\theoremstyle{plain}
\newtheorem*{thm*}{Theorem}
\newtheorem*{prop*}{Proposition}
\newtheorem*{rem*}{Remark}
\newtheorem{thm}{Theorem}[section]
\newtheorem{cor}[thm]{Corollary}
\newtheorem{defi}[thm]{Definition}
\newtheorem{prop}[thm]{Proposition}
\newtheorem{lm}[thm]{Lemma}
\newtheorem{claim*}{Claim}
\newtheorem{exple}[thm]{Example}
\numberwithin{equation}{thm}
\theoremstyle{remark}
\newtheorem{rem}[thm]{Remark}
\newcommand{\Z}{\mathbb Z}
\newcommand{\N}{\mathbb N}
\newcommand{\ul}{\underline}
\newcommand{\ti}{{\text{-}}}
\newcommand{\A}{{\mathcal{A}}}
\newcommand{\C}{{\mathcal{C}}}
\newcommand{\D}{{\mathcal{D}}}
\newcommand{\F}{{\mathcal{F}}}
\newcommand{\E}{{\mathcal{E}}}
\newcommand{\M}{{\mathcal{M}}}
\newcommand{\PP}{{\mathcal{P}}}
\newcommand{\Common}{{com \ti mon}}
\begin{document}

	 \title[Polynomial functors from algebras over a set-operad]{Polynomial functors from algebras over a set-operad and non-linear Mackey functors}

		      \author{Manfred Hartl \&Teimuraz Pirashvili \& Christine Vespa}
		      
		      \address{Universit\'e Lille Nord de France, F-59044 Lille, France
UVHC, LAMAV and FR CNRS 2956, F-59313 Valenciennes, France}
\email{manfred.hartl@univ-valenciennes.fr}

\address{Department of Mathematics, University of Leicester, University Road, Leicester, LE1 7RH, UK}
\email{tp59@leicester.ac.uk}

\address{ Universit\'e de Strasbourg, Institut de Recherche
  Math\'ematique Avanc\'ee, Strasbourg, France. }                        
\email{vespa@math.unistra.fr}

\date{\today}
\maketitle

\begin{flushright}
		      \textit{Dedicated to the memory of Jean-Louis Loday for his generosity and benevolence.}
		      \end{flushright}

\begin{abstract}
\vspace{.3cm}
In this paper, we give a description of polynomial functors from (finitely generated free) groups to abelian groups in terms of non-linear Mackey functors generalizing those given in a paper of Baues-Dreckmann-Franjou-Pirashvili published in $2001$. This description is a consequence of our two main results: a description of functors from (finitely generated free) $\PP$-algebras (for $\PP$ a set-operad) to abelian groups in terms of non-linear Mackey functors and the isomorphism between polynomial functors on (finitely generated free) monoids and those on (finitely generated free) groups. Polynomial functors from (finitely generated free) $\PP$-algebras to abelian groups and from (finitely generated free) groups to abelian groups are described explicitely by their cross-effects and maps relating them which satisfy a list of relations.

\textit{Mathematics Subject Classification:} 18D; 18A25; 55U
\vspace{.3cm}

\textit{Keywords}: polynomial functors; non-linear Mackey functors; set-operads\end{abstract}
\begin{footnote}{ 
The third author is partially  supported by project ANR blanc BLAN08-2\_338236, HGRT and ANR blanc HOGT. The authors gratefully acknowledge the hospitality of the Max Planck Institut f\"ur Mathematik in Bonn where this project started and where this paper was written.
}
\end{footnote}

Polynomial functors
play a prominent role in the representation theory of algebraic groups,
algebraic $K$-theory as well as in the theory of modules over the Steenrod algebra (see, for instance, \cite{Pan-S}). In particular it is a main computational tool for
computing the stable cohomology of classical groups (considered as
discrete groups) with twisted coefficients (see \cite{Betley}, \cite{FFSS} and  \cite{Dja-V}). 

The study of polynomial functors, in their own right, has a long history starting with the work of Schur in $1901$ in \cite{Schur}, even before the notion of a functor was defined by Eilenberg and MacLane. In fact, without using the language of functors, Schur proved that over a field of characteristic zero any polynomial functor is a
direct sum of homogeneous functors and the category of homogeneous
functors of degree $d$ is equivalent to the category of
representations of the symmetric group on $d$ letters. There were many attempts to generalize Schur's theorem for general rings. For polynomial functors from (finitely generated free) abelian groups to abelian groups a satisfactory answer is given in \cite{BDFP} where the authors obtained a description of those polynomial functors in terms of non linear Mackey functors. 

The principal aim of this paper is to generalize this result. The two main results of this paper are a description of functors from (finitely generated free) $\PP$-algebras (for $\PP$ a set-operad) to abelian groups and a description of polynomial functors from (finitely generated free) groups to abelian groups, obtained from the first result and an isomorphism between polynomial functors on (finitely generated free) monoids and those on (finitely generated free) groups.

More precisely, for $\PP$ a set-operad, $Free(\PP)$ the category of (finitely generated free) $\PP$-algebras and $Ab$ the category of abelian groups, we consider the category $Func(Free(\PP), Ab)$ of functors $F: Free(\PP) \to Ab$. The first principal result of this paper is the following:
\begin{thm} \label{Thm-1}
There is a natural equivalence of categories:
$$Func(Free(\PP), Ab) \simeq PMack(\Omega(\PP), Ab)$$
where $PMack(\Omega(\PP), Ab)$ is the category of pseudo-Mackey functors defined in Definition \ref{PMack}.
\end{thm}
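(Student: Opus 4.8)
The plan is to follow the method of \cite{BDFP}, replacing the arithmetic of integer matrices by the combinatorics of the set-operad $\PP$. First, fix a skeleton of $Free(\PP)$ whose objects are the free $\PP$-algebras $\PP[S]$ on finite sets $S$; the coproduct is the free product, $\PP[S]\ast\PP[T]=\PP[S\sqcup T]$, and $\PP[\emptyset]$ is a zero object, so $Free(\PP)$ is a pointed category with finite coproducts. Consequently every functor $F\colon Free(\PP)\to Ab$ carries a natural cross-effect decomposition
\[ F\big(\PP[S_1]\ast\cdots\ast\PP[S_k]\big)\;\cong\;\bigoplus_{A\subseteq\{1,\dots,k\}}\mathrm{cr}_{|A|}F\big((\PP[S_i])_{i\in A}\big),\qquad \mathrm{cr}_0F:=F(\PP[\emptyset]), \]
so that $F(\PP[S])$ is the finite direct sum of the groups $\mathrm{cr}_qF(\PP[1],\dots,\PP[1])$ over the $q$-element subsets of $S$. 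The whole problem thereby reduces to understanding how an arbitrary $\PP$-algebra morphism between free algebras acts on cross-effect summands.

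Second, define the comparison functor $\Psi\colon Func(Free(\PP),Ab)\to PMack(\Omega(\PP),Ab)$ on objects by $\Psi(F)(S)=\mathrm{cr}_{|S|}F(\PP[1],\dots,\PP[1])$. A general morphism $\PP[m]\to\PP[n]$ factors, using the coproduct, as a codiagonal fold preceded by a coproduct of morphisms $\PP[1]\to\PP[n]$, and each of the latter factors in turn as a morphism induced by a set map precomposed with a morphism $\PP[1]\to\PP[k]$ of the form $x\mapsto\mu(x_1,\dots,x_k)$ for $\mu\in\PP(k)$. Applying $F$ and reading off the effect on cross-effects, the set maps and the folds produce the classical restriction and transfer operations, while the operations $x\mapsto\mu(x_1,\dots,x_k)$ produce the genuinely operadic transfers, indexed by $\mu\in\PP(k)$ --- and the datum of such a $\mu$ up to the symmetries of $\PP$ is precisely what a morphism of $\Omega(\PP)$ records. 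Checking that the restriction and transfer operations so obtained satisfy exactly the list of relations of Definition \ref{PMack}, in particular the \emph{pseudo} double-coset compatibility that measures the failure of additivity of $F$, shows that $\Psi(F)$ is a well-defined pseudo-Mackey functor, naturally in $F$.

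Third, in the reverse direction set $\Phi(M)(\PP[S])=\bigoplus_{A\subseteq S}M(A)$ and let a $\PP$-algebra morphism $\varphi\colon\PP[S]\to\PP[T]$ act on the block $M(A)\to M(B)$ (for $A\subseteq S$, $B\subseteq T$) through the morphism of $\Omega(\PP)$ extracted from the way $\varphi$ expresses the generators indexed by $A$ in terms of those indexed by $B$; functoriality of $\Phi(M)$ is then a line-by-line translation of the pseudo-Mackey relations. One then verifies that $\Psi$ and $\Phi$ are mutually quasi-inverse: $\mathrm{cr}_q\Phi(M)(\PP[1],\dots,\PP[1])\cong M(\{1,\dots,q\})$ follows by inspecting the block decomposition of $\Phi(M)(\PP[1]^{\ast q})$, and $\Phi\Psi(F)\cong F$ is the cross-effect decomposition of the first step, natural in $F$.

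\textbf{Main obstacle.} The substantive work --- and the place where the hypotheses on $\PP$ and the precise axioms of Definition \ref{PMack} are used --- is the combinatorial bookkeeping of the middle two steps: systematically factoring a general $\PP$-algebra morphism between free algebras into elementary pieces, computing the action of each piece on the cross-effect summands (where, in contrast with the additive setting of \cite{BDFP}, the expansion of an element of a free $\PP$-algebra is governed by $\PP$-decorated trees), and matching the composition law of $Free(\PP)$ with the Mackey-type relations --- in particular verifying that the ``pseudo'' defect on the diagonal is exactly reproduced and produces no spurious additional constraints.
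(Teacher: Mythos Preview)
Your outline has the right shape --- cross-effects on one side, a block-sum reconstruction on the other --- but it takes a genuinely different route from the paper and defers exactly the step where the real content lies.

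The paper does \emph{not} try to extract pseudo-Mackey data directly from a functor on $Free(\PP)$. Instead it inserts two intermediate categories. First it proves (Proposition~\ref{Free(P)-span}) that $Free(\PP)\simeq Span(\mathcal{S}(\PP)_2)$: a morphism $\mathcal F_\PP(\underline 1)\to\mathcal F_\PP(\underline m)$ is literally an element of $\coprod_n\PP(n)\times_{\mathfrak S_n}\underline m^{\,n}$, which is the same datum as a span $[\underline 1\xleftarrow{(s,\omega)}\underline n\xrightarrow{f}\underline m]$ in the double category $\mathcal S(\PP)_2$. This is the crucial structural step, because once morphisms are spans, the covariant and contravariant parts are \emph{given}, not something to be ``factored out''. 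Lindner's lemma then turns functors on $Span(\mathcal S(\PP)_2)$ into Mackey functors on $\mathcal S(\PP)_2$. A second intermediate notion, $\mathcal M$-functors on $\Gamma(\PP)$ (Definition~\ref{M-functor}), mediates between these Mackey functors and the final target; and only at the last step (Proposition~\ref{eq-Omega}) does the cross-effect/Dold--Kan machinery you describe enter, now applied to $\Gamma(\PP)$ rather than to $Free(\PP)$ itself.

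What this buys is that the ``combinatorial bookkeeping'' you flag as the main obstacle is entirely absorbed into the span identification, which is a one-line unpacking of the free-algebra adjunction. By contrast, your direct route requires you to say precisely how an arbitrary $\varphi\colon\PP[S]\to\PP[T]$ acts on the block $M(A)\to M(B)$; your phrase ``the morphism of $\Omega(\PP)$ extracted from the way $\varphi$ expresses the generators indexed by $A$ in terms of those indexed by $B$'' is not yet a definition --- for a single generator $a$, $\varphi(a)$ is a class in $\PP(k)\times_{\mathfrak S_k}T^k$, and you must specify, for each subset $B$, which pieces of this contribute and how they compose when $|A|>1$. That specification is exactly the span picture, and checking functoriality of $\Phi(M)$ then amounts to re-proving the pullback/admissible-subset calculus of Definition~\ref{PMack} by hand. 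So your approach is not wrong in principle, but as written it is a sketch whose hard part is precisely the part the paper's span formalism is designed to eliminate.
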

This theorem is the generalization of Theorem $0.2$ in \cite{BDFP} giving an equivalence of categories between the functors from (finitely generated free) commutative monoids and pseudo-Mackey functors. In fact, applying Theorem \ref{Thm-1} to the operad $\mathcal{C}om$ we recover Theorem $0.2$ in \cite{BDFP} since $Free(\mathcal{C}om)$ is the category of (finitely generated free) commutative monoids (see Example \ref{ex-alg}).

In \cite{BDFP}, in order to prove Theorem $0.2$ the authors consider the category of finite pointed sets $\Gamma$ and the category of sets where the morphisms are the surjections $\Omega$. An essential tool of the proof is the  Dold-Kan  type theorem proved in \cite{Pira-Dold}, giving a Morita equivalence between the categories  $\Gamma$ and $\Omega$. In this paper, we replace the categories 
$\Gamma$ and $\Omega$ by categories associated to the operad $\mathcal{P}$ denoted by  $\Gamma(\mathcal{P})$ and $\Omega(\mathcal{P})$ (see Definitions \ref{gammaP} and \ref{omegaP}) having the same objects as $\Gamma$ resp. $\Omega$ and whose morphisms are set maps decorated by certain elements depending on $\mathcal{P}$. The extension of the Dold-Kan type theorem to this context (see Theorem \ref{DK}) is an application of the general result of S{\l}omi{\'n}ska in \cite{Slom} describing general conditions which imply Morita equivalences of functor categories. In this paper, however, we give another proof of this result which  has the advantage to provide an explicit description of the functors constituting the equivalence. Theorem \ref{Thm-1}, proved in section \ref{section-4}, then is obtained by combining the following equivalences of categories:

\begin{itemize}
\item by Proposition \ref{Free(P)-span} the category $Free(\mathcal{P})$ is equivalent to the category of Spans on the double  category $\mathcal{S}(\PP)_2$ having as objects finite sets and where horizontal maps are set maps and vertical maps are set maps decorated by certain elements depending on $\mathcal{P}$ (see Definition \ref{double});
\item in Proposition \ref{eq-M-functor} we obtain an equivalence of categories between the functors on this category of Spans and the $\mathcal{M}$-functors on $\Gamma(\mathcal{P})$ defined in Definition \ref{M-functor};
\item in Proposition \ref{eq-Omega} we prove that the Morita equivalence between $\Gamma(\mathcal{P})$ and $\Omega(\mathcal{P})$ provides an equivalence of categories between the $\mathcal{M}$-functors on $\Gamma(\mathcal{P})$ and the pseudo-Mackey functors on $\Omega(\mathcal{P})$ to $Ab$.
\end{itemize}
Applying Theorem \ref{Thm-1} to the operad $\mathcal{A}s$, since $Free(\mathcal{A}s)$ is the category of (finitely generated free) monoids (see Example \ref{ex-alg}), we obtain a description of functors from  (finitely generated free) monoids to abelian groups in terms of pseudo-Mackey functors. In order to obtain a description of polynomial functors from (finitely generated free) groups to abelian groups we would like to obtain a relationship between polynomial functors from (finitely generated free) monoids to abelian groups and those from (finitely generated free) groups to abelian groups. This relation is  the second main result of this paper (see Proposition \ref{pol-gr-mon}). Recall that a reduced functor is a functor such that $F(0)=0$.
\begin{thm} \label{Thm-2}
There is an isomorphism of categories between the reduced polynomial functors of degree $n$ from (finitely generated free) monoids to abelian groups and those from (finitely generated free) groups to abelian groups.
\end{thm}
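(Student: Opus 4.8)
The plan is to realise the isomorphism through the group completion functor. Choosing skeleta indexed by $\N$, write $\mathbf{mon}$ (resp.\ $\mathbf{gr}$) for the category of finitely generated free monoids (resp.\ groups), with $M_{k}$ (resp.\ $F_{k}$) the free object on $k$ generators, and let $\gamma\co\mathbf{mon}\to\mathbf{gr}$ be group completion, so that $\gamma(M_{k})=F_{k}$ and $\gamma$ is the identity on objects. Precomposition with $\gamma$ gives $\gamma^{*}\co Func(\mathbf{gr},Ab)\to Func(\mathbf{mon},Ab)$. One checks directly that $\gamma$ preserves finite coproducts and the zero object ($\gamma(M_{a}*M_{b})=F_{a+b}=F_{a}*F_{b}$, $\gamma(M_{0})=F_{0}$), whence $\gamma^{*}$ commutes with the formation of cross-effects: $\mathrm{cr}_{k}(\gamma^{*}F)(M_{i_{1}},\dots,M_{i_{k}})=\mathrm{cr}_{k}(F)(F_{i_{1}},\dots,F_{i_{k}})$. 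Thus $\gamma^{*}$ sends reduced functors to reduced functors and functors of degree $\le n$ to functors of degree $\le n$, and since $\gamma$ is also surjective on objects it reflects both properties. Hence $\gamma^{*}$ restricts to a functor between the full subcategories of reduced polynomial functors of degree $\le n$ (equivalently, of degree exactly $n$), and it remains to prove that this restriction is an isomorphism of categories.

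Faithfulness and injectivity on objects are immediate, since $\gamma$ is the identity on the objects of the skeleta and so a functor on $\mathbf{gr}$, or a natural transformation between two such, is determined by its restriction along $\gamma$. The substance of the theorem is therefore an \emph{extension principle}: every reduced polynomial functor $G\co\mathbf{mon}\to Ab$ of degree $\le n$ is of the form $F\circ\gamma$ for a (necessarily unique, reduced, degree $\le n$) functor $F\co\mathbf{gr}\to Ab$, and every natural transformation between such functors extends along $\gamma$. To set this up I would first record that $\mathbf{gr}$ is generated, as a category under composition, by the subcategory $\gamma(\mathbf{mon})$ together with the single morphism $\iota\co F_{1}\to F_{1}$, $t\mapsto t^{-1}$: writing an arbitrary group homomorphism $F_{k}\to F_{l}$ on reduced words and spelling it out letter by letter, one factors it through some $F_{m}$ as a composite of $\gamma$ of monoid homomorphisms and of maps of the form $\mathrm{id}*\iota*\mathrm{id}$. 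Consequently an extension $F$ of $G$ is determined by the single endomorphism $F(\iota)$ of $G(M_{1})$; and once $F(\iota)$ is pinned down by a formula natural in $G$, a natural transformation of the functors on $\mathbf{mon}$ will automatically commute with it and hence extend.

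To determine $F(\iota)$ I would use the polynomiality of $G$. The relation $t\cdot t^{-1}=e$ in the $F_{k}$ — the only relation of $\mathbf{gr}$ not already holding in $\mathbf{mon}$ — becomes exploitable after splitting $G(F_{k})=G(M_{k})$ into its cross-effects of order $\le n$, which is legitimate precisely because $\mathrm{cr}_{n+1}G=0$; it then forces $F(\iota)$ to be an explicit $\Z$-linear combination of the operations on $G(M_{1})$ induced by the monoid homomorphisms $M_{1}\to M_{j}$, $t\mapsto x_{1}\cdots x_{j}$, and $M_{j}\to M_{1}$, $x_{i}\mapsto t$ ($1\le j\le n$) — the categorified analogue of the elementary fact that a polynomial map $\N\to A$ of degree $\le n$ extends uniquely to a polynomial map $\Z\to A$ through binomial coefficients. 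This delivers uniqueness of $F$, and the naturality of the formula delivers fullness. For existence one defines $F$ on a general morphism of $\mathbf{gr}$ by choosing any factorisation of the above kind; \textbf{the main obstacle is to verify that this is well defined and functorial}, i.e.\ that the forced assignment respects all the relations of $\mathbf{gr}$. This is exactly where the hypothesis $\mathrm{cr}_{n+1}G=0$ is essential: modulo the relations of $\mathbf{mon}$ every relation of $\mathbf{gr}$ is a consequence of $t\cdot t^{-1}=e$, and after projection to the cross-effects of order $\le n$ the resulting $\Z$-linear identities among the operations of $G$ hold automatically. An organisationally cleaner route to the same verification is to describe $G$ through Theorem~\ref{Thm-1} by its cross-effects $\mathrm{cr}_{k}G$ ($1\le k\le n$) and the structure maps relating them, to note that this data only involves coproducts — transported by $\gamma$ — and endomorphisms of $M_{1}$ — which extend from $\N$ to $\Z$ by the elementary argument above — and to read off that the same data describes a functor on $\mathbf{gr}$, necessarily of degree $\le n$. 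Combining these facts with faithfulness and bijectivity on objects yields the asserted isomorphism of categories.
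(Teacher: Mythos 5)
Your outline identifies the right object (the group-completion functor $\mathbf{g}$) and the right prototype (the fact that a polynomial map of degree $\le n$ from $\N$ extends uniquely to $\Z$), and the paper's argument is indeed a categorified version of that prototype: Lemma \ref{iso-ab-gr} shows $P_n(M)\cong P_n(U\mathbf{g}(M))$ for any monoid $M$, and this is applied hom-set by hom-set to the pointwise monoids $Hom_{mon}(M_k,M_l)$, whose groupifications are the $Hom_{gr}(F_k,F_l)$ by Proposition \ref{g-pres-prods}. Your observation that $\gamma^*$ commutes with cross-effects is also correct and gives faithfulness, preservation and reflection of degree, and uniqueness of the extension once its existence is known.

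The gap is the step you yourself flag as the ``main obstacle'' and then declare resolved: that ``after projection to the cross-effects of order $\le n$ the resulting $\Z$-linear identities among the operations of $G$ hold automatically.'' This is precisely what the authors explain cannot be obtained by direct verification in the non-commutative setting, and it is what prompted the development of section \ref{section5}. As discussed in section \ref{section-5.5}, composition $Hom(B,C)\times Hom(A,B)\to Hom(A,C)$ is not bilinear for the pointwise monoid structure on hom-sets ($f(g+g')=fg+fg'$ holds, but $(f+f')g\neq fg+f'g$ in general), so the BDFP argument --- that $\psi\phi$ is bipolynomial of degree $n$ in each variable and hence determined by its restriction to monoid hom-sets --- breaks down; your proposal is essentially that argument with the bilinearity tacitly assumed. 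The paper's new insight, which your sketch lacks, is Proposition \ref{TnZ}: the $\Z$-linearization of composition \emph{does} descend to $T_n\overline{\Z}[\C]$, not because it is jointly polynomial but because it is separately compatible with $t_n$ in each tensor factor, combined with ${\rm Ker}(t_n\otimes t_n)={\rm Ker}(1\otimes t_n)+{\rm Ker}(t_n\otimes 1)$ by right-exactness of $\otimes$. With the preadditive category $T_n\overline{\Z}[\C]$ in place, Corollary \ref{Poln(C)=Add(PnC)} converts polynomial functors into additive functors, reducing the theorem to an isomorphism $T_n\overline{\Z}[mon]\cong T_n\overline{\Z}[gr]$ of small preadditive categories established via Corollary \ref{Pn(Free(P))} and Theorem \ref{Pnmon=Pngr}. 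Your ``organisationally cleaner route'' through Theorem \ref{Thm-1} faces the same wall: a pseudo-Mackey functor on $\Omega(\mathcal{A}s)$ encodes functoriality on $mon$ through the span category built from $\Omega(\mathcal{A}s)_2$, and showing that the same data governs composition of \emph{group} homomorphisms is exactly the content of Theorem \ref{Thm-2}, not something one can read off. Closing the gap you leave would, in effect, reconstruct Proposition \ref{TnZ}.
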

This theorem is the generalization of Proposition $8.3$ in \cite{BDFP} giving the equivalence of categories between the reduced polynomial functors of degree $n$ from (finitely generated free) commutative monoids to abelian groups and those from (finitely generated free) abelian groups to abelian groups. In \cite{BDFP} the sketched proof of Proposition $8.3$ use, in an essential way, the fact that the composition:
$$Hom_{\Common}(B,C) \times Hom_{\Common}(A,B)  \to Hom_{\Common}(A,C) $$
is a bilinear map. In the non-commutative case, this map is linear in the 
second variable (i.e. $f(g+h)=fg+fh$) but no more linear in the first variable. So we can't adapt directly the proof proposed in \cite{BDFP} in the non-commutative setting. Consequently, to prove Theorem \ref{Thm-2} we have to introduce, in section \ref{section5}, several new tools of independent interest. In particular, in Definition \ref{polmap-setmor} we extend the notion of polynomial maps in the sense of Passi (from monoids
to abelian groups) to maps from a set of morphisms in a (suitable) category
to abelian groups. Using this notion we can characterize, in Theorem \ref{thm:polfunc-polmap}, polynomial functors in terms of their effect on morphism sets, like in the classical case of polynomial
functors between abelian categories. This leads us to introduce, in Proposition \ref{TnZ}, the categories $T_n\overline{\Z}[\C]$ extending the categories $P_n(\A)$ for $\A$ an additive category introduced in \cite{Pira}. These categories have the following important property (see Corollary \ref{Poln(C)=Add(PnC)}):
polynomial functors of degree $n$ from $\C$ to $Ab$ are equivalent to additive functors
from $T_n\overline{\Z}[\C]$  to $Ab$. We then compute, in Corollary \ref{Pn(Free(P))}, the category $T_n\overline{\Z}[\C]$ for $\C$ being the category of $\PP$-algebras (for $\PP$ a set-operad having a binary
operation for which $0_{\PP} \in \PP(0)$ is a unit).  The desired equivalence between
polynomial functors from monoids and those from groups then follows from a
canonical isomorphism $T_n\overline{\Z}[mon] \simeq T_n\overline{\Z}[gr]$   obtained in Theorem \ref{Pnmon=Pngr} from the computation of these categories given in Corollary  \ref{Pn(Free(P))}. 

We point out that these methods also allow to establish a similar isomorphism between polynomial functors from (finitely generated free) loops (i.e. quasigroups with unit) and polynomial functors from (finitely generated free) $\mathcal{M}ag$-algebras where $\mathcal{M}ag$ is the set-operad encoding magmas with unit; the latter can be computed using the results of this paper. This will be considered elsewhere.

Combining Theorems \ref{Thm-1} (for $\PP=\mathcal{A}s$) and \ref{Thm-2} we obtain in section \ref{section-5.5}:

\begin{thm}
There is an equivalence of categories between the reduced polynomial functors of degree $n$ from (finitely generated free) groups to abelian groups
and the category 
$ (PMack(\Omega(\mathcal{A} s), Ab))_{\leq n}$
where $(PMack(\Omega(\mathcal{A} s), Ab))_{\leq n}$ is the full subcategory of $PMack(\Omega(\mathcal{A} s), Ab)$ having as objects the functors which vanish on the sets $X$ of cardinality greater than $n$ and on $\underline{0}$.
\end{thm}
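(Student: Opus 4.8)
The plan is to combine the two main theorems with a translation of the notion of polynomial degree across the equivalence of Theorem \ref{Thm-1}. Since $Free(\mathcal{A}s)$ is the category of finitely generated free monoids (Example \ref{ex-alg}), Theorem \ref{Thm-1} gives an equivalence $Func(Free(\mathcal{A}s), Ab) \simeq PMack(\Omega(\mathcal{A}s), Ab)$; by Theorem \ref{Thm-2} the reduced polynomial functors of degree $n$ from finitely generated free groups to $Ab$ form a category isomorphic to the corresponding one for monoids. It therefore suffices to show that the equivalence of Theorem \ref{Thm-1} restricts to an equivalence between the full subcategory of $Func(Free(\mathcal{A}s), Ab)$ of reduced functors that are polynomial of degree $n$ and the full subcategory $(PMack(\Omega(\mathcal{A}s), Ab))_{\leq n}$; as both are full, it is enough to check that the equivalence and a chosen quasi-inverse carry one subcategory into the other.

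First I would track how the equivalence of Theorem \ref{Thm-1}, built in the introduction as the composite of Propositions \ref{Free(P)-span}, \ref{eq-M-functor} and \ref{eq-Omega}, acts on values. On the $\Gamma(\PP)$ side an $\mathcal{M}$-functor assigns to a finite set $X$ a group manufactured from $F$ evaluated on the free $\PP$-algebra on $|X|$ generators, and, by the cross-effect decomposition of such evaluations, the ``new'' part of this value on a set of cardinality $k$ is a natural retract of a finite sum of copies of the $k$-th cross-effect $\mathrm{cr}_k(F)$, and conversely. In particular $F(0)=0$ precisely when the associated $\mathcal{M}$-functor vanishes on $\underline{0}$, and $\mathrm{cr}_{n+1}(F)=0$ --- i.e. $F$ is polynomial of degree $n$ --- precisely when the associated $\mathcal{M}$-functor vanishes on every set of cardinality $>n$. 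The Morita equivalence of Theorem \ref{DK} between $\Gamma(\PP)$ and $\Omega(\PP)$ is induced by functors which do not increase the cardinality of the underlying set, so it carries $\mathcal{M}$-functors with these vanishing properties to pseudo-Mackey functors lying in $(PMack(\Omega(\mathcal{A}s), Ab))_{\leq n}$ and back. One may alternatively phrase this last step via the characterisation of polynomial functors by cross-effects (Theorem \ref{thm:polfunc-polmap}) together with the explicit computation of $T_n\overline{\Z}[mon]$ in Corollary \ref{Pn(Free(P))}.

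Granting this, the equivalence of Theorem \ref{Thm-1} and its quasi-inverse send the two full subcategories into one another and hence restrict to an equivalence between them; precomposing with the isomorphism of Theorem \ref{Thm-2} then yields the stated equivalence for finitely generated free groups. The main obstacle I expect is the middle step: identifying the value of the pseudo-Mackey functor on a $k$-element set with a retract of copies of $\mathrm{cr}_k(F)$ in a manner compatible with the structure maps of a pseudo-Mackey functor, so that vanishing of the cross-effect forces vanishing of the whole value and conversely. In \cite{BDFP} this is read off from the explicit Dold--Kan style description of the equivalence in the commutative setting; here the analogous bookkeeping must be carried out with the $\mathcal{A}s$-decorations of $\Gamma(\mathcal{A}s)$ and $\Omega(\mathcal{A}s)$ and with cross-effects computed in the non-additive category $Free(\mathcal{A}s)$. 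Once the dictionary between cardinality and polynomial degree is in place, the remaining verifications are formal.
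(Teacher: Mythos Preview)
Your overall architecture is the paper's: specialize Theorem \ref{Thm-1} to $\PP=\mathcal{A}s$, translate the polynomial-degree condition across that equivalence, and then transport from $mon$ to $gr$ via Theorem \ref{Thm-2}. The last step and the first are exactly what the paper does (the degree translation is its Corollary \ref{equ-pol-cor}), so in outline you are on the same track.

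The gap is in your middle step. You assert that ``$cr_{n+1}(F)=0$ \ldots\ precisely when the associated $\mathcal{M}$-functor vanishes on every set of cardinality $>n$,'' and then argue that the Morita equivalence preserves this vanishing. But the $\mathcal{M}$-functor attached to $F$ has value $F(\mathcal{F}_{\mathcal{A}s}(\underline{k}))$ on $[k]$; for a nonzero polynomial $F$ this never vanishes for large $k$ (by Proposition \ref{ce-prop} it is a direct sum of cross-effects of all orders $\le k$, including the first). So your premise is false and the ``cardinality-preserving'' argument for the Morita step does not get off the ground. What actually happens is that the Dold--Kan equivalence itself extracts the cross-effect: by Proposition \ref{DK-explicit} the functor $cr$ sends the covariant part $N_*$ of the $\mathcal{M}$-functor to the $\Omega(\PP)$-functor with value $cr_k N_*([1],\ldots,[1]) = cr_k F(\mathcal{F}_{\mathcal{A}s}(\underline{1}),\ldots,\mathcal{F}_{\mathcal{A}s}(\underline{1}))$ on $\underline{k}$. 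Thus the pseudo-Mackey functor (not the $\mathcal{M}$-functor) vanishes on $\underline{k}$ for $k>n$ exactly when all higher cross-effects of $F$ at the generator vanish; since every object of $Free(\mathcal{A}s)$ is a finite coproduct of copies of $\mathcal{F}_{\mathcal{A}s}(\underline{1})$, this is equivalent to $cr_{n+1}F=0$ (and similarly $F(0)=0$ corresponds to vanishing on $\underline{0}$). That is precisely Corollary \ref{equ-pol-cor}, and once you invoke it the rest of your argument goes through.

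Your proposed alternative via Theorem \ref{thm:polfunc-polmap} and Corollary \ref{Pn(Free(P))} is misplaced here: those results are the engine behind Theorem \ref{Thm-2}, not a substitute for the cross-effect computation that identifies the $\le n$ subcategory on the pseudo-Mackey side.
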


Applying our results to $\PP=\mathcal{C}om$ we recover the result of \cite{BDFP}.

In section \ref{section6} we give a presentation of reduced polynomial functors generalizing the one given in \cite{BDFP}. In particular we obtain an explanation of a phenomenon which could, at first glance seem mysterious in \cite{BDFP}. In fact, in the presentation of reduced polynomial functors given in \cite{BDFP} the longest relation curiously has only $8$ terms,
independantly of the degree of the functor under consideration. In section \ref{section6} we see that this property arises from the fact that the operad $\C om$ is generated by a binary
operation. Since $\A s$ is generated also by a binary
operation the corresponding longest relation also has only $8$ terms. But, for an
operad which has genuine higher than binary operations the corresponding  relation has
a number of terms growing exponentially with the arity of the generating operations.

In the case of reduced polynomial functors of degree two from (finitely generated free) groups to abelian groups  we recover the corresponding result obtained in \cite{Baues-Pira}.

\tableofcontents

%%%%%%%%%%%%%%%
\begin{minipage}{\textwidth}
%\centerline{\textbf{Index of notations and symbols}}\\
%\begin{multicols*}{3}
\printindex
%\end{multicols*}
\end{minipage}

\textbf{Notations}: We denote by $Gr$ the category of groups, $Ab$ the category of abelian groups, $Mon$ the category of monoids and by $ComMon$ the category of commutative monoids. We denote by $gr$ (resp. $ab$, $mon$ and $common$) the fullsubcategory of $Gr$ (resp. $Ab$, $Mon$ and $ComMon$) having as objects finitely generated free objects.
%%%%%%%%%%%%%%%%%%
\section{Recollections on set-operads, double categories and Mackey functors}
\marginpar{Intro}

\subsection{Symmetric set-operads}

Let $\mathcal{S}$ be the skeleton of the category of finite sets with objects $\underline{n}=\{ 1, \ldots, n \}$ for $n \geq 0$. We have $\underline{0}=\emptyset$ which is the initial object of $\mathcal{S}$. 

The category $\mathcal{S}$ is a symmetric monoidal category with product the cartesian product $\times$ and unit object $\underline{1}$.

A set-operad $\PP$ consists of sets $\PP(j)$ with right actions by the symmetric groups $\mathfrak{S}_j$ for $j \geq 0$ (with $\mathfrak{S}_0=\{Id\}$), a unit element $1_{\PP} \in \PP(1)$ and product maps:
$$\gamma_{(n;m_1, \ldots, m_n)}: \PP(n) \times \PP(m_1) \times \ldots \times \PP(m_n) \to \PP(m_1+ \ldots + m_n)$$
that are suitably associative, unital and equivariant. For brevity, we will often write $\gamma$ instead of 
$\gamma_{(n;m_1, \ldots, m_n)}$. (For further information about operads, see for instance \cite{May-Kriz}).

We will assume that the operad $\PP$ is unitary that is $\PP(0)=\underline{1}$. We denote by $0_{\PP}$ the unique element of $\PP(0)$.

\begin{exple} \label{ex-1.1}
\begin{enumerate}
\item For  a set $X$, we have an associated set-operad $\mathcal{E}_X$ such that $\mathcal{E}_X(n)=Hom_{\mathcal{S}}(X^n,X)$.
\item The unitary set-operad $\mathcal{I}$ is defined by $\mathcal{I}(0)=\mathcal{I}(1)=\underline{1}$ and $\mathcal{I}(n)=\emptyset$ for $n>1$. It is the initial object in the category of unitary set-operads.
\item The unitary commutative set-operad $\mathcal{C}om$ is defined by $\forall n \in \N \ \mathcal{C}om(n)=\underline{1}$ equipped with the trivial $\mathfrak{S}_n$-action. It is the terminal object in the category of unitary set-operads. In fact, for $\PP$ a unitary set operad the constant maps:
$$\forall j \in \N, \quad  \PP(j) \rightarrow \mathcal{C}om(j)=\underline{1}$$
form a morphism of operads: $\PP \to \mathcal{C}om$.
\item The unitary associative set-operad $\mathcal{A}s$ is defined by $\forall n \in \N,\ \mathcal{A}s(n)=\underline{\mid \mathfrak{S}_n \mid}$, where $\mid X \mid$ denotes the cardinality of the set $X$, equipped with the action of $\mathfrak{S}_n$ given by right translations.
\end{enumerate}
\end{exple}

Let $\PP$ be a set-operad. A \textit{$\PP$-algebra} is a set $X$ and a morphism of operads $\PP \to \mathcal{E}_X$. We denote by $\PP \ti Alg$ the category of $\PP$-algebras. More explicitely, a $\PP$-algebra is a set $X$ together with a family of maps:
$$ \quad \mu_j:\PP(j) \times_{\mathfrak{S}_j} X^j \to X, \quad j \in \N $$
that is associative and unital. We denote by $1_X$ the element of $X$ given by $\mu_0(0_{\PP}, *)$ where $* \in X^0=\underline{1}$.

The forgetful functor $\PP \ti Alg \to Set$ has a left adjoint functor $\mathcal{F}_{\PP}: Set \to \PP \ti Alg$, where $Set$ is the category of all sets.
The category of finitely generated free $\PP$-algebras $Free(\PP)$ is the full subcategory of $\PP \ti Alg$ whose objects are $\mathcal{F}_{\PP}(\underline{n})$ for $n \geq 0$. Recall that $\mathcal{F}_{\PP}(X)=\underset{n\geq 0}{\amalg} \PP(n) \underset{\mathfrak{S}_n}{\times} X^n$.

\begin{rem}  \label{skelett-Free(P)}
There is an isomorphism of categories between $Free(\PP)$ and the category $\F(\PP)$ whose objects are the sets $\underline{n}$ and whose morphisms are given by $\F(\PP)(\underline{n}, \underline{m})=Free(\PP)(\F_\PP(\underline{n}), \F_\PP(\underline{m}))$.
\end{rem}

\begin{exple} \label{ex-alg}
\begin{enumerate}
\item We have $Free(\mathcal{I})=\Gamma$ where $\Gamma$ is the skeleton of the category of finite pointed sets having as  objects $[n]=\{0, 1, \ldots, n \}$ with  $0$ as basepoint and morphisms the pointed set maps $f: [n] \to [m]$. 
\item We have $Free(\mathcal{C}om)=common$ where $common$ is the category of finitely generated free commutative monoids with unit.
\item We have $Free(\mathcal{A}s)=mon$ where $mon$ is the category of finitely generated free monoids with unit.
\end{enumerate}
\end{exple}

\begin{rem} \label{remarque-section1}
Recall that a bijection $h:\underline{n} \to \underline{n}$ induces a bijection $\hat{h}: \PP(n) \to \PP(n)$ given by $\hat{h}(\omega)=\omega . h^{-1}$.
\end{rem}

%%%%%%%%%%%%%%%%%%%%%%%%%%%%
\subsection{May-Thomason category $\mathcal{S}(\PP)$ and the categories $\Gamma(\PP)$ and $\Omega(\PP)$}

The following construction is due to May and Thomason in \cite{May-T}.

\begin{defi} \label{May-T} \index{$\mathcal{S}(\PP)$}
Let $\PP$ be a unitary set-operad. The \textbf{May-Thomason category} associated to $\PP$ (or the category of operators): $\mathcal{S}(\PP)$, has as objects the finite sets $\underline{n}=\{1, \ldots, n\}$, and morphisms from $\underline{n}$ to $\underline{m}$ in $\mathcal{S}(\PP)$ are the pairs $(f, \omega^f)$ where $f \in \mathcal{S}(\underline{n},\underline{m})$, $\omega^f \in \Pi_{y \in Y} \PP(\mid f^{-1}(y)\mid)$. For $(f, \omega^f): \underline{n} \to \underline{m}$ and $(g, \omega^g): \underline{m} \to \underline{l}$ morphisms in $\mathcal{S}(\PP)$, the composition $(g, \omega^g)\circ (f, \omega^f)$ in $\mathcal{S}(\PP)$ is the pair $(g \circ f, \omega^{g \circ f})$ where, for $i \in \{1, \ldots l \}$ we have:
$$\omega_i^{g \circ f}=\gamma(\omega_i^g; \omega^f_{j_1}, \ldots, \omega^f_{j_s}).\sigma_i$$
where $g^{-1}(i)=\{j_1, \ldots, j_s \}$ with $j_1<\ldots <j_s$, for $h: \underline{p} \to \underline{q}$ and $\alpha \in \underline{q}$, $\omega^h_\alpha \in \PP(\mid h^{-1}(\alpha) \mid)$ is the component of $\omega^h$ and $\sigma_i$ is the permutation of $\mid (g \circ f)^{-1}(i)\mid$ defined in the following way: let $A_i=(x_1, \ldots, x_{n_j})$ be the natural ordering of $(g \circ f)^{-1}(i)$ and let $B_i=(y_1, \ldots, y_{n_j})$ be its ordering obtained  by regarding it as $\amalg_{g(j)=i} f^{-1}(j)$ so ordered that: 
\begin{itemize}
\item if $j<j'$, all elements of $f^{-1}(j)$ precede all elements of $f^{-1}(j')$;
\item each $f^{-1}(j)$ has its natural ordering as a subset of \underline{n};
\end{itemize}
$\sigma_i$ then is defined by: $\forall k \in \{1, \ldots n_j\}$, $\sigma_i(k)=l$ such that $y_l=x_k$.

\end{defi}

\begin{rem}
For $\underline{n} \in \mathcal{S}(\PP)$ the identity morphism of $\underline{n}$ is $(Id_{\underline{n}}, (1_{\PP}, \ldots, 1_{\PP}) \in \PP(1)^{n})$ and the associativity of composition follows from the definition of operads.
\end{rem}

\begin{exple}[Example of composition in $\mathcal{S}(\PP)$]
Let $(f, \omega_1 \in \PP(1), \omega_2 \in \PP(2)): \underline{3} \to \underline{2}$ be the morphism in $\mathcal{S}(\PP)$ defined by $f(1)=f(3)=2$ and $f(2)=1$ and $(g,\alpha_1 \in \PP(2), \alpha_2 \in \PP(1), \alpha_3 \in \PP(3)): \underline{6} \to \underline{3}$ defined by $g(1)=g(5)=1$, $g(2)=2$ and $g(3)=g(4)=g(6)=3$. We have:
$$(f, \omega_1, \omega_2)(g,\alpha_1, \alpha_2, \alpha_3 )=(fg, \gamma(\omega_1, \alpha_2) \in \PP(1), \gamma(\omega_2, \alpha_1, \alpha_3).\sigma \in \PP(5))$$
where $\sigma=\begin{pmatrix} 1&2&3&4&5\\ 1&3&4&2&5 \end{pmatrix}$ (in this example $A_2=(1,3,4,5,6)$ and $B_2=(1,5,3,4,6)$).
\end{exple}

\begin{exple} $\ $
\begin{itemize}
\item For $\PP=\mathcal{I}$, $\mathcal{S}(\mathcal{I})$ is the subcategory of $\mathcal{S}$ having as morphisms the injections.
\item For $\PP=\C om$. Since $\forall n \geq 0 \quad \C om(n)=\underline{1}$, a morphism in $\mathcal{S}(\PP)$ is a pair $(f, \omega^f)$ where $f \in Hom_{\mathcal{S}}(X,Y)$ and $\omega^f \in  \Pi_{y \in Y}  \C om(\mid f^{-1}(y)\mid)=  \Pi_{y \in Y}  \underline{1}$. Consequently, $\mathcal{S}(\C om)=\mathcal{S}$.

\item For $\PP=\A s$, $\mathcal{S}(\A s)$ is the category of non-commutative sets (see \cite{Pira-PROP}) having as objects the sets $\underline{n}$ and for   $\underline{n}$  and $\underline{m}$ objects in  $\mathcal{S}(\A s)$, an element of  $Hom_{\mathcal{S}(\A s)}(\underline{n},\underline{m})$ is a set map $f: \underline{n} \to \underline{m}$ together with a total ordering of the fibers $f^{-1}(j)$ for all $j \in \underline{m}$. The morphisms of $\mathcal{S}(\A s)$ are called non-commutative maps. For $f: \underline{n} \to \underline{m}$ and $g: \underline{m} \to \underline{k}$ two non-commutative maps, the composition $g \circ f \in Hom_{\mathcal{S}(\A s)}(\underline{n},\underline{k})$ is the set map $g \circ f: \underline{n} \to \underline{k}$ and the total ordering in $(g \circ f)^{-1}(i)$ for $i \in \underline{n}$ is given by the ordered union of ordered sets:
$$(g \circ f)^{-1}(i)=\amalg_{j \in g^{-1}(i)} f^{-1}(j).$$
\end{itemize}
\end{exple}

In general, the disjoint union $\amalg$ is not a coproduct in the category $\mathcal{S}(\PP)$ but it defines a symmetric monoidal category structure on $\mathcal{S}(\PP)$. The following straightforward lemma will be useful in the sequel.
\begin{lm} \label{1.6}
Let $(f, \omega^f): X \to S_1 \amalg S_2$ be a morphism in $\mathcal{S}(\PP)$ then:
$$(f, \omega^f)=(f_1, \omega^{f_1}) \amalg (f_2, \omega^{f_2})$$
where $f_i$ is the restriction of $f$ on $f^{-1}(S_i)$ for $i \in \{1, 2 \}$ and $\omega^{f_1}$ and $\omega^{f_2}$ are defined by:
$$\omega^f=(\omega^{f_1}, \omega^{f_2}) \in \prod_{y \in S_1 \amalg S_2} \PP(\mid f^{-1}(y) \mid)=\prod_{y_1 \in S_1} \PP(\mid f^{-1}(y_1) \mid) \times \prod_{y_2 \in S_2} \PP(\mid f^{-1}(y_2) \mid)$$
$$=\prod_{y_1 \in S_1} \PP(\mid f_1^{-1}(y_1) \mid) \times \prod_{y_2 \in S_2} \PP(\mid f_2^{-1}(y_2) \mid).$$
\end{lm}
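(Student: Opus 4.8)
The plan is to unwind the definition of the symmetric monoidal structure $\amalg$ on $\mathcal{S}(\PP)$ and to observe that the decomposition is forced by locality over the target. First I would set $X_i := f^{-1}(S_i)$ for $i \in \{1,2\}$; since the subsets $f^{-1}(S_1)$ and $f^{-1}(S_2)$ partition $X$, there is a canonical identification $X = X_1 \amalg X_2$ in $\mathcal{S}$. This identification lifts to $\mathcal{S}(\PP)$ once one notes the following ordering point: for an operad with nontrivial orderings on fibers (e.g. $\PP = \A s$), the ordering of a fiber $f^{-1}(y)$ regarded as a subset of $X$ coincides with its ordering regarded as a subset of $X_i$. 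Let then $f_i \co X_i \to S_i$ be the corestriction of $f|_{X_i}$; for $y \in S_i$ one has $f^{-1}(y) \subseteq X_i$, hence $f^{-1}(y) = f_i^{-1}(y)$ as (ordered) sets. This last equality is exactly the identification of products displayed in the statement, and it allows us to define $\omega^{f_i}$ as the corresponding component of $\omega^f$ under
$$\prod_{y \in S_1 \amalg S_2} \PP(|f^{-1}(y)|) = \prod_{y_1 \in S_1} \PP(|f_1^{-1}(y_1)|) \times \prod_{y_2 \in S_2} \PP(|f_2^{-1}(y_2)|).$$

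Next I would check that the morphism $(f_1,\omega^{f_1}) \amalg (f_2,\omega^{f_2}) \co X_1 \amalg X_2 \to S_1 \amalg S_2$ coincides with $(f,\omega^f)$ under the identification $X = X_1 \amalg X_2$. On underlying set maps this is immediate: the restriction of $f_1 \amalg f_2$ to $X_i$ is $f_i$, which is the corestriction of $f|_{X_i}$, so $f_1 \amalg f_2 = f$. For the decoration, the definition of $\amalg$ on morphisms of $\mathcal{S}(\PP)$ assembles the decoration of $(f_1,\omega^{f_1}) \amalg (f_2,\omega^{f_2})$ at a point $y \in S_i$ to be the $y$-component of $\omega^{f_i}$, which by construction is the $y$-component of $\omega^f$; hence the two decorations agree componentwise over $S_1 \amalg S_2$. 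This yields the asserted equality.

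I expect no genuine obstacle here: the content is the bookkeeping fact that $\amalg$ makes $\mathcal{S}(\PP)$ symmetric monoidal and that the formation of morphisms in $\mathcal{S}(\PP)$ is local over the target. The only point deserving a line of care is the ordering subtlety mentioned above for operads such as $\A s$, namely that passing to the sub-object $X_i = f^{-1}(S_i)$ does not disturb the internal ordering of any fiber $f^{-1}(y)$ with $y \in S_i$, so that the components $\omega^h_\alpha \in \PP(|h^{-1}(\alpha)|)$ occurring are literally unchanged. Once this is recorded, the identification of the two morphisms is immediate.
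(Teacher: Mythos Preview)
Your argument is correct and is exactly the natural unwinding of definitions; the paper itself states this lemma without proof, calling it ``straightforward''. The only minor remark is that your ``ordering'' caveat is slightly over-cautious: the decoration $\omega^f_y$ lives in $\PP(|f^{-1}(y)|)$, and since $f^{-1}(y) = f_i^{-1}(y)$ as sets for $y \in S_i$, the component is literally unchanged regardless of $\PP$; no ordering compatibility needs to be checked at this step (orderings enter only in the composition law via the permutations $\sigma_i$, not in the monoidal structure).
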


In the sequel, we adapt the definition of May-Thomason category to the categories $\Gamma$ and $\Omega$.

\begin{defi}   \label{gammaP} \index{$\Gamma(\PP)$}
Let $\PP$ be a unitary set-operad. Let $\Gamma(\PP)$ be the category of $\PP$-pointed sets having as objects the pointed sets $[n]$ and an element of  $Hom_{\Gamma(\PP)}([n],[m])$ is a pair $(f, \omega^f)$ where $f$ is a pointed set map $f: [n] \to [m]$ and $\omega^f \in \Pi_{y \in [m], y \not= 0} \PP(\mid f^{-1}(y)\mid)$. The composition is defined in the same way as in the category $\mathcal{S}(\PP)$.
\end{defi}

\begin{exple} $\ $
\begin{itemize}
\item For $\PP=\mathcal{I}$, $\Gamma(\mathcal{I})$ is the subcategory of $\Gamma$ having as morphisms the pointed injections.
\item For $\PP=\C om$, we have $\Gamma(\C om)=\Gamma$.
\item For $\PP=\A s$, we have $\Gamma(\PP)=\Gamma(\A s)$ where $\Gamma(\A s)$ is the category   of non-commutative pointed sets having as objects the sets $[n]$ and for   $[n]$  and $[m]$ objects in  $\Gamma(as)$, an element of  $Hom_{\Gamma(as)}([n],[m])$ is a set map $f: [n] \to [m]$ such that $f(0)=0$ together with a total ordering of the fibers $f^{-1}(j)$ for all $j=1, \ldots, m$. We remark that this definition differs from the one given in \cite{Pira-Richter}: here, we do not suppose to have an ordering of $f^{-1}(0)$.
\end{itemize}
\end{exple}

Let $\Omega$ be the category having as objects the sets $\underline{n}$, for $n \geq 0$  and as morphisms the surjective maps.

\begin{defi} \label{omegaP}
Let $\PP$ be a unitary set-operad.  Let $\Omega(\PP)$ be the subcategory of $\mathcal{S}(\PP)$ having as morphisms the morphisms of $\mathcal{S}(\PP)$:  $(f, \omega^f)$ such that $f$ is a surjective map.
\end{defi}

\begin{exple} $\ $ \label{Omega}
\begin{itemize}
\item For $\PP=\mathcal{I}$, $\Omega(\mathcal{I})$ is the subcategory of $\mathcal{S}$ having as morphisms the bijections.
\item For $\PP=\C om$, we have $\Omega(\C om)=\Omega$.
\item For $\PP=\A s$,  $\Omega(\A s)$ is the category having as objects the non-commutative sets and as morphisms the surjective maps.
\end{itemize}
\end{exple}

The following proposition is a direct consequence of definitions.
\begin{prop}
A morphism of unitary set-operad $\phi: \PP \to \mathcal{Q}$ gives rise to functors $\mathcal{S}(\phi): \mathcal{S}(\PP) \to \mathcal{S}(\mathcal{Q})$, $\Omega(\phi): \Omega(\PP) \to \Omega(\mathcal{Q})$ and $\Gamma(\phi): \Gamma(\PP) \to \Gamma(\mathcal{Q})$.
\end{prop}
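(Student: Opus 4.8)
The plan is to unwind the definitions and check that the pair of assignments $\underline n \mapsto \underline n$ and $(f,\omega^f) \mapsto (\phi(f),\phi_*(\omega^f))$ is functorial in each of the three cases, where $\phi_*$ denotes the map induced by $\phi$ componentwise on the decorations. First I would make precise what $\mathcal S(\phi)$ does on morphisms: given $(f,\omega^f)\colon \underline n \to \underline m$ in $\mathcal S(\PP)$, we have $\omega^f \in \prod_{y\in\underline m}\PP(|f^{-1}(y)|)$, and since $\phi$ consists of maps $\PP(j)\to\mathcal Q(j)$ we obtain $\phi_*(\omega^f) := (\phi_{|f^{-1}(y)|}(\omega^f_y))_{y\in\underline m} \in \prod_{y\in\underline m}\mathcal Q(|f^{-1}(y)|)$, so we set $\mathcal S(\phi)(f,\omega^f) := (f,\phi_*(\omega^f))\colon \underline n \to \underline m$ in $\mathcal S(\mathcal Q)$. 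On objects $\mathcal S(\phi)$ is the identity.

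The two things to verify are preservation of identities and of composition. For identities: the identity of $\underline n$ in $\mathcal S(\PP)$ is $(Id_{\underline n},(1_\PP,\ldots,1_\PP))$, and since a morphism of operads sends $1_\PP$ to $1_{\mathcal Q}$, its image under $\mathcal S(\phi)$ is $(Id_{\underline n},(1_{\mathcal Q},\ldots,1_{\mathcal Q}))$, the identity of $\underline n$ in $\mathcal S(\mathcal Q)$. For composition, recall that the $i$-th decoration component of $(g,\omega^g)\circ(f,\omega^f)$ is $\omega_i^{g\circ f} = \gamma(\omega_i^g;\omega^f_{j_1},\ldots,\omega^f_{j_s}).\sigma_i$, where the indices $j_1<\cdots<j_s$ and the permutation $\sigma_i$ depend only on the underlying set maps $f$ and $g$, not on the decorations. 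Since $\phi$ is a morphism of operads it commutes with the structure maps $\gamma$ and with the symmetric group actions, so
$$\phi_*\big(\gamma(\omega_i^g;\omega^f_{j_1},\ldots,\omega^f_{j_s}).\sigma_i\big) = \gamma\big(\phi_*(\omega^g_i);\phi_*(\omega^f_{j_1}),\ldots,\phi_*(\omega^f_{j_s})\big).\sigma_i,$$
which is exactly the $i$-th decoration component of $\mathcal S(\phi)(g,\omega^g)\circ\mathcal S(\phi)(f,\omega^f)$. Hence $\mathcal S(\phi)$ respects composition, and it is a functor.

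Finally, for $\Omega(\phi)$ and $\Gamma(\phi)$ one restricts: $\Omega(\PP)$ is by Definition \ref{omegaP} the subcategory of $\mathcal S(\PP)$ on the morphisms $(f,\omega^f)$ with $f$ surjective, and $\mathcal S(\phi)$ preserves the underlying set map $f$, hence preserves surjectivity, so $\mathcal S(\phi)$ restricts to a functor $\Omega(\phi)\colon\Omega(\PP)\to\Omega(\mathcal Q)$. Similarly, $\Gamma(\phi)$ is defined on pointed set maps $f\colon[n]\to[m]$ by the same formula $(f,\omega^f)\mapsto(f,\phi_*(\omega^f))$, with $\omega^f$ now indexed by the nonbasepoint elements of $[m]$; since the composition law in $\Gamma(\PP)$ is given by the same formula as in $\mathcal S(\PP)$, the identical computation shows $\Gamma(\phi)$ is a functor. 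I do not expect any genuine obstacle here: the statement is, as the paper says, a direct consequence of the definitions, and the only point requiring a line of care is that the combinatorial data ($j_1<\cdots<j_s$ and $\sigma_i$) entering the composition formula depend only on the underlying maps, so that $\phi$'s compatibility with $\gamma$ and the $\Sym_j$-actions suffices.
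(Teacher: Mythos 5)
Your proof is correct and follows the only natural route: the paper itself offers no proof, calling the proposition a direct consequence of the definitions, and your verification — that $\phi$ commutes with the $\gamma$-structure maps and the $\Sym_j$-actions while the combinatorial data $j_1<\cdots<j_s$ and $\sigma_i$ in the composition formula depend only on the underlying set maps — is exactly the check the authors leave implicit. (Minor slip: in your opening line you write $(f,\omega^f)\mapsto(\phi(f),\phi_*(\omega^f))$, but $\phi$ does not act on the set map $f$; you correct this immediately afterward to $(f,\phi_*(\omega^f))$, which is the right formula.)
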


For $\phi: \PP \to \mathcal{Q}$ a morphism of set-operad, the categories $\Gamma(\PP)$, $\mathcal{S}(\PP)$, $\Omega(\PP)$, $\Gamma(\mathcal{Q})$, $\mathcal{S}(\mathcal{Q})$, $\Omega(\mathcal{Q})$ are connected by the functors in the following diagram:
\begin{equation}\label{diagram-1}
\xymatrix{\Omega(\PP)  \ar@{^{(}->}[r]^{\iota} \ar[d]_{\Omega(\phi)} &   \mathcal{S}(\PP) \ar[r]^j \ar[d]_{\mathcal{S}(\phi)}&\Gamma(\PP) \ar[d]_{\Gamma(\phi)} \\
\Omega(\mathcal{Q})  \ar@{^{(}->}[r]_{\iota} &   \mathcal{S}(\mathcal{Q}) \ar[r]_j&\Gamma(\mathcal{Q})  } 
\end{equation}
where
\begin{itemize}
\item $\iota$ is the obvious faithful functor;
\item $j$ is the functor which adds a basepoint to a set, that is: $j(\underline{n})=[n]$ and for an arrow $(g, \omega^g) \in Hom_{\mathcal{S}(\PP)}(\underline{n}, \underline{m})$  we define   $j(g, \omega^g)=(g_+, \omega^{g_+})$ where the map $g_+ \in Hom_\Gamma([n], [m])$ is given by  $g_+(0)=0$ and ${g_+}_{\mid \{1, \ldots, n\}}=g$ and  
$$\omega^{g_+}=\omega^g \in  \prod_{x \in [m], x \neq 0}\PP(\mid g_+^{-1}(x) \mid )=  \prod_{x \in \underline{m}} \PP(\mid g_+^{-1}(x) \mid)= \prod_{x \in \underline{m}} \PP( \mid g^{-1}(x) \mid).$$
\end{itemize}

%%%%%%%%%%%%%%%%%

\subsection{Double categories, Mackey and pseudo-Mackey functors}
First we recall the notion of double category due to Ehresmann \cite{Ehres}. We refer the reader to \cite{Fied-Loday} for a complete definition of a double category.
\begin{defi}
A double category $\mathbb{D}$ consists of a set of objects, a set of horizontal arrows $A \to B$, a set of vertical arrows $\vcenter{\xymatrix{A \ar@{-->}[d]\\
C}}
$ and a set of squares:
$$\vcenter{\xymatrix{ A \ar@{-->}[d] \ar[r] &B \ar@{-->}[d]\\
C \ar[r] &D
}} $$ satisfying natural conditions. The objects and the horizontal arrows form a category denoted by $\mathbb{D}^h$. The objects and vertical arrows form a category denoted by $\mathbb{D}^v$.
\end{defi}
The following examples of double category will be useful in the sequel.

\begin{defi} \label{double}
For the category $\mathcal{S}(\PP)$ (resp. $\Gamma(\PP)$, resp. $\Omega(\PP)$) we define the  double category $\mathcal{S}(\PP)_2$  (resp. $\Gamma(\PP)_2$, resp. $\Omega(\PP)_2$)  whose objects are objects of $\mathcal{S}$ (resp. $\Gamma$, resp. $\Omega$), horizontal arrows are morphisms in $\mathcal{S}$ (resp. $\Gamma$, resp. $\Omega$), and vertical arrows are morphisms of $\mathcal{S}(\PP)$ (resp. $\Gamma(\PP)$, resp. $\Omega(\PP)$). Squares are diagrams
$$\mathcal{D}= \quad \vcenter{
\xymatrix{ A \ar@{-->}[d]_ -{(\phi, \omega^{\phi})} \ar[r]^-f &B \ar@{-->}[d]^-{(\psi, \omega^{\psi})}\\
C \ar[r]_-g &D
} }$$ 
where $f$ and $g$ are horizontal maps and ${(\phi, \omega^{\phi})}$ and ${(\psi, \omega^{\psi})}$ are vertical maps of $\mathcal{S}(\PP)_2$  (resp. $\Gamma(\PP)_2$, resp. $\Omega(\PP)_2$) , satisfying the following conditions:
\begin{enumerate}
\item the image of $\mathcal{D}$ in $\mathcal{S}$ is a pullback diagram of sets,
\item for all $c \in C$ the bijection $g_*: \PP( \mid\phi^{-1}(c)\mid ) \to  \PP ( \mid \psi^{-1}(g(c))\mid )$  induced by the bijection $\phi^{-1}(c) \to  \psi^{-1}(g(c))$ (see Remark \ref{remarque-section1}) satisfies 
$$g_*(\omega^{\phi}_{c})=\omega^{\psi}_{g(c)}.$$ 
\end{enumerate}
\end{defi}

\begin{exple}
For $\PP=\A s$, $\mathcal{S}(\PP)_2=\mathcal{S}(\A s)_2$ where $\mathcal{S}(\A s)_2$ is the double category defined in \cite{Pira-PROP}. The condition $(2)$ in the previous definition becomes: for all $c \in C$ the induced map $g_*: \phi^{-1}(c) \to \psi^{-1}(g(c))$ is an isomorphism of ordered sets. 
\end{exple}

The following diagram generalizes the diagram (\ref{diagram-1}):

\begin{equation}\label{diagram-1}
\xymatrix{\Omega(\PP)_2  \ar@{^{(}->}[r]^{\iota} \ar[d] &   \mathcal{S}(\PP)_2 \ar[r]^j \ar[d]&\Gamma(\PP)_2 \ar[d] \\
\Omega(\mathcal{Q})_2  \ar@{^{(}->}[r]_{\iota} &   \mathcal{S}(\mathcal{Q})_2 \ar[r]_j&\Gamma(\mathcal{Q})_2  } 
\end{equation}

Remark that a vertical arrow of $\Gamma(\PP)_2$: $(f, \omega^f) \in Hom_{\Gamma(\PP)}([n], [m])$ is of the form $j(\tilde{f}, \omega^{\tilde{f}})$
if and only if $f^{-1}(0)=0$.

For an operad $\PP$ such that $\PP(1)=\{ 1_{\PP}\}$, $(\mathcal{S}(\PP)_2)^h$ and $(\mathcal{S}(\PP)_2)^v$ have the same class of isomorphisms but, in general, the class of isomorphisms of $(\mathcal{S}(\PP)_2)^v$ is bigger than that of $(\mathcal{S}(\PP)_2)^h$. So we can not apply directly the construction considered in section $5$ in \cite{Pira-PROP}. Nevertheless, we can adapt the construction of the Span category in our situation considering the notion of double isomorphisms in a double category.

Let $\mathbb{D}$ be a double category and $A,B$ be two objects of $\mathbb{D}$. Consider a pair $(f,u)$ where:
\begin{itemize}
\item $f: A \to B$ is a horizontal isomorphism of $\mathbb{D}$;
\item 
$u:\xymatrix{ A \ar@{-->}[r] &B }$ is a vertical isomorphism of 
$\mathbb{D}$;
\end{itemize}
provided with a square $\mathcal{D}$ in $\mathbb{D}$:
$$\mathcal{D}= \quad \vcenter{
\xymatrix{ A \ar@{-->}[d]_ -{u} \ar[r]^-f &B \ar@{-->}[d]^-{Id}\\
B \ar[r]_-{Id} &B.
} }$$
Equivalently, the pair $(f,u)$ is provided with the square:
$$\mathcal{D}^*= \quad \vcenter{
\xymatrix{ A \ar@{-->}[d]_ -{Id} \ar[r]^-{Id} &A \ar@{-->}[d]^-{u}\\
A \ar[r]_-{f} &B
} }$$
since we have:
$$\mathcal{D}= \quad \vcenter{
\xymatrix{ A \ar@{-->}[d]_ -{u} \ar[r]^-{Id} &A \ar@{-->}[d]^-{Id} \ar[r]^-{f} & B \ar@{-->}[d]^-{Id} \\
B \ar[r]_-{f^{-1}} &A \ar[r]_-{f}& B
} }$$
where the left-hand square of $\mathbb{D}$  is the horizontal inverse of $\mathcal{D}^*$ and the right-hand square is the square of $\mathbb{D}$ associated to the horizontal map $f$.

\begin{defi}\cite{Grandis-Pare} Let $\mathbb{D}$ be a double category and $A,B$ be two objects of $\mathbb{D}$. A double isomorphism from $A$ to $B$ is a pair $(f,u)$ where:
\begin{itemize}
\item $f: A \to B$ is a horizontal isomorphism of $\mathbb{D}$;
\item 
$u:\xymatrix{ A \ar@{-->}[r] &B }$ is a vertical isomorphism of 
$\mathbb{D}$.
\end{itemize}
equipped with a square $\mathcal{D}$ (or equivalently with $\mathcal{D}^*$) in $\mathbb{D}$:
$$\mathcal{D}= \quad \vcenter{
\xymatrix{ A \ar@{-->}[d]_ -{u} \ar[r]^-f &B \ar@{-->}[d]^-{Id}\\
B \ar[r]_-{Id} &B.
} } \quad (\mathrm{equivalently}\quad \mathcal{D}^*= \quad \vcenter{
\xymatrix{ A \ar@{-->}[d]_ -{Id} \ar[r]^-{Id} &A \ar@{-->}[d]^-{u}\\
A \ar[r]_-{f} &B
} })$$
\end{defi}

\begin{defi}
Two squares of $\mathbb{D}$
$$
\xymatrix{ A \ar@{-->}[d]_ -{a} \ar[r]^-{\alpha} &B \ar@{-->}[d]^-{b}& \mathrm{and} & A' \ar@{-->}[d]_ -{a'} \ar[r]^-{\alpha'} &B\ar@{-->}[d]^-{b'}\\
C \ar[r]_-{\beta} &D&&C \ar[r]_-{\beta'} &D'
}$$
are isomorphic in $\mathbb{D}$ if there exists a double isomorphism $(f,u)$ from $A'$ to $A$ and a double isomorphism $(f',u')$ from $D$ to $D'$ 

such that $\alpha'=\alpha f$, $a'=au$, $\beta'=f' \beta$ and $b'=u'b$.

\end{defi}

The following lemma which describes double isomorphisms in the double categories $\mathcal{S}(\PP)_2$, $\Omega(\PP)_2$ and $\Gamma(\PP)_2$ is a straightforward consequence of the definition of squares in these categories.
\begin{lm} \label{dble-iso-MT}
Double isomorphisms in $\mathcal{S}(\PP)_2$ (resp. $\Omega(\PP)_2$ resp. $\Gamma(\PP)_2$) are pairs $(f,(f,1_{\PP}^{\times \mid A\mid}))$ where $f:A\to A$ is a bijection. 
\end{lm}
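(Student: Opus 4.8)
The plan is to unwind the definitions and check that the square $\mathcal D$ (equivalently $\mathcal D^*$) forces the vertical isomorphism $u$ to coincide with $(f,1_\PP^{\times\mid A\mid})$. First I would recall that a double isomorphism from $A$ to $B$ consists of a horizontal isomorphism $f\colon A\to B$ and a vertical isomorphism $u\colon A\dashrightarrow B$ fitting into the square $\mathcal D$ above. In each of the three double categories the horizontal isomorphisms are exactly the bijections $f\colon A\to B$ (coming from isomorphisms in $\mathcal S$, $\Omega$ or $\Gamma$), and the vertical isomorphisms are the invertible morphisms of $\mathcal S(\PP)$ (resp.\ $\Omega(\PP)$, $\Gamma(\PP)$); since $\PP$ is unitary one sees that an invertible morphism of $\mathcal S(\PP)$ must have underlying set map a bijection and decoration lying in the units $\PP(1)$, but in any case the square condition will pin down $u$ precisely, so I need not analyse invertibility separately.

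Next I would write $u=(\phi,\omega^\phi)\colon A\dashrightarrow B$ and impose the two conditions defining a square $\mathcal D$ with top $f$, right $\mathrm{Id}_B=(\mathrm{Id}_B,1_\PP^{\times\mid B\mid})$, bottom $\mathrm{Id}_B$ and left $u$. Condition (1) says the underlying diagram in $\mathcal S$
$$\vcenter{\xymatrix{ A \ar[d]_-{\phi} \ar[r]^-f &B \ar[d]^-{\mathrm{Id}}\\ B \ar[r]_-{\mathrm{Id}} &B}}$$
is a pullback of sets; since the right and bottom maps are identities, this forces $\phi=f$ as set maps (and in particular $A$ and $B$ have the same cardinality, $f$ being a bijection). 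So without loss of generality we may take $A=B$ and $f\colon A\to A$ a bijection with $\phi=f$. Condition (2) then says: for every $c\in A$, writing $\phi^{-1}(c)=f^{-1}(c)$ and $\psi=\mathrm{Id}_A$ so $\psi^{-1}(f(c))=\{f(c)\}$, the induced bijection $f^{-1}(c)\to\{f(c)\}$ of one-element sets gives $f_*\colon\PP(1)\to\PP(1)$ which is the identity (by Remark on the $\hat h$ construction, a bijection of singletons induces the trivial map on $\PP(1)$), and the condition reads $f_*(\omega^\phi_c)=\omega^{\mathrm{Id}}_{f(c)}=1_\PP$. Hence $\omega^\phi_c=1_\PP$ for all $c$, i.e.\ $\omega^\phi=1_\PP^{\times\mid A\mid}$ and $u=(f,1_\PP^{\times\mid A\mid})$.

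Finally I would observe the converse: for any bijection $f\colon A\to A$, the pair $\bigl(f,(f,1_\PP^{\times\mid A\mid})\bigr)$ genuinely is a double isomorphism, since $(f,1_\PP^{\times\mid A\mid})$ is invertible in $\mathcal S(\PP)$ (resp.\ $\Omega(\PP)$, $\Gamma(\PP)$) with inverse $(f^{-1},1_\PP^{\times\mid A\mid})$ by the composition formula in Definition \ref{May-T} together with the unitality of $\gamma$, and the square $\mathcal D$ with these data satisfies (1) trivially (it is a pullback) and (2) because both sides are $1_\PP$. The same argument applies verbatim in $\Omega(\PP)_2$ (bijections are surjections, so they live in $\Omega(\PP)$) and in $\Gamma(\PP)_2$ (a pointed bijection $[n]\to[n]$ has $f^{-1}(0)=\{0\}$ and trivial decoration). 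I do not expect any serious obstacle here: the only mild subtlety is checking that the pullback condition (1) really does collapse $\phi$ onto $f$ and equalise the cardinalities; everything else is a direct substitution into the two conditions of Definition \ref{double}, using that $\PP(1)$-components of decorations of identities are $1_\PP$ and that a bijection between singletons induces the identity on $\PP(1)$.
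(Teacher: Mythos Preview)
Your proof is correct and follows essentially the same approach as the paper: use the pullback condition (1) on the square $\mathcal{D}$ to force $\phi=f$ (and $A=B$ in the skeleton), then use condition (2) comparing decorations along the identity to force $\omega^\phi=1_\PP^{\times|A|}$. You additionally spell out the converse direction (that every such pair really is a double isomorphism), which the paper leaves implicit; this is a welcome clarification but not a different argument.
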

\begin{proof}
Let $(f,(g,\omega))$ be a double isomorphism in  $\mathcal{S}(\PP)_2$ (resp. $\Omega(\PP)_2$ resp. $\Gamma(\PP)_2$) from $A$ to $A'$ provided with the square:
$$\mathcal{D}= \quad \vcenter{
\xymatrix{ A \ar@{-->}[d]_ -{(g,\omega)} \ar[r]^-f &A' \ar@{-->}[d]^-{(Id,1_{\PP}^{\times \mid A'\mid})}\\
A' \ar[r]_-{Id} &A'.
} }$$
By definition, $f$ is an isomorphism in  $\mathcal{S}$ (resp. $\Omega$ resp. $\Gamma$) from $A$ to $A'$ so $f$ is a bijection  and $A=A'$ since we consider skeleton of categories. Since the image of $\mathcal{D}$ in $\mathcal{S}$ (resp. $\Omega$ resp. $\Gamma$) is a pullback diagram of sets we have $g=f$. Condition $(2)$ in the definition of $\mathcal{S}(\PP)_2$ (resp. $\Omega(\PP)_2$ resp. $\Gamma(\PP)_2$) implies that $\omega=1_{\PP}^{\times \mid A\mid}$.
\end{proof}

In order to define Mackey functors from $\mathcal{S}(\PP)_2$  and pseudo-Mackey functors from $\Omega(\PP)_2$ we need the following lemma.
\begin{lm} \label{uniq-square}
Let $g: C \to D$ be a horizontal arrow of $\mathcal{S}(\PP)_2$ (resp. $\Omega(\PP)_2$) and $(\psi, \omega^{\psi}): B \to D$ be a vertical arrow of $\mathcal{S}(\PP)_2$ (resp. $\Omega(\PP)_2$), there exists a unique square in $\mathcal{S}(\PP)_2$ (resp. $\Omega(\PP)_2$) up to isomorphism, of the form:
$$\mathcal{D}= \quad \vcenter{
\xymatrix{ A \ar@{-->}[d]_ -{(\phi, \omega^{\phi})} \ar[r]^-f &B \ar@{-->}[d]^-{(\psi, \omega^{\psi})}\\
C \ar[r]_-g &D.
} }$$ 
\end{lm}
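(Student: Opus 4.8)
The plan is to construct the square explicitly as a pullback in $\mathcal{S}$ equipped with the unique compatible operad decoration, then verify uniqueness up to isomorphism using Lemma \ref{dble-iso-MT}. First I would set $A = C \times_D B$, the pullback of sets along $g$ and $\psi$, with $\phi \co A \to C$ and $f \co A \to B$ the two projections; by construction condition (1) of Definition \ref{double} holds. In the surjective case (for $\Omega(\PP)_2$) one checks that $f$ is surjective because $\psi$ is surjective and $g$ is surjective, so $A$ indeed lies in $\Omega$. The key point is that for each $c \in C$, the projection $f$ restricts to a bijection $\phi^{-1}(c) \xrightarrow{\sim} \psi^{-1}(g(c))$ (this is the elementary description of fibers of a pullback); hence there is exactly one way to define $\omega^\phi_c \in \PP(|\phi^{-1}(c)|)$ so that condition (2) holds, namely $\omega^\phi_c = (g_*)^{-1}(\omega^\psi_{g(c)})$, where $g_*$ is the bijection on $\PP$-sets induced by that bijection of fibers as in Remark \ref{remarque-section1}. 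This produces a square $\mathcal{D}$; one should also record that $f$ is a horizontal arrow and $(\phi,\omega^\phi)$ a vertical arrow, which is immediate.

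Next I would address uniqueness up to isomorphism. Suppose $\mathcal{D}'$ is another square with the same right vertical arrow $(\psi,\omega^\psi)$ and bottom horizontal arrow $g$, with top-left object $A'$, top horizontal arrow $f' \co A' \to B$ and left vertical arrow $(\phi',\omega^{\phi'}) \co A' \to C$. Since the image of $\mathcal{D}'$ in $\mathcal{S}$ is a pullback diagram of sets and pullbacks are unique up to canonical isomorphism, there is a unique bijection $h \co A' \to A$ with $\phi h = \phi'$ and $f h = f'$. The pair $(h, (h, 1_\PP^{\times |A'|}))$ is a double isomorphism from $A'$ to $A$ by Lemma \ref{dble-iso-MT}, and $(Id_D, (Id_D, 1_\PP^{\times |D|}))$ is a (trivial) double isomorphism from $D$ to $D$. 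It remains to see these identify $\mathcal{D}'$ with $\mathcal{D}$ in the sense of the preceding definition: the conditions $\alpha' = \alpha f$, $a' = a u$, $\beta' = f'\beta$, $b' = u'b$ translate here into $f' = f h$, $(\phi',\omega^{\phi'}) = (\phi,\omega^\phi)\circ(h, 1_\PP^{\times|A'|})$, $g = Id_D \cdot g$, $(\psi,\omega^\psi) = (Id_D,\ldots)\circ(\psi,\omega^\psi)$. The first and third hold by definition of $h$ and trivially; the last is immediate; the second reduces to checking the $\omega$-components agree, i.e. $\omega^{\phi'}_{c}$ equals the component of $\omega^\phi$ transported along $h$, which follows because both are forced by condition (2) applied with the same $g$ and $(\psi,\omega^\psi)$.

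I expect the main obstacle to be purely bookkeeping: tracking how the operad elements $\omega^\phi$ behave under the induced bijections on fibers and matching them precisely against the rather elaborate definition of "isomorphic squares" (which involves composing vertical arrows with double isomorphisms, hence invoking the composition formula of Definition \ref{May-T}). One must be careful that composition with a double isomorphism $(h,(h,1_\PP^{\times|A'|}))$ has the expected effect — it simply relabels fibers and multiplies decorations by $1_\PP$, which by the unital axiom of the operad leaves them unchanged up to the relabelling — so that the identity $(\phi',\omega^{\phi'}) = (\phi,\omega^\phi)\circ(h,1_\PP^{\times|A'|})$ genuinely captures the compatibility of the two $\omega$'s. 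Once this verification is set up carefully, everything else is formal. Note that for $\Omega(\PP)_2$ the only extra check beyond the $\mathcal{S}(\PP)_2$ case is that $f$ (and $h$) land in $\Omega$, i.e. are surjective, which as noted follows from surjectivity of $\psi$ and $g$.
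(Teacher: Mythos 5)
Your proposal is correct and follows essentially the same route as the paper: form the pullback in $\mathcal{S}$, observe that condition (2) of Definition \ref{double} forces $\omega^\phi_c=(g_*)^{-1}(\omega^\psi_{g(c)})$ so the decoration is determined uniquely, and then note that the canonical bijection between any two pullbacks yields exactly the double isomorphism of Lemma \ref{dble-iso-MT} identifying the squares. The only small gap is in the $\Omega(\PP)_2$ case: you must check that \emph{both} $f$ and $\phi$ are surjective (surjectivity of $f$ follows from that of $g$, and surjectivity of $\phi$ from that of $\psi$), whereas you only address $f$.
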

\begin{proof}
For $\mathcal{S}(\PP)_2$,  since the image of $\mathcal{D}$ in $\mathcal{S}$ is a pullback diagram of sets, $A$ is the pullback of sets
$$\vcenter{
\xymatrix{ A \ar[d]_ -{\phi} \ar[r]^-f &B \ar[d]^-{\psi}\\
C \ar[r]_-g &D
} }$$
defined up to a bijection $\alpha: A' \to A$.
Then we lift the set map $\phi$ into $\mathcal{S}(\PP)_2$ according to property $(2)$. Indeed, $\omega^{\phi} \in \prod_{c \in C} \PP( \mid \phi^{-1}(c) \mid)$ and for $c \in C$, $\omega_c^{\phi} \in \PP(\mid \phi^{-1}(c) \mid)$ such that $g_*(\omega^{\phi}_c)=\omega_{g(c)}^\psi$, so the lifting $(\phi, \omega^\phi)$ of $\phi$ in $\mathcal{S}(\PP)_2$ exists and is unique. The square 
$$\vcenter{
\xymatrix{ A \ar[d]_ -{(\phi, \omega^{\phi})} \ar[r]^-f &B \ar[d]^-{(\psi, \omega^{\psi})}\\
C \ar[r]_-g &D
} }$$
is defined up to the double isomorphism $(\alpha, (\alpha, 1_{\PP}^{\times \mid A'\mid}))$ from $A'$ to $A$.

For $\Omega(\PP)_2$, in the pullback diagram of sets, since $g$ and $\psi$ are surjective, $f$ and $\phi$ are also surjective. We lift the set map $\phi$ into $\Omega(\PP)_2$ as previously. 
\end{proof}

\begin{rem}
If we replace the category $\mathcal{S}(\PP)_2$ by $\Gamma(\PP)_2$ in Lemma \ref{uniq-square}, the statement is no more true. If fact, for $g: C \to D$ a horizontal arrow of $\Gamma(\PP)_2$ and $(\psi, \omega^{\psi}): B \to D$ a vertical arrow of $\Gamma(\PP)_2$, for $c \in C \setminus\{ 0\}$ such that $g(c)=0$, there is not a unique way to define $\omega_c^{\phi}$. However we have the following analogue of Lemma \ref{uniq-square} for $\Gamma(\PP)_2$.
\end{rem}

\begin{lm} 
Let $g: C \to D$ be a horizontal arrow of $\Gamma(\PP)_2$ such that $g^{-1}(0)=0$ and $(\psi, \omega^{\psi}): B \to D$ be a vertical arrow of $\Gamma(\PP)_2$ there exists a unique square in $\Gamma(\PP)_2$  up to isomorphism, of the form:
$$\mathcal{D}= \quad \vcenter{
\xymatrix{ A \ar@{-->}[d]_ -{(\phi, \omega^{\phi})} \ar[r]^-f &B \ar@{-->}[d]^-{(\psi, \omega^{\psi})}\\
C \ar[r]_-g &D.
} }$$ 
\end{lm}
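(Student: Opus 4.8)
The plan is to follow the proof of Lemma~\ref{uniq-square} for $\mathcal{S}(\PP)_2$, watching where the extra hypothesis $g^{-1}(0)=0$ is used. First I would invoke condition~(1) in the definition of a square of $\Gamma(\PP)_2$: it forces the underlying diagram in $\Gamma$ to be a pullback of pointed sets, so that $A$, together with the horizontal map $f\colon A\to B$ and the underlying pointed map $\phi\colon A\to C$ of the sought vertical arrow, must be the pullback of $g$ and $\psi$ in $\Gamma$. Such a pullback exists and is determined up to a pointed bijection $\alpha\colon A'\to A$. This reduces the statement to the following claim: the pointed map $\phi$ admits a unique decoration $\omega^{\phi}\in\prod_{c\in C\setminus\{0\}}\PP(\mid\phi^{-1}(c)\mid)$ for which condition~(2) holds.

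To prove this claim I would fix $c\in C\setminus\{0\}$. By the hypothesis $g^{-1}(0)=0$ we have $g(c)\neq 0$, so the component $\omega^{\psi}_{g(c)}\in\PP(\mid\psi^{-1}(g(c))\mid)$ of the given vertical arrow is defined. Since $A$ is the pullback, the restriction of $f$ gives a bijection $\phi^{-1}(c)\to\psi^{-1}(g(c))$, $(c,b)\mapsto b$, which by Remark~\ref{remarque-section1} induces a bijection $g_*\colon\PP(\mid\phi^{-1}(c)\mid)\to\PP(\mid\psi^{-1}(g(c))\mid)$. Condition~(2) is exactly the equation $g_*(\omega^{\phi}_{c})=\omega^{\psi}_{g(c)}$, whence necessarily $\omega^{\phi}_{c}=g_*^{-1}(\omega^{\psi}_{g(c)})$; conversely this choice satisfies~(2). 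Running over all $c\neq 0$ defines the unique $\omega^{\phi}$.

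Finally I would assemble the pieces: taking $A$ to be the pointed-set pullback equipped with the decoration just built produces a square as desired, and any two such squares differ only through the choice of pullback, hence through the double isomorphism $(\alpha,(\alpha,1_{\PP}^{\times\mid A'\mid}))$ of Lemma~\ref{dble-iso-MT} from $A'$ to $A$; this is precisely uniqueness up to isomorphism of squares in the sense defined above.

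I do not anticipate a real obstacle. The only delicate point — and the exact reason the statement fails in general, as recorded in the Remark preceding it — is that for $c\neq 0$ with $g(c)=0$ condition~(2) puts no constraint on $\omega^{\phi}_{c}$; the hypothesis $g^{-1}(0)=0$ is what excludes this case and makes every component of $\omega^{\phi}$ forced. The one routine thing to verify carefully is that, because decorations are indexed only by nonzero elements, the basepoint contributes nothing and the pointed-set pullback really supplies all the data needed.
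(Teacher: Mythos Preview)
Your proposal is correct and follows exactly the same approach as the paper, which simply remarks that since $g^{-1}(0)=0$ the lifting $(\phi,\omega^{\phi})$ of $\phi$ into $\Gamma(\PP)_2$ exists and is unique. You have spelled out in detail what the paper leaves implicit, and your identification of the role of the hypothesis $g^{-1}(0)=0$ is precisely the point.
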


\begin{proof}
Since $g^{-1}(0)=0$ the lifting $(\phi, \omega^{\phi})$ of $\phi$ into $\Gamma(\PP)_2$ exists and is unique.
\end{proof}

\begin{defi} $\ $ Let $\A$ be a category.
\begin{itemize}
\item A Janus functor $M$ from a double category $\mathbb{D}$ to $\A$ is:
\begin{enumerate}
\item a covariant functor $M_*: \mathbb{D}^h \to \A$
\item a contravariant functor $M^*:  (\mathbb{D}^v)^{op} \to \A$
\end{enumerate}
such that, for each object $A \in \mathbb{D}$ we have $M_*(A)=M^*(A)=M(A)$.
\item A Mackey functor $M=(M_*, M^*)$ from a double category $\mathbb{D}$ to $\A$ is a Janus functor $M$ from  $\mathbb{D}$ to $\A$ such that for each square in $\mathbb{D}$, 
$$\vcenter{
\xymatrix{ A \ar@{-->}[d]_ -\phi \ar[r]^-f &B \ar@{-->}[d]^-\psi\\
C \ar[r]_-g &D
} }$$ we have:
$$M^*(\psi) M_*(g)=M_*(f) M^*(\phi).$$
The category of Mackey functors from $\mathbb{D}$ to $\A$ is denoted by $Mack(\mathbb{D}, \A)$.
\end{itemize}
\end{defi}

We give the construction of the category of spans of a double category: %given in  \cite{Fied-Loday}. 
\begin{defi} \label{span}
Let $\mathbb{D}$ be a double category, such that for $f \in Hom_{\mathbb{D}^h}(X,B)$ and $\psi \in Hom_{\mathbb{D}^v}(Y,B)$ there exists a unique square of $\mathbb{D}$, up to isomorphism, of the form 
$$ \mathcal{D}=\quad \vcenter{
\xymatrix{ Z \ar@{-->}[d]_ -{\phi_1} \ar[r]^-{f_1} &Y \ar@{-->}[d]^-\psi\\
X \ar[r]_-f &B.
} }$$  The category of spans of $\mathbb{D}$, denoted by $Span(\mathbb{D})$ is defined in the following way:
\begin{enumerate}
\item the objects of $Span(\mathbb{D})$ are those of $\mathbb{D}$;
\item for $A$ and $B$ objects of $\mathbb{D}$, $Hom_{Span(\mathbb{D})}$ is the set of equivalence classes of diagrams 
$$\vcenter{\xymatrix{ X \ar@{-->}[d]_ -\phi \ar[r]^-f &B \\A }}$$
 where $\phi \in Hom_{\mathbb{D}^v}(X,A)$ and $f \in Hom_{\mathbb{D}^h}(X,B)$, for the equivalence relation which identifies the two diagrams  $(\xymatrix{ A &X \ar@{-->}[l]_ -\phi \ar[r]^-f &B })$ and $(\xymatrix{ A &X' \ar@{-->}[l]_ -{\phi'} \ar[r]^-{f'} &B })$ if there exists a double isomorphism $(h,u)$ from $X'$ to $X$ in $\mathbb{D}$ such that:
 $$(\xymatrix{ A &X' \ar@{-->}[l]_ -{\phi'} \ar[r]^-{f'} &B }) =(\xymatrix{ A &X' \ar@{-->}[l]_ -{\phi u} \ar[r]^-{f h} &B }).$$
We denote by $[\xymatrix{ A &X \ar@{-->}[l]_ -\phi \ar[r]^-f &B }]$ the map of $Hom_{Span(\mathbb{D})}(A,B)$ represented by the diagram $\xymatrix{ A &X \ar@{-->}[l]_ -\phi \ar[r]^-f &B }$.
\item the composition of  $[\xymatrix{ A &X \ar@{-->}[l]_ -\phi \ar[r]^-f &B }]$ and $[\xymatrix{ B &Y\ar@{-->}[l]_ -\psi \ar[r]^-g &C }]$ is the class of the diagram $\xymatrix{ A &Z \ar@{-->}[l]_ -{\phi \phi_1} \ar[r]^-{g f_1} &C }$ where 
$$ \mathcal{D}=\quad  \vcenter{
\xymatrix{ Z \ar@{-->}[d]_ -{\phi_1} \ar[r]^-{f_1} &Y \ar@{-->}[d]^-\psi\\
X \ar[r]_-f &B
} }$$
is a square in $\mathbb{D}$.
\end{enumerate}
\end{defi}
The following lemma is a generalization of a result of Lindner \cite{Lindner} in the context of classical Mackey functors.
\begin{lm} \label{Lindner}
Let $\mathbb{D}$ be a double category satisfying the hypothesis of Definition \ref{span}, there is a natural equivalence:
$$Mack(\mathbb{D},\A) \simeq Func(Span(\mathbb{D}), \A).$$
\end{lm}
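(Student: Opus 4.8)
The plan is to exhibit explicit functors in both directions and check they are mutually quasi-inverse, following Lindner's original argument but taking care of the fact that our notion of equivalence of spans uses \emph{double} isomorphisms (Definition of $Span(\mathbb{D})$) rather than plain horizontal or vertical isomorphisms. First I would construct, from a Mackey functor $M=(M_*,M^*)$ on $\mathbb{D}$, a functor $\widetilde{M}\colon Span(\mathbb{D})\to\A$ by setting $\widetilde{M}(A)=M(A)$ on objects and, on the morphism represented by $\xymatrix{ A &X \ar@{-->}[l]_ -\phi \ar[r]^-f &B }$, by $\widetilde{M}([\phi,f])=M_*(f)\circ M^*(\phi)$. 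The first point to verify is that this is well defined on equivalence classes: if $(h,u)$ is a double isomorphism from $X'$ to $X$ with $\phi'=\phi u$ and $f'=fh$, then $M_*(f')M^*(\phi')=M_*(f)M_*(h)M^*(u)M^*(\phi)$, and by Lemma \ref{dble-iso-MT}-type reasoning — or rather directly from the square $\mathcal{D}$ witnessing the double isomorphism and the Mackey relation applied to it — one gets $M_*(h)M^*(u)=\mathrm{id}$, so the two values agree. Functoriality on identities is immediate ($\mathrm{id}_A$ is represented by $\xymatrix{ A &A \ar@{-->}[l]_ -{Id} \ar[r]^-{Id} &A }$), and functoriality on composites is exactly the Mackey square condition: composing $[\phi,f]$ and $[\psi,g]$ gives $[\phi\phi_1, gf_1]$ for the distinguished square $\mathcal{D}$ with sides $f_1,\psi,f,\phi_1$, and $M_*(gf_1)M^*(\phi\phi_1)=M_*(g)M_*(f_1)M^*(\phi_1)M^*(\phi)=M_*(g)M^*(\psi)M_*(f)M^*(\phi)$ using precisely $M^*(\psi)M_*(f)=M_*(f_1)M^*(\phi_1)$.

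Conversely, from a functor $G\colon Span(\mathbb{D})\to\A$ I would build a Mackey functor $M^G$: there are canonical functors $\iota_h\colon \mathbb{D}^h\to Span(\mathbb{D})$ sending $f\colon A\to B$ to $[\xymatrix{ A &A \ar@{-->}[l]_ -{Id} \ar[r]^-f &B }]$ and $\iota_v\colon (\mathbb{D}^v)^{op}\to Span(\mathbb{D})$ sending $\phi\colon X\to A$ to $[\xymatrix{ A &X \ar@{-->}[l]_ -\phi \ar[r]^-{Id} &X }]$ (one checks these respect composition, the second one contravariantly, again via the distinguished squares). Then set $M^G_*=G\circ\iota_h$ and $(M^G)^*=G\circ\iota_v$; they agree on objects. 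The Mackey relation for a square with sides $f,\psi,g,\phi$ follows because in $Span(\mathbb{D})$ one has the factorization $[\psi,Id]\circ[Id,g]=[\phi,f]=[Id,f]\circ[\phi,Id]$ — the middle equality being the content of the distinguished-square description of composition — so applying $G$ yields $(M^G)^*(\psi)M^G_*(g)=M^G_*(f)(M^G)^*(\phi)$.

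Finally I would check that $M\mapsto\widetilde{M}$ and $G\mapsto M^G$ are inverse up to natural isomorphism, and that both assignments are functorial in $M$ resp. $G$ (a morphism of Mackey functors is a natural transformation commuting with both $M_*$ and $M^*$; a natural transformation of functors on $Span(\mathbb{D})$ restricts along $\iota_h,\iota_v$), giving the asserted equivalence of categories. The key observation making everything fit is that \emph{every} span $\xymatrix{ A &X \ar@{-->}[l]_ -\phi \ar[r]^-f &B }$ factors canonically as $\iota_h(f)\circ\iota_v(\phi)$ in $Span(\mathbb{D})$, so a functor on $Span(\mathbb{D})$ is determined by its restrictions to horizontal and vertical arrows together with the Mackey relation, and conversely. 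I expect the main obstacle to be purely bookkeeping: verifying well-definedness on equivalence classes of spans and checking that $\iota_v$ is a (contravariant) functor, both of which require unwinding Definition \ref{span} and the notion of double isomorphism from Lemma \ref{dble-iso-MT}, and making sure the distinguished square used to define composition is compatible with the one used in the Mackey condition. None of this is deep, but it is where all the hypotheses (uniqueness of the square up to double isomorphism) are actually used.
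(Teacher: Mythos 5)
The paper does not actually give a proof of this lemma: it only remarks that the statement is a generalization of Lindner's classical result and moves on. So there is nothing internal to compare against, but your proof is correct and is exactly what a worked-out version of the Lindner argument looks like in this double-categorical setting. All the key checkpoints are in order: well-definedness of $\widetilde M$ on equivalence classes of spans is obtained by applying the Mackey relation to the square $\mathcal D$ packaged with a double isomorphism $(h,u)$, which gives $M_*(h)M^*(u)=\mathrm{id}$ (this is the right move — you do not need the explicit description of double isomorphisms from Lemma \ref{dble-iso-MT}, which only holds for the three concrete double categories); functoriality of $\widetilde M$ is precisely the Mackey square condition; and in the other direction, the Mackey relation for $(M^G_*,(M^G)^*)$ follows from the two factorizations $[\psi,\mathrm{Id}]\circ[\mathrm{Id},g]=[\phi,f]=[\mathrm{Id},f]\circ[\phi,\mathrm{Id}]$ in $Span(\mathbb D)$, the first of which uses the uniqueness clause of Definition \ref{span} to identify the composition square with the given Mackey square. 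The only hypothesis you use implicitly and should flag in a final write-up is the existence of identity squares $1^v_f$ and $1^h_\phi$ in the double category $\mathbb D$ (part of the double-category axioms, needed to see that $\iota_h$, $\iota_v$ are functorial and that every span factors as $\iota_h(f)\circ\iota_v(\phi)$); with that noted, the argument is complete.
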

According to Lemma \ref{uniq-square} the category $Span(\mathcal{S}(\PP)_2)$ exists. We have the following result.
\begin{prop} \label{coprod}
The disjoint union $\amalg$ is a coproduct in $Span(\mathcal{S}(\PP)_2)$.
\end{prop}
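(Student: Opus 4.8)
The plan is to exhibit, for each pair of objects $X, Y$ of $Span(\mathcal{S}(\PP)_2)$, the disjoint union $X \amalg Y$ together with the two structure maps, and then check the universal property of the coproduct. The natural candidates for the coproduct injections are the spans
$$\iota_X = [\xymatrix{ X & X \ar@{-->}[l]_-{(Id, 1_\PP^{\times |X|})} \ar[r]^-{in_X} & X \amalg Y }], \qquad
\iota_Y = [\xymatrix{ Y & Y \ar@{-->}[l]_-{(Id, 1_\PP^{\times |Y|})} \ar[r]^-{in_Y} & X \amalg Y }],$$
where $in_X, in_Y$ are the canonical inclusions in $\mathcal{S}$; that is, both injections have identity vertical leg and a horizontal inclusion leg. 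Given a third object $T$ and two spans $[\xymatrix{ X & U \ar@{-->}[l]_-{\phi} \ar[r]^-{f} & T }]$ and $[\xymatrix{ Y & V \ar@{-->}[l]_-{\psi} \ar[r]^-{g} & T }]$, the induced map $X \amalg Y \to T$ should be represented by
$$[\xymatrix{ X \amalg Y & U \amalg V \ar@{-->}[l]_-{\phi \amalg \psi} \ar[r]^-{f \sqcup g} & T }],$$
using that $\amalg$ gives a symmetric monoidal structure on $\mathcal{S}(\PP)_2$ (so $\phi \amalg \psi$ is a well-defined vertical arrow, by Lemma \ref{1.6}) and that $f \sqcup g$ is the copairing in $\mathcal{S}$.

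The key steps, in order, are as follows. First I would verify that precomposing the candidate map $X \amalg Y \to T$ with $\iota_X$ recovers the span $(\phi, f)$, and similarly for $\iota_Y$: this is a composition of spans in the sense of Definition \ref{span}, so it requires forming the unique-up-to-isomorphism square in $\mathcal{S}(\PP)_2$ over the cospan given by $in_X \colon X \to X\amalg Y$ (horizontal) and $\phi \amalg \psi \colon U \amalg V \to X \amalg Y$ (vertical). By condition (1) in Definition \ref{double} the top-left corner of that square is the pullback of sets, which is exactly $U$ (the preimage of $X$ under $\phi\amalg\psi$), with horizontal leg $Id_U$ and vertical leg $\phi$; Lemma \ref{1.6} guarantees the decorated lift is $(\phi, \omega^\phi)$ as desired. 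Hence the composite span is $[\xymatrix{X & U \ar@{-->}[l]_-{\phi} \ar[r]^-{f} & T}]$, as needed. Second, I would prove uniqueness: if $[\xymatrix{ X\amalg Y & W \ar@{-->}[l]_-{\theta} \ar[r]^-{h} & T }]$ is any span with $\iota_X$-precomposition equal to $(\phi,f)$ and $\iota_Y$-precomposition equal to $(\psi,g)$, then forming those two composites as above identifies $W$ (up to double isomorphism) with the disjoint union of $\theta^{-1}(X)$ and $\theta^{-1}(Y)$, which is $W$ itself since $\{X,Y\}$ partitions $X\amalg Y$; tracking the decorations through Lemma \ref{1.6} and tracking $h$ through the copairing shows the span equals $\phi\amalg\psi$ over $f\sqcup g$. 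Finally I would note that $\underline{0}$ is the initial object: for each $X$ there is a unique span $[\xymatrix{\underline 0 & \underline 0 \ar@{-->}[l] \ar[r] & X}]$ since $\mathcal{S}(\PP)(\underline 0, \underline 0)$ has a single element and $\mathcal{S}(\underline 0, X)$ likewise.

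The main obstacle I anticipate is bookkeeping the operad decorations through the span composition, i.e. making sure that the unique lift furnished by Lemma \ref{uniq-square} in the composite $\iota_X \circ (\text{induced map})$ really is the original decorated vertical arrow $(\phi,\omega^\phi)$ and not merely a decoration that agrees with it after applying $g_*$. This is exactly where Lemma \ref{1.6} is essential: over the cospan whose horizontal leg is the inclusion $in_X$, the pullback square has $g = in_X$ injective, so condition (2) of Definition \ref{double} pins the decoration on the lift down to $\omega^\phi$ componentwise with no ambiguity, and the decomposition $\omega^{\phi\amalg\psi} = (\omega^\phi, \omega^\psi)$ is precisely the content of Lemma \ref{1.6}. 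Everything else — associativity, well-definedness on equivalence classes — is routine given Lemma \ref{Lindner} and the symmetric monoidal structure on $\mathcal{S}(\PP)_2$, so I would keep those verifications brief.
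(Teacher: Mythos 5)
Your proposal follows essentially the same route as the paper's proof: the same injections (identity vertical leg, inclusion horizontal leg), the same candidate induced span with vertical leg $\phi\amalg\psi$ and horizontal leg $f\sqcup g$, and the same reliance on Lemma \ref{1.6} to track the decorations, both in verifying the triangle identities and in the uniqueness argument (the paper cites Lemma \ref{1.6} only for uniqueness and leaves the rest implicit). One minor notational slip: the horizontal arrow out of the pullback $U$ in the square witnessing the composite with $\iota_X$ is the inclusion $U\hookrightarrow U\amalg V$, not $Id_U$; the composite horizontal arrow of the resulting span is then $f\sqcup g$ restricted to $U$, i.e.\ $f$, so your conclusion is nonetheless correct.
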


\begin{proof}
Let $S_1$, $S_2$ and $T$ be objects of $\mathcal{S}(\PP)_2$ and  $\beta=[\xymatrix{ S_1 &A \ar@{-->}[l]_ -{(f, \omega^f)} \ar[r]^-{f'} &T]}$, $\gamma=[\xymatrix{ S_2 &B \ar@{-->}[l]_ -{(g, \omega^g)} \ar[r]^-{g'} &T]}$,  $I_1=[\xymatrix{ S_1 &S_1 \ar@{-->}[l]_ -{(Id, 1_{\PP}^{\times \mid S_1 \mid})} \ar[r]^-{i_1} & S_1 \amalg S_2 ]}$ and $I_2=[\xymatrix{ S_2 &S_2 \ar@{-->}[l]_ -{(Id,1_{\PP}^{\times \mid S_2 \mid})} \ar[r]^-{i_2} & S_1 \amalg S_2 ]}$ morphisms of $Span(\mathcal{S}(\PP)_2)$.

For $\alpha=[\xymatrix{ S_1\amalg S_2 &A \amalg B \ar@{-->}[l]_ -{(f, \omega^{f}) \amalg (g, \omega^{g})} \ar[r]^-{f' \amalg g'} & T ]}$ we have:
$$\alpha \circ I_1=\beta \mathrm{\quad and \quad} \alpha \circ I_2=\gamma.$$
The uniqueness of $\alpha$ is obtained according to Lemma \ref{1.6}.
\end{proof} 

Although the notion of Mackey functors from $\Omega(\PP)_2$ is well defined, we need to introduce the notion of pseudo-Mackey functors from $\Omega(\PP)_2$ in order to obtain a description of functors from finitely generated free $\PP$-algebras to $Ab$. Let 
$$\mathcal{D}=\quad \vcenter{
\xymatrix{ A \ar@{-->}[d]_ -{(\phi, \omega^{\phi})} \ar[r]^-f &B \ar@{-->}[d]^-{(\psi, \omega^{\psi})}\\
C \ar[r]_-g &D
} }$$ be a square in $\Omega(\PP)_2$. (For fixed morphisms $g$ and $(\psi, \omega^{\psi})$, $\mathcal{D}$ is unique up to isomorphism according to Lemma \ref{uniq-square}).  For any subset $A'$ of $A$ we let $f_{A'}$ and $(\phi, \omega^{\phi})_{A'}$ be the restrictions of $f$ and $(\phi, \omega^{\phi})$ on $A'$ (i.e., if $i: A' \to A$ is the inclusion, we have $(i, \omega^i) \in Hom_{\mathcal{S}(\PP)}(A',A)$ where $\omega^i=(1_{\PP}^{\times \mid A' \mid}, 0_{\PP}^{\times \mid A'^c\mid}) \in \Pi_{y \in i(A')} \PP(\mid i^{-1}(y)\mid) \times \Pi_{y \in i(A')^c}\PP(0)$, so $(\phi, \omega^{\phi})_{A'}=(\phi, \omega^{\phi}) \circ (i, \omega^i)$.) Following \cite{BDFP}, we call $A'$ a $\mathcal{D}$-\textit{admissible subset} of $A$ if  $f_{A'}$ and $\phi_{A'}=\phi \circ i$ are surjections.  We denote by $Adm(\mathcal{D})$ the set of all $\mathcal{D}$-admissible subsets of $A$. 
\begin{defi} \label{PMack}
A pseudo-Mackey functor $M: \Omega(\PP)_2 \to Ab$ is a Janus functor such that:
\begin{enumerate}
\item For  a double isomorphism $(f,u)$ of $\Omega(\PP)_2$ we have: 
$$M_*(f) M^*(u)=Id.$$
\item For a pair of horizontal and vertical morphisms of $\Omega(\PP)_2$ having the same target: $g: C \to D$ and $(\psi, \omega^{\psi}): \xymatrix{ B \ar@{-->}[r] &D}$ and  the unique square,  up to isomorphism, in $\Omega(\PP)_2$
$$\mathcal{D}=\quad \vcenter{
\xymatrix{ A \ar@{-->}[d]_ -{(\phi, \omega^{\phi})} \ar[r]^-f &B \ar@{-->}[d]^-{(\psi, \omega^{\psi})}\\
C \ar[r]_-g &D
} }$$ 
we have
$$M^*(\psi, \omega^\psi) \circ M_*(g)= \sum_{A' \in Adm(A)}M_*(f_{A'}) \circ M^*(\phi, \omega^{\phi})_{A'}.$$
\end{enumerate}
\end{defi}

Note that the notion of pseudo-Mackey functor is not weaker than that of Mackey functor. There is no obvious link between the categories of pseudo-Mackey functors  and of Mackey functors of $\Omega(\PP)_2$.

%%%%%%%%%%%%%%%
\section{Polynomial functors}
In this section we generalize the definition of polynomial functors introduced by Eilenberg and MacLane \cite{EML} for covariant and contravariant functors from a pointed monoidal category.

Let $\C$ be a pointed category (i.e. having a null object denoted by $0$) and equipped with a monoidal structure $\vee$ , whose neutral object is $0$. In other words, there is a bifunctor $\vee: \C \times \C \to \C$ together with isomorphisms $X \vee 0 \simeq X$ and $0 \vee X \simeq X$ such that usual properties (associativity, etc) hold.
\begin{exple}
\begin{enumerate}
\item Any pointed category with finite coproducts is an example of such category. In particular, the categories $Gr$, $Ab$, $Mon$ and $ComMon$ are pointed categories with finite coproducts.
\item Although $\Gamma(\PP)$ has no coproduct, it is an example of such a category. In fact, the null object of $\Gamma(\PP)$ is $[0]$ and the symmetric monoidal structure is given by the wedge operation of pointed sets: $[n] \vee [m]=[n+m]$.
\end{enumerate}
\end{exple}
In the sequel, we need the following notations: for $X_1, X_2, \ldots X_n \in \C$, let
$$i^n_{\hat{k}}: X_1 \vee \ldots \vee \hat{X_k} \vee \ldots \vee X_n \to X_1 \vee \ldots \vee {X_k} \vee \ldots \vee X_n$$
be the composition:
$$X_1 \vee \ldots \vee \hat{X_k} \vee \ldots \vee X_n \simeq X_1 \vee \ldots \vee 0 \vee \ldots \vee X_n \to X_1 \vee \ldots \vee {X_k} \vee \ldots \vee X_n$$
where the second map is induced by the unique map $0 \to X_k$. 
Similarly one obtains morphisms:
$$r^n_{\hat{k}}=(1_{X_1}, \ldots, 1_{X_1}, 0, 1_{X_1},  \ldots, 1_{X_n}) : X_1 \vee \ldots \vee {X_k} \vee \ldots \vee X_n \to X_1 \vee \ldots \vee \hat{X_k} \vee \ldots \vee X_n.$$
We have the relation:
$$r^n_{\hat{k}}\ i^n_{\hat{k}}=1_{X_1 \vee \ldots \vee \hat{X_k} \vee \ldots \vee X_n}.$$

In the following,  $\D$ is an abelian category.

\subsection{Cross-effects of covariant functors}
Let $F: \C \to \D$ be a functor.
\begin{defi} \label{cross-eff}
The $n$-th cross-effect of $F$ is a functor $cr_nF: \C^{\times n} \to \D$ (or a multi-functor) defined inductively by
$$cr_1F(X) = ker(F(0) : F(X) \to F(0))$$
$$cr_2F(X_1,X_2) = ker((F(r^2_{\hat{2}}),F(r^2_{\hat{1}}))^t : F(X_1\vee X_2) \to F(X_1)\oplus F(X_2))$$
and, for $n \geq 3$, by
$$ cr_nF(X_1, \ldots, X_n) = cr_2(cr_{n-1}F(-,X_3,\ldots,X_n))(X_1,X_2) .$$
\end{defi}
We write also $F(X_1\mid \ldots \mid X_n)$ instead of $cr_nF(X_1, \ldots, X_n)$.

Recall that a \textit{reduced} functor $F: \C \to \D$ is a functor satisfying $F(0)=0$. We denote by $Func_*(\C, {\mathcal{D}})$ the category of reduced functors $F: \C \to {\mathcal{D}}$. 

There is an alternative description of cross-effects.

\begin{prop} \label{cr_n}
Let $F:\C \to \D$ be a functor. Then the  $n$-th cross-effect \linebreak $cr_nF(X_1, \ldots, X_n)$ is equal to the kernel of the natural homomorphism
$$\hat{r}^F\hspace{1mm} : \hspace{1mm} F(X_1 \vee \ldots \vee X_n) \hspace{1mm} \xrightarrow{\phantom{aaaa}} \hspace{1mm} \bigoplus_{k=1}
^n F(
X_1 \vee \ldots \vee \hat{X_k} \vee \ldots \vee X_n). $$
where $\hat{r}^F$ is the map $(F(r^n_{\hat{1}}), \ldots, F(r^n_{\hat{n}}))^t$.
\end{prop}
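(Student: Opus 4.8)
The plan is to prove the two descriptions of $cr_nF$ agree by induction on $n$, using the inductive definition in Definition \ref{cross-eff} together with a stability statement: the functor $F$ restricted to a suitable subcategory of "based" objects (where one summand is split off) has cross-effects that can be computed either way. The key observation is that both $\hat r^F$ and the iterated construction of $cr_n$ are built from the retractions $F(r^n_{\hat k})$, so the real content is a linear-algebra lemma about kernels of maps into direct sums.

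First I would record the base cases. For $n=1$ there is nothing to prove, and for $n=2$ the statement is exactly the definition of $cr_2F$, since $r^2_{\hat 2}=r^2_{\hat 1}$ in the relevant indexing matches $(F(r^2_{\hat 2}),F(r^2_{\hat 1}))^t$. Next, for the inductive step, set $G(-)=cr_{n-1}F(-,X_3,\dots,X_n)\colon \C\to\D$, so that by definition $cr_nF(X_1,\dots,X_n)=cr_2G(X_1,X_2)=\ker\big((G(r^2_{\hat 2}),G(r^2_{\hat 1}))^t\big)$. By the induction hypothesis applied to $G$ is not quite enough, because the induction hypothesis is about $cr_{n-1}F$, not about an arbitrary iterate; instead I would apply the induction hypothesis to $F$ directly to identify $G(X_1\vee X_2)=cr_{n-1}F(X_1\vee X_2,X_3,\dots,X_n)$ as the kernel of $\hat r^F$ for the $(n-1)$-tuple $(X_1\vee X_2,X_3,\dots,X_n)$, and similarly for $G(X_1)$, $G(X_2)$.

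The heart of the argument is then a diagram chase inside $F((X_1\vee X_2)\vee X_3\vee\cdots\vee X_n)=F(X_1\vee\cdots\vee X_n)$. One must show that the subobject
$$\ker\Big(F(X_1\vee\cdots\vee X_n)\to\bigoplus_{k=1}^n F(X_1\vee\cdots\vee\hat X_k\vee\cdots\vee X_n)\Big)$$
coincides with the subobject obtained by first intersecting the kernels of $F(r)$ over all retractions $r$ killing one of $X_3,\dots,X_n$ (giving $\ker$ of $\hat r^G$-type maps, i.e. the various $G(\cdots)$), and then taking inside that the kernel of $(G(r^2_{\hat 2}),G(r^2_{\hat 1}))^t$. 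This is a purely formal fact about intersections of kernels of a family of compatible retractions in an abelian category: the relations $r^n_{\hat k}i^n_{\hat k}=1$ and the commutativity of the various retractions among themselves let one check that the two subobjects have the same elements (or, categorically, represent the same monomorphism up to unique isomorphism). I expect this bookkeeping — matching up which retractions appear in $\hat r^F$ versus in the nested $cr_2(cr_{n-1}(\cdots))$, and verifying the intersection of kernels behaves associatively — to be the main obstacle, though it is entirely elementary; the cleanest route is probably to phrase it as: for a finite family of idempotent endomorphisms (here the composites $i\circ r$) that pairwise commute, the kernel of the map into the product of the images equals the intersection of the individual kernels, and this intersection can be computed in any order. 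Once that lemma is in place, the inductive step is immediate and the proposition follows.
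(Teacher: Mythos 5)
Your overall strategy — induction on $n$, using the inductive definition of $cr_nF$ and the induction hypothesis to identify $G(Y) = cr_{n-1}F(Y, X_3, \ldots, X_n)$ as a kernel inside $F(Y\vee X_3\vee\cdots\vee X_n)$ — is correct and is the standard argument; the paper states this proposition without proof. The base cases are handled correctly (for $n=2$ the two formulas only differ by the harmless reordering of the two summands in the target, so the kernels agree).

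The description of the inductive step, however, has an imprecision that hides the one non-trivial point. You describe $G(X_1\vee X_2)$ as cut out by the kernels of $F(r)$ for the retractions $r$ killing one of $X_3,\dots,X_n$. That accounts for only $n-2$ retractions; applying the induction hypothesis to the $(n-1)$-tuple $(X_1\vee X_2, X_3,\ldots, X_n)$ involves one more, namely the retraction deleting $X_1\vee X_2$ as a single block, i.e.\ the map $\rho\colon X_1\vee\cdots\vee X_n\to X_3\vee\cdots\vee X_n$. So the nested construction yields an intersection of $n+1$ kernels, while $\ker\hat r^F$ is an intersection of only $n$. The key to matching them is not really commuting idempotents or re-associating the intersection, but a factorization: $\rho$ factors through $r^n_{\hat 2}$ (first kill $X_2$, then kill $X_1$), so $\ker F(r^n_{\hat 2})\subseteq\ker F(\rho)$ and the extra kernel condition is redundant. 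Once you make this explicit, the bookkeeping you defer to is indeed elementary and your proof is complete.
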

The next proposition gives the cross-effect decomposition of $F(X_1 \vee \ldots \vee X_n)$.

\begin{prop} \label{ce-prop} Let $F: \C \to \D$ be a reduced functor.
Then there is a natural decomposition 
$$F(X_1 \vee \ldots \vee X_n) \simeq \bigoplus_{k=1}^{n} \bigoplus_{1 \leq i_1 < \ldots < i_k \leq n} cr_kF(X_{i_1}, \ldots, X_{i_k}) .$$
\end{prop}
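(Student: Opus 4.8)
The plan is to prove the decomposition by induction on $n$, using Proposition \ref{cr_n} (the description of $cr_nF$ as the kernel of $\hat r^F$) together with the fact that for a reduced functor the map $\hat r^F$ is a split epimorphism, so that the short exact sequence it defines splits naturally. The base case $n=1$ is immediate: since $F$ is reduced, $F(0)=0$, hence $cr_1F(X)=\ker(F(X)\to 0)=F(X)$, which is the asserted decomposition for $n=1$.

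First I would establish the case $n=2$ directly, as it contains the essential idea. Consider the natural map $\hat r^F=(F(r^2_{\hat 1}),F(r^2_{\hat 2}))^t\colon F(X_1\vee X_2)\to F(X_1)\oplus F(X_2)$. Using the maps $i^2_{\hat k}\colon X_k\to X_1\vee X_2$ one builds a natural section $s=(F(i^2_{\hat 2}),F(i^2_{\hat 1}))\colon F(X_1)\oplus F(X_2)\to F(X_1\vee X_2)$; from the relations $r^n_{\hat k}i^n_{\hat k}=1$ and $r^2_{\hat 1}i^2_{\hat 2}=r^2_{\hat 2}i^2_{\hat 1}=0$ (the latter because it factors through $0$, $F$ being reduced), one checks $\hat r^F\circ s=\mathrm{id}$. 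Hence in the abelian category $\D$ the sequence $0\to cr_2F(X_1,X_2)\to F(X_1\vee X_2)\to F(X_1)\oplus F(X_2)\to 0$ splits naturally, giving
$$F(X_1\vee X_2)\simeq cr_2F(X_1,X_2)\oplus F(X_1)\oplus F(X_2),$$
which is the claimed formula for $n=2$ once one notes $cr_1F(X_i)=F(X_i)$.

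For the inductive step, assume the decomposition holds for $n-1$ variables. Apply the $n=2$ splitting to the pair $(X_1,\,X_2\vee\ldots\vee X_n)$, obtaining
$$F(X_1\vee\ldots\vee X_n)\simeq cr_2F(X_1,X_2\vee\ldots\vee X_n)\oplus F(X_1)\oplus F(X_2\vee\ldots\vee X_n).$$
The last summand decomposes by the inductive hypothesis. For the first summand one uses that $cr_2F(X_1,-)$ is again a reduced functor in its second variable, so the induction hypothesis applies to $cr_2F(X_1,X_2\vee\ldots\vee X_n)$ in the variables $X_2,\ldots,X_n$, yielding $\bigoplus_{k=1}^{n-1}\bigoplus_{2\le i_1<\ldots<i_k\le n} cr_{k+1}F(X_1,X_{i_1},\ldots,X_{i_k})$ by the defining recursion $cr_{k+1}F(X_1,-,\ldots,-)=cr_2(cr_kF(-,\ldots))(X_1,-)$ and symmetry of cross-effects in their entries. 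Collecting the three pieces and reindexing according to whether a subset $\{i_1<\ldots<i_k\}$ contains the index $1$ or not, one recovers exactly $\bigoplus_{k=1}^n\bigoplus_{1\le i_1<\ldots<i_k\le n}cr_kF(X_{i_1},\ldots,X_{i_k})$, and naturality is preserved at each step since every section used is built from the natural maps $i^n_{\hat k}, r^n_{\hat k}$.

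The main obstacle is bookkeeping rather than conceptual: one must verify that cross-effects are symmetric (i.e.\ $cr_nF(X_1,\ldots,X_n)$ is, up to natural isomorphism, independent of the ordering of the arguments) so that the recursion applied to the "head" variable $X_1$ produces the same functors as the standard recursion, and one must check that the various natural splittings are mutually compatible so that the iterated decomposition is well-defined and natural. Both facts are standard for cross-effects in the Eilenberg--MacLane sense and follow from the explicit description in Proposition \ref{cr_n} together with the commutation relations among the $i^n_{\hat k}$ and $r^n_{\hat k}$; I would state the symmetry as a preliminary lemma and then the induction goes through cleanly.
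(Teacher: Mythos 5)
The paper states Proposition \ref{ce-prop} without proof (it is a classical fact going back to Eilenberg--MacLane), so there is no proof in the paper to compare against; your blind reconstruction is essentially the standard argument and it is correct. The base case and the $n=2$ splitting via the natural section $(F(i^2_{\hat 2}),F(i^2_{\hat 1}))$ are exactly right, and the inductive step is sound. The one place you rightly flag as needing justification is the identity
$cr_k\bigl(cr_2F(X_1,-)\bigr)(Y_1,\ldots,Y_k)\;\cong\;cr_{k+1}F(X_1,Y_1,\ldots,Y_k)$,
since the paper's recursive definition iterates $cr_2$ in the first two slots while fixing the tail, whereas your induction peels off the head variable first. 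This is more than bare symmetry under permutations; it is a ``rebracketing'' statement. It does hold, and the cleanest way to see it is via Proposition \ref{cr_n}: $cr_kG(Y_1,\ldots,Y_k)$ for $G=cr_2F(X_1,-)$ is, inside $F(X_1\vee Y_1\vee\cdots\vee Y_k)$, the joint kernel of the retraction to $F(X_1)$, the retraction to $F(\bigvee_j Y_j)$, and the retractions to $F(X_1\vee\bigvee_{j\ne i}Y_j)$; but the map to $F(X_1)$ factors through any of the latter, so the extra condition is redundant and the joint kernel agrees with the one defining $cr_{k+1}F(X_1,Y_1,\ldots,Y_k)$. With that preliminary lemma written out, your induction goes through, and naturality is immediate since every section used is built from the canonical maps $i^n_{\hat k}$, $r^n_{\hat k}$.
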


	The cross-effects have the following crucial property.
\begin{prop} \label{exact}
The functor $cr_n: Func(\mathcal{C}, \D) \to Func(\mathcal{C}^{\times n} , \D)$ is exact for all $n\ge 1$.
\end{prop}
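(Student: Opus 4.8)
The plan is to verify exactness pointwise and to induct on $n$, the whole argument resting on the principle that a natural direct summand of an exact functor is exact. Since $\D$ is abelian, kernels, cokernels and images in $Func(\C^{\times n},\D)$ are computed pointwise, so the evaluation functors $ev_{\vec X}\colon Func(\C^{\times n},\D)\to\D$ are exact and jointly detect exactness; hence it is enough to show that for every object $\vec X=(X_1,\ldots,X_n)$ of $\C^{\times n}$ the composite $ev_{\vec X}\circ cr_n\colon Func(\C,\D)\to\D$ is exact.

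First I would treat $n=1$. Writing $o_X\colon 0\to X$ and $0_X\colon X\to 0$ for the structure maps of the null object, the relation $0_X\circ o_X=\mathrm{id}_0$ makes $e^F_X:=\mathrm{id}_{F(X)}-F(o_X\circ 0_X)$ an idempotent endomorphism of $F(X)$, natural in $X$ and in $F$, whose image is $cr_1F(X)=\ker F(0_X)$. Thus $cr_1$ is the image of an idempotent natural endomorphism of the identity functor of $Func(\C,\D)$, so applying it to a short exact sequence exhibits $0\to cr_1F'\to cr_1F\to cr_1F''\to 0$ as a direct summand of an exact sequence, hence exact. I also record that $cr_1F$ is reduced, since $cr_1F(0)=\ker\mathrm{id}_{F(0)}=0$, and that $F$ splits naturally as a direct sum of the constant functor $X\mapsto F(0)$ and $cr_1F$.

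Next I would handle $n=2$. The functor $F\mapsto\big((X_1,X_2)\mapsto F(X_1\vee X_2)\big)$ is precomposition with $\vee\colon\C^{\times2}\to\C$, hence exact (pointwise it is an evaluation). For reduced $F$ the canonical map $F(X_1\vee X_2)\to F(X_1)\oplus F(X_2)$ defining $cr_2$ is a split epimorphism, its section being built from the maps $i^2_{\hat k}$, using $r^n_{\hat k}i^n_{\hat k}=\mathrm{id}$ together with the fact that the mixed composites factor through $0$ (equivalently, this is the decomposition of Proposition \ref{ce-prop}); so $cr_2F$ is a natural direct summand of $F(X_1\vee X_2)$ and $cr_2$ is exact on reduced functors. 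For general $F$, additivity of $cr_2$, the vanishing of $cr_2$ on constant functors (where the defining map is a diagonal $A\to A\oplus A$), and the splitting from the previous step give $cr_2F\simeq cr_2(cr_1F)$, a composite of exact functors. Finally, for $n\geq3$ I would induct: fixing $\vec X$ and letting $E\colon Func(\C^{\times(n-1)},\D)\to Func(\C,\D)$ be the partial evaluation at $X_3,\ldots,X_n$ (exact, being pointwise an evaluation), Definition \ref{cross-eff} gives
$$ev_{\vec X}\circ cr_n \;=\; ev_{(X_1,X_2)}\circ cr_2\circ E\circ cr_{n-1},$$
a composite of exact functors by the inductive hypothesis and the case $n=2$, which finishes the induction.

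I expect the only genuinely delicate point to be the naturality \emph{in the variable $F$} of the direct-sum splitting used for $n=2$: Proposition \ref{ce-prop} supplies the decomposition of $F(X_1\vee\cdots\vee X_n)$, but to deduce exactness of $cr_n$ one needs the inclusion and the retraction to be natural transformations of functors in $F$ as well, which is why I would spell out the section of $F(X_1\vee X_2)\to F(X_1)\oplus F(X_2)$ explicitly rather than merely quoting the decomposition. Everything else is routine diagram bookkeeping.
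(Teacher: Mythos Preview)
Your proof is correct and follows essentially the same strategy as the paper: split off $cr_1$ naturally via the idempotent, then handle $n=2$ using that $cr_2F$ is a natural direct summand of $F(X_1\vee X_2)$, and finish by induction through the recursive definition of $cr_n$. The paper's one-line proof phrases the $n=2$ step as ``the snake lemma'' rather than ``natural direct summand'', but these amount to the same thing here since the rows $\hat r^F$ are naturally split epimorphisms (so the cokernel column is zero and the snake lemma yields exactly the short exact sequence of kernels); your explicit identification of the section is precisely what makes either phrasing work.
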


\begin{proof} For $n=1$ it is a consequence of the natural decomposition $F(X) \simeq cr_1F(X) \oplus F(0)$. For $n=2$ this follows from the snake-lemma. For higher $n$ use induction.
\end{proof}

\begin{defi} \label{poly} 
A functor $F: \C \to \D$  is said to be \textit{polynomial of degree lower or equal to $n$} if $cr_{n+1}F=0$. A reduced functor $F$ is called \textit{linear} if $n=1$ and is called \textit{quadratic} if $n=2$. 
\end{defi}
We denote by $Pol_n(\mathcal{C},\D)$ the full subcategory of $Func_{*}(\mathcal{C},\D)$ consisting of reduced polynomial functors of degree lower or equal to $n$ from $\mathcal{C}$ to $\D$, $Lin(\mathcal{C},\D)$ the subcategory of linear functors and $Quad(\mathcal{C},\D)$ those of quadratic functors.

If $\C$ is a preadditive category, the linear functors are precisely the additive functors. In this case, we will denote $Add(\mathcal{C},\D)$ instead of $Lin(\mathcal{C},\D)$.

The category $Pol_n(\mathcal{C}, \mathcal{D})$ has the following fundamental property which is an immediate consequence of Proposition \ref{exact}.
	\begin{prop} \label{thick}
The category $Pol_n(\mathcal{C}, \D)$ is  thick i.e. closed under quotients, subobjects and extensions.
\end{prop} 

 For $X$ a fixed object of $\C$ we define the universal functor $U_X^{\mathcal{C}}: \C \to Ab$ as follows. For a set $S$,  let $\mathbb{Z}[S]$ denote the free abelian group with basis $S$. 
Since for all $Y \in\C$, $Hom_{\mathcal{C}}(X,Y)$ is pointed with basepoint the zero map, we can define a subfunctor $\mathbb{Z}[0]$ of $\mathbb{Z}[Hom_{\mathcal{C}}(X,-)]$ by $\mathbb{Z}[0](Y)=\mathbb{Z}[\{X \xrightarrow{0} Y \}]$ for $Y \in \C$. 
\begin{defi} \label{UXC}
The universal functor $U_X^{\mathcal{C}}: \C \to Ab$ relative to $X$ is the quotient of $\mathbb{Z}[Hom_{\mathcal{C}}(X,-)]$ by the subfunctor $\mathbb{Z}[0]$.
\end{defi}
Note that $U_X^{\mathcal{C}}$ is the reduced standard projective functor associated with $X$.

To keep notation simple we write $f$ also for the equivalence class in $U_X^{\mathcal{C}}(Y)$ of an element $f$ of $Hom_{\mathcal{C}}(X,Y)$.

\subsection{Cross-effects of contravariant functors}
Let $G$ be a contravariant functor from $\C$ to $\D$.

\begin{defi}
The $n$-th cross-effect of $G$ is a functor $\tilde{cr}_nG: ({\C^{op}})^{\times n} \to \D$ (or a multi-functor) defined inductively by
$$\tilde{cr}_1G(X) = ker(G(0) : G(X) \to G(0))$$
$$\tilde{cr}_2G(X_1,X_2) = ker((G(i^2_{\hat{2}}),G(i^2_{\hat{1}}))^t : G(X_1\vee X_2) \to G(X_1)\oplus G(X_2))$$
and, for $n \geq 3$, by
$$ \tilde{cr}_nG(X_1, \ldots, X_n) = \tilde{cr}_2(\tilde{cr}_{n-1}G(-,X_3,\ldots,X_n))(X_1,X_2) .$$
\end{defi}

\begin{prop}
Let $G$ be a contravariant functor from $\C$ to $\D$. Then the  $n$-th cross-effect \linebreak $\tilde{cr}_nG(X_1, \ldots, X_n)$ is equal to the kernel of the natural homomorphism
$$ \hat{i}^G\hspace{1mm} : \hspace{1mm} G(X_1 \vee \ldots \vee X_n) \hspace{1mm} \xrightarrow{\phantom{aaaa}} \hspace{1mm} \bigoplus_{k=1}
^n G(
X_1 \vee \ldots \vee \hat{X_k} \vee \ldots \vee X_n) $$
where $ \hat{i}^G$ is the map $(G(i^n_{\hat{1}}), \ldots, G(i^n_{\hat{k}}))^t$.
\end{prop}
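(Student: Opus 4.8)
The plan is to mirror the argument for covariant functors (Proposition \ref{cr_n}) and dualize it. First I would observe that the dual statement is exactly Proposition \ref{cr_n} applied to the covariant functor $F = G$ regarded as a functor on $\C^{op}$: indeed, under the identification of contravariant functors $\C \to \D$ with covariant functors $\C^{op} \to \D$, the morphisms $i^n_{\hat k}$ in $\C$ become the morphisms $r^n_{\hat k}$ in $\C^{op}$ (since $i^n_{\hat k}$ and $r^n_{\hat k}$ are obtained from each other by reversing arrows, using that $\vee$ is the wedge and $0$ is the zero object, which is self-dual), and the inductive definition of $\tilde{cr}_n G$ matches verbatim the definition of $cr_n F$ once one notes that the monoidal structure $\vee$ on $\C$ restricts to a monoidal structure on $\C^{op}$ with the same neutral object $0$. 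Hence one can simply invoke Proposition \ref{cr_n} on $\C^{op}$.

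For a self-contained argument I would instead induct on $n$, copying the proof of Proposition \ref{cr_n} with all arrows reversed. For $n=1,2$ the claim is the definition. For the inductive step, suppose the statement holds for $n-1$; then $\tilde{cr}_{n-1}G(-,X_3,\dots,X_n)$ is the kernel of
$$
\hat{i}^G_{n-1}\colon G(X_1 \vee \ldots \vee X_n) \to \bigoplus_{k=1}^{n-1} G(X_1 \vee \ldots \vee \hat{X_k} \vee \ldots \vee X_n).
$$
Applying the $n=2$ case to the functor $H(-) = \tilde{cr}_{n-1}G(-,X_3,\dots,X_n)$ in the variables $X_1,X_2$ expresses $\tilde{cr}_nG(X_1,\dots,X_n)$ as the kernel of $(H(i^2_{\hat 2}),H(i^2_{\hat 1}))^t$. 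One then checks, using functoriality of kernels and the commuting square relating $i^2_{\hat\ell}$ (in the two-fold wedge $(X_1\vee X_2)\vee(X_3\vee\cdots)$) with the $i^n_{\hat k}$, that the intersection of $\ker \hat{i}^G_{n-1}$ with $\ker(H(i^2_{\hat 2}),H(i^2_{\hat 1}))^t$ coincides with the kernel of the full map $\hat{i}^G = (G(i^n_{\hat 1}),\dots,G(i^n_{\hat n}))^t$. This is the same bookkeeping as in Proposition \ref{cr_n}, using only that the $i^n_{\hat k}$ for $k \leq 2$ factor through $X_1\vee X_2 \vee \hat{X_3}\vee\cdots$ appropriately (equivalently: dualize the identities $r^n_{\hat k} = r^{n-1}_{\hat k}\circ(r^2_{\hat\ell}\vee\mathrm{id})$ used in the covariant case).

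The only mild subtlety — and the step I would flag as the main point to get right — is the direction of the structure maps: since $G$ is contravariant, the inclusion-type maps $i^n_{\hat k}\colon X_1\vee\cdots\vee\hat X_k\vee\cdots\vee X_n \to X_1\vee\cdots\vee X_n$ of $\C$ induce the restriction-type maps $G(i^n_{\hat k})$ going \emph{out} of $G(X_1\vee\cdots\vee X_n)$, which is why $i$ (not $r$) appears in the target decomposition; one must therefore be careful that the inductive use of the $n=2$ case is applied to $H$ with the correct variance, and that $\tilde{cr}_{n-1}G(-,X_3,\dots,X_n)$ is again contravariant in its first variable so that the $n=2$ case genuinely applies. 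Once this is set up correctly, the argument is formally identical to that of Proposition \ref{cr_n}, and no further computation is needed.
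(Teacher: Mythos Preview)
The paper states this proposition without proof (just as it does for the covariant analogue, Proposition~\ref{cr_n}), so there is nothing to compare against directly. Your approach is correct and is exactly the argument the authors are tacitly relying on: since $0$ is a null object and $\vee$ a symmetric monoidal product with unit $0$, the category $\C^{op}$ again satisfies the hypotheses of Section~2, the maps $i^n_{\hat k}$ in $\C$ become the maps $r^n_{\hat k}$ in $\C^{op}$, and the inductive definition of $\tilde{cr}_n G$ is literally Definition~\ref{cross-eff} applied to $G\colon \C^{op}\to\D$; Proposition~\ref{cr_n} then yields the claim. Your self-contained inductive version is also fine and is just an unwinding of the same dualization.
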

\begin{prop} \label{contra-coker}
If $G$ is a contravariant functor from $\C$ to $\D$ then for all $n \in \N$:
$$\tilde{cr}_nG(X_1, \ldots, X_n) \simeq coker(\bigoplus_{k=1}^n G(r^n_{\hat{k}})\hspace{1mm} : \hspace{1mm} \bigoplus_{k=1}
^n G(
X_1 \vee \ldots \vee \hat{X_k} \vee \ldots \vee X_n) \hspace{1mm} \xrightarrow{\phantom{aaaa}} \hspace{1mm}G(X_1 \vee \ldots \vee X_n)).$$
\end{prop}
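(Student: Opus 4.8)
The plan is to realise $G(X_1\vee\cdots\vee X_n)$ as an internal direct sum whose two summands are precisely $\tilde{cr}_n G(X_1,\dots,X_n)$ and the image of the map $\bigoplus_{k=1}^n G(r^n_{\hat k})$; the cokernel of this map is then, tautologically, the first summand. First I would set $Y = X_1\vee\cdots\vee X_n$ and $Y_k = X_1\vee\cdots\vee\hat X_k\vee\cdots\vee X_n$, so that $r^n_{\hat k}\co Y\to Y_k$, $i^n_{\hat k}\co Y_k\to Y$ and $r^n_{\hat k}\,i^n_{\hat k}=\mathrm{id}_{Y_k}$. Unravelling the definitions of $i^n_{\hat k}$ and $r^n_{\hat k}$, the composite $z_k := i^n_{\hat k}\,r^n_{\hat k}\co Y\to Y$ is the idempotent endomorphism of $Y$ that is the canonical inclusion on each $X_j$ with $j\neq k$ and the zero map on $X_k$. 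A short check from these descriptions shows that these endomorphisms commute pairwise: $z_kz_l=z_lz_k$ for all $k,l$ (for $k\neq l$ both composites equal the endomorphism of $Y$ which is the identity on every $X_j$ with $j\notin\{k,l\}$ and zero on $X_k$ and $X_l$).

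Then I would apply the contravariant functor $G$ to obtain endomorphisms $e_k := G(z_k) = G(r^n_{\hat k})\,G(i^n_{\hat k})$ of $G(Y)$. Since $G(i^n_{\hat k})\,G(r^n_{\hat k}) = G(r^n_{\hat k}\,i^n_{\hat k}) = \mathrm{id}$, each $e_k$ is idempotent, $G(r^n_{\hat k})$ is a split monomorphism, and hence $\mathrm{ker}(e_k)=\mathrm{ker}\,G(i^n_{\hat k})$ and $\mathrm{im}(e_k)=\mathrm{im}\,G(r^n_{\hat k})$; moreover the $e_k$ commute pairwise because the $z_k$ do. Next I would invoke the elementary fact that for pairwise commuting idempotents $e_1,\dots,e_n$ of an object $M$ of an abelian category, the endomorphism $f=\prod_{k=1}^n(\mathrm{id}_M-e_k)$ is idempotent with $\mathrm{im}(f)=\bigcap_k\mathrm{ker}(e_k)$ and $\mathrm{ker}(f)=\sum_k\mathrm{im}(e_k)$, so that $M=\bigl(\bigcap_k\mathrm{ker}(e_k)\bigr)\oplus\bigl(\sum_k\mathrm{im}(e_k)\bigr)$. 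Applied to $M=G(Y)$ this gives
$$G(X_1\vee\cdots\vee X_n)=\Bigl(\bigcap_{k=1}^n\mathrm{ker}\,G(i^n_{\hat k})\Bigr)\oplus\Bigl(\sum_{k=1}^n\mathrm{im}\,G(r^n_{\hat k})\Bigr).$$
By the preceding proposition the first summand is exactly $\tilde{cr}_nG(X_1,\dots,X_n)$, and the second summand is the image of $\bigoplus_{k=1}^n G(r^n_{\hat k})$. Hence the composite $\tilde{cr}_nG(X_1,\dots,X_n)\hookrightarrow G(Y)\twoheadrightarrow\mathrm{coker}\bigl(\bigoplus_{k=1}^n G(r^n_{\hat k})\bigr)$ is an isomorphism, which is the asserted description; it is natural since it is built from the natural kernel and cokernel in question.

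I expect the only genuine work to be in the first step: verifying carefully that $z_k=i^n_{\hat k}\,r^n_{\hat k}$ is indeed the ``kill the $k$-th wedge summand'' endomorphism of $Y$ and that these endomorphisms commute (equivalently, that after applying $G$ the $e_k$ commute) — everything afterwards is formal manipulation of commuting idempotents. It is worth noting that this argument uses neither the cross-effect decomposition (Proposition \ref{ce-prop}) nor any reducedness hypothesis on $G$, and handles all $n\geq 1$ uniformly (the case $n=0$ being the triviality $\mathrm{coker}(0\to G(0))=G(0)$ under the convention $\tilde{cr}_0G=G(0)$).
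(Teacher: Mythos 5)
Your proof is correct. The paper states this proposition without providing a proof, evidently treating it as the routine dual of Proposition \ref{cr_n}; your argument via the commuting idempotents $e_k = G(r^n_{\hat k})\,G(i^n_{\hat k})$ on $G(X_1\vee\cdots\vee X_n)$, yielding the internal direct-sum decomposition into $\bigcap_k\ker G(i^n_{\hat k}) = \tilde{cr}_nG(X_1,\ldots,X_n)$ and $\sum_k\mathrm{im}\,G(r^n_{\hat k})$, is the standard one and is the same commuting-idempotents technique the paper invokes explicitly elsewhere, for example in the proof of Theorem \ref{DK}.
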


\begin{defi} \label{poly} 
A contravariant functor $G$ from $\C$ to $\D$  is said to be \textit{polynomial of degree lower or equal to $n$} if $\tilde{cr}_{n+1}G=0$. A reduced functor is called \textit{linear} if $n=1$ and is called \textit{quadratic} if $n=2$. 
\end{defi}

Let $G$ be a contravariant functor from $\C$ to $\D$. We can consider the covariant functor $G^{op}: \C \to \D^{op}$ associated to $G$. 
\begin{prop}
Let $G$ be a contravariant functor from $\C$ to $\D$ and $G^{op}: \C \to \D^{op}$ be the covariant functor associated to $G$.
If $\D$ is an abelian category, then:
$$cr_n(G^{op})  \simeq \tilde{cr}_n(G) \quad \forall n \in \N.$$
\end{prop}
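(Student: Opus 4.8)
The plan is to unwind the two inductive definitions in parallel and check that the cross-effect of $G^{op}$, computed in the abelian category $\D^{op}$, coincides termwise with $\tilde{cr}_n(G)$ computed in $\D$. The key observation is that the defining diagrams for $cr_n$ (of a covariant functor into $\D^{op}$) and for $\tilde{cr}_n$ (of a contravariant functor into $\D$) are built from the very same structure maps $i^n_{\hat k}$ and $r^n_{\hat k}$, but with the roles of $i$ and $r$, and of kernel and cokernel, interchanged by the duality $\D \leftrightarrow \D^{op}$. So the proof is essentially a bookkeeping argument organized by induction on $n$.

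First I would treat the base case $n=1$. By Definition \ref{cross-eff} applied to the covariant functor $G^{op}\colon \C \to \D^{op}$, we have $cr_1(G^{op})(X) = \ker\big(G^{op}(0)\colon G^{op}(X)\to G^{op}(0)\big)$, where the kernel is taken in $\D^{op}$. A kernel in $\D^{op}$ is a cokernel in $\D$, and the map $G^{op}(0)$ in $\D^{op}$ is the map $G(0)\colon G(0)\to G(X)$ in $\D$; hence $cr_1(G^{op})(X)$ is, as an object of $\D$, the cokernel of $G(0)\colon G(0)\to G(X)$. On the other hand, since $G$ is reduced-free in the relevant sense we should instead compare with $\tilde{cr}_1 G(X) = \ker(G(0)\colon G(X)\to G(0))$; here one uses that $\D$ is abelian together with the splitting $G(X)\simeq \tilde{cr}_1G(X)\oplus G(0)$ (coming from the retraction $0\to X\to 0$), which identifies $\ker$ and $\operatorname{coker}$ of $G(0)$. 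This gives $cr_1(G^{op})(X)\simeq \tilde{cr}_1(G)(X)$ naturally.

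Next, for $n=2$, I would compare $cr_2(G^{op})(X_1,X_2)$, which is the kernel in $\D^{op}$ of $(G^{op}(r^2_{\hat 2}), G^{op}(r^2_{\hat 1}))^t$, with $\tilde{cr}_2 G(X_1,X_2) = \ker\big((G(i^2_{\hat 2}),G(i^2_{\hat 1}))^t\big)$. A kernel in $\D^{op}$ of the map that is $(G^{op}(r^2_{\hat 2}), G^{op}(r^2_{\hat 1}))^t$ is a cokernel in $\D$ of $G(r^2_{\hat 2})\oplus G(r^2_{\hat 1})\colon G(X_1)\oplus G(X_2)\to G(X_1\vee X_2)$; by Proposition \ref{contra-coker} this cokernel is naturally isomorphic to $\tilde{cr}_2 G(X_1,X_2)$. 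So the $n=2$ case reduces precisely to Proposition \ref{contra-coker}, which already packages the kernel/cokernel duality one needs. The general case $n\geq 3$ then follows by induction: both $cr_n$ and $\tilde{cr}_n$ are defined by the same recursion $cr_n(-) = cr_2(cr_{n-1}(-,X_3,\dots,X_n))(X_1,X_2)$, so applying the inductive hypothesis to $cr_{n-1}(G^{op})\simeq \tilde{cr}_{n-1}(G)$ (as functors on the appropriate product of opposite categories) and then the $n=2$ case (i.e.\ Proposition \ref{contra-coker}) to the resulting functor yields the claim, with all isomorphisms natural because each step is.

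I expect the main subtlety to be purely notational: keeping straight, at each stage, which category ($\D$ or $\D^{op}$) a given (co)kernel is formed in, and checking that $G^{op}$ sends $r^n_{\hat k}$ (a morphism of $\C$) to the morphism of $\D^{op}$ that corresponds to $G(r^n_{\hat k})$ in $\D$ — so that the ``$r$ versus $i$'' discrepancy between Definition \ref{cross-eff} and the contravariant definition is exactly absorbed by the op-duality. There is no real analytic content; the one place where abelianness of $\D$ is genuinely used is in identifying the relevant kernels with cokernels, which is already isolated in Proposition \ref{contra-coker} (and, for $n=1$, in the splitting of $G(X)$). Once those identifications are in hand the induction is immediate.
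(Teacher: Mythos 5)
Your argument is correct and rests on the same essential ingredients as the paper's proof, namely the identification of kernels in $\D^{op}$ with cokernels in $\D$ and Proposition \ref{contra-coker}. The paper's version is organizationally more compact: it invokes Proposition \ref{cr_n} to write $cr_n(G^{op})(X_1,\ldots,X_n)$ directly as the kernel in $\D^{op}$ of $\hat{r}^{G^{op}}$, recognizes this as the cokernel in $\D$ of $\bigoplus_k G(r^n_{\hat k})$, and concludes at once from Proposition \ref{contra-coker}, thereby avoiding both the induction on $n$ and the separate splitting argument you give for $n=1$; nonetheless both routes are sound and natural in all $X_i$.
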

\begin{proof}
In an abelian category, finite products and coproducts coincide.
$$\begin{array}{l}
cr_n(G^{op})(X_1, \ldots, X_n)\\
=Ker^{Ab^{op}}(\hat{r}^{G^{op}}\hspace{1mm} : \hspace{1mm} G^{op}(X_1 \vee \ldots \vee X_n) \hspace{1mm} \xrightarrow{\phantom{aaaa}} \hspace{1mm} \bigoplus_{k=1}
^n G^{op}(
X_1 \vee \ldots \vee \hat{X_k} \vee \ldots \vee X_n))\\
=coker^{Ab}(\hat{r}^{G}\hspace{1mm} : \hspace{1mm} G(X_1 \vee \ldots \vee X_n) \hspace{1mm} \xleftarrow{\phantom{aaaa}} \hspace{1mm} \bigoplus_{k=1}
^n G(
X_1 \vee \ldots \vee \hat{X_k} \vee \ldots \vee X_n))\\
\simeq \tilde{cr}_nG(X_1, \ldots, X_n). 
\end{array}$$
Where the latter isomorphism is given by Proposition \ref{contra-coker}.
\end{proof}

\begin{rem}
When we consider polynomial functors with values in a suitable non-abelian category $\D$ (for example, the category of groups or, more generally, a semi-abelian category (see \cite{BB})) and suppose that $ \D^{op}$ has products and kernels, $cr_n(G^{op})$ is defined but in general $cr_n(G^{op}) \not \simeq \tilde{cr}_n(G)$. 
\end{rem}

%%%%%%%%%%%%%%%%%%%%%%%%%%%
\subsection{Polynomial functors on a pointed category having finite coproducts}
	
	In section \ref{section5} we will consider polynomial functors on a pointed category having finite coproducts $\C$. In this setting the existence of the folding map $\nabla^n: \vee_{i=1}^{n} X \to X$ gives us an explicit description of the $n$-Taylorisation functor (see Definition \ref{Tn-coker}). Then, using this description, we give several results on polynomial functors in this setting which will be useful in section \ref{section5}.

	We denote by $\Delta^n_{\C}: \C \to \C^{\times n}$ the diagonal functor. For $n=2$ we write $\Delta_{\C}$ instead of $\Delta^2_{\C}$.

\begin{defi} \label{+}
For $F \in Func(\mathcal{C},\D)$ and $X \in \mathcal{C}$, we denote by $S_{n}^F$ the natural transformation $S_{n}^F: (cr_{n}F) \Delta^{n}_{\C} \to F$ given by the composition
$$cr_{n}F (X , \ldots , X) \xrightarrow{inc} F( \vee_{i=1}^{n} X) \xrightarrow{F(\nabla^n)} F(X)$$
where $\nabla^n: \vee_{i=1}^{n} X \to X$ is the folding map.

\end{defi}
In the sequel we denote by $Func(\mathcal{C}, \D)_{\leq n} $ the full subcategory of $Func(\mathcal{C}, \D)$ consisting of polynomial functors of degree lower or equal to $n$.

\begin{defi} \label{Tn-coker}
The $n-$Taylorisation functor $T_n :  Func(\mathcal{C}, \D) \to  Func(\mathcal{C}, \D)_{\leq n} $ is defined by: $T_nF=coker((cr_{n+1}F) \Delta^{n+1}_{\C}  \xrightarrow{S_{n+1}^F} F)$  for $F \in Func(\mathcal{C}, \D)$. We call $T_1$ the linearization functor and   $T_2$   the quadratization functor.
\end{defi}

Let $U_n: Func(\mathcal{C}, \D)_{\leq n} \to Func(\mathcal{C}, \D)$ denote the forgetful (i.e.\ inclusion)      functor.

\begin{prop} \label{Tn-prop}
The $n-$Taylorisation functor $T_n :  Func(\mathcal{C}, \D) \to  (Func(\mathcal{C}, \D))_{\leq n}$ is a left adjoint to $U_n$. The unit of the adjunction is the natural epimorphism $t_n^F: F \to T_nF$ which is an isomorphism if $F$ is polynomial of degree $\le n$.

\end{prop}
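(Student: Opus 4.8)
\textbf{Proof proposal for Proposition \ref{Tn-prop}.}

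The plan is to establish the adjunction $T_n \dashv U_n$ by constructing the unit $t_n^F \colon F \to T_nF$ and verifying the universal property directly. First I would note that $t_n^F$ is defined as the canonical projection onto the cokernel in Definition \ref{Tn-coker}, so it is an epimorphism by construction; it is a natural transformation of functors $\C \to \D$ because $S_{n+1}^F$ is natural in $F$ (it is built from the inclusion of a cross-effect and the image under $F$ of the fixed folding map $\nabla^{n+1}$). The first real point to check is that $T_nF$ actually lands in $(Func(\C,\D))_{\leq n}$, i.e. that $cr_{n+1}(T_nF) = 0$. For this I would use that $cr_{n+1}$ is exact (Proposition \ref{exact}): applying $cr_{n+1}$ to the defining exact sequence $(cr_{n+1}F)\Delta^{n+1}_\C \xrightarrow{S_{n+1}^F} F \xrightarrow{t_n^F} T_nF \to 0$ yields a right-exact sequence, so it suffices to show that $cr_{n+1}(S_{n+1}^F)$ is an epimorphism, equivalently that $cr_{n+1}$ applied to the inclusion $(cr_{n+1}F)\Delta^{n+1}_\C \hookrightarrow F(\vee^{n+1}-)$ composed with $F(\nabla^{n+1})$ hits everything; this reduces, via the cross-effect decomposition of Proposition \ref{ce-prop} applied to $F(\vee^{n+1}-)$ and naturality of $\nabla^{n+1}$ under the diagonal, to the fact that the top cross-effect summand $cr_{n+1}F(X,\dots,X)$ maps isomorphically onto $cr_{n+1}F$ of the diagonal. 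This is the step I expect to require the most care.

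Next I would verify the universal property: for any $G \in Func(\C,\D)_{\leq n}$ and any natural transformation $\varphi \colon F \to U_nG$, there is a unique $\bar\varphi \colon T_nF \to G$ with $\bar\varphi \circ t_n^F = \varphi$. Uniqueness is immediate since $t_n^F$ is an epimorphism. For existence, because $T_nF$ is a cokernel, it suffices to show $\varphi \circ S_{n+1}^F = 0$. But $S_{n+1}^F$ factors through $F(\vee^{n+1}X) \xrightarrow{F(\nabla^{n+1})} F(X)$ starting from $cr_{n+1}F(X,\dots,X)$, and by naturality of $\varphi$ the composite $\varphi_X \circ F(\nabla^{n+1})$ equals $G(\nabla^{n+1}) \circ \varphi_{\vee^{n+1}X}$ precomposed with the inclusion of the cross-effect; since $\varphi$ restricted to cross-effects gives a map $cr_{n+1}F \to cr_{n+1}G = 0$ (as $G$ has degree $\leq n$), and the inclusion $cr_{n+1}F(X,\dots,X) \hookrightarrow F(\vee^{n+1}X)$ followed by any natural map into $G$ factors through $cr_{n+1}G$ on the diagonal, the composite vanishes. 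Hence $\varphi$ descends uniquely to $T_nF$, giving the adjunction.

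Finally, if $F$ is already polynomial of degree $\leq n$, then $cr_{n+1}F = 0$, so $(cr_{n+1}F)\Delta^{n+1}_\C = 0$ and $S_{n+1}^F$ is the zero map out of the zero functor; its cokernel is $F$ itself and $t_n^F = \mathrm{id}_F$, so in particular $t_n^F$ is an isomorphism. The main obstacle, as indicated, is the bookkeeping showing $cr_{n+1}(S_{n+1}^F)$ is surjective — i.e. that killing the image of the folding map on the top cross-effect of $\vee^{n+1}X$ really does annihilate the $(n+1)$-st cross-effect of $F$ — which rests on carefully combining Proposition \ref{ce-prop}, Proposition \ref{exact}, and the compatibility of $\nabla^{n+1}$ with the diagonal functor $\Delta^{n+1}_\C$.
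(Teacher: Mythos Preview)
The paper does not actually give a proof of Proposition~\ref{Tn-prop}; it is stated as a standard fact and then used. So there is no proof in the paper to compare against directly.

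Your sketch is correct. Step~3 (the universal property) and step~4 (the isomorphism when $F$ is already of degree $\le n$) are fine as written: the key point in step~3 is precisely that naturality of $\varphi$ forces $\varphi_{X^{\vee n+1}}$ to send $cr_{n+1}F(X,\ldots,X)$ into $cr_{n+1}G(X,\ldots,X)=0$, so $\varphi\circ S_{n+1}^F$ vanishes and $\varphi$ descends to the cokernel.

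For step~2 (showing $cr_{n+1}(T_nF)=0$), you are right that this is the point needing care. The precise way to finish it is to produce a section of $cr_{n+1}(S_{n+1}^F)$ at $(X_1,\ldots,X_{n+1})$: setting $X=X_1\vee\cdots\vee X_{n+1}$, the map $cr_{n+1}F(j_1,\ldots,j_{n+1})\colon cr_{n+1}F(X_1,\ldots,X_{n+1})\to cr_{n+1}F(X,\ldots,X)$ (where $j_k\colon X_k\hookrightarrow X$) lands in $cr_{n+1}\big((cr_{n+1}F)\Delta^{n+1}_\C\big)(X_1,\ldots,X_{n+1})$ because $r^{n+1}_{\hat{k}}j_k=0$ and $cr_{n+1}F$ is multi-reduced, and composing with $(S_{n+1}^F)_X$ gives back the inclusion $cr_{n+1}F(X_1,\ldots,X_{n+1})\hookrightarrow F(X)$ since $\nabla^{n+1}\circ(j_1\vee\cdots\vee j_{n+1})=\mathrm{id}_X$. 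This is exactly the section argument the paper spells out in the proof of Lemma~\ref{TnMDelm}, so your ``main obstacle'' is handled by the same technique already present in the paper.
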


Thus, we obtain the diagram:

$$\xymatrix{
     &&F  \ar[dl]_{t_{n+1}^F}   \ar[d]_{t_n^F}  \ar[dr]^{t_{n-1}^F} &&& \\
    \ldots   \ar[r] & T_{n+1}F \ar[r]^{\pi_{n+1}}&  T_{n}F \ar[r]^{\pi_{n}}  &T_{n-1}F \ar[r]^{\pi_{n-1}}& \ldots \ar[r]&T_1F \ar[r] & T_0F=0.
      }$$

To keep notation simple we write $t_n$ instead of $t_n^F$ when the functor $F$ is understood.

%++++++++++++++

	\begin{lm}\label{Tn(GoF)} Let $\C,\D$ be pointed categories with finite sums,   $\E$ be an abelian category and $\xymatrix{\C\ar[r]^F &\D\ar[r]^G &\E}$ be functors where $F$ is reduced. Then the natural transformation
	\[T_n(F^*t_n^G)\colon T_n(G\circ F) \longrightarrow T_n(T_nG\circ F)\] is an isomorphism.
	\end{lm}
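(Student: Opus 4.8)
The plan is to reduce everything to a statement about cross-effects using the characterization of $T_n$ via Proposition~\ref{Tn-prop}: since $T_n$ is left adjoint to the inclusion $U_n$, a morphism of polynomial functors of degree $\le n$ is an isomorphism as soon as it is, and it suffices to show that $T_n(F^*t_n^G)$ has this property after applying $T_n$ once more (which changes nothing, both sides being already polynomial of degree $\le n$). Concretely, I would first recall that $t_n^G\colon G\to T_nG$ is an epimorphism with kernel (more precisely, cokernel presentation) controlled by the image of $S_{n+1}^G$, i.e. there is an exact sequence $(cr_{n+1}G)\Delta^{n+1}_\D \xrightarrow{S_{n+1}^G} G \xrightarrow{t_n^G} T_nG \to 0$. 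Precomposing with the reduced functor $F$ and then applying the exact functor $T_n$ (here one uses that $T_n$ is a left adjoint, hence right exact, and that the relevant categories are abelian so cokernels behave well), the claim $T_n(F^*t_n^G)$ iso is equivalent to showing
\[
T_n\bigl((cr_{n+1}G\circ F^{\times(n+1)})\circ \Delta^{n+1}_\D\circ F\bigr) \;\longrightarrow\; T_n(G\circ F)
\]
has the same image as $S_{n+1}^{G\circ F}$, i.e. that precomposing the $(n+1)$-st cross-effect of $G$ with $F$ produces, after folding, exactly the degree-$\le n$ truncation data of $G\circ F$.

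The key computational step is the behaviour of cross-effects under precomposition with a reduced functor $F\colon \C\to\D$. The point is that since $F$ is reduced, $F(0)=0$, and $F$ sends the coproduct inclusions/retractions $i^n_{\hat k}, r^n_{\hat k}$ of $\C$ to the corresponding maps built from $F(X_1),\dots,F(X_n)$ in $\D$ — but $F$ does \emph{not} in general commute with $\vee$, so there is a natural comparison map $F(X_1)\vee\cdots\vee F(X_n) \to F(X_1\vee\cdots\vee X_n)$ whose cross-effects one must analyze. The crucial fact I would isolate is: for a reduced $F$, the $(n+1)$-st cross-effect $cr_{n+1}(G\circ F)(X,\dots,X)$, restricted along the diagonal, maps onto the same subobject of $G(F(X)\vee\cdots\vee F(X))$ that $cr_{n+1}G(F(X),\dots,F(X))$ does after applying the comparison map, because any "higher" contribution coming from the non-additivity of $F$ lies in strictly higher filtration and is itself killed by $T_n$. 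Thus $S_{n+1}^{G\circ F}$ and $S_{n+1}^{T_nG\circ F}$ have, up to the degree-$n$ truncation, the same image inside $G\circ F$ resp. $T_nG\circ F$, which is exactly what is needed.

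I expect the main obstacle to be making precise and bookkeeping the "higher filtration" argument in the previous paragraph: one needs to show that the difference between the two cokernel presentations — of $T_n(G\circ F)$ and of $T_n(T_nG\circ F)\circ F$ — is generated by elements that are already in the image of $S_{n+1}$, so that taking the $n$-th Taylorisation collapses it. A clean way to organize this is to use the cross-effect decomposition of Proposition~\ref{ce-prop} applied to $G$ evaluated on $F(X_1)\vee\cdots\vee F(X_{n+1})$ together with naturality of $S_\bullet$, and to run an induction on $n$ using the inductive definition $cr_{n+1} = cr_2(cr_n(-,X_3,\dots))$; the base case $n=1$ reduces to the elementary fact that $T_1$ only sees $cr_1$ and $cr_2$, on which precomposition with a reduced $F$ is transparent. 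An alternative, perhaps slicker, route is purely adjunction-theoretic: show directly that for every polynomial $H$ of degree $\le n$, precomposition induces a bijection $\Hom(T_n(T_nG\circ F),H)\cong \Hom(T_n(G\circ F),H)$ by unwinding the universal property of $T_n$ twice and using that $t_n^G$ induces a bijection $\Hom(T_nG,K)\cong\Hom(G,K)$ for polynomial $K$ of degree $\le n$ — but one still has to know that $K\circ(\text{something})$ stays polynomial of degree $\le n$, which again brings in the cross-effect estimate. Either way, the heart of the matter is the interaction between $cr_{n+1}$ and precomposition with a reduced (but non-additive) functor.
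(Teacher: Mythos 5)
Your plan correctly identifies the two key ingredients---the presentation $T_n(-)=\mathrm{coker}(S_{n+1})$ and the comparison map $F(X_1)\vee\cdots\vee F(X_{n+1})\to F(X_1\vee\cdots\vee X_{n+1})$ that exists because $F$ is reduced---and you are right that everything turns on how $cr_{n+1}$ interacts with precomposition by a reduced, non-additive $F$. But you misstate what actually needs to be proved, and that pushes you toward an unnecessary ``higher filtration'' step. The claim is not that $cr_{n+1}(G\circ F)(X,\ldots,X)$ and $cr_{n+1}G(FX,\ldots,FX)$ have matching images modulo higher degree; it is the much cheaper one-directional statement that $G((Fi_1,\ldots,Fi_{n+1}))\colon G(FX^{\vee(n+1)})\to GF(X^{\vee(n+1)})$ carries $cr_{n+1}G(FX,\ldots,FX)$ \emph{into} $cr_{n+1}(G\circ F)(X,\ldots,X)$, so that $(S_{n+1}^G)_{FX}$ factors through $(S_{n+1}^{GF})_X$. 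The containment of images may be strict, and no surjectivity---hence no filtration bookkeeping, no induction on $n$---is needed. There is also a directional slip in your formulation: $cr_{n+1}(G\circ F)(X,\ldots,X)$ lives inside $GF(X^{\vee(n+1)})$, not inside $G(FX\vee\cdots\vee FX)$, and the comparison map points from the latter to the former; keeping the two apart is precisely what makes the right statement and the right map visible.

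Once that factorization is established, the rest is a short universal-property argument. The reduced hypothesis on $F$ is used at exactly one place, to get $(Fi_1,\ldots,Fi_n)\circ r^{n+1}_{\hat{k}} = F(r^{n+1}_{\hat{k}})\circ(Fi_1,\ldots,Fi_{n+1})$ (both vanish on the $k$-th summand because $F(0)=0$), which shows the comparison map commutes with the $\hat{r}$-arrows and therefore restricts to cross-effects. From $(S_{n+1}^G)_{FX}$ factoring through $(S_{n+1}^{GF})_X$ it follows that $t_n^{GF}=\mathrm{coker}(S_{n+1}^{GF})$ kills the image of $(S_{n+1}^G)_{FX}$, hence factors through $F^*t_n^G=\mathrm{coker}(S_{n+1}^G)_{FX}$; polynomiality of $T_n(G\circ F)$ lets the residual map factor further through $t_n^{T_nG\circ F}$, producing a left inverse of $T_n(F^*t_n^G)$, which suffices because $T_n(F^*t_n^G)$ is an epimorphism. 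Your alternative adjunction-theoretic route stalls for the reason you yourself flag: it requires knowing in advance that precomposition by $F$ preserves polynomial degree, which is essentially the content of the lemma you are trying to prove.
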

	
	\begin{proof}
	To construct an inverse of $T_n(F^*t_n^G)$ we first show that the epimorphism\linebreak $\xymatrix{t_n^{GF}\colon G\circ F \ar@{->>}[r]& T_n(G\circ F)}$ factors through the  epimorphism $F^*t_n^G$.

	Consider the following diagram.

	$$\xymatrix{
	cr_{n+1}G(FX,\ldots,FX) \ar@/_6pc/[dd]_{(S_{n+1}^{G})_{FX}}
	\ar@{.>}[d] \ar@{{ >}->}[r]^-{\iota^G} & G(F(X)^{\vee n+1})\ar[d]_{G((Fi_1,\ldots,Fi_{n+1}))}
	\ar[r]^-{\hat{r}^G} & \prod_{k=1}^{n+1} G(F(X)^{\vee n})\ar[d]_{\prod G((Fi_1,\ldots,Fi_{n}))}\\
	cr_{n+1}(G\circ F)(X,\ldots,X)\ar[d]^{(S_{n+1}^{GF})_X}
	\ar@{{ >}->}[r]^-{\iota^{GF}}  & (G\circ F)(X^{\vee n+1}) \ar[dl]^{GF(\nabla^{n+1}_X)}
	\ar[r]^-{\hat{r}^{GF}}&\prod_{k=1}^{n+1} G(F(X)^{\vee n})\\
	GF(X) & &
	}$$
	The right-hand square commutes: for $1\le k\le n+1$, denoting by $pr_k$ the projection to the $k$-th factor, we have
	\begin{eqnarray*}
pr_k \circ \hat{r}^{GF}  \circ G((Fi_1,\ldots,Fi_{n+1})) &=& GF(r^{n+1}_{\hat{k}}) G((Fi_1,\ldots,Fi_{n+1})) \\
&=& G((F(r^{n+1}_{\hat{k}}i_1),\ldots,F(r^{n+1}_{\hat{k}}i_{n+1}))\\
&=&G((Fi_1,\ldots,Fi_{n}) r^{n+1}_{\hat{k}}) \quad\mbox{since $F$ is reduced}\\
&=&G((Fi_1,\ldots,Fi_{n})) G(r^{n+1}_{\hat{k}})\\
&=&pr_k \circ \mbox{$\prod $} G((Fi_1,\ldots,Fi_{n}))  \circ \hat{r}^{G}
\end{eqnarray*}
 whence the dotted arrow exists making the left-hand square commute. By definition of the map $S_{n+1}$ the right-hand triangle commutes , and so does the (graphically degenerate) left-hand triangle since $GF(\nabla^{n+1}_X)G((Fi_1,\ldots,Fi_{n+1})) = G(\nabla^{n+1}_{FX})$. Thus $S_{n+1}^G$ factors through $S_{n+1}^{GF}$ which means that $t_n^{GF} = {\rm coker}(S_{n+1}^{GF})$ factors through $F^*t_n^{G}={\rm coker}(t_n^G)_{FX}$, as
$\xymatrix{	t_n^{GF}\colon (G\circ F)(X) \ar@{->>}^-{F^*t_n^{G}}[r] & (T_nG)\circ F(X) \ar@{->>}[r]^-{\overline{t_n^{GF}}} & T_n(G\circ F)(X)}$\hspace{-1mm}. As $T_n(G\circ F)$ is polynomial of degree $\le n$, the map $\overline{t_n^{GF}}$ further factors as a composition $$\xymatrix{
\overline{t_n^{GF}}\colon (T_nG)\circ F 
	 \ar@{->>}[r]^-{t_n^{H}} &T_n(T_nG \circ F)  
	 \ar@{->>}[r]^-{\overline{\overline{t_n^{GF}}}} & T_n(G\circ F)}$$
	where $H=(T_nG)\circ F$. It remains to check that $\overline{\overline{t_n^{GF}}}$ is an inverse of $T_n(F^*t_n^G)$:
	\begin{eqnarray*}
	\left( \overline{\overline{t_n^{GF}}} \circ T_n(F^*t_n^G)\right) t_n^{GF}&=& \overline{\overline{t_n^{GF}}}\circ t_n^H \circ F^*t_n^G\\&=&
	\overline{t_n^{GF}} \circ F^*t_n^G\\&=&  t_n^{GF}
	\end{eqnarray*}
	whence $\overline{\overline{t_n^{GF}}} \circ T_n(F^*t_n^G)=1$. As $T_n(F^*t_n^G)$ is an epimorphism this suffices to conclude.
	\end{proof}

	%++++++++++++++++++++
\begin{lm}\label{TnMDelm} Let $\C$ be a pointed category with finite sums and $\D$ be abelian. Let $M\colon \C^m\to \D$ be a multireduced multifunctor (i.e. such that 
$M(X_1,\ldots,X_n)=0$ if $X_k=0$ for some $1\le k\le m$). Denote by 
$\Delta^m\colon\C \to \C^m$ the diagonal functor. Then for $1\le n<m$ one has $T_n(M\Delta^m)=0$ (in other words, $M\Delta^m$ is cohomogenous of degree $\le n$).
\end{lm}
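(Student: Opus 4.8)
The plan is as follows. Write $F = M\Delta^m$; since $M$ is multireduced, $F$ is reduced ($F(0)=M(0,\dots,0)=0$), so by Definition \ref{Tn-coker} it suffices to show that the natural transformation $S_{n+1}^F\colon (cr_{n+1}F)\Delta^{n+1}\to F$ of Definition \ref{+} is a split epimorphism, since then $T_nF = \mathrm{coker}(S_{n+1}^F)=0$. I would do this by producing an explicit section of $S_{n+1}^F$, coming from one suitably chosen ``mixed diagonal'' summand of $F(X^{\vee(n+1)})$; conceptually, $cr_{n+1}F(X,\dots,X)$ contains a copy of $F(X)$ for every surjection $\{1,\dots,m\}\twoheadrightarrow\{1,\dots,n+1\}$, and $S_{n+1}^F$ restricts to the identity on each such copy.

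Since $n<m$ we have $m\ge n+1$, so I may fix a surjection $a\colon\{1,\dots,m\}\twoheadrightarrow\{1,\dots,n+1\}$, $k\mapsto a_k$. Let $i_1,\dots,i_{n+1}\colon X\to X^{\vee(n+1)}$ denote the coproduct inclusions and $p_1,\dots,p_{n+1}\colon X^{\vee(n+1)}\to X$ the retractions with $p_j i_j=1_X$. I would then consider
$$\iota_a \;:=\; M(i_{a_1},\dots,i_{a_m})\colon\; F(X)=M(X,\dots,X)\;\longrightarrow\; M(X^{\vee(n+1)},\dots,X^{\vee(n+1)})=F(X^{\vee(n+1)}),$$
which is a split monomorphism with retraction $M(p_{a_1},\dots,p_{a_m})$ (as $p_{a_k}i_{a_k}=1_X$). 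The first real step is to check that $\iota_a$ factors through $cr_{n+1}F(X,\dots,X)$. By Proposition \ref{cr_n} this cross-effect is the kernel of $\hat r^F=(F(r^{n+1}_{\hat 1}),\dots,F(r^{n+1}_{\hat{n+1}}))^t$, so I must see that $F(r^{n+1}_{\hat l})\circ\iota_a=0$ for every $l\in\{1,\dots,n+1\}$. Now $F(r^{n+1}_{\hat l})\circ\iota_a = M(r^{n+1}_{\hat l}i_{a_1},\dots,r^{n+1}_{\hat l}i_{a_m})$, and surjectivity of $a$ gives some $k$ with $a_k=l$; the $k$-th entry of this tuple is $r^{n+1}_{\hat l}i_l=0\colon X\to X^{\vee n}$, a morphism factoring through the zero object, so $M$ of the whole tuple factors through $M(\dots,0,\dots)=0$ by multireducedness of $M$. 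Hence $\iota_a$ factors as $F(X)\xrightarrow{\tilde\iota_a} cr_{n+1}F(X,\dots,X)\hookrightarrow F(X^{\vee(n+1)})$ for a unique $\tilde\iota_a$.

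It then remains to compose with $S_{n+1}^F$. By Definition \ref{+}, $S_{n+1}^F$ is $F(\nabla^{n+1})$ precomposed with the inclusion $cr_{n+1}F(X,\dots,X)\hookrightarrow F(X^{\vee(n+1)})$ (this inclusion being the kernel inclusion, by Proposition \ref{cr_n}), so
$$S_{n+1}^F\circ\tilde\iota_a \;=\; F(\nabla^{n+1})\circ\iota_a \;=\; M(\nabla^{n+1}i_{a_1},\dots,\nabla^{n+1}i_{a_m}) \;=\; M(1_X,\dots,1_X)\;=\;1_{F(X)},$$
because $\nabla^{n+1}i_j=1_X$ for each $j$. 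Since $i_j$, $r^{n+1}_{\hat l}$ and $\nabla^{n+1}$ are natural in $X$, the map $\tilde\iota_a$ is a natural transformation and hence a section of $S_{n+1}^F$ in $Func(\C,\D)$; alternatively, as $\D$ is abelian the cokernel defining $T_nF$ is computed pointwise and we have just seen that $S_{n+1}^F$ is pointwise split epi. Either way $T_n(M\Delta^m)=0$, i.e.\ $M\Delta^m$ is cohomogenous of degree $\le n$.

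I expect the only delicate point to be the middle step — that $\iota_a$ lands in $cr_{n+1}F$ — and this is exactly where both hypotheses are used: the inequality $n<m$ is precisely what guarantees the existence of a surjection $\{1,\dots,m\}\twoheadrightarrow\{1,\dots,n+1\}$ (if instead $n\ge m$, every choice of $a$ misses some index $l$, the retraction $F(r^{n+1}_{\hat l})$ need not annihilate $\iota_a$, and the argument collapses), while multireducedness of $M$ is what converts ``some component of the tuple is a zero morphism'' into ``$M$ of the tuple vanishes.'' The remaining computations are routine unwindings of Definitions \ref{+} and \ref{Tn-coker} together with the identities $p_ji_j=1_X$ and $\nabla^{n+1}i_j=1_X$.
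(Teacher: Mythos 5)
Your proof is correct and follows essentially the same approach as the paper's: exhibiting a section of $S^{F}_{n+1}$ landing in the cross-effect via multireducedness of $M$. The only cosmetic difference is that the paper first reduces to the case $n=m-1$ (using the factorization $t_n=\overline{t_n}\circ t_{m-1}$) and then takes the section $M(i_1,\dots,i_m)$, whereas you skip this reduction by choosing an arbitrary surjection $a\colon\{1,\dots,m\}\twoheadrightarrow\{1,\dots,n+1\}$ and using $M(i_{a_1},\dots,i_{a_m})$, which collapses to the paper's construction when $n=m-1$.
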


\begin{proof}
We may assume that $n=m-1$, thanks to the natural factorization of the morphism
$\xymatrix{t_n\colon
M\Delta^m \ar@{-{>>}}[r]^-{t_{m-1}} & T_{m-1}(M\Delta^{m}) \ar@{-{>>}}[r]^-{\overline{t_{n}}} & T_n(M\Delta^{m})
}$, where $\overline{t_n}=\pi_{n+1} \ldots \pi_{m-1}$ (see the diagram after the Proposition \ref{Tn-prop}). So it suffices to show that the map $S_m^{M\Delta^{m}}\colon cr_m(M\Delta^{m})\circ \Delta^{m} \to M\Delta^{m}$ is a pointwise epimorphism. For $X\in \C$, $$\xymatrix{(S_m^{M\Delta^{m}})_X\colon  cr_m(M\Delta^{m})(X,\ldots,X)\ar@{{ >}->}[r]^-{\iota_{X,\ldots,X}^{M\Delta^{m}} } & M(X^{\vee m},\ldots, X^{\vee m}) \ar[rr]^-{M(\nabla^m,\ldots,\nabla^m)} & &M(X,\ldots,X). }$$ But $M(i_1,\ldots,i_m)\colon 
M(X,\ldots,X)\to M(X^{\vee m},\ldots, X^{\vee m})$ is a section of $M(\nabla^m,\ldots,\nabla^m)$ which takes values in $cr_m(M\Delta^{m})(X,\ldots,X)$ since for $1\le k\le m$, $M\Delta^{m}(r^m_{\hat{k}}) M(i_1,\ldots,i_m)$ $ =0$ since $r^m_{\hat{k}}i_k=0$ and $M$ is multireduced. Hence $(\iota_{X,\ldots,X}^{M\Delta^{m}})^{-1}M(i_1,\ldots,i_m)$ is a section of $(S_m^{M\Delta^{m}})_X$, whence the latter  indeed is an epimorphism.
\end{proof}

%%%%%
The following proposition will be useful in section \ref{section5}.
	\begin{prop}\label{polopol} [Theorem $1.9$ in  \cite{PiraSS} or Proposition $1.6$  in  \cite{JMcotriples}]
	Let $\xymatrix{\C\ar[r]^F & \A\ar[r]^G& \mathcal{B}}$ be functors where $\A$ and $\mathcal{B}$ are abelian categories. If $F$ is polynomial of degree $\leq n$ and $G$ is polynomial of degree $\leq m$ then $G \circ F$ is polynomial of degree $\leq nm$.
	\end{prop}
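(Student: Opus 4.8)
The plan is to compute $cr_{nm+1}(G\circ F)$ and check that it vanishes, by iterating the cross-effect decomposition of Proposition \ref{ce-prop}: once for $F$, then once for $G$. First I would reduce to $F$ and $G$ reduced — split off the constant parts using the natural splittings $F\cong cr_1F\oplus\ul{F(0)}$, $G\cong cr_1G\oplus\ul{G(0)}$, expand $G\circ F$ accordingly with Proposition \ref{ce-prop}, and use that polynomial functors of degree $\le nm$ are closed under subobjects, quotients, extensions (Proposition \ref{thick}) and finite direct sums (Proposition \ref{exact}); this leaves only composites of reduced functors to handle. So assume $F\colon\C\to\A$, $G\colon\A\to\mathcal{B}$ reduced. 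Fix $p\ge 1$, write $\ul p=\{1,\ldots,p\}$, and for $\emptyset\ne S\subseteq\ul p$ and $X_1,\ldots,X_p\in\C$ set $X_S=(X_i)_{i\in S}$ and $F_S=cr_{|S|}F(X_S)\in\A$; since $\deg F\le n$, we have $F_S=0$ whenever $|S|>n$.

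Since $\A$ is abelian, its finite coproducts are biproducts, so Proposition \ref{ce-prop} for $F$ gives a natural isomorphism $F(X_1\vee\ldots\vee X_p)\cong\bigoplus_{\emptyset\ne S\subseteq\ul p}F_S$ under which, by naturality, $F(i^p_{\hat k})$ is the inclusion of the sub-biproduct $\bigoplus_{S\,:\,k\notin S}F_S$. Applying $G$ and invoking Proposition \ref{ce-prop} again, this time for the finite family $(F_S)_S$ of objects of $\A$, yields a natural isomorphism
$$(G\circ F)(X_1\vee\ldots\vee X_p)\;\cong\;\bigoplus_{\emptyset\ne\mathcal{T}}cr_{|\mathcal{T}|}G\bigl((F_S)_{S\in\mathcal{T}}\bigr),$$
where $\mathcal{T}$ ranges over the nonempty sets of indices $S$ and only those with $|\mathcal{T}|\le m$ survive because $\deg G\le m$. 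Again by naturality of this decomposition with respect to the split monomorphisms above, $(G\circ F)(i^p_{\hat k})=G$ applied to the inclusion $\bigoplus_{S\,:\,k\notin S}F_S\hookrightarrow\bigoplus_S F_S$ has image exactly $\bigoplus_{\mathcal{T}\,:\,k\notin\bigcup_{S\in\mathcal{T}}S}cr_{|\mathcal{T}|}G\bigl((F_S)_{S\in\mathcal{T}}\bigr)$.

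By Proposition \ref{ce-prop} applied to the reduced functor $G\circ F$, the cross-effect $cr_p(G\circ F)(X_1,\ldots,X_p)$ is the sub-biproduct of $(G\circ F)(X_1\vee\ldots\vee X_p)$ complementary to $\sum_{k=1}^p\mathrm{im}\bigl((G\circ F)(i^p_{\hat k})\bigr)$; combined with the last sentence, that sum of images is $\bigoplus_{\mathcal{T}\,:\,\bigcup_{S\in\mathcal{T}}S\ne\ul p}cr_{|\mathcal{T}|}G\bigl((F_S)_{S\in\mathcal{T}}\bigr)$, so
$$cr_p(G\circ F)(X_1,\ldots,X_p)\;\cong\;\bigoplus_{\mathcal{T}\,:\,\bigcup_{S\in\mathcal{T}}S=\ul p}cr_{|\mathcal{T}|}G\bigl((F_S)_{S\in\mathcal{T}}\bigr).$$
A nonzero summand forces $|\mathcal{T}|\le m$, each $|S|\le n$ (otherwise $F_S=0$), and $\bigcup_{S\in\mathcal{T}}S=\ul p$, whence $p=\bigl|\bigcup_{S\in\mathcal{T}}S\bigr|\le\sum_{S\in\mathcal{T}}|S|\le|\mathcal{T}|\,n\le mn$. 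Thus for $p=nm+1$ the index set is empty, $cr_{nm+1}(G\circ F)=0$, and $G\circ F$ is polynomial of degree $\le nm$.

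I expect the only real work to be the naturality bookkeeping in the middle steps: verifying that under the two applications of Proposition \ref{ce-prop} the maps $F(i^p_{\hat k})$ and then $(G\circ F)(i^p_{\hat k})$ are indeed the sub-biproduct inclusions, respectively have the stated image — the point being that $G$ applied to the projection killing the summands $F_S$ with $k\in S$ annihilates every summand $cr_{|\mathcal{T}|}G\bigl((F_S)_{S\in\mathcal{T}}\bigr)$ with $k\in\bigcup_{S\in\mathcal{T}}S$, since cross-effects vanish on a zero argument. This is routine ``cross-effect yoga'', but it is where care is needed; note also that abelianness of $\A$ is used essentially, as it is what makes $F(X_1\vee\ldots\vee X_p)$ literally the biproduct of the $F_S$ to which the decomposition of $G$ is applied.
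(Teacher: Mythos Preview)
The paper does not supply its own proof of this proposition; it simply cites Theorem~1.9 of \cite{PiraSS} and Proposition~1.6 of \cite{JMcotriples}. Your argument is correct and is essentially the standard ``double cross-effect decomposition'' proof one finds in those references: decompose $F(X_1\vee\cdots\vee X_p)$ via Proposition~\ref{ce-prop}, then apply Proposition~\ref{ce-prop} again to $G$ on the resulting biproduct in $\A$, identify $cr_p(G\circ F)$ with the sub-sum indexed by families $\mathcal{T}$ covering $\ul{p}$, and conclude by the pigeonhole bound $p\le n\,|\mathcal{T}|\le nm$. The naturality bookkeeping you flag (that $GF(r^p_{\hat{k}})$ kills precisely the summands $cr_{|\mathcal{T}|}G((F_S)_{S\in\mathcal{T}})$ with $k\in\bigcup_{S\in\mathcal{T}}S$, because cross-effects are multireduced) is indeed the only point requiring care, and you have it right.
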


%%%%%%%%%%%%%%

\section{Dold-Kan type theorem for $\Gamma(\PP)$}
In \cite{Pira-Dold} it is proved that the categories $\Gamma$ and $\Omega$ are Morita equivalent i.e. we have an equivalence of categories $Func(\Gamma, Ab) \simeq Func(\Omega, Ab)$. In \cite{BDFP}, the authors use this result and its contravariant version. The aim of this section is to generalize this Morita equivalence to the category $\Gamma(\PP)$. 
More precisely, we have the following result.

\begin{thm} \label{DK}
Let $\PP$ be a unitary set operad, there are natural equivalences:
$$\xymatrix{
Func(\Gamma(\PP), Ab)  \ar@<-.5ex>[r]_-{cr}  & Func(\Omega(\PP), Ab) \ar@<-.5ex>[l]_-{i_!}}$$
$$\xymatrix{
Func(\Gamma(\PP)^{op}, Ab)  \ar@<-.5ex>[r]_-{\tilde{cr}}  & Func(\Omega(\PP)^{op}, Ab) \ar@<-.5ex>[l]_-{i_*}}.$$
\end{thm}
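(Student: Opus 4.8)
The plan is to generalize the argument of \cite{Pira-Dold} (as used in \cite{BDFP}) to the decorated setting, and indeed to give an explicit construction of the equivalence. The key observation is that the categories $\Gamma(\PP)$ and $\Omega(\PP)$ differ only in that morphisms of $\Gamma(\PP)$ are allowed to ``collapse'' elements to the basepoint: any pointed map $f\colon [n]\to[m]$ in $\Gamma(\PP)$ factors canonically as a surjection onto its image (in $\Omega(\PP)$) composed with a pointed injection, and the pointed injections are exactly the maps which collapse a subset to the basepoint and embed the rest bijectively. So the first step is to exhibit this factorization system on $\Gamma(\PP)$: every morphism of $\Gamma(\PP)$ is (uniquely up to isomorphism) a composite of a retraction $r$ onto a standard sub-object, the inclusion $\iota$ of that sub-object, and an element of $\Omega(\PP)$; this is the decorated analogue of the ``epi--split mono'' type decomposition underlying Dold--Kan for $\Gamma$ versus $\Omega$.

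Next I would set up the two functors. The functor $cr$ (respectively $\tilde{cr}$) sends a functor $G\colon \Gamma(\PP)\to Ab$ to its first cross-effect, i.e.\ $G$ restricted to $\Omega(\PP)$ after killing the image of the basepoint-collapsing retractions; concretely $cr(G)(\underline{n})=\mathrm{cr}_n G([1],\ldots,[1])$ evaluated appropriately, or equivalently the direct summand of $G([n])$ complementary to the contributions of the proper sub-objects, using the cross-effect decomposition of Proposition \ref{ce-prop}. In the other direction, $i_!$ is the left Kan extension along the inclusion $\iota\colon\Omega(\PP)\hookrightarrow\Gamma(\PP)$ (and $i_*$ the right Kan extension in the contravariant case). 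The bulk of the work is then to check that the unit and counit of this (Kan extension, restriction) adjunction are isomorphisms. Here one uses the factorization from the first step together with Proposition \ref{ce-prop}: evaluating $i_!M$ on $[n]$ produces a colimit over the comma category which, by the factorization, reduces to a finite direct sum $\bigoplus_{S\subseteq[n]} M(S)$ indexed by sub-objects, matching exactly the cross-effect decomposition; and the cross-effects of such a direct-sum functor recover $M$. The role of Lemma \ref{Lindner}, the Span description, and the double-category machinery is to make these manipulations functorial and to handle the decorations $\omega^f$ coherently — in particular Lemma \ref{dble-iso-MT} ensures the only ambiguity is by honest bijections, so the indexing sets of the direct sums are well-defined.

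The main obstacle will be bookkeeping the operad decorations through the colimit/Kan extension computation: unlike the undecorated case, a morphism of $\Gamma(\PP)$ carries elements $\omega^f_y\in\PP(|f^{-1}(y)|)$ over the non-basepoint fibers, and when one restricts to a sub-object the decorations over collapsed points must be discarded in a way compatible with composition (this is exactly the asymmetry noted in the Remark after Lemma \ref{uniq-square}, where liftings of squares to $\Gamma(\PP)_2$ fail to be unique when something maps to the basepoint). Making the comma-category colimit collapse correctly therefore requires showing that the ``extra'' non-unique choices of decoration over basepoint-fibers do not contribute — i.e.\ that they are coned off in the colimit — which is where the explicit proof advertised in the introduction (as opposed to a black-box appeal to S\l{}omi\'nska \cite{Slom}) must do its real work. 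Once the covariant equivalence is established, the contravariant one follows by the same argument applied to $\D^{op}=Ab^{op}$, using Proposition \ref{contra-coker} to rewrite the relevant kernels as cokernels; I would simply remark that the construction dualizes.
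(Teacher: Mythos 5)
Your plan correctly anticipates the shape of the equivalence (cross-effects in one direction, a direct sum over subsets in the other, with the decoration bookkeeping concentrated in the combinatorics of collapsing to the basepoint), but it misidentifies the categorical nature of $i_!$, and that error propagates into the proposed verification. The functor with $i_!(G)([n]) = \bigoplus_{\mu \subset \underline{n}} G(\underline{|\mu|})$ is \emph{not} the left Kan extension along $j\iota\colon \Omega(\PP) \to \Gamma(\PP)$. Evaluate at $[0]$: since $\mathrm{Hom}_{\Gamma(\PP)}([m],[0])$ is a singleton for every $m$, the comma category $j\iota \downarrow [0]$ is all of $\Omega(\PP)$, so $\mathrm{Lan}_{j\iota}(G)([0])$ is the colimit of $G$ over the whole of $\Omega(\PP)$ — not $G(\underline{0}) = i_!(G)([0])$. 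Consequently, checking that the unit and counit of the $(\mathrm{Lan}, \mathrm{restriction})$ adjunction are isomorphisms cannot be the right plan: if it succeeded, the quasi-inverse to $i_!$ would (by uniqueness of adjoints) be restriction along $j\iota$, but the actual quasi-inverse is the cross-effect functor $cr$, and $cr(F_*)(\underline{n}) = cr_n F_*([1],\ldots,[1])$ is a proper direct summand of $F_*([n])$, not naturally isomorphic to the restriction.

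The paper's proof does not go through Kan extensions. It builds orthogonal idempotents $f_A \in \Z[\mathrm{End}_{\Gamma(\PP)}([n])]$ for $A \subset [n]$ with $0 \in A$ by Moebius inversion over the poset of such subsets (Lemma \ref{DK-lm1}, applying Theorem \ref{poset}), then computes directly that $\mathrm{Hom}(P^{\Gamma(\PP)}_{[n]} f_A,\, P^{\Gamma(\PP)}_{[m]} f_B) \cong \Z[\mathrm{Hom}_{\Omega(\PP)}(\underline{|B\setminus\{0\}|}, \underline{|A\setminus\{0\}|})]$ (Lemma \ref{DK-lm2}), so that the full subcategory $Q$ of these split projectives satisfies $Q^{op} \simeq \Z[\Omega(\PP)]$ (Lemma \ref{Q^op-eq}). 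Gabriel--Popescu (Theorem \ref{Popescu}) then converts this Morita statement into the equivalence of functor categories, and the explicit formulas for $cr$ and $i_!$ in Proposition \ref{DK-explicit} are read off by tracking what the idempotents $f_A$ do to a functor. Your intuition about the non-unique lifting of decorations over basepoint fibers, and about needing to do real work rather than cite S\l{}omi\'nska, is sound — this is exactly what the Moebius-inverted idempotents encode — but that combinatorics lives in the endomorphism ring of $[n]$, not in a comma-category colimit.
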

These equivalences are already established in \cite{Slom}. In this paper, S{\l}omi{\'n}ska describes general conditions which imply Morita equivalences of functor categories. The previous theorem is an application, given in section $2.7$, of her general result. However, we here give another proof of this theorem. We obtain the following explicit description of the natural equivalences appearing in the previous theorem.

\begin{prop} \label{DK-explicit}
For $F_* \in Func(\Gamma(\PP), Ab)$ 
$$cr(F_*)(\underline{n})=cr_nF_*([1], \ldots , [1])$$
where $cr_n$ is the $n$-th cross-effect of a covariant functor, and for $(f, \omega^f) \in Hom_{\Omega(\PP)}(\underline{n}, \underline{m})$ we denote by $(f_+, \omega^{f_+})$ the unique extension of $(f, \omega^f)$ in $\Gamma(\PP)$, $cr(F_*)(f, \omega^f)$ is the morphism induced by restriction of $F(f_+, \omega^{f_+})$.

For $F^* \in Func(\Gamma(\PP)^{op}, Ab)$)
$$\tilde{cr}(F^*)(\underline{n})=\tilde{cr_n}F^*([1], \ldots , [1])$$
where $\tilde{cr_n}$ is the $n$-th cross-effect of a contravariant functor, and for $(f, \omega^f) \in Hom_{\Omega(\PP)}(\underline{n}, \underline{m})$ $\tilde{cr}(F^*)(f, \omega^f)$ is defined as previously.

For $G_* \in Func(\Omega(\PP),Ab)$, 
$$i_!(G_*)([n])=\bigoplus_{\mu \subset \underline{n}} G_*(\underline{ \mid \mu \mid}).$$
For $(g,\omega^g) \in Hom_{\Gamma(\PP)}([n], [m])$ the restriction of the morphism $i_!(G_*)(g,\omega_g)$ to the summand $G_*( \underline{ \mid \mu \mid})$ of $i_!(G_*)([n])$ is $0$ if $0 \in g(\mu)$ and is the morphism:
$$G_*(g_{|\mu}, \omega^{g_{|\mu}}): G_*(\underline{ \mid \mu \mid}) \to G_*(\underline{ \mid g(\mu) \mid})$$
followed by the injection into $i_!(G_*)([m])$ otherwise.

For $G^* \in Func(\Omega(\PP)^{op},Ab)$,
$$i_*(G^*)([n])=\bigoplus_{\mu \subset \underline{n}} G^*(\underline{ \mid \mu \mid}).$$

For $(h,\omega^h) \in Hom_{\Gamma(\PP)^{op}}([n], [m])$ the restriction of the morphism $i_*(G^*)(h,\omega_h)$ to the summand $G^*( \underline{ \mid \mu \mid})$ of $i_*(G^*)([n])$ is $0$ to the factors  $G^*( \underline{ \mid \nu \mid})$ such that $h(\nu) \neq \mu$ and is the morphism:
$$G^*(h_{|\nu}, \omega^{h_{|\nu }}): G^*(\underline{ \mid \mu \mid}) \to G^*(\underline{ \mid \nu \mid})$$
to the factors  $G^*( \underline{ \mid \nu \mid})$ such that  $h(\nu)= \mu$.
\end{prop}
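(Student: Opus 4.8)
\emph{Plan.} The plan is to adopt the four formulas in the statement as the \emph{definitions} of the functors $cr$, $\tilde{cr}$, $i_!$, $i_*$, to verify that each is well defined, and then to prove directly that $cr$ and $i_!$ (resp.\ $\tilde{cr}$ and $i_*$) are mutually quasi-inverse equivalences; this will simultaneously give the announced independent proof of Theorem \ref{DK}. I will work out the covariant case in detail and merely indicate the formally dual contravariant one, where the kernel description of $cr_n$ in Proposition \ref{cr_n} gets replaced by the cokernel description of $\tilde{cr}_n$ in Proposition \ref{contra-coker}.

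\emph{Well-definedness.} First I would check that $cr$ takes values in $Func(\Omega(\PP),Ab)$. By Proposition \ref{cr_n}, $cr_nF_*([1],\ldots,[1])$ is the common kernel of the maps $F_*(r^n_{\hat k})$, where $r^n_{\hat k}\colon[n]\to[n-1]$ is viewed in $\Gamma(\PP)$ with trivial decoration, and the key point is that for $(f,\omega^f)\in Hom_{\Omega(\PP)}(\underline n,\underline m)$ with canonical $\Gamma(\PP)$-extension $(f_+,\omega^{f_+})$ the map $F_*(f_+,\omega^{f_+})$ sends $cr_nF_*([1],\ldots,[1])$ into $cr_mF_*([1],\ldots,[1])$: for $j\in\underline m$, surjectivity of $f$ provides some $k_0\in f^{-1}(j)$, so $r^m_{\hat j}\circ f_+$ kills the coordinate $k_0$ and thus factors in $\Gamma(\PP)$ as $(g',\omega^{g'})\circ(r^n_{\hat{k_0}},1_\PP^{\times(n-1)})$ (the decoration being forced since every fibre of $r^n_{\hat{k_0}}$ over a non-basepoint is a singleton), whence $F_*(r^m_{\hat j})F_*(f_+,\omega^{f_+})$ vanishes on $\ker F_*(r^n_{\hat{k_0}})$, a fortiori on the $n$-th cross-effect; functoriality of $cr$ is then inherited from that of $j\circ\iota\colon\Omega(\PP)\to\Gamma(\PP)$. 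Next I would check that $i_!$ is well defined, i.e.\ that its recipe on morphisms respects composition: for $(g,\omega^g)\colon[n]\to[m]$, $(h,\omega^h)\colon[m]\to[l]$ in $\Gamma(\PP)$ and $\mu\subset\underline n$, if $0\in g(\mu)$ then $0\in(hg)(\mu)$ and both $i_!(G_*)(h)\circ i_!(G_*)(g)$ and $i_!(G_*)(hg)$ annihilate the summand $G_*(\underline{|\mu|})$; if $0\notin g(\mu)$ then $g$ restricts to a surjection $\mu\to g(\mu)$, which with the induced decoration and the order inherited from $\mu\subset\underline n$ is a morphism of $\Omega(\PP)$, and one deduces from the composition law of Definition \ref{May-T} that $0\in(hg)(\mu)\iff 0\in h(g(\mu))$ and that $(hg)_{|\mu}=h_{|g(\mu)}\circ g_{|\mu}$ in $\Omega(\PP)$ when $0\notin(hg)(\mu)$; the reindexing permutations $\sigma_i$ of Definition \ref{May-T} cause no trouble because passing to $\mu$ merely deletes elements and preserves relative orders.

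\emph{The two composites.} Then I would show $cr\circ i_!\cong\mathrm{Id}$ and $i_!\circ cr\cong\mathrm{Id}$. For the first, $cr(i_!(G_*))(\underline n)=cr_n(i_!(G_*))([1],\ldots,[1])$ is the common kernel of the maps induced by the $r^n_{\hat k}$ on $i_!(G_*)([n])=\bigoplus_{\mu\subset\underline n}G_*(\underline{|\mu|})$; by the defining recipe $r^n_{\hat k}$ carries the summand $\mu$ isomorphically (up to relabelling) onto the summand $r^n_{\hat k}(\mu)$ when $k\notin\mu$ and to $0$ when $k\in\mu$, so the common kernel is precisely the summand $\mu=\underline n$, namely $G_*(\underline n)$, and this identification intertwines $cr(i_!(G_*))(f,\omega^f)$ with $G_*(f,\omega^f)$. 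For the second, $i_!(cr(F_*))([n])=\bigoplus_{\mu\subset\underline n}cr_{|\mu|}F_*([1],\ldots,[1])$, which the cross-effect decomposition of $F_*([1]\vee\cdots\vee[1])$ (Proposition \ref{ce-prop} applied to the reduced part $cr_1F_*$, the summand $\mu=\emptyset$ accounting for $F_*([0])$) identifies canonically with $F_*([n])$; naturality of this identification in $\Gamma(\PP)$ amounts to the claim that, for $(g,\omega^g)\colon[n]\to[m]$, the composite of $F_*(g,\omega^g)$ with the inclusion of the $\mu$-summand and the projection onto the $\mu'$-summand vanishes unless $\mu'=g(\mu)$ with $0\notin g(\mu)$, in which case it equals $cr(F_*)(g_{|\mu},\omega^{g_{|\mu}})$, and this is obtained by precomposing with the wedge inclusion $\iota_\mu$ and using, when $0\in g(\mu)$, the same factorization-through-$r^{|\mu|}_{\hat c}$ as before and, when $0\notin g(\mu)$, the factorization $g\circ\iota_\mu=\iota_{g(\mu)}\circ(g_{|\mu})_+$.

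\emph{Contravariant case and main obstacle.} Finally, the contravariant statements follow by running the whole argument with all arrows reversed: by Proposition \ref{contra-coker}, $\tilde{cr}_nF^*([1],\ldots,[1])$ is the cokernel of $\bigoplus_k F^*(r^n_{\hat k})$, the dual cross-effect decomposition and the same factorizations give $\tilde{cr}\circ i_*\cong\mathrm{Id}$ and $i_*\circ\tilde{cr}\cong\mathrm{Id}$, and the condition $h(\nu)=\mu$ in the statement is the contravariant counterpart of the condition $\mu'=g(\mu)$ above. I expect the main obstacle to be the verification in the well-definedness of $i_!$, and its reappearance in the naturality part of $i_!\circ cr\cong\mathrm{Id}$: reconciling the purely combinatorial recipe ``$0$ if $0\in g(\mu)$, otherwise $G_*(g_{|\mu},\omega^{g_{|\mu}})$'' with genuine functoriality forces one to track both the operad decorations $\omega$ and the reindexing permutations of the May--Thomason composition law under restriction of a morphism to a subset of its source. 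Apart from that, the argument is a transcription of the classical Dold--Kan argument of \cite{Pira-Dold} and \cite{BDFP}, with the operad labels carried along passively.
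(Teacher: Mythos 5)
Your proposal takes a genuinely different route from the paper's. The paper derives the explicit formulas \emph{after} establishing the equivalence abstractly: it first builds the orthogonal idempotents $f_A$ in $\Z[\End_{\Gamma(\PP)}([n])]$ by M\"obius inversion (Lemma \ref{DK-lm1}), invokes Gabriel--Popescu to identify $Func(\Gamma(\PP),Ab)$ with $Func^{add}(Q^{op},Ab)$, identifies $Q^{op}$ with $\Z[\Omega(\PP)]$ via a counting argument (Lemmas \ref{DK-lm2}, \ref{Q^op-eq}), and only then computes $cr(F)(\underline n)=\mathrm{Im}(\hat F(f_{[n]}))=cr_nF([1],\dots,[1])$ and checks that $i_!(G)$ is the functor corresponding to $G$ by evaluating $\Hom(P^{\Gamma(\PP)}_{[n]}f_A,i_!(G))$. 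You propose instead to take the formulas as definitions and to verify by hand that they give quasi-inverse functors, re-proving Theorem \ref{DK} along the way. Both approaches succeed, and they trade off along a familiar axis: the paper's buys conceptual economy, since once the idempotents are in place, functoriality and the quasi-inverse property are consequences of Yoneda and Gabriel--Popescu, with all combinatorics confined to Lemma \ref{DK-lm2}; your approach is more elementary and more self-contained, but it concentrates the combinatorial burden in exactly the two places you flag as the obstacle. Specifically, for functoriality of $i_!$ one must show $(hg)_{|\mu}=h_{|g(\mu)}\circ g_{|\mu}$ in $\Omega(\PP)$ when $0\notin(hg)(\mu)$; this does hold, because the decorated restriction $(g,\omega^g)_{|\mu}$ is by construction the composite $(g,\omega^g)\circ(i_\mu,\omega^{i_\mu})$ with the decorated inclusion, and one has the factorization $(g,\omega^g)\circ(i_\mu,\omega^{i_\mu})=(i_{g(\mu)},\omega^{i_{g(\mu)}})\circ(g_{|\mu},\omega^{g_{|\mu}})$, from which the identity follows by associativity and unitality of $\gamma$ together with equivariance absorbing the reindexing permutations $\sigma_i$. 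Your sketch defers precisely this check with ``the reindexing permutations cause no trouble''; a complete write-up would need to make it explicit, and it is exactly this bookkeeping that the paper's idempotent machinery is designed to avoid. Modulo filling in that verification and its contravariant dual, the argument is sound.
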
 

The previous explicit description of natural equivalences let us to prove the following result.
\begin{prop}
The natural equivalences of Theorem \ref{DK}
 are natural in $\PP$ (i.e. for a morphism of set-operad $\alpha: \PP \to \mathcal{Q}$ we have commutative diagrams
$$\xymatrix{
Func(\Gamma(\PP), Ab)  \ar@<-.5ex>[r]_-{cr}  & Func(\Omega(\PP), Ab) \ar@<-.5ex>[l]_-{i_{!}}\\
Func(\Gamma(\mathcal{Q}), Ab)  \ar@<-.5ex>[r]_-{cr} \ar[u] & Func(\Omega(\mathcal{Q}), Ab) \ar@<-.5ex>[l]_-{i_{!}}  \ar[u] } \quad 
\xymatrix{
Func(\Gamma(\PP)^{op}, Ab)  \ar@<-.5ex>[r]_-{\tilde{cr}}  & Func(\Omega(\PP)^{op}, Ab) \ar@<-.5ex>[l]_-{i_{ *}}\\
Func(\Gamma(\mathcal{Q})^{op}, Ab)  \ar@<-.5ex>[r]_-{\tilde{cr}} \ar[u] & Func(\Omega(\mathcal{Q})^{op}, Ab) \ar@<-.5ex>[l]_-{i_{*}}  \ar[u] }
$$

where the vertical maps are induced by precomposition by the functors $\Gamma(\PP) \to \Gamma(\mathcal{Q})$ and $\Omega(\PP) \to \Omega(\mathcal{Q})$ induced by $\alpha$.
\end{prop}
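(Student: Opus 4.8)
The plan is to exploit the explicit formulas for the four functors $cr$, $\tilde{cr}$, $i_!$, $i_*$ given in Proposition \ref{DK-explicit} and simply check, formula by formula, that each one intertwines the restriction functors $\alpha^*$. The key observation is that a morphism of operads $\alpha\colon\PP\to\mathcal Q$ induces functors $\Gamma(\alpha)$, $\Omega(\alpha)$ which are the \emph{identity on objects} and on underlying set maps: they only push forward the decorations $\omega^f$ along the maps $\PP(k)\to\mathcal Q(k)$. In particular, for a surjection $f$, the extension $(f_+,\omega^{f_+})$ in $\Gamma(\PP)$ is sent by $\Gamma(\alpha)$ to the extension of $\Omega(\alpha)(f,\omega^f)$, so the two procedures "extend to $\Gamma$" and "apply $\alpha$" commute on the nose. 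The same holds for the various restrictions $(g_{|\mu},\omega^{g_{|\mu}})$, $(h_{|\nu},\omega^{h_{|\nu}})$ appearing in the formulas for $i_!$ and $i_*$, since restricting a decorated map to a subset commutes with pushing the decoration forward along $\alpha$.

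First I would treat the square for $cr$. Given $F_*\in Func(\Gamma(\mathcal Q),Ab)$, the functor $\alpha^*F_*=F_*\circ\Gamma(\alpha)$ has the same underlying values as $F_*$, and its cross-effects $cr_n(\alpha^*F_*)([1],\dots,[1])$ are canonically identified with $cr_nF_*([1],\dots,[1])$ because $\Gamma(\alpha)$ preserves the wedge decomposition $[n]=[1]\vee\cdots\vee[1]$ and the maps $r^n_{\hat k}$. Hence $cr(\alpha^*F_*)(\underline n)=cr(F_*)(\underline n)$ as abelian groups, and on a morphism $(f,\omega^f)$ of $\Omega(\PP)$ both composites act by restricting $F_*(\Gamma(\alpha)(f_+,\omega^{f_+}))$; these agree since $\Gamma(\alpha)(f_+,\omega^{f_+})=(\Omega(\alpha)(f,\omega^f))_+$. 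The contravariant square for $\tilde{cr}$ is identical, using $\tilde{cr}_n$ and the maps $i^n_{\hat k}$ in place of $r^n_{\hat k}$. For the squares involving $i_!$ and $i_*$ one argues directly from the displayed formulas: $i_!(\alpha^*G_*)([n])=\bigoplus_{\mu\subset\underline n}(\alpha^*G_*)(\underline{|\mu|})=\bigoplus_{\mu}G_*(\underline{|\mu|})=\alpha^*(i_!G_*)([n])$ (note $i_!$ on $Func(\Omega(\mathcal Q),Ab)$ composed with $\Gamma(\alpha)$ gives the same summands, since $\Gamma(\alpha)$ is the identity on objects and sends $(g,\omega^g)$ to a morphism with the same underlying map $g$, so $0\in g(\mu)$ is detected identically); and on morphisms the summand-to-summand component is $G_*$ applied to $\Omega(\alpha)(g_{|\mu},\omega^{g_{|\mu}})=(\Omega(\alpha)(g,\omega^g))_{|\mu}$ in both composites. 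The case of $i_*$ is symmetric.

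The only point requiring any care — and the place I would expect to spend the most effort — is the \emph{naturality of the comparison isomorphisms} rather than of the functors: one must check that the adjunction unit/counit exhibiting $cr$ and $i_!$ as mutually inverse equivalences (constructed in the proof of Theorem \ref{DK}, which we are entitled to assume) is compatible with $\alpha^*$, i.e. that the canonical isomorphism $F_*\xrightarrow{\sim} i_!(cr(F_*))$ maps under $\alpha^*$ to the analogous isomorphism for $\alpha^*F_*$. This reduces, via the explicit formula, to the statement that the natural decomposition of Proposition \ref{ce-prop} applied to $F_*$ over $\Gamma(\mathcal Q)$ restricts along $\Gamma(\alpha)$ to the corresponding decomposition of $\alpha^*F_*$ over $\Gamma(\PP)$ — which holds because $\Gamma(\alpha)$ preserves wedges and the inclusions/retractions $i^n_{\hat k}$, $r^n_{\hat k}$ that build the decomposition. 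Granting this, each of the four diagrams commutes (strictly, or up to the evident canonical isomorphism), which is exactly the assertion of the proposition. I would write the proof as: "This is immediate from the explicit descriptions in Proposition \ref{DK-explicit}, since the functors $\Gamma(\alpha)$ and $\Omega(\alpha)$ are the identity on objects and on underlying set maps, and restriction of decorated maps to subsets as well as extension of surjections to pointed maps commute with pushforward of decorations along $\alpha$; the compatibility of the comparison isomorphisms follows likewise from the fact that $\Gamma(\alpha)$ preserves the wedge decomposition used in Proposition \ref{ce-prop}."
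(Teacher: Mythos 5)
Your proof is correct and follows exactly the route the paper indicates (the paper simply states that "the previous explicit description of natural equivalences" from Proposition \ref{DK-explicit} yields the result, and leaves the details to the reader). The decisive observations — that $\Gamma(\alpha)$ and $\Omega(\alpha)$ are the identity on objects and underlying set maps and only push decorations forward, so that extension $(f,\omega^f)\mapsto(f_+,\omega^{f_+})$ and restriction to subsets commute with $\alpha$ — are precisely what make the four formulas in Proposition \ref{DK-explicit} manifestly compatible with $\alpha^*$, which is the verification the paper has in mind.
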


\begin{rem} For $\PP$ a unitary set-operad, we have a unique morphism $\PP \to \C om$ since $\C$om is the terminal object in the category of unitary set-operads (see Example \ref{ex-1.1} (2)). So, we obtain that the natural equivalences $cr, i_!,\tilde{cr}, i_*$ extend the Morita equivalence  between $\Gamma$ and $\Omega$ in \cite{Pira-Dold}, i.e. we have commutative diagrams
$$\xymatrix{
Func(\Gamma(\PP), Ab)  \ar@<-.5ex>[r]_-{cr}  & Func(\Omega(\PP), Ab) \ar@<-.5ex>[l]_-{i_{!}}\\
Func(\Gamma, Ab)  \ar@<-.5ex>[r]_-{cr} \ar@{^{(}->}[u] & Func(\Omega, Ab) \ar@<-.5ex>[l]_-{i_{!}}  \ar@{^{(}->}[u] } \quad 
\xymatrix{
Func(\Gamma(\PP)^{op}, Ab)  \ar@<-.5ex>[r]_-{\tilde{cr}}  & Func(\Omega(\PP)^{op}, Ab) \ar@<-.5ex>[l]_-{i_{ *}}\\
Func(\Gamma^{op}, Ab)  \ar@<-.5ex>[r]_-{\tilde{cr}} \ar@{^{(}->}[u] & Func(\Omega^{op}, Ab) \ar@<-.5ex>[l]_-{i_{*}}  \ar@{^{(}->}[u] }
$$
where the vertical functors are induced by precomposition by the  functors $ \Omega(\PP) \to \Omega(\C om)= \Omega$ and $ \Gamma(\PP) \to \Gamma(\C om)=\Gamma$ induced by the unique morphism $\PP \to \C om$.
\end{rem}

To prove Proposition \ref{DK-explicit}, we use an argument combining the Moebius inversion and the theorem of Gabriel-Popescu. This argument has been used in \cite{Vespa1} to obtain a description of the category $Func(Span(\E^q_{deg}(\mathbb{F}_2)), \E)$ in terms of modules over orthogonal groups where $\E^q_{deg}(\mathbb{F}_2)$ is the category of $\mathbb{F}_2$-quadratic spaces and $\E$ is the category of $\mathbb{F}_2$-vector spaces and in the appendix of \cite{Dja-V} to give an alternative proof of the Dold-Kan type theorem given in  \cite{Pira-Dold} which is very similar to the proof of Theorem \ref{DK} presented here. 

First, we recall the two principal ingredients.
\begin{thm}[Theorem 3.9.2 \cite{Stanley}] \label{poset}
Let $(X, \leq)$ be a finite poset, in which every pair $\{x,y\}$ has a greatest lower bound $x \wedge y$. Let $0$ denote the smallest element in $X$. Suppose that $X$ has a greatest element $1$. Let $R$ be a ring (with identity element $1_R$), and suppose that $\alpha \mapsto e_{\alpha}$ is a map from $X$ to $R$ with the properties that $e_{\alpha}e_{\beta}=e_{\alpha \wedge \beta}$ for any $\alpha, \beta \in X$, and $e_1=1_R$. For $\alpha \in X$, define
$$f_{\alpha}=\sum_{\beta \leq \alpha}\mu_X(\beta, \alpha) e_{\beta},$$
where $\mu_X$ is the M\"{o}bius function of $X$. Then the elements $f_{\alpha}$, for $\alpha \in X$, are orthogonal idempotents of $R$, whose sum is equal to $1_R$. Furthermore, we have $e_{\alpha}=\sum_{\beta \leq \alpha} f_{\beta}$.
\end{thm}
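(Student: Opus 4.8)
The plan is to derive all three assertions from the single relation $e_\alpha=\sum_{\beta\le\alpha}f_\beta$, which is nothing but the M\"obius inversion of the definition of $f_\alpha$; the genuinely substantial point — that the $f_\alpha$ are \emph{orthogonal} idempotents — will then follow by applying M\"obius inversion a second time, now on the product poset $X\times X$.

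First I would record the inversion that ties the definition of $f_\alpha$ to the last displayed formula of the statement. Since $X$ is finite and $0$ is its least element, $\{\beta:\beta\le\alpha\}$ is the interval $[0,\alpha]$, and the standard M\"obius inversion principle in a finite poset gives
$$f_\alpha=\sum_{\beta\le\alpha}\mu_X(\beta,\alpha)\,e_\beta \quad\Longleftrightarrow\quad e_\alpha=\sum_{\beta\le\alpha}f_\beta,$$
the two implications resting respectively on the defining identities $\sum_{\gamma\le\beta\le\alpha}\mu_X(\beta,\alpha)=\delta_{\gamma\alpha}$ and $\sum_{\gamma\le\beta\le\alpha}\mu_X(\gamma,\beta)=\delta_{\gamma\alpha}$ of the M\"obius function. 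This proves the last assertion of the theorem. Specializing to $\alpha=1$ and using that $1$ is the greatest element (so that $\{\beta:\beta\le1\}=X$) together with $e_1=1_R$, we get $\sum_{\beta\in X}f_\beta=e_1=1_R$, which is the second assertion.

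It remains to show $f_\alpha f_\gamma=\delta_{\alpha\gamma}\,f_\alpha$, and this is where I would use the multiplicativity hypothesis $e_\alpha e_\beta=e_{\alpha\wedge\beta}$. Substituting $e_\alpha=\sum_{\beta\le\alpha}f_\beta$ into both sides of $e_\alpha e_\gamma=e_{\alpha\wedge\gamma}$ yields, for all $\alpha,\gamma\in X$,
$$\sum_{\beta\le\alpha}\ \sum_{\delta\le\gamma} f_\beta f_\delta \;=\;\sum_{\substack{\epsilon\le\alpha\\ \epsilon\le\gamma}} f_\epsilon .$$
The left-hand side is the zeta transform of the $R$-valued function $(\beta,\delta)\mapsto f_\beta f_\delta$ on $X\times X$, whose M\"obius function factors as $\mu_X(\beta',\beta)\,\mu_X(\delta',\delta)$. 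Inverting on $X\times X$ gives
$$f_\beta f_\delta=\sum_{\alpha\le\beta}\ \sum_{\gamma\le\delta}\mu_X(\alpha,\beta)\,\mu_X(\gamma,\delta)\!\!\sum_{\substack{\epsilon\le\alpha\\ \epsilon\le\gamma}}\!\! f_\epsilon ,$$
and after interchanging the order of summation (summing over $\epsilon$ outermost) this becomes
$$f_\beta f_\delta=\sum_{\epsilon} f_\epsilon\Big(\sum_{\epsilon\le\alpha\le\beta}\mu_X(\alpha,\beta)\Big)\Big(\sum_{\epsilon\le\gamma\le\delta}\mu_X(\gamma,\delta)\Big)=\sum_\epsilon \delta_{\epsilon\beta}\,\delta_{\epsilon\delta}\,f_\epsilon ,$$
where each inner sum vanishes unless $\epsilon\le\beta$ (resp.\ $\epsilon\le\delta$) and otherwise equals the indicated Kronecker delta, by the M\"obius identity already quoted. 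The surviving term forces $\epsilon=\beta=\delta$, so $f_\beta f_\delta=\delta_{\beta\delta}f_\beta$: the $f_\alpha$ are idempotent and pairwise orthogonal, which finishes the argument.

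I do not expect a deep obstacle here; the only care needed is bookkeeping — tracking which of the two dual M\"obius identities is invoked at each step, and checking that the inversion over $X\times X$ is legitimate (it is, since $X\times X$ is again finite and its M\"obius function is the product of those of the factors). One could alternatively prove the orthogonality purely by induction over the product poset, avoiding any explicit mention of $\mu_{X\times X}$, but the two-step inversion above seems the cleanest presentation.
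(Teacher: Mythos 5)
The paper does not prove this result at all: it is imported by citation from Stanley (Theorem 3.9.2), so there is no internal argument to compare your proof against. Assessing it on its own merits, your argument is correct. The first M\"obius inversion turns the defining formula for $f_\alpha$ into $e_\alpha=\sum_{\beta\le\alpha}f_\beta$, and specializing at $\alpha=1$ (using that $1$ is the greatest element and $e_1=1_R$) gives completeness. For orthogonality, substituting the inverted form into $e_\alpha e_\gamma=e_{\alpha\wedge\gamma}$ and using $\{\epsilon\mid\epsilon\le\alpha\wedge\gamma\}=\{\epsilon\mid\epsilon\le\alpha,\ \epsilon\le\gamma\}$ (the only place the meet hypothesis enters) reduces everything to a second M\"obius inversion on $X\times X$, legitimate because $X\times X$ is finite with $\mu_{X\times X}=\mu_X\otimes\mu_X$; the interchange of finite sums and the collapse via the two Kronecker deltas are all valid, including the degenerate case of an empty interval $[\epsilon,\beta]$ contributing $0$. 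The one implicit point you might spell out is that M\"obius inversion applies to functions valued in any abelian group, here $(R,+)$ via its $\Z$-module structure, but you use this correctly. Stanley's own argument reaches the same orthogonality by expanding $f_\alpha f_\gamma$ directly and grouping terms by the value of the meet, which requires somewhat more bookkeeping; your two-step inversion on the product poset is a clean and equivalent packaging of the same idea.
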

\begin{thm}[Gabriel-Popescu theorem; \cite{Popescu} Corollaire 6.4 p 103] \label{Popescu}
For any abelian category $\C$ the following assertions are equivalent.
\begin{enumerate}
\item The category $\C$ has arbitrary direct sums and a set  $\{ P_i\}_{i \in I}$  of  small projective generators of $\C$.
\item The category $\C$ is equivalent to the subcategory  $Func^{add}(\mathcal{Q}^{op}, Ab)$ of $Func(\mathcal{Q}^{op}, Ab)$ whose objects are additive functors  (i.e. functors satisfying $F(f+g)=F(f)+F(g)$ where $f$ and $g$ are morphisms of $Hom_{\mathcal{Q}^{op}}(V,W)$) and $\mathcal{Q}$ is the full subcategory of $\C$ whose set of objects is $\{ P_i \mid i \in I \}$.
\end{enumerate}
\end{thm}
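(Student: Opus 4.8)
This is the Morita-type version of the Gabriel-Popescu theorem (the case where the generators are assumed projective), and the plan is to concentrate on the substantial implication $(1)\Rightarrow(2)$: the converse $(2)\Rightarrow(1)$ is a routine check, since in $Func^{add}(\mathcal{Q}^{op},Ab)$ direct sums are formed pointwise, and the representable functors $\Hom_{\mathcal{Q}}(-,Q)$ for $Q\in\mathcal{Q}$ are a set of small projective generators by the Yoneda lemma together with the fact that evaluation at an object of $\mathcal{Q}$ is exact and commutes with direct sums.

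Assume $(1)$, let $\mathcal{Q}$ be the full subcategory of $\C$ on the $P_i$, and consider
\[ h\co\C\longrightarrow Func^{add}(\mathcal{Q}^{op},Ab),\qquad X\longmapsto\Hom_{\C}(-,X)\big|_{\mathcal{Q}}; \]
this lands in additive functors because $\mathcal{Q}$ is a full preadditive subcategory of the abelian category $\C$ and $\Hom_{\C}$ is biadditive. The first and most important step is to establish that $h$ commutes with arbitrary direct sums; this is exactly the smallness of each $P_i$, i.e.\ the natural isomorphism $\Hom_{\C}(P_i,\bigoplus_\alpha X_\alpha)\cong\bigoplus_\alpha\Hom_{\C}(P_i,X_\alpha)$. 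Feeding this into the Yoneda lemma for $\mathcal{Q}$, one deduces the technical lemma that underlies everything else: for any family $(Q_\beta)$ in $\mathcal{Q}$ and any $Y\in\C$, the canonical map $\Hom_{\C}(\bigoplus_\beta Q_\beta,Y)\to\Hom\big(h(\bigoplus_\beta Q_\beta),hY\big)$ is bijective.

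Next I would prove $h$ faithful and full. Faithfulness follows at once from $\{P_i\}$ being a generating set: if $h(g)=0$ then $g\circ\varphi=0$ for every $\varphi\co P_i\to X$, and the canonical epimorphism $\bigoplus_{i,\varphi}P_i\twoheadrightarrow X$ then forces $g=0$. For fullness, given $\eta\co hX\to hY$, I would choose a presentation $\bigoplus_\beta P_{j_\beta}\xrightarrow{\,r\,}\bigoplus_\alpha P_{i_\alpha}\xrightarrow{\,p\,}X\to0$ in $\C$ by coproducts of the $P_i$ (available because the $P_i$ generate), apply $h$, and use projectivity of the $P_i$ to see that the resulting sequence stays exact in $Func^{add}(\mathcal{Q}^{op},Ab)$ (where cokernels are pointwise), so that $hX=coker(hr)$ there. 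By the technical lemma, $\eta\circ hp=h(\tilde{g})$ for a unique $\tilde{g}\co\bigoplus_\alpha P_{i_\alpha}\to Y$; since $h(\tilde{g}\circ r)=\eta\circ h(p\circ r)=0$, faithfulness gives $\tilde{g}\circ r=0$, so $\tilde{g}$ factors as $g\circ p$, and $h(g)=\eta$ because $hp$ is a pointwise epimorphism.

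For essential surjectivity, take $F\in Func^{add}(\mathcal{Q}^{op},Ab)$ and present it as $\bigoplus_\beta h_{Q_\beta}\xrightarrow{\,u\,}\bigoplus_\alpha h_{Q_\alpha}\to F\to0$, using that the representables $h_Q=\Hom_{\mathcal{Q}}(-,Q)$ generate the functor category. The technical lemma realizes $u$ as $h(v)$ for a unique $v\co\bigoplus_\beta Q_\beta\to\bigoplus_\alpha Q_\alpha$ in $\C$; set $X:=coker(v)$, which exists since $\C$ is abelian with arbitrary direct sums. Applying $\Hom_{\C}(P,-)$ for each $P\in\mathcal{Q}$ to the defining right-exact sequence of $X$ and using that $P$ is projective shows $h(X)$ is the pointwise cokernel of $h(v)=u$, hence $h(X)\cong F$, and naturality of this isomorphism is then immediate. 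The genuine obstacle in all of this is the bookkeeping in the second paragraph around smallness, i.e.\ making $h$ commute with the (possibly infinite) direct sums occurring in the projective presentations; granting that, the remainder is the classical diagram chase with projectives and generators.
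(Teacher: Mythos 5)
This theorem is not proved in the paper; it is quoted from Popescu's book (Corollaire~6.4, p.~103), so there is no in-paper argument to compare yours against. Your proof is nevertheless correct and is the standard Morita-theoretic argument for the projective-generator form of Gabriel--Popescu: set $h(X)=\Hom_{\C}(-,X)|_{\mathcal{Q}}$, use smallness of the $P_i$ to make $h$ preserve direct sums, combine this with Yoneda to get the key lemma identifying $\Hom_{\C}(\bigoplus_\beta Q_\beta,Y)$ with $\Hom(h(\bigoplus_\beta Q_\beta),hY)$, and then deduce faithfulness from the generating property, fullness and essential surjectivity from projective presentations together with the pointwise nature of (co)kernels in $Func^{add}(\mathcal{Q}^{op},Ab)$ and the fact that projectivity of the $P_i$ makes $h$ right exact on such presentations. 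The sketch of $(2)\Rightarrow(1)$ via Yoneda and pointwise evaluation is also correct. No gaps.
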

The proof of Proposition \ref{DK-explicit} relies on the following lemmas.

\begin{lm} \label{DK-lm1}
Let $R$ be the ring $\Z[End_{\Gamma(\PP)}([n])]$ (the identity element of $R$ is $(Id_{[n]}, 1_{\PP}^{\times n})$), $A$ be a subset  of $[n]$ containing $0$ and $(e_A, \omega^{e_A})\in End_{\Gamma(\PP)}([n])$ given by $e_A(j)=j$ if $j \in A$ and $e_A(j)=0$ else, and 
$$\omega^{e_A}=(1_{\PP}, \ldots 1_{\PP}, 0_{\PP}, \ldots, 0_{\PP}) \in  \prod_{y \in [n], y \not= 0} \PP(\mid e_A^{-1}(y) \mid )=\prod_{a \in A, a \not= 0} \PP(1) \times \prod_{a' \in A^c} \PP(0) $$
 where $1_{\PP} \in \PP(1)$ and $0_{\PP} \in \PP(0)$. Then the elements $f_A$ defined by:
$$f_A=\sum_{B \subset A} \mu_{\PP os([n])}(B,A) (e_B, \omega^{e_B})=\sum_{B \subset A} (-1)^{\mid A \setminus B \mid} (e_B, \omega^{e_B})$$
are orthogonal idempotents of $R$ such that $\sum_{A \subset [n]} f_A=(Id_{[n]}, (1_{\PP}, \ldots, 1_{\PP}) \in \PP(1)^{\times n})$, for $\PP os([n])$ the poset of subsets of $[n]$ containing $0$ ordered by the inclusion.
\end{lm}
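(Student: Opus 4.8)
The plan is to deduce the lemma from Stanley's theorem (Theorem \ref{poset}), applied to the finite poset $X=\PP os([n])$ and the ring $R=\Z[\End_{\Gamma(\PP)}([n])]$, through the assignment $A\mapsto e_A:=(e_A,\omega^{e_A})$. First I would check that $X$ satisfies the hypotheses of Theorem \ref{poset}: it is finite, the meet of two subsets $A,B$ containing $0$ is $A\cap B$ (which again contains $0$), its least element is $\{0\}$ and its greatest element is $[n]$.

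The heart of the matter is to verify the two algebraic identities required by Theorem \ref{poset}, namely $e_{[n]}=1_R$ and $e_A\circ e_B=e_{A\cap B}$ for all $A,B\in X$. The first is immediate: since $[n]^c=\emptyset$, the pair $(e_{[n]},\omega^{e_{[n]}})$ is exactly $(Id_{[n]},1_{\PP}^{\times n})$, the identity morphism of $[n]$ in $\Gamma(\PP)$, hence $1_R$. For the second I would compute the composite in $\Gamma(\PP)$ following Definition \ref{May-T} (recall that composition in $\Gamma(\PP)$ is defined as in $\mathcal{S}(\PP)$). At the level of underlying set maps, $e_A\circ e_B$ sends $j$ to $j$ when $j\in A\cap B$ and to $0$ otherwise, i.e.\ it is the set map $e_{A\cap B}$. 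For the decorations, fix $i\in[n]$ with $i\neq 0$. If $i\notin A\cap B$ then $(e_A\circ e_B)^{-1}(i)=\emptyset$, so $\omega^{e_A\circ e_B}_i\in\PP(0)=\{0_{\PP}\}=\{\omega^{e_{A\cap B}}_i\}$. If $i\in A\cap B$ then $e_A^{-1}(i)=\{i\}$, so the composition formula reads $\omega^{e_A\circ e_B}_i=\gamma(\omega^{e_A}_i;\omega^{e_B}_i)\cdot\sigma_i$, where $\sigma_i$ is a permutation of the one-element set $(e_A\circ e_B)^{-1}(i)=\{i\}$, hence trivial; since $i\in A\setminus\{0\}$ and $i\in B\setminus\{0\}$ we have $\omega^{e_A}_i=\omega^{e_B}_i=1_{\PP}$, and the unit axiom of the operad gives $\gamma(1_{\PP};1_{\PP})=1_{\PP}=\omega^{e_{A\cap B}}_i$. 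This establishes $e_A\circ e_B=e_{A\cap B}$ in $\End_{\Gamma(\PP)}([n])$ (in particular each $e_A$ is idempotent).

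With these identities in hand, Theorem \ref{poset} applies directly: the elements $f_A=\sum_{B\subseteq A}\mu_X(B,A)e_B$ are orthogonal idempotents of $R$ whose sum is $e_{[n]}=(Id_{[n]},1_{\PP}^{\times n})$, together with the inversion $e_A=\sum_{B\subseteq A}f_B$. It then remains only to identify the M\"obius function of $X$: the interval $[B,A]$ of $\PP os([n])$ is order-isomorphic (via $C\mapsto C\setminus B$) to the Boolean lattice on the set $A\setminus B$, whose M\"obius function from bottom to top is $(-1)^{|A\setminus B|}$; this yields the closed form $f_A=\sum_{B\subseteq A}(-1)^{|A\setminus B|}(e_B,\omega^{e_B})$ appearing in the statement.

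I expect the only delicate point to be the bookkeeping in the composition rule of $\Gamma(\PP)$: one must keep the indexing of $\omega^{e_A}$ over $[n]\setminus\{0\}$ straight (splitting it into the part over $A\setminus\{0\}$, where the component is $1_{\PP}$, and the part over $A^c$, where it is $0_{\PP}$), and observe that every fiber occurring in the computation has at most one element, so the correction permutations $\sigma_i$ are all trivial and no equivariance subtlety intervenes. Everything else is a direct citation of Theorem \ref{poset} together with the standard M\"obius function of a Boolean lattice.
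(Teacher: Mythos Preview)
Your proof is correct and follows exactly the same approach as the paper: verify that $(e_{[n]},\omega^{e_{[n]}})$ is the identity and that $(e_A,\omega^{e_A})(e_B,\omega^{e_B})=(e_{A\cap B},\omega^{e_{A\cap B}})$, then invoke Theorem~\ref{poset}. In fact you supply more detail than the paper (which simply asserts the product identity and the M\"obius computation), and your observation that the case $i\notin A\cap B$ is forced because $\PP(0)$ is a singleton is a clean shortcut.
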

\begin{proof}
Let $[n]$ be an object in $\Gamma(\PP)$. The pointed set $[n]$ is a greatest element of $\PP os([n])$. Every pair $\{A, B\}$ of elements of $\PP os([n])$ admits a greatest lower bound given by the intersection $A \cap B$. Let $R$ be the ring $\Z[End_{\Gamma(\PP)}([n])]$. To an element $A$ of $\PP os([n])$ we associate the endomorphism of $[n]$ in $\Gamma(\PP)$ $(e_A, \omega^{e_A}) $ defines in the statement.
We have, 
$$(e_{[n]}, \omega^{e_{[n]}})=(Id_{[n]}, (1_{\PP}, \ldots, 1_{\PP}) \in \PP(1)^{\times n})  \mathrm{\quad and \quad}(e_A, \omega^{e_A}) (e_B, \omega^{e_B})=(e_{A \cap B}, \omega^{e_{A \cap B}}).$$ Consequently, by Theorem \ref{poset} the elements $f_A$ defined in the statement
are orthogonal idempotents of $R$ such that $ \sum_{A \subset [n]} f_A=(Id_{[n]}, (1_{\PP}, \ldots, 1_{\PP}))$. 
\end{proof}

In the following proposition we give another description of the idempotents $f_A$ which will be useful in the sequel.

\begin{prop} \label{idemp-explic}
Let $A$ be a subset of $[n]$. We have the following identity:
$$f_A=(e_A, \omega^{e_A})\underset{B\subsetneq A}{\prod}((Id_{[n]}, 1_{\PP}^{\times n})-(e_B, \omega^{e_B})).$$
\end{prop}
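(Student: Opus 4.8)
The plan is to expand the product on the right-hand side and match it term-by-term with the defining formula $f_A=\sum_{B\subset A}(-1)^{|A\setminus B|}(e_B,\omega^{e_B})$ from Lemma \ref{DK-lm1}. First I would record the two multiplicative facts established in the proof of Lemma \ref{DK-lm1}: the identity element of $R=\Z[\End_{\Gamma(\PP)}([n])]$ is $(Id_{[n]},1_{\PP}^{\times n})=(e_{[n]},\omega^{e_{[n]}})$, and $(e_A,\omega^{e_A})(e_B,\omega^{e_B})=(e_{A\cap B},\omega^{e_{A\cap B}})$ for all $A,B$ containing $0$. In particular the elements $(e_B,\omega^{e_B})$ commute with each other, so the product $\prod_{B\subsetneq A}\big((Id_{[n]},1_{\PP}^{\times n})-(e_B,\omega^{e_B})\big)$ is unambiguous, and left-multiplying it by $(e_A,\omega^{e_A})$ gives, after using $(e_A,\omega^{e_A})(e_B,\omega^{e_B})=(e_{A\cap B},\omega^{e_{A\cap B}})=(e_B,\omega^{e_B})$ for $B\subsetneq A$,
$$(e_A,\omega^{e_A})\prod_{B\subsetneq A}\big((Id_{[n]},1_{\PP}^{\times n})-(e_B,\omega^{e_B})\big)=\prod_{B\subsetneq A}\big((e_A,\omega^{e_A})-(e_B,\omega^{e_B})\big).$$

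Next I would expand this last product using distributivity. A subset of the index set $\{B : B\subsetneq A\}$ of size $k$ on which we pick the term $-(e_B,\omega^{e_B})$ contributes $(-1)^k (e_{B_1}\cap\cdots\cap B_k)$-type term multiplied by the appropriate power of $(e_A,\omega^{e_A})$; since $(e_A,\omega^{e_A})$ is idempotent and absorbs each $(e_{B_i},\omega^{e_{B_i}})$, every such term collapses to $\pm(e_{C},\omega^{e_{C}})$ where $C=B_1\cap\cdots\cap B_k$ (with $C=A$ for the empty selection). Collecting, the coefficient of $(e_C,\omega^{e_C})$ for a fixed $C\subsetneq A$ is $\sum_{k\ge 1}(-1)^k N_k(C)$ where $N_k(C)$ counts the number of $k$-element antichains... more simply, it is $\sum(-1)^{|\mathcal{B}|}$ over all nonempty families $\mathcal{B}$ of proper subsets of $A$ whose intersection equals $C$. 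The standard way to evaluate this is Möbius inversion on the subset lattice, or equivalently an inclusion–exclusion / nerve argument: this alternating sum equals $(-1)^{|A\setminus C|}$ when $C\neq A$ and equals $1$ when $C=A$, which is exactly the coefficient appearing in $f_A=\sum_{C\subset A}(-1)^{|A\setminus C|}(e_C,\omega^{e_C})$.

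The combinatorial identity in the previous step is the main obstacle, and I would handle it cleanly rather than by brute expansion. The slick approach: for $C\subsetneq A$, the families $\mathcal{B}$ of proper subsets of $A$ with $\bigcap\mathcal{B}=C$ are governed by the complements; writing everything in $A\setminus C$, one is counting families of nonempty subsets of the finite set $A\setminus C$ whose union is all of $A\setminus C$, with sign $(-1)^{|\mathcal{B}|}$, and the alternating count of such covering families of an $m$-element set is $(-1)^{m}$ (this is a well-known identity, provable by inclusion–exclusion on which elements fail to be covered, or by recognizing it as the reduced Euler characteristic of a contractible-vs-void complex). Feeding $m=|A\setminus C|\ge 1$ gives the claimed coefficient $(-1)^{|A\setminus C|}$; the term $C=A$ arises uniquely from the empty selection with coefficient $1=(-1)^{|A\setminus A|}$. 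Assembling these coefficients reproduces the formula of Lemma \ref{DK-lm1} verbatim, proving the identity. Alternatively, and perhaps more in the spirit of the paper, one can avoid the combinatorics entirely: by Theorem \ref{poset} the $f_A$ are the unique orthogonal idempotents with $e_B=\sum_{A\supset B... }$, wait—more precisely, one checks directly that the right-hand side $g_A:=(e_A,\omega^{e_A})\prod_{B\subsetneq A}(1-(e_B,\omega^{e_B}))$ satisfies $g_A g_{A'}=0$ for $A\neq A'$ (because if $A\not\subset A'$ pick $B=A\cap A'\subsetneq A$ so the factor $1-(e_B,\omega^{e_B})$ kills it after multiplying by $(e_{A'},\omega^{e_{A'}})$, and symmetrically) and $(e_C,\omega^{e_C})=\sum_{A\subset C}g_A$ by a telescoping/induction on $|C|$; uniqueness of such a system of idempotents refining the $e$'s then forces $g_A=f_A$. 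I expect the second route to be shorter to write and less error-prone.
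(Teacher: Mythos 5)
Your first route (direct expansion of the product) is correct and gives a complete argument. Every step checks out: the idempotent $(e_A,\omega^{e_A})$ absorbs into each factor since $(e_A,\omega^{e_A})(e_B,\omega^{e_B})=(e_B,\omega^{e_B})$ for $B\subsetneq A$; expanding $\prod_{B\subsetneq A}\big((e_A,\omega^{e_A})-(e_B,\omega^{e_B})\big)$ over selections $S$ of the index set yields $\sum_S(-1)^{|S|}\big(e_{\bigcap_{B\in S}B},\omega^{e_{\bigcap_{B\in S}B}}\big)$, with the empty selection giving $(e_A,\omega^{e_A})$; and since $\bigcap_{B\in S}B=C$ forces each $B\in S$ to contain $C$, the coefficient of $(e_C,\omega^{e_C})$ for $C\subsetneq A$ becomes, after complementing inside $A$, the alternating count of covers of $A\setminus C$ by nonempty subsets, which equals $(-1)^{|A\setminus C|}$ by binomial inversion from $\sum_{\mathcal{T}\subseteq 2^{[m]}\setminus\{\emptyset\}}(-1)^{|\mathcal{T}|}=0$ for $m\geq 1$. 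That reproduces $f_A=\sum_{C\subseteq A}(-1)^{|A\setminus C|}(e_C,\omega^{e_C})$. Be aware that the paper does not actually prove this proposition but only cites Proposition~4.39 of \cite{Vespa1}, so there is no argument in the paper itself against which to compare your approach.

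The second route you say you would prefer has a genuine gap: the step $(e_C,\omega^{e_C})=\sum_{A\subseteq C}g_A$ is not a simple ``telescoping/induction.'' The inductive hypothesis gives $e_{C'}=\sum_{A\subseteq C'}g_A$ only for each fixed $C'\subsetneq C$, and summing these over all $C'\subsetneq C$ overcounts each $g_A$ with a nontrivial multiplicity, so the induction does not close without extra work. (Also note that once you do have $e_C=\sum_{A\subseteq C}g_A$, M\"obius inversion alone gives $g_A=f_A$; orthogonality of the $g_A$ is not actually needed for the conclusion.) A clean way to plug the gap -- and in fact the shortest proof overall -- is to substitute $e_D=\sum_{D'\subseteq D}f_{D'}$, valid by Theorem~\ref{poset}, into $g_A=e_A\prod_{B\subsetneq A}(1-e_B)$, rewrite each factor as $1-\sum_{B'\subseteq B}f_{B'}=\sum_{D\not\subseteq B}f_D$, and use orthogonality of the $f$'s: the product collapses to $\sum_E f_E$ over those $E\subseteq A$ with $E\not\subseteq B$ for every $B\subsetneq A$, and the only such $E$ is $A$ itself, giving $g_A=f_A$ directly. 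Either write out your first route in full, or replace the ``telescoping'' step of the second with this computation.
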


\begin{proof}
The proof of this proposition is the same that the proof of Proposition $4.39$ in \cite{Vespa1} mutatis mutandis.
\end{proof}

Let $P_{[n]}^{\Gamma(\PP)}=\Z[Hom_{\Gamma(\PP)}([n], -)]$ be the standard projective objects of $Func(\Gamma(\PP), Ab)$.

\begin{lm} \label{DK-lm2}
Let $A$ be  a subset of $[n]$ and $B$ be a subset of $[m]$, we have:
$$Hom_{Func(\Gamma(\PP), Ab)}(P_{[n]}^{\Gamma(\PP)} f_A, P_{[m]}^{\Gamma(\PP)} f_B) \simeq \Z[Hom_{\Omega(\PP)}(\underline{\mid B \setminus\{0 \} \mid }, \underline{\mid A \setminus\{0 \} \mid })].$$
\end{lm}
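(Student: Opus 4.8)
The plan is to use the Yoneda lemma together with the idempotent decomposition of the standard projectives established in Lemma \ref{DK-lm1} and Proposition \ref{idemp-explic}. First I would recall that for any idempotents $f_A \in R=\Z[\End_{\Gamma(\PP)}([n])]$ and $f_B \in \Z[\End_{\Gamma(\PP)}([m])]$, the Yoneda lemma gives a natural isomorphism
$$\Hom_{Func(\Gamma(\PP), Ab)}(P_{[n]}^{\Gamma(\PP)} f_A, P_{[m]}^{\Gamma(\PP)} f_B) \simeq f_A \cdot \Z[\Hom_{\Gamma(\PP)}([m],[n])]\cdot f_B,$$
i.e.\ the subgroup of $\Z[\Hom_{\Gamma(\PP)}([m],[n])]$ obtained by left multiplication by $f_A$ and right multiplication by $f_B$ (using that $P^{\Gamma(\PP)}_{[n]}$ evaluated at $[m]$ is $\Z[\Hom_{\Gamma(\PP)}([n],[m])]$, and that the $R$-action on it is by precomposition). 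So the task reduces to computing the image of the composite idempotent acting on the free abelian group on $\Hom_{\Gamma(\PP)}([m],[n])$.

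Next I would analyze this image combinatorially. Using the description $f_A=(e_A,\omega^{e_A})\prod_{B'\subsetneq A}((Id,1_{\PP}^{\times n})-(e_{B'},\omega^{e_{B'}}))$ from Proposition \ref{idemp-explic}, left multiplication by $f_A$ on a basis element $(g,\omega^g)\colon [m]\to[n]$ first composes with $e_A$ (killing everything outside $A$) and then applies an inclusion-exclusion that annihilates any morphism whose image misses some point of $A$; hence $f_A\cdot\Z[\Hom_{\Gamma(\PP)}([m],[n])]$ is spanned by (alternating sums corresponding to) those $(g,\omega^g)$ that are \emph{surjective onto $A$}. Symmetrically, right multiplication by $f_B$ forces the morphism to be ``injective-like'' exactly on $B$: precomposition with $e_B$ kills the complement of $B$, and the inclusion-exclusion over $B'\subsetneq B$ annihilates anything factoring through a proper subset, so one is left with morphisms that, after restriction to $B$, are surjective onto $A$ and moreover send $B\setminus\{0\}$ \emph{bijectively} onto $A\setminus\{0\}$ at the level of the underlying sets (the $0$'s are matched automatically since all maps are pointed). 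The decorations $\omega^g$ on fibers of cardinality $1$ are forced to be $1_{\PP}$, and on fibers of cardinality $0$ to be $0_{\PP}$, so no operad data survives beyond recording a bijection. This is precisely the data of an element of $\Hom_{\Omega(\PP)}(\underline{\mid B\setminus\{0\}\mid},\underline{\mid A\setminus\{0\}\mid})$: a surjection $\underline{\mid B\setminus\{0\}\mid}\to \underline{\mid A\setminus\{0\}\mid}$ decorated by operad elements on the fibers — wait, that is not quite a bijection, so let me restate: right multiplication by $f_B$ only forces surjectivity of the restriction to $B$, not injectivity, so the surviving morphisms correspond to pairs (surjection of $B\setminus\{0\}$ onto $A\setminus\{0\}$, decoration on fibers), which is exactly $\Hom_{\Omega(\PP)}(\underline{\mid B\setminus\{0\}\mid},\underline{\mid A\setminus\{0\}\mid})$ after identifying $A\setminus\{0\}\cong\underline{\mid A\setminus\{0\}\mid}$ and $B\setminus\{0\}\cong\underline{\mid B\setminus\{0\}\mid}$. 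One checks the alternating sums produced by the two inclusion-exclusion operators are linearly independent and biject with this set, giving the claimed isomorphism of abelian groups.

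I expect the main obstacle to be the bookkeeping in the double inclusion-exclusion: showing that $f_A\cdot(-)\cdot f_B$ applied to the basis of $\Z[\Hom_{\Gamma(\PP)}([m],[n])]$ produces a family of elements indexed exactly by $\Hom_{\Omega(\PP)}(\underline{\mid B\setminus\{0\}\mid},\underline{\mid A\setminus\{0\}\mid})$, with no collapsing and no extra relations. This is a Möbius-function argument of the same flavor as the proof of Proposition $4.39$ (hence Proposition \ref{idemp-explic}) in \cite{Vespa1}, and the cleanest route is probably to observe that $f_A$ and $f_B$ are the images under the Cayley-type representation of the idempotents of Lemma \ref{DK-lm1}, then reduce to the already-known commutative computation (the $\PP=\C om$ case, which is the Dold–Kan computation of \cite{Pira-Dold}) fibered over the operad decorations, using Lemma \ref{1.6} to split decorations over fibers. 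Verifying naturality in $[n]$ and $[m]$ (needed for the proof of Proposition \ref{DK-explicit}) is then routine from the Yoneda identification.
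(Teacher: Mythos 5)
Your overall strategy — Yoneda to reduce to computing $f_A \cdot \Z[\Hom_{\Gamma(\PP)}([m],[n])]\cdot f_B$, then a combinatorial analysis of which basis elements survive — is the same as the paper's, and your self-corrected description of the surviving morphisms (those $(t,\omega^t)$ with $t(B)=A$ and $t$ nonvanishing on $B\setminus\{0\}$, i.e.\ surjections of $B\setminus\{0\}$ onto $A\setminus\{0\}$ with fiber decorations) is right. Two things don't hold up, though.

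First, the parenthetical claim that ``the decorations $\omega^g$ on fibers of cardinality $1$ are forced to be $1_{\PP}$, and on fibers of cardinality $0$ to be $0_{\PP}$, so no operad data survives beyond recording a bijection'' is false. $\PP(1)$ need not be a singleton, and in a surjection that is not a bijection the fibers have arbitrary cardinality, so the decoration $\omega^{t|_B}\in\prod_{a\in A\setminus\{0\}}\PP(|t^{-1}(a)\cap B|)$ genuinely carries $\PP$-data. Nothing about the idempotent sandwich normalizes the decoration; the paper's computation gives $f_A(t,\omega^t)f_B=(t,\omega^t)f_B$ when the surjectivity conditions hold, with $\omega^t$ left intact. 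Your later sentence arrives at the correct $\Omega(\PP)$-hom set, but the intermediate claim would have to be deleted, not just hedged.

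Second, and more seriously, the step you flag as ``the main obstacle'' — that the elements $f_A(t,\omega^t)f_B$ indexed by $\Hom_{\Omega(\PP)}(\underline{|B\setminus\{0\}|},\underline{|A\setminus\{0\}|})$ are linearly independent and span, with no collapsing — is in fact where all the work lies, and your suggested route (linear independence of the alternating sums, or reduction to the $\C om$ case ``fibered over decorations'') is not spelled out and is not obviously correct. The paper finesses this entirely by a rank argument: it uses the orthogonal idempotent decomposition to write $\Z[\Hom_{\Gamma(\PP)}([m],[n])]\simeq\bigoplus_{A\subset[n],\,B\subset[m]}f_A\Z[\Hom_{\Gamma(\PP)}([m],[n])]f_B$ and pairs it against the manifest set-theoretic bijection $\Hom_{\Gamma(\PP)}([m],[n])\simeq\coprod_{A,B}\Hom_{\Omega(\PP)}(B\setminus\{0\},A\setminus\{0\})$ obtained by assigning to $(t,\omega^t)$ its image $A=t([m])$, the subset $B=\{0\}\cup(t^{-1}(0))^c$, and the induced surjection. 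Since $\kappa$ is already surjective componentwise and the total ranks match, each $\kappa$ is an isomorphism. Without some version of this counting step your argument stops at surjectivity; you should either supply it or prove linear independence directly, which would be noticeably harder.

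Minor remark: you also need to check that the assignment is well defined, i.e.\ that $f_A(\tilde\alpha,\omega^{\tilde\alpha})f_B$ really lands in the asserted subgroup and that it only depends on the $\Omega(\PP)$-morphism, not on the chosen extension $\tilde\alpha$; the paper handles this with the identities $(e_A,\omega^{e_A})(t,\omega^t)=(t,\omega^t)(e_{t^{-1}(A)},\omega^{e_{t^{-1}(A)}})$, $(t,\omega^t)=(t,\omega^t)(e_E,\omega^{e_E})$ for $E=\{0\}\cup(t^{-1}(0))^c$, and $(e_C,\omega^{e_C})f_D=f_D$ or $0$ according as $D\subset C$ or not, which your sketch implicitly uses but does not state.
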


\begin{proof}
By the Yoneda lemma, we have 
\begin{eqnarray*}
Hom_{Func(\Gamma(\PP), Ab)}(P_{[n]}^{\Gamma(\PP)} f_A,  P_{[m]}^{\Gamma(\PP)} f_B) &\simeq &f_A P_{[m]}^{\Gamma(\PP)}([n]) f_B\\
&=&f_A \Z[Hom_{\Gamma(\PP)}([m], [n])] f_B
\end{eqnarray*}

Consider the linear map
$$\kappa: \Z[Hom_{\Omega(\PP)}(\underline{\mid B \setminus \{ 0 \} \mid}, \underline{\mid A \setminus \{ 0 \} \mid} )] \to f_A \Z[Hom_{\Gamma(\PP)}([m], [n])] f_B$$
given by 
$$\kappa(\alpha, \omega^{\alpha})=f_A (\tilde{\alpha}, \omega^{\tilde{\alpha}}) f_B$$
where $\tilde{\alpha}$ is the unique extension of $\alpha$ such that $\tilde{\alpha}=0$ on  $(B \setminus\{ 0 \})^c$ and $\omega^{\tilde{\alpha}}=(\omega^\alpha, 0_\PP, \ldots, 0_\PP) \in \Pi_{y \in [n], y \neq 0} \PP(\mid \tilde{\alpha}^{-1}(y) \mid) =\Pi_{y \in \underline{A \setminus \{0 \}}} \PP(\mid {\alpha}^{-1}(y) \mid) \times \Pi_{y \in A^c} \PP(0) $. We will prove that $\kappa$ is an isomorphism. First, it is straightforward to see that, for $(t, \omega^t) \in Hom_{\Gamma(\PP)}([m],[n])$:
\begin{enumerate}
\item $(e_A, \omega^{e_A})(t, \omega^t)=(t, \omega^t)(e_{t^{-1}(A)}, \omega^{t^{-1}(A)})$;
\item $(t, \omega^t)=(t, \omega^t) (e_E, \omega^{e_E})$ for $E=\{0\} \cup (t^{-1}(0))^c$;
\item $(e_C, \omega^{e_C}) f_D= \left \lbrace 
\begin{array}{ll}
f_D \quad & \mathrm{if}\ D \subset C\\
0 & \mathrm{otherwise.}
\end{array}
\right.$
\end{enumerate}
We compute 
\begin{eqnarray*}
f_A (t, \omega^t) f_B &=& \sum_{C \subset A} \mu_{\PP os}(C,A) (e_C, \omega^{e_C})  (t, \omega^t) f_B \mathrm{\ by\ definition\ of\ } f_A\\
&=&(t, \omega^t) \sum_{C \subset A} \mu_{\PP os}(C,A) (e_{t^{-1}C}, \omega^{e_{t^{-1}C}}) f_B \mathrm{\ by\ } (1)\\
&=&(t, \omega^t) \sum_{t(B) \subset C \subset A} \mu_{\PP os}(C,A) f_B \mathrm{\ by\ } (3)\\
&=&\left \lbrace 
\begin{array}{ll}
(t, \omega^t) f_B \quad & \mathrm{if}\ t(B)=A\\
0 & \mathrm{otherwise}
\end{array}
\right.\\
&=&\left \lbrace 
\begin{array}{ll}
(t, \omega^t)(e_E, \omega^{e_E})f_B \quad & \mathrm{if}\ t(B)=A,\ \mathrm{for}\ E=\{0\} \cup (t^{-1}(0))^{c}\\
0 & \mathrm{otherwise} \quad \mathrm{by}\ (2)
\end{array}
\right.\\
&=&\left \lbrace 
\begin{array}{ll}
(t, \omega^t)f_B \quad & \mathrm{if}\ t(B)=A\ \mathrm{and}\ B \subset E=\{0\} \cup (t^{-1}(0))^{c}\\
0 & \mathrm{otherwise} \quad \mathrm{by}\ (3).
\end{array}
\right.\\
\end{eqnarray*}
Since $(t, \omega^t)f_B$ depends only of the restriction of $t$ on $B$, we deduce from the previous calculation that a non-zero element of the form $f_A (t, \omega^t) f_B $ induces a  map  in $\Omega(\PP)$ from $B \setminus \{0 \}$ to $A \setminus \{0 \}$: $(t, \omega^t)(e_B, \omega^{e_B})_{\mid B\setminus \{0 \}}$. Furthermore 
$$\kappa((t, \omega^t)(e_B, \omega^{e_B})_{\mid B \setminus \{0 \}})=f_A (t, \omega^t) (e_B, \omega^{e_B}) f_B=f_A (t, \omega^t) f_B$$
where the last equality is given by $(3)$.
We deduce that $\kappa$ is surjective.

To prove that $\kappa$ is bijective, we use a rank argument. On the one hand we have the isomorphism:
$$\bigoplus_{A \subset [n]; B \subset [m]} f_A \Z[Hom_{\Gamma(\PP)}([m], [n])]f_B \simeq  \Z[Hom_{\Gamma(\PP)}([m], [n])]$$
and, on the other hand, the bijection
$$Hom_{\Gamma(\PP)}([m],[n]) \simeq \coprod_{A \subset [n]; B \subset [m]} Hom_{\Omega(\PP)}(B \setminus \{0\}, A \setminus \{0\})$$
which associates to $(t, \omega^{t}) \in Hom_{\Gamma(\PP)}([m],[n])$ the elements $B=\{0\} \cup (t^{-1}(0))^{c}    $ , $A=t([m])$ and the map in $Hom_{\Omega(\PP)}(B \setminus \{0\}, A \setminus \{0\})$ induced by $(t, \omega^{t})$.

\end{proof}

\begin{lm} \label{Q^op-eq}
Let $Q$ be the full subcategory of $Func(\Gamma(\PP), Ab)$ whose set of objects is $\{ P_{[n]}^{\Gamma(\PP)} f_A \}_{n, A \subsetneq [n]}$, there is a natural equivalence of categories:
$$Q^{op} \simeq \Z[\Omega(\PP)]$$
where $ \Z[\Omega(\PP)]$ is the linearization of the category $\Omega(\PP)$ (i.e. $Obj( \Z[\Omega(\PP)])=Obj(\Omega(\PP))$ and $Hom_{ \Z[\Omega(\PP)]}(A,B)=\Z[Hom_{\Omega(\PP)}(A,B)]$).
\end{lm}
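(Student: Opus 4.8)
The plan is to produce an explicit functor $\Phi: Q^{op}\to\Z[\Omega(\PP)]$ and to verify that it is essentially surjective and fully faithful. On objects I set $\Phi\bigl(P_{[n]}^{\Gamma(\PP)}f_A\bigr)=\underline{\mid A\setminus\{0\}\mid}$. On morphisms, observe that a morphism $P_{[n]}^{\Gamma(\PP)}f_A\to P_{[m]}^{\Gamma(\PP)}f_B$ of $Q^{op}$ is by definition a morphism $P_{[m]}^{\Gamma(\PP)}f_B\to P_{[n]}^{\Gamma(\PP)}f_A$ of $Func(\Gamma(\PP),Ab)$; by the Yoneda lemma together with Lemma \ref{DK-lm2} (applied with the roles of $([n],A)$ and $([m],B)$ interchanged), this hom-group is carried by the isomorphism $\kappa$ of that lemma onto $\Z[Hom_{\Omega(\PP)}(\underline{\mid A\setminus\{0\}\mid},\underline{\mid B\setminus\{0\}\mid})]$, which is precisely $Hom_{\Z[\Omega(\PP)]}\bigl(\Phi(P_{[n]}^{\Gamma(\PP)}f_A),\Phi(P_{[m]}^{\Gamma(\PP)}f_B)\bigr)$; I take $\Phi$ on this hom-group to be $\kappa^{-1}$.

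Granting that $\Phi$ is a functor, the rest is short. For essential surjectivity: given $k\ge 0$, the set $A=\{0,1,\dots,k\}$ is a proper subset of $[k+1]$ containing $0$ and $\Phi(P_{[k+1]}^{\Gamma(\PP)}f_A)=\underline{k}$. Full faithfulness is immediate from Lemma \ref{DK-lm2}, since $\Phi$ acts on each hom-group through the bijection $\kappa^{-1}$. Finally $\Phi$ preserves identities: the extension by $0$ of $Id_{\underline{\mid A\setminus\{0\}\mid}}$ is exactly $(e_A,\omega^{e_A})$, so $\kappa(Id_{\underline{\mid A\setminus\{0\}\mid}})=f_A\,(e_A,\omega^{e_A})\,f_A=f_A$ by property $(3)$ in the proof of Lemma \ref{DK-lm2} and the idempotency of $f_A$, and $f_A$ corresponds under Yoneda to $Id_{P_{[n]}^{\Gamma(\PP)}f_A}$.

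What remains — and what I expect to be the main obstacle — is that $\Phi$ preserves composition. As $\Phi$ is additive on hom-groups and composition is biadditive on both sides, it suffices to treat $\phi=\kappa(\alpha,\omega^\alpha)$ and $\psi=\kappa(\beta,\omega^\beta)$ arising from genuine morphisms $(\alpha,\omega^\alpha)\in Hom_{\Omega(\PP)}(\underline{\mid A\setminus\{0\}\mid},\underline{\mid B\setminus\{0\}\mid})$ and $(\beta,\omega^\beta)\in Hom_{\Omega(\PP)}(\underline{\mid B\setminus\{0\}\mid},\underline{\mid C\setminus\{0\}\mid})$. Unwinding the Yoneda identification (under which composition in the full subcategory of $Func(\Gamma(\PP),Ab)$ on the $P_{[n]}^{\Gamma(\PP)}$'s becomes composition in $\Z[\Gamma(\PP)]$), preservation of composition reduces to the identity
\[
f_C\circ(\widetilde{\beta},\omega^{\widetilde{\beta}})\circ f_B\circ(\widetilde{\alpha},\omega^{\widetilde{\alpha}})\circ f_A \;=\; f_C\circ(\widetilde{\beta\circ\alpha},\omega^{\widetilde{\beta\circ\alpha}})\circ f_A
\]
in $\Z[Hom_{\Gamma(\PP)}([n],[l])]$, where $\widetilde{(-)}$ denotes extension by $0$ and $\beta\circ\alpha$ is the composite formed in $\Omega(\PP)$. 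I would establish this in three moves: (i) since $A\subsetneq[n]$ and $\alpha$ is surjective, $(\widetilde{\alpha},\omega^{\widetilde{\alpha}})$ has image exactly $B$, so $(e_B,\omega^{e_B})\circ(\widetilde{\alpha},\omega^{\widetilde{\alpha}})=(\widetilde{\alpha},\omega^{\widetilde{\alpha}})$ by the unit axiom of $\PP$; (ii) rewriting $f_B$ through Proposition \ref{idemp-explic} as $(e_B,\omega^{e_B})\prod_{D\subsetneq B}\bigl((Id_{[m]},1_{\PP}^{\times m})-(e_D,\omega^{e_D})\bigr)$ and invoking properties $(1)$–$(3)$ from the proof of Lemma \ref{DK-lm2}, the intermediate idempotent $f_B$ is absorbed once flanked by $f_C$ and $f_A$; (iii) the composition law in $\Gamma(\PP)$ involves only the fibers over non-basepoint elements, which agree with the corresponding fibers computed in $\Omega(\PP)$ under the order-preserving identifications $A\setminus\{0\}\cong\underline{\mid A\setminus\{0\}\mid}$, $B\setminus\{0\}\cong\underline{\mid B\setminus\{0\}\mid}$, $C\setminus\{0\}\cong\underline{\mid C\setminus\{0\}\mid}$, so that the associativity and equivariance axioms of $\PP$ (which control the operadic products $\gamma$ and the shuffle permutations $\sigma_i$ occurring in the composition formula) yield $(\widetilde{\beta},\omega^{\widetilde{\beta}})\circ(\widetilde{\alpha},\omega^{\widetilde{\alpha}})=(\widetilde{\beta\circ\alpha},\omega^{\widetilde{\beta\circ\alpha}})$. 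Combining (i)–(iii) gives the displayed identity; the delicate part is the bookkeeping in (ii)–(iii), namely matching the operadic composition data of $\Gamma(\PP)$ with that of $\Omega(\PP)$ and correctly disposing of the intermediate idempotent. Once this is done, $\Phi$ is an equivalence of categories, which is the assertion; alternatively, (i)–(iii) can be carried out by adapting the computation of \cite{Vespa1} to the present operadic context.
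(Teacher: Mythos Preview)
Your proposal is correct and follows essentially the same route as the paper: define $\Phi$ on objects by $P_{[n]}^{\Gamma(\PP)}f_A\mapsto\underline{\mid A\setminus\{0\}\mid}$, on morphisms via $\kappa^{-1}$, and then check functoriality by showing that the intermediate idempotent $f_B$ can be absorbed. The only organizational difference is that the paper expands $f_B$ directly by its M\"obius definition and applies properties (1) and (3) from Lemma~\ref{DK-lm2} in a single computation (yielding the telescoping sum $\sum_{\tilde\beta(C)\subset D\subset B}\mu(D,B)$), whereas you split the argument into steps (i)--(iii) and invoke the product formula of Proposition~\ref{idemp-explic}; both reduce to the same identities, and your step (iii) makes explicit the equality $(\tilde\beta,\omega^{\tilde\beta})\circ(\tilde\alpha,\omega^{\tilde\alpha})=(\widetilde{\beta\circ\alpha},\omega^{\widetilde{\beta\circ\alpha}})$ that the paper uses tacitly in its last line.
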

\begin{proof}
Let $\Phi: Q^{op} \to \Z[\Omega(\PP)]$ be the functor defined by:
$$\Phi(P_{[n]}^{\Gamma(\PP)} f_A)=\underline{\mid A \setminus \{ 0 \} \mid }$$
and for $f \in Hom_{Q_{[n]}^{op} }( P_{[n]}^{\Gamma(\PP)} f_A, P_{[n]}^{\Gamma(\PP)} f_B)$ we define $F(f)$ using the morphism $\kappa^{-1}$ introduced in the proof of Lemma \ref{DK-lm2}. To prove that $\Phi$ is a functor we have to verify that $\kappa$ is compatible with the composition. 
Let $c_\C$ be the composition map in the category $\C$ and consider $(\alpha, \omega^{\alpha}) \in Hom_{\Omega(\PP)}(B \setminus \{ 0 \}, A \setminus \{ 0 \} )$ and  $(\beta, \omega^{\beta}) \in Hom_{\Omega(\PP)}(C \setminus \{ 0 \}, B \setminus \{ 0 \} )$. In one hand we have:
$$\kappa c_{\mathbb{Z}[\Omega(\PP)]}((\alpha, \omega^{\alpha}), (\beta, \omega^{\beta}))=f_A(\widetilde{\alpha \circ \beta}, \omega^{\widetilde{\alpha \circ \beta}}) f_C$$
and in the other hand we have:
$$c_{Q^{op}}( \kappa(\alpha, \omega^{\alpha}),  \kappa(\beta, \omega^{\beta}))=f_A(\tilde{\alpha}, \omega^{\tilde{\alpha}}) f_B f_B (\tilde{\beta}, \omega^{\tilde{\beta}}) f_C.$$
We have:
\begin{align*}
&f_A(\tilde{\alpha}, \omega^{\tilde{\alpha}}) f_B f_B (\tilde{\beta}, \omega^{\tilde{\beta}}) f_C= f_A(\tilde{\alpha}, \omega^{\tilde{\alpha}}) f_B(\tilde{\beta}, \omega^{\tilde{\beta}}) f_C \mathrm{\quad since\ } f_B\mathrm{\ is\ an\ idempotent}\\
&= f_A(\tilde{\alpha}, \omega^{\tilde{\alpha}}) \underset{D \subset B}{\sum}\mu_\PP(D,B)(e_D, \omega^{e_D}) (\tilde{\beta}, \omega^{\tilde{\beta}}) f_C \mathrm{\quad by\ definition\ of\ } f_B\\
&= f_A(\tilde{\alpha}, \omega^{\tilde{\alpha}}) \underset{D \subset B}{\sum}\mu_\PP(D,B)(\tilde{\beta}, \omega^{\tilde{\beta}}) (e_{\tilde{\beta}^{-1}(D)}, \omega^{e_{\tilde{\beta}^{-1}(D)}}) f_C \mathrm{\quad by\ } (1)\mathrm{\ in \ the\  proof\ of\ Lemma\ \ref{DK-lm2}}\\
&= f_A(\tilde{\alpha}, \omega^{\tilde{\alpha}}) \underset{\tilde{\beta}(C) \subset D \subset B}{\sum}\mu_\PP(D,B)(\tilde{\beta}, \omega^{\tilde{\beta}})  f_C \mathrm{\quad by\ } (3)\mathrm{\ in \ the\  proof\ of\ Lemma\ \ref{DK-lm2}}\\
&=\left \lbrace 
\begin{array}{ll}
 f_A(\tilde{\alpha}, \omega^{\tilde{\alpha}}) (\tilde{\beta}, \omega^{\tilde{\beta}})  f_C &\mathrm{if\ } \tilde{\beta}(C)=B\\
0  &\mathrm{otherwise}
\end{array}
\right.
\end{align*}
Since $(\beta, \omega^{\beta}) \in Hom_{\Omega(\PP)}(C \setminus \{ 0 \}, B \setminus \{ 0 \} )$, $\beta$ is a surjective map and $\tilde{\beta}(C)=B$ so:
\begin{align*}
f_A&(\tilde{\alpha}, \omega^{\tilde{\alpha}}) f_B f_B (\tilde{\beta}, \omega^{\tilde{\beta}}) f_C= f_A(\tilde{\alpha}, \omega^{\tilde{\alpha}}) (\tilde{\beta}, \omega^{\tilde{\beta}})  f_C= f_A(\widetilde{\alpha \circ \beta}, \omega^{\widetilde{\alpha \circ \beta}})  f_C
\end{align*}
and $\Phi$ is a functor.

Since the morphism $\kappa$ is bijective, $\Phi$ is fully faithful. For $\underline{k} \in \Omega(\PP)$, the relation $\Phi(P_{[n]}^{\Gamma(\PP)}f_{[k]})=\underline{k}$ proves that $\Phi$ is essentially surjective.
\end{proof}

\begin{proof}[Proof of Theorem \ref{DK}]
By Lemma \ref{DK-lm1}, we have the decomposition 
$$P_{[n]}^{\Gamma(\PP)}=\bigoplus_{A \subset [n]} P_{[n]}^{\Gamma(\PP)} f_A.$$
Applying Theorem \ref{Popescu} to $\C=Func(\Gamma(\PP), Ab)$ and $Q$ being the full subcategory of $Func(\Gamma(\PP), Ab)$ whose set of objects is $\{ P_{[n]}^{\Gamma(\PP)} f_A\}_{n,A\subset[n]}$ we obtain an equivalence of categories:
$$\sigma: Func(\Gamma(\PP)_n, Ab) \to Func^{add}(Q_n^{op}, Ab))$$
given by: $\sigma(G)( P_{[n]}^{\Gamma(\PP)} f_A)=Hom_{Func(\Gamma(\PP)_n, Ab)}(P_{[n]}^{\Gamma(\PP)} f_A, G)$ for $G \in Func(\Gamma(\PP)_n, Ab)$.
So, we have the following sequence of natural equivalences:
\begin{eqnarray*}
Func(\Gamma(\PP), Ab)& \simeq& Func^{add}(Q^{op}, Ab)\\
& \simeq & Func^{add}(\Z[\Omega(\PP)], Ab)  \mathrm{\quad  \ by\  Lemma \   \ref{Q^op-eq}}\\
& \simeq & Func(\Omega(\PP), Ab) 
\end{eqnarray*}
Now we prove the explicit formulae of the natural equivalences given in the statement.

To obtain the explicit description of $cr$, recall that $cr$ is given by the composition:
$$cr: Func(\Gamma(\PP), Ab) \xrightarrow{\simeq} Func^{add}(Q^{op}, Ab) \xrightarrow{(\hat{\Phi})^*}  Func(\Z[\Omega(\PP)], Ab) \to Func(\Omega(\PP), Ab)$$
where $\hat{\Phi}: \Z[\Omega(\PP)] \to Q^{op}$ is an inverse of $\Phi$ given by:
$$\hat{\Phi}(\underline{n})=P^{\Gamma(\PP)}_{[n]}f_{[n]}$$
and by $\kappa$ on morphisms. For a functor $F:\mathcal{C} \to Ab$ let $\hat{F}:\Z[\mathcal{C}] \to Ab$ denote the $\Z$-linear extension of $F$. Then:
$$cr(F)(\underline{n})=Hom_{Func(\Gamma(\PP), Ab)}(P_{[n]}^{\Gamma(\PP)} f_{[n]}, F)\simeq \hat{F}(f_{[n]})F([n]) \simeq Im(\hat{F}(f_{[n]})).$$
By Proposition \ref{idemp-explic} we have: 
$$f_{[n]}=\underset{B\subsetneq [n]}{\prod}((Id_{[n]}, 1_{\PP}^{\times n})-(e_B, \omega^{e_B}))$$
so
\begin{equation} \label{eq-idem}
\hat{F}(f_{[n]})=\underset{B\subsetneq [n]}{\prod}\hat{F}((Id_{[n]}, 1_{\PP}^{\times n})-(e_B, \omega^{e_B}))=\underset{B\subsetneq [n]}{\prod}(1-F(e_B, \omega^{e_B})).
\end{equation}
Recall that for two commuting idempotents $E_A, E_B$ we have: 
$$Im(E_A E_B)=Im(E_A) \cap Im(E_B).$$
So, by (\ref{eq-idem}) we obtain:
$$Im(F(f_{[n]}))=\underset{B\subsetneq [n]}{\bigcap} Im(1-F(e_B, \omega^{e_B}))=\underset{B\subsetneq [n]}{\bigcap} Ker(F(e_B, \omega^{e_B})).$$
For $B \in \PP os([n])$ such that $B \neq [n]$, let $p_B: [n] \to B$ (resp. $i_B: B \to [n]$) be the epimorphism (resp. monomorphism) such that:
$$e_B=i_B \circ p_B.$$
We have $(e_B, \omega^{e_B})=(i_B, \omega^{e_B}) \circ (p_B, 1_\PP^{\times \mid B \mid}).$
Since $(Id_B, 1_\PP^{\times \mid B \mid})=(p_B, 1_\PP^{\times \mid B \mid})\circ (i_B, \omega^{e_B}) $ we have:
$$Ker(F(e_B, \omega^{e_B}))=Ker(F(p_B, 1_\PP^{\times \mid B \mid})).$$
Therefore:
$$Im(F(f_{[n]}))=\underset{B\subsetneq [n]}{\bigcap} Ker(F(p_B, 1_\PP^{\times \mid B \mid}))=\underset{B\subsetneq [n], \mid B \mid=n}{\bigcap} Ker(F(p_B, 1_\PP^{\times \mid B \mid}))$$
since for $B \subsetneq [n]$ such that $\mid B \mid <n$ there exists $B' \subsetneq[n]$ with $\mid B' \mid =n$ such that $p_B$ factors through $p_{B'}$. We deduce that:
$$cr(F)(\underline{n})=cr_nF([1], \ldots, [n])$$
by Proposition \ref{cr_n}.

Now we show that for $G \in Func(\Omega(\PP), Ab)$, the functor $i_!(G): \Gamma(\PP) \to Ab$ corresponds to $G$ under the above equivalences.  We must check that:
$$\Phi^*\hat{G}=Hom(-, i_!(G)): Q^{op} \to Ab.$$
We have $\Phi^*\hat{G}(P_{[n]}^{\Gamma(\PP)} f_A)=G(\underline{\mid A \setminus \{0\} \mid})$ while
$$Hom(P_{[n]}^{\Gamma(\PP)} f_A, i_!(G)) \simeq Im \left(i_!(G)(f_A) \right)$$
$$=Im\left(\underset{B \subset A}{\sum} \mu_{\PP os[n]} (B,A) i_!(G)(e_B, \omega^{e_B}): i_!(G)([n]) \to  i_!(G)([n])\right).$$
Let $\mu \subset \underline{n}$. Then $i_!(G)(e_B, \omega^{e_B})_{\mid G(\underline{\mid \mu \mid})}$ is $0$ if $\mu \not \subset B$ and is the injection 
$$I_{\mu}: G(\underline{\mid \mu \mid}) \hookrightarrow i_!(G)([n])$$
if $\mu \subset B$.
In particular, it is $0$ if $\mu \not \subset A$. For $\mu \subset A$, we have:
$$\underset{B \subset A}{\sum}\mu_{\PP os[n]} (B,A) i_!(G)(e_B, \omega^{e_B})_{\mid G(\mid \mu \mid)}=\underset{\mu \subset B \subset A}{\sum}(-1)^{\mid A \setminus B\mid} I_\mu=\left \lbrace 
\begin{array}{ll}
I_\mu\quad  \mathrm{if}\ \mu=A \setminus\{0\}\\
0  \quad \mathrm{otherwise}
\end{array}
\right.\\$$
since $B$ contains $0$ while $\mu$ does not. It follows that $Hom(P_{[n]}^{\Gamma(\PP)}.f_A, i_!(G)) \simeq G(\underline{\mid A \setminus \{0 \} \mid })$, as desired.
\end{proof}

%%%%%%%%%%%%%%%%%%%%%%%%%%%%%%%%%%%

\section{Equivalence between functors from $\PP \ti alg$ and pseudo Mackey functors} \label{section-4}
The aim of this section is to prove the following theorem.
\begin{thm} \label{thm-pal}
There is a natural equivalence of categories
$$Func(Free(\PP), Ab) \simeq PMack(\Omega(\PP), Ab)$$
where $Free(\PP)$ is the category of finitely generated free $\PP$-algebras and $\Omega(\PP)$ is the category defined in \ref{omegaP}.
\end{thm}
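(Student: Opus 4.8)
The plan is to assemble the equivalence in three stages, exactly along the outline sketched in the introduction, so that the bulk of the argument is carried by Propositions \ref{Free(P)-span}, \ref{eq-M-functor} and \ref{eq-Omega} together with Lemma \ref{Lindner} and the Dold--Kan type theorem \ref{DK}.

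First I would reduce $Free(\PP)$ to a span category. A morphism $\mathcal{F}_\PP(\underline n)\to\mathcal{F}_\PP(\underline m)$ is determined by the images of the $n$ generators, i.e.\ by an $n$-tuple of elements of $\mathcal{F}_\PP(\underline m)=\amalg_{k\ge 0}\PP(k)\times_{\mathfrak{S}_k}\underline m^{\,k}$; such a tuple is precisely the data of a finite set $X$ together with a vertical arrow $(\phi,\omega^\phi)\colon X\to\underline n$ of $\mathcal{S}(\PP)$ (the fibre over $i$ being the arity of the operation used to form the image of the $i$-th generator, decorated by that operation) and an ordinary set map $X\to\underline m$ recording which target generators are plugged in. This is the content of Proposition \ref{Free(P)-span}: $Free(\PP)\simeq Span(\mathcal{S}(\PP)_2)$, the span category being well defined by Lemma \ref{uniq-square}. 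The two things to verify are that composition of morphisms of free $\PP$-algebras — performed by operadic substitution — corresponds to the pullback-and-compose law of Definition \ref{span}, where the structure maps $\gamma$ and the shuffle permutations $\sigma_i$ of Definition \ref{May-T} appear exactly as in the composition law of $\mathcal{S}(\PP)$; and that passing to the coinvariants $\PP(k)\times_{\mathfrak{S}_k}\underline m^{\,k}$ corresponds precisely to identifying spans up to the double isomorphisms of Lemma \ref{dble-iso-MT}. I expect this first stage — and in particular the compatibility of substitution with the permutations $\sigma_i$ — to be the main technical obstacle.

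Next, Lemma \ref{Lindner} (Lindner's theorem for double categories) gives $Func(Span(\mathcal{S}(\PP)_2),Ab)\simeq Mack(\mathcal{S}(\PP)_2,Ab)$, and I would then identify the latter with the category of $\mathcal{M}$-functors on $\Gamma(\PP)$ (Definition \ref{M-functor}, Proposition \ref{eq-M-functor}). The point here is that $\Gamma(\PP)$ is obtained from $\mathcal{S}(\PP)$ by adjoining a basepoint via the functor $j$, so a Mackey functor $(M_*,M^*)$ on $\mathcal{S}(\PP)_2$ — a covariant and a contravariant functor agreeing on objects and satisfying the push--pull relation on squares — can be repackaged as a single functor on $\Gamma(\PP)$ carrying the covariant data together with the extra structure maps encoding $M^*$ on the vertical (decorated) arrows, with the Mackey square relation becoming the defining relations of an $\mathcal{M}$-functor; the extra basepoint is what accommodates the contravariant part.

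Finally I would invoke Theorem \ref{DK}: the Morita equivalence $Func(\Gamma(\PP),Ab)\simeq Func(\Omega(\PP),Ab)$ together with its contravariant version, made explicit in Proposition \ref{DK-explicit}, transports the $\mathcal{M}$-functor structure on $\Gamma(\PP)$ onto the pseudo-Mackey structure of Definition \ref{PMack} on $\Omega(\PP)_2$ (Proposition \ref{eq-Omega}). The delicate verification at this stage is that the summation over $\mathcal{D}$-admissible subsets in axiom (2) of Definition \ref{PMack} is forced by the explicit formulas: $i_!(G_*)([n])=\bigoplus_{\mu\subset\underline n}G_*(\underline{\mid\mu\mid})$ with the morphism action described summand-by-summand, so that tracing a square of $\Omega(\PP)_2$ through $i_!$ and $cr$ yields exactly $\sum_{A'\in Adm(\mathcal{D})}M_*(f_{A'})\circ M^*(\phi,\omega^\phi)_{A'}$. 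Chaining the three equivalences then gives $Func(Free(\PP),Ab)\simeq PMack(\Omega(\PP),Ab)$, with naturality of all the constructions checked along the way.
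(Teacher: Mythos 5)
Your proposal is correct and follows exactly the same three-stage decomposition that the paper uses: $Free(\PP)\simeq Span(\mathcal{S}(\PP)_2)$ (Proposition \ref{Free(P)-span}), $Mack(\mathcal{S}(\PP)_2,Ab)\simeq \mathcal{M}\ti func(\Gamma(\PP),Ab)$ (Lemma \ref{Lindner} and Proposition \ref{eq-M-functor}), and $\mathcal{M}\ti func(\Gamma(\PP),Ab)\simeq PMack(\Omega(\PP),Ab)$ (Proposition \ref{eq-Omega} via Theorem \ref{DK}). You have also correctly anticipated the two delicate points, namely the compatibility of operadic substitution with the shuffle permutations $\sigma_i$ in the first stage and the emergence of the sum over admissible subsets from the explicit formulas of Proposition \ref{DK-explicit} in the last.
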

The proof is divided into three steps. In the first one, we prove the equivalence of categories $Free(\PP)\simeq Span(\mathcal{S}(\PP)_2)$ where $Span(\mathcal{S}(\PP)_2)$ is the category of Spans given in Definition \ref{span}. Then, we define in Definition \ref{M-functor} the notion of $\mathcal{M}$-functor from $\PP$-pointed sets. In the second part, we prove in Proposition \ref{eq-M-functor} the equivalence of categories $Mack(\mathcal{S}(\PP)_2, Ab)\simeq \mathcal{M} \ti func(\Gamma(\PP), Ab)$. Recall that $Mack(\mathcal{S}(\PP)_2, Ab)\simeq Func(Span(\mathcal{S}(\PP)_2), Ab)$ by Lemma \ref{Lindner}. Finally, we prove in Proposition \ref{eq-Omega} the equivalence $\mathcal{M} \ti func(\Gamma(\PP), Ab) \simeq PMack(\Omega(\PP), Ab)$. 

In the following proposition we extend an analogous result already present in \cite{Pira-PROP} and corresponding to the particular case of a set-operad satisfying $\PP(1)=\{1_{\PP}\}$. 
\begin{prop} \label{Free(P)-span}
For $\PP$ a unitary set-operad, there is an equivalence of categories:
$$Free(\PP) \simeq Span(\mathcal{S}(\PP)_2)$$ where $Span(\mathcal{S}(\PP)_2)$ is the category of Spans defined in Definition \ref{span}. 
\end{prop}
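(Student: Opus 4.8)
The plan is to pass to the skeleton $\F(\PP)$ of $Free(\PP)$ supplied by Remark~\ref{skelett-Free(P)}, so that both categories under comparison have the finite sets $\underline n$ as objects, and then to construct a functor $\Theta\colon Span(\mathcal{S}(\PP)_2)\to\F(\PP)$ that is the identity on objects and fully faithful; this yields the asserted equivalence. On morphisms I would define $\Theta$ through the free--forgetful adjunction $\F_\PP\dashv U$, under which a morphism $\F_\PP(\underline n)\to\F_\PP(\underline m)$ is the same datum as a set map $\underline n\to U\F_\PP(\underline m)=\coprod_{k\geq 0}\PP(k)\times_{\mathfrak{S}_k}\underline m^k$. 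Concretely, a span $[\,\underline n\xleftarrow{(\phi,\omega^{\phi})}X\xrightarrow{f}\underline m\,]$ is sent to the morphism carrying the generator $i\in\underline n$ to the element $\mu_{k_i}(\omega^{\phi}_i;f(x_1),\ldots,f(x_{k_i}))$ of $\F_\PP(\underline m)$, where $k_i=|\phi^{-1}(i)|$, $\phi^{-1}(i)=\{x_1<\ldots<x_{k_i}\}$ is ordered naturally as a subset of $X$, $\omega^{\phi}_i\in\PP(k_i)$ is the corresponding component of $\omega^{\phi}$, and the arguments $f(x_j)$ are read as generators of the free algebra.

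The first thing to check is that $\Theta$ is well defined and bijective on morphism sets. By Lemma~\ref{1.6} the vertical leg $(\phi,\omega^{\phi})\colon X\to\underline n$ decomposes as the disjoint union over $i\in\underline n$ of its restrictions to the fibres, so a span from $\underline n$ to $\underline m$ is precisely an $n$-tuple of spans from $\underline 1$ to $\underline m$; and by Lemma~\ref{dble-iso-MT} two spans out of $\underline 1$ represent the same morphism of $Span(\mathcal{S}(\PP)_2)$ exactly when their apices differ by a bijection. Hence the set of morphisms from $\underline 1$ to $\underline m$ in $Span(\mathcal{S}(\PP)_2)$ is canonically $\coprod_{k}\PP(k)\times_{\mathfrak{S}_k}\underline m^k=U\F_\PP(\underline m)$, the $\mathfrak{S}_k$-coinvariants absorbing both the chosen ordering of the fibre and the chosen representative of the decoration (the transport law being exactly condition~(2) of Definition~\ref{double} together with Remark~\ref{remarque-section1}). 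Matching this with the adjunction isomorphism produces a bijection $Hom_{Span(\mathcal{S}(\PP)_2)}(\underline n,\underline m)\cong Set(\underline n,U\F_\PP(\underline m))\cong Free(\PP)(\F_\PP\underline n,\F_\PP\underline m)$, natural in $\underline m$, which is the effect of $\Theta$ on morphisms.

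The substantial step is that $\Theta$ respects composition. Compatibility with identities is immediate, since the identity span $\underline n\xleftarrow{(\mathrm{Id},1_\PP^{\times n})}\underline n\xrightarrow{\mathrm{Id}}\underline n$ is sent to $i\mapsto\mu_1(1_\PP;i)=i$. For a general composite, take composable spans with apices $X$ and $Y$ and write $F$, $G$ for their images under $\Theta$. By Definition~\ref{span} the composite span has apex the set-pullback $Z=X\times_{\underline m}Y$, right leg $g\circ\mathrm{pr}_Y$, and left leg the composite in $\mathcal{S}(\PP)$ of $(\phi,\omega^{\phi})$ with the unique vertical lift $(\phi_1,\omega^{\phi_1})$ of $\mathrm{pr}_X$ furnished by Lemma~\ref{uniq-square}, whose component at $x\in X$ is $\omega^{\psi}_{f(x)}$ transported along the canonical bijection $\phi_1^{-1}(x)\cong\psi^{-1}(f(x))$. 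Unwinding the composition formula of Definition~\ref{May-T}, the $i$-th component of the left leg equals $\gamma(\omega^{\phi}_i;\omega^{\psi}_{f(x_1)},\ldots,\omega^{\psi}_{f(x_{k_i})})$ up to a permutation $\sigma_i$, and the matching fibre of the right leg lists, in blocks indexed by $j=1,\ldots,k_i$, the values of $g$ on $\psi^{-1}(f(x_j))$. On the other hand, applying the $\PP$-algebra morphism $G$ to $\mu_{k_i}(\omega^{\phi}_i;f(x_1),\ldots,f(x_{k_i}))$, using first that $G$ commutes with the action maps $\mu_j$ and then the associativity axiom relating $\mu$ and $\gamma$, yields exactly $\mu(\gamma(\omega^{\phi}_i;\omega^{\psi}_{f(x_1)},\ldots,\omega^{\psi}_{f(x_{k_i})});\text{the same concatenated tuples})$, which is the value of $G\circ F$ on the generator $i$. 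The main obstacle is precisely this comparison of orderings: one has to verify that the permutations $\sigma_i$ of Definition~\ref{May-T} only reconcile the lexicographic ordering of the fibres of $Z$ with the ordering arising from the decomposition $\coprod_{x}\psi^{-1}(f(x))$, so that after passing to $\mathfrak{S}$-coinvariants in $U\F_\PP(\underline m)$ (equivalently, after choosing the pullback representative $Z$ with a compatible total ordering) they become trivial. Granting this bookkeeping, $\Theta$ preserves composition; being bijective on objects and fully faithful it is then an equivalence of categories, recovering the case $\PP(1)=\{1_\PP\}$ treated in \cite{Pira-PROP}.
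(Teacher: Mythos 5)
Your proof is correct and follows essentially the same route as the paper's: both pass through the free--forgetful adjunction to identify $Hom_{Span(\mathcal{S}(\PP)_2)}(\underline 1,\underline m)$ with $U\F_\PP(\underline m)=\coprod_k\PP(k)\times_{\mathfrak{S}_k}\underline m^k$, reduce the general case to spans out of $\underline 1$ via the coproduct decomposition (Proposition \ref{coprod} / Lemma \ref{1.6}), and quotient by double isomorphisms to match the $\mathfrak{S}_k$-coinvariants. The paper is even terser than you on preservation of composition, disposing of it with ``one verifies,'' so your more explicit discussion of the permutations $\sigma_i$ from Definition \ref{May-T} is a useful expansion rather than a divergence.
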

The proof of this result relies on the following lemma.
\begin{lm} \label{lm-equi}

\begin{enumerate}
\item
Let $f,f'\in Hom_{\mathcal{S}}(\underline{n}, \underline{m})$ and $\omega, \omega' \in \PP(n)$. 
The following assertions are equivalent in the double category $\mathcal{S}(\PP)_2$:
\begin{enumerate}
\item The diagrams $\xymatrix{ \underline{1} &\underline{n}  \ar@{-->}[l]_ -{(s,\omega)} \ar[r]^-f &\underline{m}  }$ and $\xymatrix{ \underline{1} &\underline{n}  \ar@{-->}[l]_ -{(s,\omega')} \ar[r]^-{f'} &\underline{m}  }$ are equivalent.
\item 
$(\omega ,(f(1), \ldots, f(n)))=(\omega',(f'(1), \ldots, f'(n))) \in \PP(n) \underset{\mathfrak{S}_n}{\times} \underline{m}^n.$
\end{enumerate}
\item
We have: $Hom_{Span(\mathcal{S}(\PP)_2) }(\underline{1}, \underline{m}) \simeq \amalg_n \PP(n) \underset{\mathfrak{S}_n}{\times} \underline{m}^n$.
\end{enumerate}
\end{lm}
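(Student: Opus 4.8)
The plan is to reduce both assertions to a single explicit computation of a composite in the category $\mathcal{S}(\PP)$. Throughout, write $s$ for the unique map $\underline{n}\to\underline{1}$ in $\mathcal{S}$, so that a vertical arrow of $\mathcal{S}(\PP)_2$ from $\underline{n}$ to $\underline{1}$ is exactly a pair $(s,\omega)$ with $\omega\in\PP(n)$; in particular the left leg of \emph{any} span from $\underline{1}$ to $\underline{m}$ is forced to have this shape, which is why the statement already presents the spans in part (1) with left leg $(s,\omega)$.

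For part (1), I would start from Definition \ref{span}: the two spans are equivalent precisely when there is a double isomorphism $(h,u)$ from $\underline{n}$ to $\underline{n}$ in $\mathcal{S}(\PP)_2$ with $(s,\omega')=(s,\omega)\circ u$ and $f'=f\circ h$. By Lemma \ref{dble-iso-MT} such a double isomorphism is nothing more than a pair $(h,(h,1_\PP^{\times n}))$ with $h\in\mathfrak{S}_n$, so the crux is to evaluate $(s,\omega)\circ(h,1_\PP^{\times n})$ via the composition rule of Definition \ref{May-T}. Its underlying set map is $s\circ h=s$, and its single $\PP$-component over the point of $\underline{1}$ is $\gamma(\omega;1_\PP,\dots,1_\PP)\cdot\sigma$, where $\gamma(\omega;1_\PP,\dots,1_\PP)=\omega$ by the unit axiom of $\PP$, and $\sigma$ is the permutation comparing the natural order of $(s\circ h)^{-1}(1)=\underline{n}$ with the order it inherits as $\amalg_{j=1}^{n}h^{-1}(j)$; since each fibre $h^{-1}(j)$ is a singleton this second order is $(h^{-1}(1),\dots,h^{-1}(n))$, forcing $\sigma=h$. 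Hence $(s,\omega)\circ(h,1_\PP^{\times n})=(s,\omega\cdot h)$, so the two spans are equivalent iff there is $h\in\mathfrak{S}_n$ with $\omega'=\omega\cdot h$ and $(f'(1),\dots,f'(n))=(f(h(1)),\dots,f(h(n)))$. This last condition is precisely the defining relation of the quotient $\PP(n)\times_{\mathfrak{S}_n}\underline{m}^n$, which gives the equivalence of (a) and (b).

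For part (2), a morphism $\underline{1}\to\underline{m}$ in $Span(\mathcal{S}(\PP)_2)$ is an equivalence class of spans $\underline{1}\xleftarrow{(s,\omega)}\underline{n}\xrightarrow{f}\underline{m}$, and, by the first remark above, such a span with apex $\underline{n}$ is exactly a pair $(\omega,f)\in\PP(n)\times\underline{m}^n$. Two spans whose apices have different cardinalities can never be equivalent, since a double isomorphism restricts to a bijection of the apices and $\mathcal{S}$ is skeletal; and for fixed $n$, part (1) identifies the set of equivalence classes of spans with apex $\underline{n}$ with $\PP(n)\times_{\mathfrak{S}_n}\underline{m}^n$ (the case $n=0$ contributing just the span with apex $\emptyset$, consistently with $\PP(0)\times_{\mathfrak{S}_0}\underline{m}^0=\underline{1}$). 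Taking the disjoint union over $n\geq 0$ yields $\amalg_{n}\PP(n)\times_{\mathfrak{S}_n}\underline{m}^n$, as claimed.

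I expect the only real obstacle to be the combinatorial bookkeeping inside the composition formula of Definition \ref{May-T} — concretely, pinning down that the comparison permutation equals $h$, and checking that the right $\mathfrak{S}_n$-action on $\PP(n)$ and coordinate action on $\underline{m}^n$ produced by the composite are literally the ones appearing in the formula $\mathcal{F}_\PP(X)=\amalg_n\PP(n)\times_{\mathfrak{S}_n}X^n$ for the free $\PP$-algebra. Once that is settled, everything else is formal.
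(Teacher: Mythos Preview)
Your proposal is correct and follows essentially the same approach as the paper: identify double isomorphisms via Lemma~\ref{dble-iso-MT}, compute $(s,\omega)\circ(h,1_\PP^{\times n})=(s,\omega\cdot h)$ from the composition rule of Definition~\ref{May-T}, and match this with the $\mathfrak{S}_n$-orbit relation in $\PP(n)\times_{\mathfrak{S}_n}\underline{m}^n$; part~(2) then follows formally. Your computation of the comparison permutation $\sigma=h$ is in fact more explicit than the paper's, which simply asserts the result of the composite.
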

\begin{proof}
\begin{enumerate}
\item
If the two diagrams in the statement are equivalent, by definition, there exists a double isomorphism in $\mathcal{S}(\PP)_2$: $(h, (h, 1_\PP^{\times n}))$ where $h:  \underline{n} \to \underline{n}$ is a bijection such that: $f'=fh$ and $(s', \omega')=(s, \omega)(h, 1_\PP^{\times n})=(s,\omega h)$. So, we have:
$$h.(\omega', (f'(1), \ldots, f'(n)))=(\omega'.h^{-1}, h.(f'(1), \ldots, f'(n)))=(\omega'.h^{-1}, (f'(h^{-1}(1)), \ldots, f'(h^{-1}(n))))$$
$$=(\omega, (f(1), \ldots, f(n)).$$
Therefore: $(\omega ,(f(1), \ldots, f(n)))=(\omega',(f'(1), \ldots, f'(n))) \in \PP(n) \underset{\mathfrak{S}_n}{\times} \underline{m}^n$. \\
Conversely, if $(\omega ,(f(1), \ldots, f(n)))=(\omega',(f'(1), \ldots, f'(n))) \in \PP(n) \underset{\mathfrak{S}_n}{\times} \underline{m}^n$, there exists $\sigma \in \mathfrak{S}_n$ such that 
$$(\omega' ,(f'(1), \ldots, f'(n)))=\sigma.(\omega,(f(1), \ldots, f(n)))= (\omega. \sigma^{-1},\sigma.(f(1), \ldots, f(n)))$$
$$=(\omega. \sigma^{-1},(f(\sigma^{-1}(1)), \ldots, f(\sigma^{-1}(n)))$$
So the double isomorphism $(\sigma^{-1}, (\sigma^{-1}, 1_{\PP}^{\times n}))$ provides an isomorphism between the two diagrams.
\item
Consider the map : $\phi: Hom_{Span(\mathcal{S}(\PP)_2) }(\underline{1}, \underline{m}) \to \amalg_n \PP(n) \underset{\mathfrak{S}_n}{\times} \underline{m}^n$ given by:
$$\phi([\xymatrix{ \underline{1} &\underline{n}  \ar@{-->}[l]_ -{(s,\omega)} \ar[r]^-f &\underline{m}  }])=(\omega ,(f(1), \ldots, f(n))).$$
This map is well-defined and is an injection by $(1)$ and obviously is a surjection.
\end{enumerate}

\end{proof}

\begin{proof}[Proof of Proposition \ref{Free(P)-span}]
We define a functor $F:Span(\mathcal{S}(\PP)_2) \to Free(\PP)$. On objects we take
$F(\underline{n})=\mathcal{F}_{\PP}(\underline{n})$.
Proposition \ref{coprod} gives the existence of coproducts in $Span(\mathcal{S}(\PP)_2) $ so $Hom_{Span(\mathcal{S}(\PP)_2) }(\underline{k}, \underline{m})= \amalg  Hom_{Span(\mathcal{S}(\PP)_2) }(\underline{1}, \underline{m})$. Consequently, it is sufficient to define $F$ on morphisms of the type $[\xymatrix{ \underline{1} &  \underline{n}\ar@{-->}[l]_ -{(s,\omega)} \ar[r]^-f & \underline{m} }]$. Let $D=(\xymatrix{ \underline{1} & \underline{n} \ar@{-->}[l]_ -{(s,\omega)} \ar[r]^-f & \underline{m}})$ be a diagram. Since $\mathcal{F}_{\PP}$ is the left adjoint to the forgetful functor $U: \PP \ti Alg \to  Set$, to define a morphism in $\PP \ti Alg$: $\mathcal{F}_{\PP}(\underline{1}) \to \mathcal{F}_{\PP}(\underline{m})$ is equivalent to define a set morphism $\underline{1} \to U(\mathcal{F}_{\PP}(\underline{m}))=\amalg_n \PP(n) \underset{\mathfrak{S}_n}{\times} \underline{m}^n$ i.e. an element of $U(\mathcal{F}_{\PP}(\underline{m}))=\amalg_n \PP(n) \underset{\mathfrak{S}_n}{\
 times} \underline{m}^n$. So  let
$$F(\xymatrix{ \underline{1} &\underline{n} \ar@{-->}[l]_ -{(s,\omega)} \ar[r]^-f &\underline{m} })=(\omega,(f(1), \ldots, f(n))) \in \PP(n) \underset{\mathfrak{S}_n}{\times} \underline{m}^n.$$
Note that $F(\xymatrix{ \underline{1} &\underline{0} \ar@{-->}[l]_ -{(s,\omega)} \ar[r]^-f &\underline{m} })=(0_\PP, *).$
Let $D'=(\xymatrix{ \underline{1} & \underline{n} \ar@{-->}[l]_ -{(s,\omega')} \ar[r]^-{f'} & \underline{m}})$ be another diagram such that $D$ and $D'$ are equivalent in $Span(\mathcal{S}(\PP)_2)$. By Lemma \ref{lm-equi}
$$(\omega,(f(1), \ldots, f(n)))=(\omega', (f'(1), \ldots, f'(n)))  \in \PP(n) \underset{\mathfrak{S}_n}{\times} \underline{m}^n.$$
So we can define: 
$$F([\xymatrix{ \underline{1} & \underline{n} \ar@{-->}[l]_ -{(s,\omega)} \ar[r]^-{f} & \underline{m}}])=(\omega,(f(1), \ldots, f(n))) \in \PP(n) \underset{\mathfrak{S}_n}{\times} \underline{m}^n.$$
One verifies that $F$ preserves composition.
Furthermore $F$ is essentially surjective and fully faithful since:
$$Hom_{Free(\PP)}(\mathcal{F}_{\PP}(\underline{p}), \mathcal{F}_{\PP}(\underline{m})\simeq Hom_{Set}(\underline{p}, U(\mathcal{F}_{\PP}(\underline{m})))\simeq \amalg_p\  Hom_{Set}(\underline{1}, U(\mathcal{F}_{\PP}(\underline{m})))$$
$$\simeq  \amalg_p\  (\amalg_k \PP(k) \underset{\mathfrak{S}_n}{\times} \underline{m}^k) \simeq \amalg_p\  Hom_{Span(\mathcal{S}(\PP)_2) }(\underline{1}, \underline{m}) \simeq Hom_{Span(\mathcal{S}(\PP)_2) }(\underline{p}, \underline{m}).$$
\end{proof}

\begin{defi} \label{M-functor}
A Janus functor $M: \Gamma(\PP)_2 \to Ab$ is a $\M \ti$functor if:
\begin{enumerate}
\item for a standard inclusion $i: [n] \to [n] \vee [m]$ and a standard retraction $r: [n] \vee [m] \to [n]$ one has 
$$M_*(r)=M^*(i, \omega^i): M([n] \vee [m] ) \to M([n])$$
$$M^*(r, \omega^r)=M_*(i): M([n]) \to M([n] \vee [m] )$$
where $\omega^i=(1_{\PP}^{\times n}, 0_{\PP}^{\times m})$ and $\omega^r=1_{\PP}^{\times n}$;
\item for any square in $\Gamma(\PP)_2$
$$ \mathcal{D}=\quad \vcenter{
\xymatrix{ [k] \ar@{-->}[d]_ -{(\phi, \omega^{\phi})} \ar[r]^-{f_1} &[l] \ar@{-->}[d]^-{(\psi, \omega^{\psi})}\\
[m] \ar[r]_-f &[n]
} }$$
with $f^{-1}(0)=0$ we have:
$$M^*(\psi, \omega^{\psi}) M_*(f)=M_*(f_1) M^*(\phi, \omega^{\phi}).$$
\end{enumerate}
The category of $\M \ti$functors is denoted by $\M \ti func(\Gamma(\PP), Ab)$.
\end{defi}

\begin{rem} \label{rem-iso}
\begin{enumerate}
\item For $M$ a $\mathcal{M}$-functor, by condition $(2)$ we have, in particular, that for an isomorphism $f: [n] \to [n]$ of $\Gamma$: $M_*(f)=M^*(f^{-1}, 1_{\PP}^{\times n})$.
\item Condition $(1)$ of the proposition also holds for all injections $i': [n] \to [n] \vee [m]$ in $\Gamma(\PP)$ and their dual retraction $r': [n] \vee [m] \to [n]$. To see this choose a permutation $\sigma$ of $[n] \vee [m]$ s.t. $i'=\sigma i$ and $r'=r \sigma$ and use point (1).
\end{enumerate}
\end{rem}

\begin{prop} \label{eq-M-functor} 
There is an equivalence of categories:
$$Mack(\mathcal{S}(\PP)_2, Ab) \simeq \mathcal{M} \ti func(\Gamma(\PP), Ab).$$
\end{prop}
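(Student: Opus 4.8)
The plan is to construct an equivalence $\Theta\colon Mack(\mathcal{S}(\PP)_2, Ab) \to \M\ti func(\Gamma(\PP), Ab)$ by transporting a Mackey functor along the functor $j\colon \mathcal{S}(\PP)_2 \to \Gamma(\PP)_2$ which adjoins a disjoint basepoint (diagram (\ref{diagram-1})). Given a Mackey functor $M = (M_*, M^*)$ on $\mathcal{S}(\PP)_2$, one first extends it to a Janus functor $\tilde M$ on $\Gamma(\PP)_2$ as follows: on objects set $\tilde M([n]) = M(\underline{n})$; for a horizontal (pointed) map $f\colon [n]\to [m]$ in $\Gamma$, decompose $[n] = f^{-1}(0)\setminus\{0\}\ \amalg\ (f^{-1}(0))^c$ at the level of underlying sets, so that $f$ factors as a retraction $r\colon \underline{n} \twoheadrightarrow \underline{n}\setminus f^{-1}(0)$ (forgetting the coordinates sent to the basepoint) followed by a genuine set map $\bar f\colon \underline{n}\setminus f^{-1}(0) \to \underline{m}$ in $\mathcal{S}$; define $\tilde M_*(f) = M_*(\bar f)\circ M^*(r,\omega^r)$, where the second factor uses the contravariant part along the decorated retraction. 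Dually, for a vertical arrow $(\psi,\omega^\psi)\colon [k]\to [n]$ in $\Gamma(\PP)$ one factors the underlying decorated map through $\underline{k}\setminus\psi^{-1}(0)$ and sets $\tilde M^*(\psi,\omega^\psi) = M^*(\bar\psi,\omega^{\bar\psi})\circ M_*(\iota)$ where $\iota$ is the inclusion of the non-basepoint-preimage part. One then checks that $\tilde M$ is functorial in each variable (using functoriality of $M_*$, $M^*$ and the Mackey double-square relation to handle the book-keeping of the retractions), that conditions (1) and (2) of Definition \ref{M-functor} hold — condition (1) is immediate from the definition of $\tilde M$ on retractions and inclusions, and condition (2) follows by applying the Mackey relation for $M$ to the image in $\mathcal{S}(\PP)_2$ of a square of $\Gamma(\PP)_2$ with $f^{-1}(0)=0$, noting that such a square comes from a square of $\mathcal{S}(\PP)_2$ under $j$.

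For the inverse functor $\Psi\colon \M\ti func(\Gamma(\PP), Ab) \to Mack(\mathcal{S}(\PP)_2, Ab)$, one restricts along $j$: given an $\M$-functor $N$ on $\Gamma(\PP)_2$, define $\Psi(N)(\underline{n}) = N([n])$, $\Psi(N)_*(f) = N_*(f_+)$ for a horizontal map $f$ of $\mathcal{S}$ (with $f_+$ its basepoint extension, which satisfies $f_+^{-1}(0)=\{0\}$), and $\Psi(N)^*(\phi,\omega^\phi) = N^*(\phi_+,\omega^{\phi_+})$ for a vertical arrow. Functoriality is inherited from $N$; the Mackey square relation for $\Psi(N)$ is exactly condition (2) of Definition \ref{M-functor} applied to the image under $j$ of a square of $\mathcal{S}(\PP)_2$, which has $f^{-1}(0) = 0$ automatically since $f_+$ only sends the basepoint to the basepoint. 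One should also verify that $\Psi(N)$ is genuinely a Janus functor, i.e. that $\Psi(N)_*$ and $\Psi(N)^*$ agree on objects, which holds by construction.

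Finally one exhibits natural isomorphisms $\Psi\Theta \cong \mathrm{Id}$ and $\Theta\Psi \cong \mathrm{Id}$. The composite $\Psi\Theta$ applied to $M$ recovers $M_*(\bar f_+) M^*(r_+, \omega^{r_+})$; since $f_+$ has empty basepoint-preimage among the non-basepoint coordinates, the retraction $r_+$ is an identity and $\bar f_+ = f$, so $\Psi\Theta(M) = M$ on the nose on the covariant part, and similarly on the contravariant part. For $\Theta\Psi$, the point is that every pointed map $f\colon [n]\to[m]$ and every decorated vertical arrow of $\Gamma(\PP)$ factors, by Remark \ref{rem-iso} and condition (1) of Definition \ref{M-functor}, through the standard retraction/inclusion pair, so an $\M$-functor is completely determined by its restriction to maps with trivial basepoint-preimage together with the retraction formulas — which is precisely what $\Theta\Psi$ reconstructs; hence $\Theta\Psi(N)\cong N$ naturally.

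The main obstacle I expect is \emph{well-definedness and functoriality of $\Theta$}: verifying that $\tilde M_*$ respects composition of pointed maps when the basepoint-preimages of the two composable maps interact nontrivially. This is where one genuinely needs the double-square (Mackey) relation for $M$ rather than mere functoriality of $M_*$ and $M^*$ separately, and where the decorations $\omega^{(-)}$ must be tracked carefully through the factorizations; the compatibility in Lemma \ref{1.6} and the uniqueness statement of Lemma \ref{uniq-square} (for $\mathcal{S}(\PP)_2$) are the tools that make this book-keeping go through. Conditions (1) and (2) of Definition \ref{M-functor}, and the two triangle identities, are comparatively formal once $\Theta$ is known to be well defined.
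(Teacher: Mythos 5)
Your proposal takes the same route as the paper: restrict along $j$ in one direction and, in the other, extend a Mackey functor on $\mathcal{S}(\PP)_2$ to an $\M$-functor on $\Gamma(\PP)_2$ by cutting out the preimage of the basepoint. But the two displayed formulae defining $\Theta$ are garbled in a way that would make the construction fail as written. The claimed factorisation of a pointed map $f\colon[n]\to[m]$ ``through a retraction $r\colon\underline{n}\twoheadrightarrow\underline{n}\setminus f^{-1}(0)$'' is not a factorisation: there is no canonical such retraction, and $\bar f\circ r\neq f$ in any case. Moreover $M^*$ is contravariant on vertical arrows, so $M^*(r,\omega^r)$ would be a map $M(\underline{n}\setminus f^{-1}(0))\to M(\underline{n})$ and the composite $M_*(\bar f)\circ M^*(r,\omega^r)$ does not type-check. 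The correct ingredient is the \emph{inclusion} $i_f\colon\underline{n}\setminus f^{-1}(0)\hookrightarrow\underline{n}$ viewed as a vertical arrow of $\mathcal{S}(\PP)_2$ (with decoration $1_\PP$ on singleton fibres and $0_\PP$ on empty ones), giving $M^*(i_f,\omega^{i_f})\colon M(\underline{n})\to M(\underline{n}\setminus f^{-1}(0))$ and the well-formed composite $M_*(f_-)\circ M^*(i_f,\omega^{i_f})$. Dually, your formula for $\tilde M^*(\psi,\omega^\psi)$ has the two factors in the wrong order; it should read $M_*(i_\psi)\circ M^*(\psi_-,\omega^\psi)$, not $M^*(\bar\psi,\omega^{\bar\psi})\circ M_*(\iota)$. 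With these corrections this is exactly the construction in the paper, where your $\Theta$ and $\Psi$ are called $G$ and $F$.

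The rest of your sketch correctly identifies the key ingredients: condition (2) of Definition~\ref{M-functor} for $\Theta(M)$ is obtained from the Mackey relation for $M$ via the passage from a $\Gamma(\PP)_2$-square with $f^{-1}(0)=0$ to squares in $\mathcal{S}(\PP)_2$ (this is Lemma~\ref{square-in-MT}); the Mackey relation for $\Psi(N)$ is condition (2) applied to $j$ of a square; $\Psi\Theta\cong\mathrm{Id}$ is immediate because $f_+^{-1}(0)=\{0\}$ makes the inclusion trivial; and $\Theta\Psi\cong\mathrm{Id}$ uses condition~(1) together with Remark~\ref{rem-iso}(2) to rewrite $N_*(f)$ and $N^*(\phi,\omega^\phi)$ in the decomposed form. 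So the approach is sound and coincides with the paper's; the concrete gap is the direction/dualisation error in the two formulae defining the extension functor.
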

In order to prove this proposition, we need the following construction and result. For a horizontal morphism $h: X \to Y$ in $\Gamma(\PP)_2$ we consider the restriction of $h$ on $X \setminus h^{-1}(0)$: $h_-: X \setminus h^{-1}(0) \to Y \setminus \{0\}$
and the inclusion: $i_h: X \setminus h^{-1}(0) \to X \setminus \{ 0 \}$. Similarly, for a vertical map
$(\phi, \omega^{\phi}): X \to Y$ in $\Gamma(\PP)_2$ where 
$$ \omega^{\phi} \in \prod_{y \in Y \setminus \{0\}}  \PP(\mid \phi^{-1}(y) \mid)$$
we define the restriction of $(\phi, \omega^{\phi})$ on $X \setminus \phi^{-1}(0)$: $(\phi_-, \omega^{\phi}): X \setminus \phi^{-1}(0) \to Y \setminus \{ 0\}$ and the inclusion: $(i_\phi, \omega^{i_\phi}): X \setminus \phi^{-1}(0) \to X \setminus \{0 \}$ where $\omega^{i_\phi}=(1_{\PP},  \ldots, 1_{\PP}, 0_{\PP},  \ldots, 0_{\PP}) \in \prod_{x \in X \setminus \{0 \}} \PP(\mid i_{\phi}^{-1}(x)\mid)$.

\begin{lm} \label{square-in-MT}
If
$$ \mathcal{D}=\quad \vcenter{
\xymatrix{ [k] \ar@{-->}[d]_ -{(\phi, \omega^{\phi})} \ar[r]^-{g} &[l] \ar@{-->}[d]^-{(\psi, \omega^{\psi})}\\
[m] \ar[r]_-f &[n]
} }$$
is a square in $\Gamma(\PP)_2$ such that $f^{-1}(0)=0$ then 
$$ \mathcal{D}'= \quad  \vcenter{
\xymatrix{ [k]\setminus \phi^{-1}(0) \ar@{-->}[d]_ -{(\phi_-, \omega^{\phi})} \ar[rr]^-{g_{\mid [k] \setminus  \phi^{-1}(0)}} &&[l] \setminus \psi^{-1}(0) \ar@{-->}[d]^-{(\psi_-, \omega^{\psi})}\\
\underline{m} \ar[rr]_-{f_-} && \underline{n}
} }$$
and 
$$ \mathcal{D}''= \quad  \vcenter{
\xymatrix{ [k]\setminus \phi^{-1}(0) \ar@{-->}[d]_ -{(i_\phi, \omega^{i_\phi})} \ar[rr]^-{g_{\mid [k] \setminus  \phi^{-1}(0)}} &&[l] \setminus \psi^{-1}(0) \ar@{-->}[d]^-{(i_\psi, \omega^{i_\psi})}\\
\underline{k} \ar[rr]_-{g_-} && \underline{l}
} }$$
are squares in $\mathcal{S}(\PP)_2$.
\end{lm}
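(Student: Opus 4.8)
The plan is to fix an explicit model for the pullback underlying $\mathcal{D}$ and read off all the needed fibre information from it. Since the forgetful functor $\Gamma \to \mathcal{S}$ preserves pullbacks, condition $(1)$ in the definition of $\Gamma(\PP)_2$ lets us identify the underlying pointed set $[k]$ with $P=\{(a,b)\in[m]\times[l] : f(a)=\psi(b)\}$ (basepoint $(0,0)$), with $\phi$ and $g$ corresponding to the two projections; I will work with this identification. Then I would record the elementary consequences of the hypothesis $f^{-1}(0)=\{0\}$: for $a\in\underline{m}$ one has $f(a)\neq 0$, the fibre $\phi^{-1}(a)$ equals $\{a\}\times\psi^{-1}(f(a))$, and $g$ restricts to a bijection $\phi^{-1}(a)\xrightarrow{\ \sim\ }\psi^{-1}(f(a))$, which lands inside $[l]\setminus\psi^{-1}(0)$ since $f(a)\neq 0$; on the other hand $\phi^{-1}(0)=\{0\}\times\psi^{-1}(0)$ and $g$ restricts to a bijection $\phi^{-1}(0)\xrightarrow{\ \sim\ }\psi^{-1}(0)$. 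In particular $g^{-1}(0)=\{(0,0)\}=\{0\}$, so that $[k]\setminus g^{-1}(0)=\underline{k}$ (making the notation $g_-\colon\underline{k}\to\underline{l}$ in $\mathcal{D}''$ meaningful) and $g$ sends $[k]\setminus\phi^{-1}(0)$ into $[l]\setminus\psi^{-1}(0)$.

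Next I would check that $\mathcal{D}'$ is a square in $\mathcal{S}(\PP)_2$. For condition $(1)$, decomposing $[k]\setminus\phi^{-1}(0)=\coprod_{a\in\underline{m}}\phi^{-1}(a)$ and using the facts above shows that the canonical comparison map $[k]\setminus\phi^{-1}(0)\to\underline{m}\times_{\underline{n}}([l]\setminus\psi^{-1}(0))$, $c\mapsto(\phi_-(c),g(c))$, is a bijection. For condition $(2)$, fix $a\in\underline{m}$: the fibres are unchanged, $\phi_-^{-1}(a)=\phi^{-1}(a)$ and $\psi_-^{-1}(f_-(a))=\psi^{-1}(f(a))$, and the bijection between them induced by the top map $g|_{[k]\setminus\phi^{-1}(0)}$ is exactly the one occurring in condition $(2)$ for $\mathcal{D}$; likewise the relevant decoration components are $\omega^\phi_a$ and $\omega^\psi_{f(a)}$. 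Hence the identity $(f_-)_*(\omega^\phi_a)=\omega^\psi_{f(a)}$ required for $\mathcal{D}'$ is literally condition $(2)$ for $\mathcal{D}$ at $c=a$.

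Finally I would check that $\mathcal{D}''$ is a square in $\mathcal{S}(\PP)_2$. For condition $(1)$ one computes $\underline{k}\times_{\underline{l}}([l]\setminus\psi^{-1}(0))\cong\{c\in\underline{k} : g(c)\notin\psi^{-1}(0)\}$; writing $c=(a,b)$ one sees $g(c)\notin\psi^{-1}(0)\iff\psi(b)\neq 0\iff f(a)\neq 0\iff\phi(c)\neq 0$, so this set is $[k]\setminus\phi^{-1}(0)$ and the comparison map $c\mapsto(i_\phi(c),g(c))$ is a bijection. For condition $(2)$, fix $c\in\underline{k}$. If $\phi(c)\neq 0$ then $i_\phi^{-1}(c)=\{c\}$, $g(c)\in[l]\setminus\psi^{-1}(0)$, $i_\psi^{-1}(g(c))=\{g(c)\}$, both relevant components of $\omega^{i_\phi}$ and $\omega^{i_\psi}$ are $1_\PP$, and the map of $\PP(1)$ induced by the bijection of singletons is the identity (Remark \ref{remarque-section1}); if instead $\phi(c)=0$, i.e.\ $c\in\phi^{-1}(0)\setminus\{0\}$, then $i_\phi^{-1}(c)=\emptyset$, $g(c)\in\psi^{-1}(0)\setminus\{0\}$, $i_\psi^{-1}(g(c))=\emptyset$, and both components are $0_\PP$. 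In either case $(g_-)_*(\omega^{i_\phi}_c)=\omega^{i_\psi}_{g(c)}$ holds trivially.

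The entire argument is bookkeeping with basepoints and fibres; the only point requiring care is keeping track of which maps index the decorations and which induced bijections enter condition $(2)$, which is why I would fix the pullback model once at the start and deduce all four fibre computations from it. I do not expect any genuine obstacle.
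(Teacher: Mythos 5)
Your proof is correct and follows essentially the same route as the paper: identify $[k]$ with the explicit pullback model, verify the pullback condition for $\mathcal{D}'$ and $\mathcal{D}''$ by direct computation, reduce condition $(2)$ for $\mathcal{D}'$ to that for $\mathcal{D}$ via the equality of fibres $\phi_-^{-1}(a)=\phi^{-1}(a)$, and check condition $(2)$ for $\mathcal{D}''$ by inspecting the inclusions. Your case analysis for condition $(2)$ of $\mathcal{D}''$ is in fact slightly more complete than the paper's, since you also treat the elements $c\in\phi^{-1}(0)\setminus\{0\}$ of $\underline{k}$ where $i_\phi^{-1}(c)=\emptyset$, whereas the paper's phrasing only covers the singleton-fibre case.
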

\begin{proof}
Since the image of $\mathcal{D}$ in $\Gamma$ is a pullback diagram we have bijections:
$$g^{-1}(0) \simeq f^{-1}(0)=\{0 \} \quad \mathrm{and} \quad \psi^{-1}(0) \simeq \phi^{-1}(0).$$
We compute the pullback of 
$\vcenter{
\xymatrix{  & &[l] \setminus \psi^{-1}(0) \ar@{-->}[d]^-{(\psi_-, \omega^{\psi})}\\
\underline{m} \ar[rr]_-{f_-} && \underline{n}
} }$ in $\mathcal{S}$.
\begin{eqnarray*}
\underline{m} \times_{\underline{n}} ([l] \setminus \psi^{-1}(0))&=& \{(c,e) \in \underline{m}  \times  ([l] \setminus \psi^{-1}(0)) \mid f_-(c)=\psi_-(e) \}\\
&=&\{(c,e) \in [m] \times  [l] \mid f(c)=\psi(e) \} \setminus \{(c,e) \in [m] \times  [l] \mid f(c)=\psi(e)=0 \} \\
&=& [m] \times_{[n]} [l] \setminus (f^{-1}(0) \times \psi^{-1}(0))\\
&\simeq& [k]\setminus \phi^{-1}(0).
\end{eqnarray*}
So, the image of $\mathcal{D}'$ in $\mathcal{S}$ is a pullback diagram. Furthermore, for $y \in \underline{m}$ we have $(\phi_-)^{-1}(y)=\phi^{-1}(y)$ and $(\psi_-)^{-1}(y)=\psi^{-1}(y)$ so the second condition to be a square in $\Gamma(\PP)_2$ is satisfied by $\mathcal{D}'$ since it is satisfied by $\mathcal{D}$.

We compute the pullback of 
$ \vcenter{
\xymatrix{ &&[l] \setminus \psi^{-1}(0) \ar@{-->}[d]^-{(i_\psi, \omega^{i_\psi})}\\
\underline{k} \ar[rr]_-{g_-} && \underline{l}
} }$ in $\mathcal{S}$.
\begin{eqnarray*}
\underline{k} \times_{\underline{l}} ([l] \setminus \psi^{-1}(0))&=& \{(c,e) \in \underline{k}  \times  ([l] \setminus \psi^{-1}(0)) \mid g_-(c)=e \}\\
&\simeq&\{c \in  \underline{k}  \mid  g_-(c) \in  [l] \setminus \psi^{-1}(0) \} \\
&=&\{c \in  \underline{k}  \mid  c \in  [k] \setminus \phi^{-1}(0)  \} \mathrm{\ since\ }   g_-(c) \in  \psi^{-1}(0) \mathrm{\ iff\ } c \in  \phi^{-1}(0)\\
&=& [k] \setminus \phi^{-1}(0).
\end{eqnarray*}
So, the image of $\mathcal{D}''$ in $\mathcal{S}$ is a pullback diagram. Furthermore, for $y \in \underline{k}$ since $\mid (i_{\phi})^{-1}(y) \mid =1= \mid (i_{\psi})^{-1}(g(y)) \mid$, we have a bijection $ (i_{\phi})^{-1}(y) \to (i_{\psi})^{-1}(g(y))$ such that the induced bijection $\PP(1) \to \PP(1)$ is the identity and satisfies $\omega^{i_\phi}_y=1_{\PP} \mapsto \omega^{i_\psi}_{g(y)}=1_{\PP}$. 
\end{proof}

\begin{proof}[Proof of Proposition \ref{eq-M-functor}]
Let $M: \Gamma(\PP)_2 \to Ab$ be a $\mathcal{M}$-functor. We define a Janus functor $F(M)=(F(M)_*, F(M)^*): \mathcal{S}(\PP)_2 \xrightarrow{j} \Gamma(\PP)_2  \xrightarrow{M=(M_*, M^*)} Ab$ by precomposition of $M$ with the functor of double categories $j$. Let 
$$ \mathcal{D}=\quad \vcenter{
\xymatrix{ \underline{k} \ar@{-->}[d]_ -{(\phi, \omega^{\phi})} \ar[r]^-{f_1} &\underline{l} \ar@{-->}[d]^-{(\psi, \omega^{\psi})}\\
\underline{m} \ar[r]_-f &\underline{n}
} }$$ be a square in $\mathcal{S}(\PP)_2$, 
$$ j(\mathcal{D})=\quad \vcenter{
\xymatrix{ [k] \ar@{-->}[d]_ -{(\phi_+, \omega^{\phi})} \ar[r]^-{{f_1}_+} &[l] \ar@{-->}[d]^-{(\psi_+, \omega^{\psi})}\\
[m] \ar[r]_-{f_+} &[n]
} }$$ is a square in $\Gamma(\PP)_2 $ such that $f_+^{-1}(0)=0$. Since $M$ is a $\mathcal{M}$-functor we have:
$$M^*(\psi_+, \omega^{\psi}) \circ M_*(f_+)=M_*({f_{1}}_+) \circ M^*(\phi_+, \omega^{\phi})$$
i.e.
$$F(M)^*(\psi, \omega^{\psi}) \circ F(M)_*(f)=F(M)_*({f_{1}}) \circ F(M)^*(\phi, \omega^{\phi})$$
so, $F(M)$ is a Mackey functor and we defined a functor $F:  \mathcal{M} \ti func(\Gamma(\PP)_2, Ab) \to Mack(\mathcal{S}(\PP)_2, Ab).$

Let $N: \mathcal{S}(\PP)_2 \to Ab$ be a Mackey functor. We define $G(N)=(G(N)_*, G(N)^*): \Gamma(\PP)_2 \to Ab$ by:
$$G(N)_*([k])=G(N)^*([k])=N(\underline{k})$$
and, for $h: X \to Y \in \Gamma(\PP)_2^h$
$$G(N)_*(h)=N_*(h_-) \circ N^*(i_h, \omega^{i_h}) $$
and for $(\phi, \omega^{\phi}): X \to Y  \in \Gamma(\PP)_2^v$
$$G(N)^*(\phi, \omega^{\phi})=N_*(i_\phi) \circ N^*(\phi_-, \omega^{\phi}).$$

Let $i: [n] \to [n] \vee [m]$ be a standard inclusion and $r: [n] \vee [m] \to [n]$ be a standard retraction. We have:
$$r_-=i_i=Id_{[n] \setminus \{ 0 \}}: [n]  \setminus \{ 0 \} \to [n]  \setminus \{ 0 \}$$
$$i_r=i_-: ([n] \vee [m]) \setminus (\{0 \} \vee [m])=[n]  \setminus \{ 0 \} \to ( [n] \vee [m]) \setminus\{ 0 \}.$$
So, we have
$$G(N)_*(r)=G(N)^*(i, \omega^i) \mathrm{\ and\ } G(N)^*(r, \omega^r)=G(N)_*(i)$$
since
$$G(N)_*(r)=N_*(r_-) \circ N^*(i_r, \omega^{i_r})=N^*(i_r, \omega^{i_r}) \mathrm{\ and\ }G(N)^*(i, \omega^i)=N_*(i_i) \circ N^*(i_-, \omega^i)=N^*(i_-, \omega^i)$$
and 
$$G(N)^*(r, \omega^r)=N_*(i_r) \circ N^*(r_-, \omega^r)=N_*(i_r) \mathrm{\ and\ }G(N)_*(i)=N_*(i_-) \circ N^*(i_i, \omega^i)=N_*(i_-).$$
Let $$ \mathcal{D}=\quad  \vcenter{
\xymatrix{ [k] \ar@{-->}[d]_ -{(\phi, \omega^{\phi})} \ar[r]^-{g} &[l] \ar@{-->}[d]^-{(\psi, \omega^{\psi})}\\
[m] \ar[r]_-f &[n]
} }$$
be a square in $\Gamma(\PP)_2$ such that $f^{-1}(0)=\{0\}$, according to Lemma \ref{square-in-MT} the squares $\mathcal{D}'$ and $\mathcal{D}''$ in this lemma are squares of $\mathcal{S}(\PP)_2$. Since $N$ is a Mackey functor we deduce that:
$$ N^*(\psi_-, \omega^{\psi})  \circ N_*(f_-)=N_*(g_{\mid [k] \setminus  \phi^{-1}(0)}) \circ N^*(\phi_-, \omega^{\phi})$$
and 
$$N^*(i_\psi, \omega^{i_\psi})  \circ N_*(g_-)=N_*(g_{\mid [k] \setminus  \phi^{-1}(0)}) \circ N^*(i_\phi, \omega^{i_\phi}).$$
Furthermore, since $f^{-1}(0)=g^{-1}(0)=\{0\}$ we have $i_f=Id_{\underline{m}}$ and $i_g=Id_{\underline{k}}$.
We compute on the one hand:
\begin{eqnarray*}
G(N)^*(\psi, \omega^\psi) \circ G(N)_*(f)&=& N_*(i_\psi) \circ N^*(\psi_-, \omega^{\psi}) \circ N_*(f_-) \circ N^*(i_f, \omega^{i_f})\\
&=&N_*(i_\psi) \circ N_*(g_{\mid [k] \setminus  \phi^{-1}(0)}) \circ N^*(\phi_-, \omega^{\phi})\circ N^*(i_f, \omega^{i_f})\\
&=& N_*(i_\psi) \circ N_*(g_{\mid [k] \setminus  \phi^{-1}(0)}) \circ N^*(\phi_-, \omega^{\phi})\\
&=& N_*(i_\psi \circ g_{\mid [k] \setminus  \phi^{-1}(0)}) \circ N^*(\phi_-, \omega^{\phi})
\end{eqnarray*}
and on the other hand:
\begin{eqnarray*}
G(N)_*(g)  \circ G(N)^*(\phi, \omega^\phi)&=&N_*(g_-) \circ N^*(i_g, \omega^{i_g}) \circ N_*(i_\phi) \circ N^*(\phi_-, \omega^{\phi})\\
&=&N_*(g_-) \circ N_*(i_\phi) \circ N^*(\phi_-, \omega^{\phi})\\
&=&N_*(g_- \circ i_\phi) \circ N^*(\phi_-, \omega^{\phi}).
\end{eqnarray*}
Since $i_\psi \circ g_{\mid [k] \setminus  \phi^{-1}(0)}=g_- \circ i_\phi$ we conclude that $G(N)$ is a $\mathcal{M}$-functor.

We verify easily that the functors $F$ and $G$ define an equivalence of categories.
\end{proof}

\begin{prop} \label{eq-Omega} 
There is an equivalence of categories:
$$\mathcal{M} \ti func(\Gamma(\PP), Ab) \simeq PMack(\Omega(\PP), Ab).$$
\end{prop}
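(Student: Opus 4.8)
The plan is to build the equivalence from the two Dold--Kan type equivalences of Theorem \ref{DK}. On objects I would send an $\M$-functor $M=(M_*,M^*)$ on $\Gamma(\PP)_2$ to the pair $\Theta(M)=(cr(M_*),\tilde{cr}(M^*))$, and conversely a pseudo-Mackey functor $N=(N_*,N^*)$ on $\Omega(\PP)_2$ to $\Xi(N)=(i_!(N_*),i_*(N^*))$. The first point is that $\Theta(M)$ is again a Janus functor, i.e.\ that $cr(M_*)$ and $\tilde{cr}(M^*)$ agree on objects. By Proposition \ref{DK-explicit} this amounts to the equality $cr_nM_*([1],\ldots,[1])=\tilde{cr}_nM^*([1],\ldots,[1])$ inside $M([n])$, which follows from axiom (1) of Definition \ref{M-functor} together with Remark \ref{rem-iso}(2): these identify $M^*$ of every standard injection with $M_*$ of the dual retraction, so that the kernel computed by Proposition \ref{cr_n} and its contravariant analogue coincide. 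For $\Xi$ the agreement on objects is immediate from $i_!(N_*)([n])=\bigoplus_{\mu\subset\underline{n}}N_*(\underline{\mid\mu\mid})=\bigoplus_{\mu\subset\underline{n}}N^*(\underline{\mid\mu\mid})=i_*(N^*)([n])$.

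Next I would check that $\Theta$ and $\Xi$ take values in the prescribed full subcategories. For $\Theta$, axiom (1) of Definition \ref{PMack} (compatibility with double isomorphisms, which by Lemma \ref{dble-iso-MT} are indexed by bijections) follows from Remark \ref{rem-iso}(1) and functoriality of $cr$ and $\tilde{cr}$. Axiom (2) is the crucial computation. Given the unique square $\mathcal{D}$ of $\Omega(\PP)_2$ over a horizontal arrow $g:C\to D$ and a vertical arrow $(\psi,\omega^\psi):B\to D$, adding basepoints produces a square of $\Gamma(\PP)_2$ over $g_+$ and $(\psi_+,\omega^{\psi_+})$ whose upper-left corner is again $[A]$ and which satisfies $g_+^{-1}(0)=0$, so axiom (2) of Definition \ref{M-functor} yields the identity $M^*(\psi_+,\omega^{\psi_+})\circ M_*(g_+)=M_*(f_+)\circ M^*(\phi_+,\omega^{\phi_+})$ on $M([C])$. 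I would then use the cross-effect decomposition $M([n])\simeq\bigoplus_{\mu\subset\underline{n}}cr(M_*)(\underline{\mid\mu\mid})$ (Propositions \ref{ce-prop}, \ref{DK-explicit}) together with the explicit formulas for $i_!$ applied to $cr(M_*)$ and for $i_*$ applied to $\tilde{cr}(M^*)$ to extract the ``top-to-top'' matrix entry of both sides. On the left this entry is $\tilde{cr}(M^*)(\psi,\omega^\psi)\circ cr(M_*)(g)$, since the vanishing clause in the $i_!$-formula forces $M_*(g_+)$ to carry the top summand of $M([C])$ into the top summand of $M([D])$. On the right, the contribution through the summand of $M([A])$ indexed by $A'\subset A$ is nonzero exactly when $\phi_{A'}$ and $f_{A'}$ are surjections, i.e.\ exactly when $A'\in Adm(\mathcal{D})$, and then equals $cr(M_*)(f_{A'})\circ\tilde{cr}(M^*)(\phi,\omega^\phi)_{A'}$; summing over such $A'$ is precisely the pseudo-Mackey relation. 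For $\Xi$ I would run the same bookkeeping in reverse: axiom (1) of Definition \ref{M-functor} comes from the explicit description of $i_!$ and $i_*$ on standard retractions and injections, and axiom (2) is obtained by splitting a square of $\Gamma(\PP)_2$ with $f^{-1}(0)=0$ into its surjective part (as in Lemma \ref{square-in-MT}) and reassembling finitely many instances of the pseudo-Mackey relation.

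Finally, $\Theta$ and $\Xi$ are mutually quasi-inverse: since $(cr,i_!)$ and $(\tilde{cr},i_*)$ are equivalences by Theorem \ref{DK}, we have $cr\circ i_!\simeq\mathrm{id}$, $i_!\circ cr\simeq\mathrm{id}$ and likewise for the contravariant pair, and the corresponding natural isomorphisms are compatible with the Janus structures because they are built from the same explicit data (on objects both amount to projection onto the top cross-effect summand); hence they assemble into natural isomorphisms $\Theta\Xi\simeq\mathrm{id}$ and $\Xi\Theta\simeq\mathrm{id}$ in $PMack(\Omega(\PP),Ab)$ and $\M\ti func(\Gamma(\PP),Ab)$ respectively. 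The step I expect to be the main obstacle is the top-to-top computation behind axiom (2): one has to align the cross-effect decompositions of $M([C])$, $M([A])$ and $M([B])$ with the two explicit formulas of Proposition \ref{DK-explicit}, and see the two halves of the admissibility condition emerge one from each of the vanishing clauses in the descriptions of $i_!$ and $i_*$. This is the non-commutative counterpart of the computation underlying Theorem $0.2$ of \cite{BDFP}, and should be the technical core of the argument.
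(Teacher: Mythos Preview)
Your proposal is correct and follows essentially the same route as the paper: define $\Theta=(cr,\tilde{cr})$ and $\Xi=(i_!,i_*)$, check the Janus condition via axiom~(1) of Definition~\ref{M-functor}, extract the pseudo-Mackey relation from the $\M$-functor relation by reading off the ``top-to-top'' block in the cross-effect decomposition (with admissibility appearing exactly from the two vanishing clauses for $i_!$ and $i_*$), and invoke Theorem~\ref{DK} for the quasi-inverse statement.

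One minor correction concerning the $\Xi$ direction: your pointer to Lemma~\ref{square-in-MT} is slightly off. That lemma produces squares in $\mathcal{S}(\PP)_2$ from a $\Gamma(\PP)_2$-square and is used in the proof of Proposition~\ref{eq-M-functor}, not here. For verifying axiom~(2) of Definition~\ref{M-functor} for $(i_!N_*,i_*N^*)$, what the paper actually does (and what your sketch needs) is a finer decomposition: for each summand index $\mu\subset\underline{m}$ on the source side and each $K\subset\underline{l}$ with $\psi(K)=f(\mu)$ on the target side, one forms the $\Omega(\PP)_2$-square over $f_{\mid\mu}$ and $(\psi,\omega^\psi)_K$ with pullback $\phi^{-1}(\mu)\cap f_1^{-1}(K)$, applies the pseudo-Mackey relation to each of these, and reassembles. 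So the reassembly is indexed by pairs $(\mu,K)$, not by a single ``surjective part'' of the original square.
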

\begin{proof}

\begin{enumerate}
\item Let $M=(M_*, M^*)$ be a $\mathcal{M}$-functor, we prove that $(cr(M_*), \tilde{cr}(M^*))$ is a pseudo-Mackey functor. 

First, we prove that $(cr(M_*), \tilde{cr}(M^*))$ is a Janus functor. Let $ \underline{n} \in Ob(\Omega(\PP)_2)$ we have:
\begin{eqnarray*}
cr(M_*)( \underline{n})&=&cr_{n}M_*([1], \ldots, [1])\\
&=&Ker(\prod_{k=1}^{n}M_*(r^{n}_{\hat{k}}))\\
&=&Ker(\prod_{k=1}^{n}M^*(i^{n}_{\hat{k}}, \omega^{i^{n}_{\hat{k}}})) \mathrm{\quad by\ Remark\ }\ref{rem-iso}\ (2)\\
&=&\tilde{cr}_{n}M^*([1], \ldots, [1])\\
&=&\tilde{cr}(M^*)(\underline{n}).
\end{eqnarray*}

Let $(f, (f, 1_\PP^{\times \mid B \mid}))$ be a double isomorphism from $A$ to $B$ in   $\Omega(\PP)_2$. We consider the double isomorphism $(j(f), (j(f), 1_\PP^{\times \mid B \mid}))$ from $j(A)$ to $j(B)$ of $\Gamma(\PP)_2$, where $j$ is the functor which adds a basepoint. By Remark \ref{rem-iso} (1) we have:
$$M_*(j(f))=M^*(j(f^{-1}), 1_\PP^{\times \mid B \mid}) \mathrm{\quad and \quad} M^*(j(f), 1_\PP^{\times \mid B \mid})=M_*(j(f^{-1})).$$
In the sequel, for simplicity, for $f: A \to B$ a bijection, we denote by $f$ also the vertical morphism $(f, 1_\PP^{\times \mid B \mid})$.
Since $f$ is an isomorphism, by definition of $i_*$ and $i_!$ in Proposition  \ref{DK-explicit} we have the following commutative diagram:
$$\xymatrix{ 
cr(M)(B) \ar[d]^{\tilde{cr}(M^*)(f)} \ar@{^{(}->}[r] & \bigoplus_{X \subset B} cr(M)(X)=i_*(\tilde{cr}M^*)(j(B)) \ar[d]^{i_*(\tilde{cr}M^*)(j(f))} \ar[r]^-{\simeq} & M(j(B)) \ar[d]^{M^*(j(f))}\\
    cr(M)(f(B))=cr(M)(A)  \ar[d]^{cr(M_*)(f)}\ar@{^{(}->}[r]&   \bigoplus_{Y \subset A} cr(M)(Y)=i_*(\tilde{cr}M^*)(j(A))   \ar[d]^{i_!(crM_*)(j(f))} \ar[r]^-{\simeq}  &M(j(A))  \ar[d]^{M_*(j(f))} \\
    cr(M)(B)  \ar@{^{(}->}[r]&  \bigoplus_{Z \subset B} cr(M)(Z)=i_!(crM)(j(B))  \ar[r] \ar[r]^-{\simeq}&  M(j(B))\\
} .$$ 
So $cr(M_*)(f) \tilde{cr}(M^*)(f)$ is the restriction of $M_*(j(f)) M^*(j(f))$ on $cr(M)(B)$, but
\begin{eqnarray*}
M_*(j(f)) M^*(j(f))
&=&M_*(j(f)) M_*(j(f^{-1}))\\
&=& M_*(j(f) j(f^{-1}))\\
&=&Id
\end{eqnarray*}
whence $cr(M_*)(f) \tilde{cr}(M^*)(f)=Id$. 

Let $$\mathcal{D}=\quad \vcenter{
\xymatrix{ A \ar@{-->}[d]_ -{(\phi, \omega^{\phi})} \ar[r]^-f &B \ar@{-->}[d]^-{(\psi, \omega^{\psi})}\\
C \ar[r]_-g &D
} }$$  be a square in $\Omega(\PP)_2$. Applying to $\mathcal{D}$ the functor $j: \Omega(\PP)_2 \to \Gamma(\PP)_2$ we obtain a square in $\Gamma(\PP)_2$ such that $j(g)^{-1}(0)=0$. Since $M$ is a $\mathcal{M}$-functor we have the following commutative diagram:

$$\xymatrix{ 
 & M(j(A)) \ar[r]^ -{M_*(j(f))}  & M(j(B))=\bigoplus_{Z \subset B} crM(Z) \ar@{->>}[r]^-{p} & crM(B)\\
 crM(C) \ar@{^{(}->}[r]_-{i} & M(j(C))\simeq \bigoplus_{X \subset C} crM(X)   \ar[r]_-{M_*(j(g))}  \ar[u]^-{M^*(j(\phi, \omega^{\phi}))}&  M(j(D)). \ar[u]_-{M^*(j(\psi, \omega^{\psi}))}
} $$ 
On the one hand we compute the composition $p \circ M^*(j(\psi, \omega^{\psi})) \circ M_*(j(g)) \circ i$. Since $g$ is surjective, by definition of $i_*$ and $i_!$ in Theorem \ref{DK} we have the following commutative diagram:
$$\xymatrix{ 
cr(M)(C) \ar[d]^{cr(M_*)(g)} \ar@{^{(}->}[r]^-{i} & \bigoplus_{X \subset C} cr(M)(X)=i_!(crM_*)(j(C)) \ar[d]^{i_!(crM_*)(j(g))} \ar[r]^-{\simeq} & M(j(C)) \ar[d]^{M_*(j(g))}\\
    cr(M)(g(C))=cr(M)(D)  \ar[d]^{\tilde{cr}(M^*)(\psi, \omega^{\psi})}\ar@{^{(}->}[r]&   \bigoplus_{Y \subset D} cr(M)(Y)=i_!(crM_*)(j(D))   \ar[d]^{i_*(\tilde{cr}M^*)(j(\psi, \omega^{\psi}))} \ar[r]^-{\simeq}  &M(j(D))  \ar[d]^{M^*(j(\psi, \omega^{\psi}))} \\
    cr(M)(B) &  \bigoplus_{Z \subset B} cr(M)(Z)=i_*(crM)(j(B))  \ar@{->>}[l]_-p \ar[r]^-{\simeq}&  M(j(B))\\
} .$$ 
So
\begin{equation} \label{eq1}
p \circ M^*(j(\psi, \omega^{\psi})) \circ M_*(j(g)) \circ i=\tilde{cr}(M^*)(\psi, \omega^{\psi}) \circ cr(M_*)(g).
\end{equation}

On the other hand we compute the composition $p \circ M_*(j(f)) \circ M^*(j(\phi, \omega^{\phi})) \circ i$.
By definition of $i_*$ the restriction of $M^*(j(\phi, \omega^{\phi}))$ to the factor $cr(M)(C)$  is $0$ to the factors $cr(M)(L)$ such that $L \subset A$ and $\phi(L) \neq C$  and is the morphism
$$cr(M^*)(\phi, \omega^{\phi})_L: cr(M)(C) \to cr(M)(L)$$ 
to the factors $cr(M)(L)$ where $L \subset A$ and $\phi(L) =C$. So, we have the following commutative diagram:
$$\xymatrix{ 
 & cr(M)(C) \ar[d] \ar@{^{(}->}[r]^-{i} & i_*(\tilde{cr}M^*)(j(C)) \ar[d]^{i_*(\tilde{cr}M^*)(j((\phi, \omega^{\phi}))) \simeq M^*(j((\phi, \omega^{\phi})))} \simeq M(j(C)) \\
 \underset{\tiny{ \begin{array}{c} L \subset A \\ \phi(L)=C; f(L)=B \end{array}}}{\bigoplus}  crM(L)         \ar[d]_-{k} &     
 \underset{ \tiny{ \begin{array}{c} L \subset A \\ \phi(L)=C \end{array}}}{\bigoplus} crM(L)  \ar[d] \ar@{^{(}->}[r]  \ar@{->>}[l]&   i_*(\tilde{cr}M^*)(j(D))   \ar[d]^{i_!(crM_*)(j(f)) \simeq M_*(j(f))} \simeq M(j(A))   \\
 cr(M)(B)  &   \underset{ \tiny{ \begin{array}{c} L \subset A \\ \phi(L)=C \end{array}}}{\bigoplus} crM(f(L)) \ar@{->>}[l] &  i_!(crM_*)(j(B))  \ar@{->>}[l] \simeq M(j(B))\\
} $$ 
where we have $k=\underset{\tiny{ \begin{array}{c} L \subset A \\ \phi(L)=C \\ f(L)=B \end{array}}}{\bigoplus} cr(M_*)(f_{\mid L}) $. We deduce that:
\begin{eqnarray*}
p \circ M_*(j(f)) \circ M^*(j(\phi, \omega^{\phi})) \circ i&=& \underset{\tiny{ \begin{array}{c} L \subset A \\ \phi(L)=C \\ f(L)=B \end{array}}}{\sum} cr(M_*)(f_{|L}) \circ cr(M^*)(\phi, \omega^{\phi})_L\\
&= &\sum_{L \in Adm(A)} cr(M_*)(f_{|L}) \circ cr(M^*)(\phi, \omega^{\phi})_L.
\end{eqnarray*}
Combining this last equality and (\ref{eq1}) we obtain:
$$ cr(M^*)(\psi, \omega^{\psi}) \circ cr(M_*)(g)=\sum_{L \in Adm(A)} cr(M_*)(f_{|L}) \circ cr(M^*)(\phi, \omega^{\phi})_L.$$

\item Let $M=(M_*,M^*) \in PMack (\Omega(\PP), Ab)$, we prove that $(i_!M_*, i_*M^*)$ is a $\mathcal{M}$-functor. 

First, we check that $(i_!(M_*), i_*(M^*))$ is a Janus functor. For $[n] \in \Gamma(\PP)_2$, 
$$i_!(M_*)([n])=\bigoplus_{\mu \subset \underline{n}}M_*(\mu)=\bigoplus_{\mu \subset \underline{n}}M^*(\mu)=i_*(M_*)([n]).$$

Let $i: [n] \to [n] \vee [m]$ be a standard inclusion and $r:  [n] \vee [m] \to [n]$ be the associated standard retraction. On the one hand, we compute $i_*(M^*)(r, \omega^r)$.

$$\xymatrix{ 
M^*(\underline{\mid \mu \mid}) \ar[d]_{M^*((r, \omega^r)_{ i(\mu)})} \ar@{^{(}->}[r] & \bigoplus_{\mu \subset \underline{n}} M^*(\underline{\mid \mu \mid})= i_*(M^*)([n]) \ar[d]^{i_*(M^*)(r, \omega^r)}\\
    M^*(\underline{\mid i(\mu) \mid}) \ar@{^{(}->}[r]&  \bigoplus_{\nu \subset \underline{n+m}} M^*(\underline{\mid \nu \mid}) =i_*(M^*)( [n] \vee [m] ) } $$ 

since, the only subset $\nu \subset \underline{n+m}$ such that $r(\nu)=\mu$ is $i(\mu)$.

On the other hand, we compute $i_!(M_*)(i)$.
$$\xymatrix{ 
M_*(\underline{\mid \mu \mid}) \ar[d]_{M_*(i_{\mid \mu})} \ar@{^{(}->}[r] & \bigoplus_{\mu \subset \underline{n}} M_*(\underline{\mid \mu \mid})= i_!(M_*)([n]) \ar[d]^{i_!(M_*)(i)}\\
    M_*(\underline{\mid i(\mu) \mid}) \ar@{^{(}->}[r]&  \bigoplus_{\nu \subset \underline{n+m}} M_*(\underline{\mid \nu \mid}) =i_!(M_*)( [n] \vee [m] ) }. $$ 

Since $i_{\mid \mu}: \mu \to i(\mu)$ and $r_{\mid i(\mu)}: i(\mu) \to \mu$ are isomorphisms inverse to each other and $M$ is in $PMack (\Omega(\PP), Ab)$ we have: $M^*((r, \omega^r)_{ i(\mu)})=M_*(i_{\mid \mu})$. So 
$$i_*(M^*)(r, \omega^r)=i_!(M_*)(i).$$
We prove, in a similar way, that: $i_*(M^*)(i)=i_!(M_*)(r, \omega^r).$

Let $$ \mathcal{D}=\quad \vcenter{
\xymatrix{ [k] \ar@{-->}[d]_ -{(\phi, \omega^{\phi})} \ar[r]^-{f_1} &[l] \ar@{-->}[d]^-{(\psi, \omega^{\psi})}\\
[m] \ar[r]_-f &[n]
} }$$
be a square in $\Gamma(\PP)_2$
such that $f^{-1}(0)=0$. 
On the one hand, we compute $i^*(M^*)(\psi, \omega^{\psi})i_!(M_*)(f)$:
$$\xymatrix{ 
M_*(\underline{\mid \mu \mid}) \ar[d]_{M_*(f_{\mid \mu})} \ar@{^{(}->}[r] & \bigoplus_{\mu \subset \underline{m}} M_*(\underline{\mid \mu \mid})= i_!(M_*)([m]) \ar[d]^{i_!(M_*)(f)}\\
   M_*(\underline{\mid f(\mu) \mid}) \ar[d]_{u} \ar@{^{(}->}[r]& i_!(M_*)( [n] )  \ar[d]^{i_*(M^*)(\psi, \omega^\psi) }\\
     \bigoplus_{K \subset \underline{l}, \psi(K)=f(\mu))} M(K)\ar@{^{(}->}[r]& i_!(M_*)( [l] )   }  $$ 
(since $f^{-1}(0)=0$, $0 \not\in f(\mu)$) where $u$ is the sum of the  maps
$$M^*((\psi, \omega^\psi)_K): M_*(\underline{\mid f(\mu) \mid}) \to M_*(K)$$ to the factors $M_*(K)$ such that $K \subset \underline{l}$ and $\psi(K)=f(\mu)$. 
For a fixed $K \subset \underline{l}$ such that $\psi(K)=f(\mu)$ we have the following square in $\Omega(\PP)_2$:
$$\vcenter{
\xymatrix{
\phi^{-1}(\mu) \cap f_1^{-1}(K) \ar@{-->}[d]_ -{(\tilde{\phi}, \omega^{\tilde{\phi}})} \ar[r]^-{\tilde{f_1}} &K \ar@{-->}[d]^-{(\psi, \omega^{\psi})_K}\\
\mu \ar[r]_-{f_{\mid \mu}} &f(\mu)=\psi(K).
} }$$
Since $(M_*, M^*)$ is a pseudo-Mackey functor we have:
$$M^*((\psi, \omega^{\psi})_K) M_*(f_{\mid \mu})=\sum_{K' \in Adm(\phi^{-1}(\mu) \cap f_1^{-1}(K))}M_*(\tilde{f_1}_{\mid K'})M^*((\tilde{\phi}, \omega^{\tilde{\phi}})_{K'}).$$

On the other hand, we compute $i_!(M_*)(f_1)i^*(M^*)(\phi, \omega^{\phi})$. We have:
$$\xymatrix{ 
M_*(\underline{\mid \mu \mid}) \ar[d]_{v } \ar@{^{(}->}[r] & \bigoplus_{\mu \subset \underline{n}} M_*(\underline{\mid \mu \mid})= i^*(M_*)([m]) \ar[d]^{i^*(M^*)(\phi, \omega^{\phi})}\\
 \bigoplus_{L \subset  [k], \phi(L)=\mu} M(L) \ar[d]_{  \bigoplus_{L \subset  [k], \phi(L)=\mu}M_*({f_1}_{\mid L} )       } \ar@{^{(}->}[r]& i_!(M_*)([k])  \ar[d]^{i_!(M_*)(f_1) }\\
     \bigoplus_{L \subset  [k], \phi(L)=\mu}  M(f_1(L))\ar@{^{(}->}[r]& i_!(M_*)( [l] )  }  $$ 
     where $v$ is the sum of the morphisms $M^*({\phi}_{\mid L}): M_*(\underline{\mid \mu \mid}) \to M(L)$ to the factors $M(L)$ such that $L \subset  [k]$ and $\phi(L)=\mu$.
For a fixed $K \subset \underline{l}$ such that $\psi(K)=f(\mu)$, the value of the composition $\bigoplus_{L \subset  [k], \phi(L)=\mu}M_*({f_1}_{\mid L} )  v$ to $M(K)$ is equal to:
$$\sum_{L \subset  [k], \phi(L)=\mu, f_1(L)=K}M_*({f_1}_{\mid L} )  M^*(({\phi},\omega^{\phi})_L)=\sum_{L \in Adm(\phi^{-1}(\mu) \cap f_1^{-1}(K))} M_*({f_1}_{\mid L} )  M^*(({\phi},\omega^{\phi})_L).$$

We deduce that:
$$i^*(M^*)(\psi, \omega^{\psi})i_!(M_*(f))=i_!(M_*(f_1))i^*(M^*)(\phi, \omega^{\phi}).$$

\end{enumerate}
\end{proof}
As a corollary of Theorem \ref{thm-pal} we obtain the following result:
\begin{cor} \label{equ-pol-cor}
There is a natural equivalence of categories between the category of reduced polynomial functors of degree $\leq n$ from $Free(\PP)$ to $Ab$ and the category of pseudo-Mackey functors from $\Omega(\PP)$ to $Ab$ which have zero values on $\underline{0}$ and on sets of cardinality $> n$.
\end{cor}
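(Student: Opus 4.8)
The plan is to show that the equivalence $\Psi\colon Func(Free(\PP), Ab) \xrightarrow{\ \sim\ } PMack(\Omega(\PP), Ab)$ of Theorem \ref{thm-pal} restricts to the two full subcategories in question. Since $\Psi$ is already an equivalence, this reduces to an object-level verification: for every $F\in Func(Free(\PP), Ab)$, one must check that $F$ is reduced and polynomial of degree $\le n$ if and only if $\Psi(F)$ vanishes on $\underline 0$ and on every $\underline k$ with $k>n$. Fullness, faithfulness and essential surjectivity of the restricted functor are then automatic, the latter because any $M$ in the target subcategory equals $\Psi(F)$ for some $F$, which then necessarily lies in $Pol_n(Free(\PP), Ab)$.

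First I would make $\Psi$ explicit on objects by chasing $F$ through the equivalences of Propositions \ref{Free(P)-span}, \ref{eq-M-functor} and \ref{eq-Omega}. Under Proposition \ref{Free(P)-span} the object $\mathcal{F}_{\PP}(\underline m)$ is the $m$-fold coproduct $\mathcal{F}_{\PP}(\underline 1)^{\vee m}$ (Proposition \ref{coprod}) and the retraction $r^m_{\hat k}$ of $Free(\PP)$ corresponds to the horizontal retraction $[m]\to[m-1]$ of $\Gamma(\PP)_2$; under Proposition \ref{eq-M-functor} the covariant part of the associated $\mathcal{M}$-functor is $M_*\colon\Gamma(\PP)\to Ab$ with $M_*([m])=F(\mathcal{F}_{\PP}(\underline m))$ and $M_*(r^m_{\hat k})=F(r^m_{\hat k})$; and feeding this into the description of $cr$ in Proposition \ref{DK-explicit}, together with Proposition \ref{cr_n}, yields
\[ \Psi(F)(\underline n)\;=\;cr_nM_*([1],\dots,[1])\;=\;cr_nF\big(\mathcal{F}_{\PP}(\underline 1),\dots,\mathcal{F}_{\PP}(\underline 1)\big)\qquad(n\text{ copies}), \]
where for $n=0$ the right-hand side is $F(0)$, the object $0=\mathcal{F}_{\PP}(\underline 0)=\PP(0)=\underline 1$ being the null object of $Free(\PP)$ (initial always, and terminal because $\PP$ is unitary). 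In particular $F$ is reduced exactly when $\Psi(F)(\underline 0)=0$.

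It then remains to prove, for a reduced $F$, the equivalence: $F$ is polynomial of degree $\le n$ if and only if $cr_kF(\mathcal{F}_{\PP}(\underline 1),\dots,\mathcal{F}_{\PP}(\underline 1))=0$ for all $k>n$. For the forward implication I would iterate the inductive formula $cr_{k+1}F(X_1,\dots,X_{k+1})=cr_2\big(cr_kF(-,X_3,\dots,X_{k+1})\big)(X_1,X_2)$ of Definition \ref{cross-eff}: if $cr_{n+1}F=0$ then $cr_kF=0$ for all $k>n$, so in particular $\Psi(F)(\underline k)=0$. For the converse I would use that every object of $Free(\PP)$ is some $\mathcal{F}_{\PP}(\underline 1)^{\vee a}$, split each argument $\mathcal{F}_{\PP}(\underline 1)^{\vee a_i}$ of $cr_{n+1}F$ as $\mathcal{F}_{\PP}(\underline 1)\vee\mathcal{F}_{\PP}(\underline 1)^{\vee(a_i-1)}$, and apply repeatedly the natural decomposition
\[ cr_mF(A\vee B,X_2,\dots,X_m)\;\cong\;cr_mF(A,X_2,\dots,X_m)\oplus cr_mF(B,X_2,\dots,X_m)\oplus cr_{m+1}F(A,B,X_2,\dots,X_m), \]
obtained by comparing the two decompositions of $F(A\vee B\vee X_2\vee\cdots\vee X_m)$ supplied by Proposition \ref{ce-prop} for the systems $(A,B,X_2,\dots,X_m)$ and $(A\vee B,X_2,\dots,X_m)$, in order to rewrite $cr_{n+1}F(\mathcal{F}_{\PP}(\underline 1)^{\vee a_1},\dots,\mathcal{F}_{\PP}(\underline 1)^{\vee a_{n+1}})$ as a finite direct sum of terms $cr_mF(\mathcal{F}_{\PP}(\underline 1),\dots,\mathcal{F}_{\PP}(\underline 1))$ with $m\ge n+1$, all zero by hypothesis. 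Since $cr_{n+1}F$ is a subfunctor of $F(-\vee\cdots\vee-)$, vanishing on all objects forces $cr_{n+1}F=0$, i.e.\ $F$ is polynomial of degree $\le n$; combining this with the reducedness criterion finishes the identification of the two subcategories.

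I expect the main obstacle to be the explicit identification of $\Psi$ on objects in the second step — concretely, the bookkeeping through the chain of equivalences needed to check that the structure maps $r^n_{\hat k}$ entering the cross-effect $cr_nF$ are precisely the horizontal retractions of $\Gamma(\PP)_2$ that appear in the formula for $cr$ in Proposition \ref{DK-explicit}. Once that is pinned down, the remainder is the standard argument that the polynomial degree of a functor on a coproduct-generated pointed category is detected by its cross-effects evaluated on the generator.
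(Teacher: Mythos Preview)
Your proposal is correct and is exactly the argument the paper has in mind: the corollary is stated in the paper without proof, merely as an immediate consequence of Theorem \ref{thm-pal}, and you have accurately supplied the omitted details. The key point---that under the chain of equivalences the pseudo-Mackey functor associated to $F$ takes the value $cr_kF(\mathcal{F}_{\PP}(\underline{1}),\ldots,\mathcal{F}_{\PP}(\underline{1}))$ on $\underline{k}$, by Proposition \ref{DK-explicit}---together with the standard fact that on a category whose objects are finite coproducts of a single generator the vanishing of $cr_{n+1}F$ is detected on that generator (via Proposition \ref{ce-prop}), is precisely what makes the restriction work.
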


\begin{rem}
Applying Theorem \ref{thm-pal} for $\PP=\mathcal{I}$ (the initial unitary set-operad, see Example \ref{ex-1.1} (2)), we recover the Dold-Kan type theorem of \cite{Pira-Dold}: $Func(\Gamma, Ab) \simeq Func (\Omega, Ab)$. In fact, the vertical maps of $\Omega(\PP)_2$ are just the bijections, so by Remark \ref{rem-iso} (1) the functor $M^*$ is determined by $M_*$.
\end{rem}

%%%%%%%%%%%%%%%%%%%%%%%%%%
\section{Polynomial maps and polynomial functors} \label{section5}
The principal aim of this section is to prove, in Corollary \ref{pol-gr-mon}, that polynomial functors of degree $n$ from finitely generated free groups to abelian groups coincide with polynomial functors of degree $n$ from finitely generated free monoids to abelian groups. In order to prove this crucial result for this paper, we introduce intermediate material interesting on its own. In fact, after recalling the definition of polynomial maps from monoids in the sense of Passi, we extend this definition to algebraic theories and, more generally to sets of morphisms of a suitable general category. This leads us to introduce a category $T_n(\overline{\Z}[\C])$ generalizing the categories $P_n(\C)$ for $\C$ an additive category considered in \cite{Pira}. These categories have the important property that polynomial functors of degree $n$ from $\C$ to $Ab$ are equivalent to additive functors from $T_n(\overline{\Z}[\C])$ to $Ab$ (see Corollary \ref{Poln(C)=Add(PnC)}). 

The proof of Corollary \ref{pol-gr-mon} relies on three main ingredients: the isomorphism between polynomial maps from monoids and from their ``groupification" in Proposition \ref{poly-map}, the link between polynomial functors and polynomial maps leading to Corollary \ref{Poln(C)=Add(PnC)} and the canonical isomorphism $T_n \overline{\Z}[gr] \simeq T_n \overline{\Z}[mon]$  obtained in Theorem \ref{Pnmon=Pngr} from the computation of these categories given in Corollary  \ref{Pn(Free(P))}.

\subsection{Passi polynomial maps}\label{sec:Pnclass}
In this section, we begin by recalling the classical definition of polynomial maps from monoids to abelian groups due to Passi and we prove that polynomial maps on a monoid and on its groupification coincide (see Proposition \ref{poly-map}). Then we extend the definition of Passi to maps from algebraic objects of any type, i.e. from models of any algebraic theory.

\subsubsection{The classical case for monoids}\label{recall:PassisPn}

Let $M$ be a monoid with a unit element $e$ and $I(M)$ be the augmentation ideal of the monoid algebra $\mathbb{Z}[M]$ (i.e. the kernel of the map $\epsilon: \Z[M] \to \Z$ such that $\epsilon (\sum_{m \in M} \alpha_m m)=\sum_{m \in M} \alpha_m$). We know that $I(M)$ is a free abelian group having as free basis the elements $m-1$ where $m \in M$ and $m \neq e$. For $k\geq 0$, we denote by $I^k(M)$ the $k-$th power of $I(M)$ with the convention $I^0(M)=\Z[M]$. Let $A$ be an abelian group, for a map $f: M \to A$ let $\bar{f}: \Z[M] \to A$ denote the extension of $f$ to a $\Z$-linear homomorphism.

\begin{defi}\cite{Passi}
Let $M$ be a monoid and $A$ be an abelian group. A map $f: M \to A$ such that $f(e)=0$ is polynomial of degree lower or equal to $n$ if $\bar{f}$ vanishes on $I^{n+1}(M)$. 
\end{defi}

The abelian group $I(M)/I^{n+1}(M)$ is denoted by $P_n(M)$ and is called the Passi group of $M$ of order $n$. The canonical quotient map $I(M) \to P_n(M)$ is denoted by $\rho_n$.  The following Proposition explains the importance of the Passi group.
\begin{prop}\cite{Passi} \label{Passi}
Let $M$ be a monoid and $f_n: M \to P_n(M)$ given by $f_n(x)=\rho_n(x-1)$, then $f_n$ is the universal polynomial map of degree $n$ from $M$ to abelian groups.
\end{prop}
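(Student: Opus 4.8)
The statement to prove is Proposition \ref{Passi}: the map $f_n \colon M \to P_n(M)$, $f_n(x) = \rho_n(x-1)$, is the universal polynomial map of degree $n$ from $M$ to abelian groups.

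\medskip

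The plan is to verify two things: first that $f_n$ is itself a polynomial map of degree $\le n$ in the sense of the preceding definition, and second that it is universal, i.e.\ that for every abelian group $A$ and every polynomial map $f \colon M \to A$ of degree $\le n$ there is a \emph{unique} group homomorphism $\bar{g} \colon P_n(M) \to A$ with $f = \bar{g} \circ f_n$. The first point is essentially immediate from the construction: $f_n$ factors as $M \to I(M) \xrightarrow{\rho_n} P_n(M)$, where the first map sends $x$ to $x-1$; its $\mathbb{Z}$-linear extension $\overline{f_n} \colon \mathbb{Z}[M] \to P_n(M)$ is the composite $\mathbb{Z}[M] \twoheadrightarrow I(M) \xrightarrow{\rho_n} P_n(M)$ suitably interpreted (sending $\sum \alpha_m m$ to $\rho_n(\sum\alpha_m(m-1))$, using that $\epsilon$ kills the difference), and this clearly vanishes on $I^{n+1}(M)$ since $\rho_n$ does by definition of $P_n(M) = I(M)/I^{n+1}(M)$. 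Also $f_n(e) = \rho_n(e-1) = 0$.

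\medskip

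For universality, suppose $f \colon M \to A$ is polynomial of degree $\le n$, so $\overline{f} \colon \mathbb{Z}[M]\to A$ vanishes on $I^{n+1}(M)$. Restricting $\overline{f}$ to $I(M)$ gives a homomorphism $I(M) \to A$ that vanishes on $I^{n+1}(M)$, hence factors uniquely through the quotient $\rho_n \colon I(M) \twoheadrightarrow P_n(M)$ as a homomorphism $\bar{g} \colon P_n(M) \to A$. Then for $x \in M$ we have $\bar{g}(f_n(x)) = \bar{g}(\rho_n(x-1)) = \overline{f}(x-1) = \overline{f}(x) - \overline{f}(1) = f(x) - f(e) = f(x)$, using $f(e)=0$. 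Uniqueness of $\bar g$ follows because $P_n(M)$ is generated as an abelian group by the elements $\rho_n(m-1) = f_n(m)$, $m\in M$ (indeed $I(M)$ is free on the $m-1$ with $m\ne e$, and $\rho_n$ is surjective), so any homomorphism out of $P_n(M)$ is determined by its values on the image of $f_n$.

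\medskip

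I do not expect any genuine obstacle here: the result is a direct unwinding of the definitions of the augmentation ideal, its powers, and the quotient $P_n(M)$, together with the universal property of a quotient group. The only mild subtlety worth spelling out is the bookkeeping between $\mathbb{Z}[M] = I^0(M)$ and $I(M)$: one uses the canonical splitting $\mathbb{Z}[M] \cong \mathbb{Z}\cdot 1 \oplus I(M)$ (via $\epsilon$) to pass between "$\overline f$ vanishes on $I^{n+1}(M)$" and "$\overline f|_{I(M)}$ factors through $P_n(M)$", and one must keep track of the normalization $f(e)=0$ so that $f$ is recovered exactly rather than up to a constant. Everything else is formal.
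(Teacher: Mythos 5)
Your proof is correct and complete. The paper itself does not reprove this proposition but simply cites it to Passi; your argument is the standard one — the universal property of the quotient $I(M)\twoheadrightarrow P_n(M)$, combined with the identification $\bar{f}(x-1)=f(x)-f(e)=f(x)$ from the normalization $f(e)=0$, and with uniqueness coming from the fact that the classes $\rho_n(m-1)$ generate $P_n(M)$ — and it correctly handles the one bookkeeping point you flag, namely the splitting $\mathbb{Z}[M]\cong\mathbb{Z}\oplus I(M)$ via $\epsilon$, which is exactly what makes $\overline{f_n}$ agree with $\rho_n$ on $I(M)$ and hence vanish on $I^{n+1}(M)$.
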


Let $Map_n(M,A)$ be the abelian group of polynomial maps $f:M \to A$ of degree $\leq n$. By the previous Proposition we have:
\begin{equation} \label{eq}
Map_n(M,A) \simeq Hom_{Ab}(P_n(M),A).
\end{equation}

In the sequel, we need the following elementary result which is wellknown.

\begin{prop} \label{prop-cog-2}
For $G \in Gr$ we have a natural isomorphism of abelian groups:
$$\theta: P_1(G)=I(G) / I^2(G)\xrightarrow{\simeq} G^{ab}:=G/[G,G]$$
such that for $g \in G$, $\theta(\overline{g-1})=\overline{g}$.
\end{prop}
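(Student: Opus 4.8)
The plan is to construct $\theta$ and an explicit inverse, and to check they are mutually inverse on generators. Recall from Section~\ref{recall:PassisPn} that $I(G)$ is a \emph{free} abelian group with basis $\{g-1 : g\in G,\ g\neq e\}$. Hence there is a unique homomorphism of abelian groups $\widetilde{\theta}\colon I(G)\to G^{ab}$ determined by $\widetilde{\theta}(g-1)=\overline{g}$ for all $g\in G$ (this is consistent for $g=e$, since $e-1=0$ and $\overline{e}=0$). First I would check that $\widetilde{\theta}$ vanishes on $I^2(G)$: the subgroup $I^2(G)$ is generated by the elements $(g-1)(h-1)$ with $g,h\in G$, and the identity
$$(g-1)(h-1)=(gh-1)-(g-1)-(h-1)$$
holds in $\mathbb{Z}[G]$, so $\widetilde{\theta}\big((g-1)(h-1)\big)=\overline{gh}-\overline{g}-\overline{h}=0$ in the abelian group $G^{ab}$. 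Therefore $\widetilde{\theta}$ factors through a homomorphism $\theta\colon P_1(G)=I(G)/I^2(G)\to G^{ab}$ with $\theta(\overline{g-1})=\overline{g}$, which is the formula required in the statement.

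Next I would produce the inverse. Consider the map $\psi\colon G\to P_1(G)$ given by $\psi(g)=\overline{g-1}$. The same identity $(gh-1)-(g-1)-(h-1)=(g-1)(h-1)\in I^2(G)$ shows that $\psi(gh)=\psi(g)+\psi(h)$ in $P_1(G)$, so $\psi$ is a group homomorphism into an abelian group and hence factors as $\psi=\overline{\psi}\circ q$ for a unique homomorphism $\overline{\psi}\colon G^{ab}\to P_1(G)$, where $q\colon G\to G^{ab}$ is the canonical quotient. Then $\theta\circ\overline{\psi}(\overline{g})=\theta(\overline{g-1})=\overline{g}$ and $\overline{\psi}\circ\theta(\overline{g-1})=\overline{\psi}(\overline{g})=\overline{g-1}$; since the classes $\overline{g}$ generate $G^{ab}$ and the classes $\overline{g-1}$ generate $P_1(G)$, the compositions $\theta\,\overline{\psi}$ and $\overline{\psi}\,\theta$ are the respective identities, so $\theta$ is an isomorphism with the stated effect on elements.

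Finally, for naturality: a group homomorphism $\alpha\colon G\to H$ induces $\mathbb{Z}[\alpha]\colon\mathbb{Z}[G]\to\mathbb{Z}[H]$ compatible with augmentations, hence a map $I(G)\to I(H)$ sending $g-1\mapsto\alpha(g)-1$; this respects powers of the augmentation ideal and so induces $P_1(\alpha)\colon P_1(G)\to P_1(H)$. On the other side $\alpha$ induces $\alpha^{ab}\colon G^{ab}\to H^{ab}$. Both $\alpha^{ab}\circ\theta_G$ and $\theta_H\circ P_1(\alpha)$ send the generator $\overline{g-1}$ of $P_1(G)$ to $\overline{\alpha(g)}\in H^{ab}$, so they coincide, giving the naturality square. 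I do not expect any genuine obstacle here: the only point needing a little care is the well-definedness of $\widetilde{\theta}$, which rests on $I(G)$ being free abelian on the indicated basis, together with the vanishing of $\widetilde{\theta}$ on $I^2(G)$ — both immediate from the displayed identity.
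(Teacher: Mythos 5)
Your argument is correct and is the standard direct verification; the paper itself supplies no proof for this Proposition, merely labeling it ``elementary'' and ``wellknown.'' Your construction of $\theta$ via the free basis of $I(G)$, the identity $(g-1)(h-1)=(gh-1)-(g-1)-(h-1)$, the inverse $\overline{\psi}$ induced by $g\mapsto \overline{g-1}$, and the naturality check are all sound and constitute a complete proof.
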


Let $Gr$ be the category of all groups and $Mon$ the category of all monoids.
The forgetful functor $U: Gr \to Mon$ admits a left adjoint denoted by ${\bf g}$ and named groupification functor.
The unit of this adjunction $\eta_M: M \to U({\bf g}(M))$ for $M \in Mon$ induces a homomorphism:
$$\eta_M^*: Map_n(U{\bf g}(M), A) \to Map_n(M,A).$$
In the sequel, we prove the following result:

\begin{prop} \label{poly-map}
Let $M$ be a monoid and $A$ be an abelian group, the homomorphism:
$$\eta_M^*\colon Map_n(U{\bf g}(M), A) \to Map_n(M,A)$$
is an isomorphism.
\end{prop}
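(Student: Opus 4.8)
The plan is to reduce the statement to a ring-theoretic fact about group rings. First I would note that $Map_n(-,A)$ and $P_n(-)$ are both functorial on monoids and that, by Proposition~\ref{Passi} together with the isomorphism~(\ref{eq}) (both natural in the monoid variable), the map $\eta_M^*$ is identified with $Hom_{Ab}(P_n(\eta_M),A)$, where $P_n(\eta_M)\colon P_n(M)\to P_n({\bf g}(M))$ is the homomorphism induced by the augmentation-preserving ring map $\bar\eta_M\colon\mathbb{Z}[M]\to\mathbb{Z}[{\bf g}(M)]$, which sends $I^{n+1}(M)$ into $I^{n+1}({\bf g}(M))$. So it suffices to prove that $P_n(\eta_M)$ is an isomorphism of abelian groups, and for that it is enough to show that $\bar\eta_M$ induces a \emph{ring} isomorphism $\mathbb{Z}[M]/I^{n+1}(M)\xrightarrow{\ \sim\ }\mathbb{Z}[{\bf g}(M)]/I^{n+1}({\bf g}(M))$, since restricting to the kernels of the augmentations then recovers $P_n(\eta_M)$.

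To construct an inverse, the key observation would be that in the ring $R_n:=\mathbb{Z}[M]/I^{n+1}(M)$ the class $\bar m$ of every $m\in M$ is a unit: writing $\bar m=1+\overline{m-1}$ with $\overline{m-1}$ nilpotent (its $(n{+}1)$-st power lies in $I^{n+1}(M)$), the element $\sum_{j=0}^{n}(-1)^{j}\,\overline{m-1}^{\,j}$ is a two-sided inverse. Hence the monoid morphism $M\to R_n^{\times}$, $m\mapsto\bar m$, factors uniquely through a group homomorphism ${\bf g}(M)\to R_n^{\times}$ by the universal property of the groupification, and $\mathbb{Z}$-linearising this gives a ring homomorphism $\mathbb{Z}[{\bf g}(M)]\to R_n$. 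This ring map preserves augmentations (check on the ring generators $\eta_M(m)^{\pm1}$, which are sent to units of augmentation $1$), so it carries $I^{n+1}({\bf g}(M))$ — the $(n{+}1)$-st power of the augmentation ideal of $\mathbb{Z}[{\bf g}(M)]$ — into the $(n{+}1)$-st power of the augmentation ideal of $R_n$, which is $0$. Therefore it factors through a ring homomorphism $\phi\colon\mathbb{Z}[{\bf g}(M)]/I^{n+1}({\bf g}(M))\to R_n$.

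Next I would check that $\phi$ and the ring map $\overline{\eta_M}$ induced on the quotients are mutually inverse. Both are ring homomorphisms; $R_n$ is generated as a ring by the classes $\bar m$ with $m\in M$, while $\mathbb{Z}[{\bf g}(M)]/I^{n+1}({\bf g}(M))$ is generated as a ring by the classes of the elements $\eta_M(m)^{\pm1}$ (since ${\bf g}(M)$ is generated as a group by $\eta_M(M)$). On these generators both composites are the identity, immediately from the construction, so $\phi$ and $\overline{\eta_M}$ are inverse to one another. Restricting to augmentation ideals yields the isomorphism $P_n(\eta_M)\colon P_n(M)\xrightarrow{\ \sim\ }P_n({\bf g}(M))$, and applying $Hom_{Ab}(-,A)$ together with the naturality of~(\ref{eq}) gives the claim.

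The main obstacle — or rather the one genuine idea — is the nilpotence of $\overline{m-1}$ in $\mathbb{Z}[M]/I^{n+1}(M)$: it is precisely what makes the image of $M$ invertible there, and thereby allows the universal property of the groupification to produce the inverse ring map; after that everything is a routine verification on ring generators. A secondary point that needs care is the bookkeeping identifying $\eta_M^*$ with $Hom_{Ab}(P_n(\eta_M),A)$ via the naturality of Proposition~\ref{Passi} and of~(\ref{eq}) in the monoid argument, but this part is formal.
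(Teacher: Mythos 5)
Your proposal is correct and follows essentially the same route as the paper's proof (Lemma \ref{iso-ab-gr}): both reduce to a ring isomorphism $\mathbb{Z}[M]/I^{n+1}(M)\cong\mathbb{Z}[{\bf g}(M)]/I^{n+1}({\bf g}(M))$, both hinge on the nilpotence of $\overline{m-1}$ making $\bar m$ a unit, and both use the universal property of groupification to produce the inverse. The only cosmetic difference is that the paper phrases the key step as $q_n(1+I(M))$ being a subgroup of the units of $\mathbb{Z}[M]/I^{n+1}(M)$ rather than speaking directly of nilpotence.
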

By (\ref{eq}) it is sufficient to prove the following lemma:

\begin{lm} \label{iso-ab-gr}
Let $M$ be a monoid. Then the map
$$P_n(\eta)\colon P_n(M) \to P_n(U{\bf g}(M))$$
 is an isomorphism of abelian groups.
\end{lm}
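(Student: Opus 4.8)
The statement to prove is that $P_n(\eta)\colon P_n(M)\to P_n(U\mathbf{g}(M))$ is an isomorphism of abelian groups, where $\eta=\eta_M\colon M\to U\mathbf{g}(M)$ is the unit of the groupification adjunction and $P_n(-) = I(-)/I^{n+1}(-)$ is the Passi group. The plan is to work at the level of the monoid algebras and their augmentation ideals. Write $G = \mathbf{g}(M)$. The unit $\eta$ induces a ring homomorphism $\bar\eta\colon \Z[M]\to \Z[G]$ compatible with augmentations, hence $\bar\eta(I^k(M))\subseteq I^k(G)$ for all $k$, and therefore a well-defined map $P_n(\eta)$. The first thing I would do is reduce the problem: it suffices to show that the induced map $I^k(M)/I^{k+1}(M)\to I^k(G)/I^{k+1}(G)$ on associated graded pieces is an isomorphism for each $0\le k\le n$ (then conclude by the five lemma / an induction on the filtration length, since $P_n$ is the quotient $I(M)/I^{n+1}(M)$ with its finite filtration by the $I^k(M)/I^{n+1}(M)$).

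The key structural input is a universal-property description of the graded ring $\mathrm{gr}\,\Z[M] = \bigoplus_k I^k(M)/I^{k+1}(M)$. Recall that $\Z[M]$ is the monoid algebra, so $\Z[-]\colon Mon\to \mathrm{Rings}$ is left adjoint to the multiplicative-monoid functor; more precisely $\Z[M]$ represents monoid maps out of $M$, and for a group $G$, $\Z[G]$ is obtained from $\Z[M]$ by inverting the (images of) elements of $M$. Now here is the crucial point: inverting a unit does \emph{not} change the augmentation ideal filtration's associated graded, because $m-1\in I(M)$ is already "small". Concretely, for $m\in M$ with inverse $m^{-1}$ in $G$, one has the identity $m^{-1}-1 = -m^{-1}(m-1)$, so modulo $I^{2}$, $m^{-1}-1 \equiv -(m-1)$; iterating, the element $m^{-1}-1$ lies in the subring generated by $I(M)$ and is congruent mod $I^{k+1}(G)$ to a polynomial in elements $m'-1$, $m'\in M$. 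This shows $\bar\eta$ is surjective on each $I^k(G)/I^{k+1}(G)$, i.e. $P_n(\eta)$ is surjective. For injectivity, I would use the standard fact (due to Passi, or deducible from it) that $P_n$ preserves the localization in the relevant sense — equivalently, that a polynomial map of degree $\le n$ on $M$ extends uniquely to one on $G$. The cleanest route is to invoke the universal property directly: by Proposition \ref{Passi}, $P_n(M)$ receives the universal degree-$n$ polynomial map from $M$; and one shows that the composite $M\xrightarrow{\eta} G \xrightarrow{f_n^G} P_n(G)$ makes $P_n(G)$ into \emph{the} universal degree-$n$ polynomial map out of $M$ as well, by checking that any degree-$n$ polynomial map $f\colon M\to A$ factors uniquely through $\eta$. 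That factorization is where the real work sits.

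So the heart of the proof, and the step I expect to be the main obstacle, is: \emph{every polynomial map $f\colon M\to A$ of degree $\le n$ (with $A$ abelian) factors uniquely through $\eta_M\colon M\to U\mathbf{g}(M)$ as a polynomial map of degree $\le n$.} Uniqueness is immediate since $\eta_M(M)$ generates $G$ as a group and a polynomial map is determined by its values on a generating set together with the "finite-difference" relations — more carefully, uniqueness follows because $\bar{f}$ is determined on $\Z[G]$ once it is known on the subring generated by $\eta(M)$, which is all of $\Z[G]$ after localization, and the degree bound forces compatibility. For existence, given $\bar f\colon \Z[M]\to A$ vanishing on $I^{n+1}(M)$, I must produce $\tilde f\colon \Z[G]\to A$ vanishing on $I^{n+1}(G)$ with $\tilde f\circ\bar\eta = \bar f$. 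The construction: define $\tilde f$ on a general element of $\Z[G]$ by first using the identity $g^{-1} = 1 + \sum_{j\ge 1}(-1)^j (g-1)^j$ which, modulo $I^{n+1}(G)$, is a \emph{finite} sum lying in $\Z[M]$'s image (for $g=\eta(m)$), so that $\bar f$ of it is defined; then extend multiplicatively and check well-definedness against the relations in $\Z[G]$. The main technical burden is verifying that this is consistent — i.e. that the value is independent of how one writes a group element as a word in $M$ and its formal inverses — which amounts to checking that $\bar f$ kills the ideal generated by expressions $m\cdot(\,\text{formal }m^{-1}\,) - 1$ modulo $I^{n+1}$; this is a direct but slightly fiddly computation with the binomial expansion of $(g-1)^j$ and the degree bound. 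Once this factorization is established, the two universal properties of $P_n(M)$ and $P_n(G)$ (as targets of universal degree-$\le n$ polynomial maps out of $M$) identify them via $P_n(\eta)$, proving it is an isomorphism, which is exactly the lemma; Proposition \ref{poly-map} then follows immediately by applying $\Hom_{Ab}(-,A)$ as noted in the excerpt.
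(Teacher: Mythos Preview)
Your proposal shares the paper's central insight---that $1+I(M)$ maps to a \emph{group} of units in $\Z[M]/I^{n+1}(M)$ via the truncated geometric series $\sum_{k=0}^n(-1)^k(m-1)^k$---but the execution diverges, and your version leaves a real gap.

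The paper constructs the inverse map directly at the ring level. Since $q_n^M(1+I(M))$ is a subgroup of the units of $\Z[M]/I^{n+1}(M)$, the monoid map $M\to q_n^M(1+I(M))$, $m\mapsto q_n(m)$, extends \emph{automatically} to a group map $\mathbf{g}(M)\to q_n^M(1+I(M))$ by the universal property of groupification. This then induces a ring map $\Z[\mathbf{g}(M)]\to \Z[M]/I^{n+1}(M)$ and hence, since it respects augmentations, the desired inverse $\overline{\gamma_M}\colon \Z[\mathbf{g}(M)]/I^{n+1}(\mathbf{g}(M))\to \Z[M]/I^{n+1}(M)$. All well-definedness issues are absorbed by universal properties; there is no ``fiddly computation'' at all.

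Your route instead tries to extend an arbitrary $\bar f\colon \Z[M]\to A$ (with $A$ an abelian group) to $\tilde f\colon \Z[G]\to A$ by substituting the geometric series for $g^{-1}$ and then ``extending multiplicatively''. But $A$ is not a ring, so ``extend multiplicatively'' is not meaningful here; what you presumably intend is to rewrite each group element as a word in $\eta(M)\cup\eta(M)^{-1}$, substitute the series, land in the image of $\Z[M]$, and apply $\bar f$. The well-definedness you then have to check---independence of the word representing a given element of $G$---is precisely the content of the universal property of $\mathbf{g}$ applied to a map into a \emph{group}, which is what the paper exploits. So your ``main technical burden'' is not merely fiddly: as stated it is not a computation at all but a repackaging of the adjunction, and you have not indicated how you would carry it out without that. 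The associated-graded reduction you open with is also unnecessary and never actually used in your argument. I would recommend reorganizing around the observation that $M\to (\Z[M]/I^{n+1}(M))^\times$ already lands in a group, and letting the adjunction do the work.
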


\begin{proof}     
Recall that for a monoid (resp. group) $N$ the natural injection $\alpha_N\colon N\to \Z[N]$  is the unit of the adjunction between monoids (resp. groups) and rings, sending $N$ to $\Z[N]$ and a ring $R$ to itself viewed as a multiplicative monoid (resp. to its group of units $R^{\times}$). Moreover, the augmentation map $\epsilon\colon \Z[N]\to \Z$ induces an augmentation map $\overline{\epsilon}$ on the quotient ring $\Z[N]/I^{n+1}(N)$ such that $\overline{\epsilon} q_n^N = \epsilon$, where $q_n^N\colon \Z[N]\to \Z[N]/I^{n+1}(N) $ is the canonical projection. 

It follows that the map $\eta_M\colon M \to U({\bf g}(M))$ successively induces  unique homomorphism of augmented rings $\Z[M] \to \Z[U({\bf g}(M)) ]$ and $$\overline{\eta_M}\colon \Z[M]/ I^{n+1}(M) \to \Z[U({\bf g}(M)) ]/ I^{n+1}(U({\bf g}(M))) =  \Z[ {\bf g}(M) ]/ I^{n+1}( {\bf g}(M)) $$ such that $
\overline{\eta_M} q_n^M \alpha_M = q_n^{ U({\bf g}(M)) }  \alpha_{U({\bf g}(M))} \eta_M$. So it suffices to construct an augmented ring homomorphism $\overline{\gamma_M} \colon \Z[ {\bf g}(M) ]/ I^{n+1}( {\bf g}(M)) \to  \Z[M]/ I^{n+1}(M)$ inverse to $\overline{\eta_M}$, since by restriction to augmentation ideals we then obtain the desired isomorphism $P_n(\eta_M)$ and its inverse.

The key point is that the multiplicative submonoid $q_n^M(1+I(M) )$ of $\Z[M]/ I^{n+1}(M)$ actually is a subgroup of its group of units since for $x\in  I(M)$, $q_n(\sum_{k=0}^n (-1)^k x^k)$ is a multiplicative inverse of $q_n(1+x)$. Thus the monoid homomorphism $\beta_n\colon M \to U(q_n^M(1+I(M) ))$, $\beta_n(m) = q_n(m) = q_n(1+(m-1))$, extends to a group homomorphism $\beta_n'\colon {\bf g}(M) \to q_n^M(1+I(M)) $ such that $\beta_n'\eta_M = \beta_n$ since $\eta_M$ is the unit of the adjunction between monoids and groups. In turn, $\beta_n'$ induces a unique ring homomorphism $\gamma_M\colon \Z[ {\bf g}(M)] \to \Z[M]/ I^{n+1}(M)$ such that $\gamma_M \alpha_{ {\bf g}(M)}$ is the composite map $\xymatrix{ {\bf g}(M) \ar[r]^-{\beta_n'} & q_n^M(1+I(M)) \hookrightarrow  \Z[M]/ I^{n+1}(M)}$. Moreover, $\gamma_M$ commutes with the augmentation maps, hence induces the desired augmented ring homomorphism 
$\overline{\gamma_M} \colon \Z[ {\bf g}(M)]/ I^{n+1}({\bf g}(M)) \to \Z[M]/ I^{n+1}(M)$ such that $\overline{\gamma_M} q_n^{{\bf g}(M)} = \gamma_M$. To check that $\overline{\eta_M}$ and $\overline{\gamma_M}$ indeed are mutually inverse ring isomorphisms, it suffices to precompose the composite maps $\overline{\gamma_M} \,\overline{\eta_M}$ and $\overline{\eta_M} \, \overline{\gamma_M}$
with the maps $\xymatrix{ M \ar[r]^-{\alpha_M}&\Z[M] }$ $\xymatrix{  \ar@{->>}[r]^-{q_n^M} &  \Z[M]/ I^{n+1}(M) }$ and $$\xymatrix{M \ar[r]^-{\eta_M} &U({\bf g}(M))={\bf g}(M) \ar[r]^-{\alpha_{{\bf g}(M)}} & \Z[{\bf g}(M)]  \ar@{->>}[r]^-{q_n^{{\bf g}(M)}} &  \Z[{\bf g}(M)]/ I^{n+1}({\bf g}(M)) }$$
and to successively use their respective universal properties.
\end{proof}
\bigskip

\begin{rem}
The isomorphism $Map_n(U{\bf g}(M), A) \simeq Map_n(M,A)$ is the generalization to the non-abelian setting of Lemma $8.1$ in \cite{BDFP}.
\end{rem}

\subsubsection{Generalization to algebraic theories}
		
	We introduce a generalized Passi functor $P_n^{\mathcal{P}}$ 
	from algebras over any unitary set-operad 	$\mathcal{P}$ to abelian groups, and give a conceptual interpretation which is new even in the classical case:
	 it turns out that $P_n^{\PP}$ is the polynomialization of degree $\le n$ of the canonical reduced functor from $\mathcal{P}$-algebras to abelian groups. This is used in the next section to describe the category $T_n\bar{\Z}[\mathcal{P}\ti alg]$ in terms of the Passi functor, at least for ``good'' operads $\mathcal{P}$.\bigskip
	
	For sets $A_1,\ldots, A_m$ recall the canonical isomorphism
	\[ \Z[A_1]\otimes \ldots\otimes \Z[A_m] \to  \Z[A_1\times \ldots \times A_m] \,,\quad a_1\otimes \ldots\otimes a_m\mapsto (a_1,\ldots,a_m)\]
	for $a_k\in A_k$.
	
	\begin{defi}Let $\mathcal{P}$ be a unitary set-operad with composition operations $\gamma_{m;i_1,\ldots,i_m}$ as before. Then an operad $\Z[\mathcal{P}]$ in $Ab$ is defined by $\Z[\mathcal{P}](m) = \Z[\mathcal{P}(m)]$ and composition operations
	\[\xymatrix{\overline{\gamma}_{m;i_1,\ldots,i_m}\colon\,\Z[\mathcal{P}(m)] \otimes \Z[\mathcal{P}(i_1)] \otimes \ldots \otimes \Z[\mathcal{P}(i_m)]\ar[r]^-{\cong}&\Z[\mathcal{P}(m)\times \mathcal{P}(i_1) \times \ldots \times \mathcal{P}(i_m)] \ar[d]^{\Z[\gamma_{m;i_1,\ldots,i_m}]}\\	& \Z[\mathcal{P}(i_1+\ldots+i_m)]}\]
Similarly, if $A$ is a $\mathcal{P}$-algebra with operations $\mu_m\colon \mathcal{P}(m)\times_{\mathfrak{S}_m} A^m\to A$ then $\Z[A]$ is a 
	$\Z[\mathcal{P}]$-algebra with operations
	\[\xymatrix{\overline{\mu}_m\colon \Z[\mathcal{P}(m)] \otimes_{\mathfrak{S}_m} \Z[A]^{\otimes m} \ar[r]^-{\cong} & \Z[\mathcal{P}(m)\times_{\mathfrak{S}_m} A^m]
	\ar[r]^-{\Z[\mu_m]} & \Z[A]
	}\]
For $\theta\in\mathcal{P}(m)$ and $x_k\in\Z[A]$ we write $\overline{\theta}(x_1\otimes\ldots\otimes x_m):=\overline{\mu}_m(\theta\otimes
x_1\otimes\ldots\otimes x_m)$. The constant map $0\colon A\to *$ into the trivial $\mathcal{P}$-algebra gives rise to a $\Z[\mathcal{P}]$-algebra map $\epsilon=\Z[0]\colon \Z[A]\to \Z[*]=\Z$; its kernel is an ideal denoted by $I(A)$ and has the canonical basis $a-1_A,a\in A\backslash\{1_A\}$. Let
\[I^n_{\mathcal{P}}(A) := \sum_{m\ge n}\overline{\mu}_m(\Z[\mathcal{P}(m)]\otimes_{\mathfrak{S}_m} I(A)^{\otimes m}) = \sum_{m\ge n}\sum_{\theta\in\mathcal{P}(m)}\overline{\theta}(I(A)^{\otimes m})\,\]
\[\xymatrix{P_n^{\mathcal{P}}(A) = I(A)/I^{n+1}_{\mathcal{P}}(A)\quad \mbox{and}\quad q_n\colon I(A)\ar@{->>}[r]&P_n^{\mathcal{P}}(A)}.\]
	Finally, $P_n^{\mathcal{P}}$ is a functor from $\mathcal{P}$-algebras to $Ab$ such that the map $p_n\colon A \to P^{\mathcal{P}}_n(A)$ is natural.	
		\end{defi}
	
	Note that we do not decorate $I(A)$ by an index $\mathcal{P}$ since, in contrast with $I^{n+1}_{\mathcal{P}}(A)$, it does not depend on $\mathcal{P}$.
	
	\begin{exple}  \label{exple:PnAs}
	 For $\mathcal{P}=\C om$ and $\A s$, $P_n^{\mathcal{P}}$ coincides with the Passi functor recalled in section \ref{sec:Pnclass}: we have 
	$I^n_{{\mathcal{C}om}}(A)=\sum_{m\ge n}\overline{\mu}_m(\Z[\ul{1}]\otimes_{\mathfrak{S}_m} I(A)^{\otimes m}) = \sum_{m\ge n} I^m(A) = I^n(A)$ since $I^m(A) \subset I^n(A)$ for $m\ge n$, and similarly $I^n_{{\mathcal{A}s}}(A)=\sum_{m\ge n}\overline{\mu}_m(\Z[\mathfrak{S}_m]\otimes_{\mathfrak{S}_m} I(A)^{\otimes m}) = \sum_{m\ge n} I^m(A) = I^n(A)$.
	
	\end{exple}
	
	\newcommand{\palg}{$\mathcal{P}$-algebra}
	
 In order to extend the above definitions to pointed algebraic categories we recall some basic notions of universal algebra from the categorical viewpoint, cf.\ \cite{Bor2}.
	 
	 \begin{defi}\label{def:theory}
	 An algebraic theory is a category $\mathbb{T}$ with objects $\ul{m}$, $m\ge 0$, such that $\ul{m}=\ul{1}^m$, i.e.\ $\ul{m}$ is the $m$-th cartesian power of $\ul{1}$. A $\mathbb{T}$-model is a functor $M\colon \mathbb{T}\to Set$ which preserves finite products. The models of $\mathbb{T}$ form a category $\mathbb{T}\ti Mod$ with morphisms being the natural transformations.
	 
	 Moreover, we say that $\mathbb{T}$ is pointed if the object $\ul{0}$ is a zero-object in $\mathbb{T}$.
	 \end{defi}
	 
	 For a set $X$  let  $\mathcal{F}_{\mathbb{T}}(X)$ denote the free $\mathbb{T}$-model generated by $X$, and $\mathcal{F}(\mathbb{T})$ be the category with objects the sets $\underline{m}$ for $m\geq 0$, and with morphisms  $\mathcal{F}(\mathbb{T})(\underline{k},\underline{l}) = \mathbb{T} \ti Mod(\mathcal{F}_{\mathbb{T}}(\underline{k}),\mathcal{F}_{\mathbb{T}}(\underline{l}))$.
	 
	 \begin{rem}\label{rem:theories} It is often useful to 
	 think of the product decompositions $\ul{m}=\ul{1}^m$ as being specified, i.e.\
	 of the corresponding projection maps $p_k\colon \ul{m} \to \ul{1}$, $1\le k\le m$, being fixed.
	 
	 Any model  $M\colon \mathbb{T}\to Set$ takes the empty product $\ul{0}=\ul{1}^0$ to an empty product in $Set$, i.e.\ a one-point set. So if $\mathbb{T}$ is pointed the functor $M$ naturally takes values in the category of pointed sets $Set_*$: the basepoint of $M(\ul{m})$ is the element $M(0)(*)$ where $0\colon \ul{0}\to \ul{m}$ is the zero morphism and $*$ is the unique element of $M(\ul{0})$.
	 \end{rem}
	 
	 To any algebraic theory $\mathbb{T}$ one can associate a set-operad $\mathcal{P}_{\mathbb{T}}$, as follows. Let $\mathcal{P}_{\mathbb{T}}(m) = {\rm Hom}_{\mathbb{T}}(\ul{m},\ul{1})$ with the right action of $\mathfrak{S}_m$ induced by its natural left action 
	 on $\ul{m}$. To define the composition operation $\gamma$ let $m\ge 1$, $i_1,\ldots,i_m\ge 0$, $\theta\in {\rm Hom}_{\mathbb{T}}(\ul{m},\ul{1})$ and 
	 $\theta_k\in {\rm Hom}_{\mathbb{T}}(\ul{i_k},\ul{1})$ for $1\le k\le m$. Then first let $\ul{pr}_k\colon \ul{i_1+\ldots+i_m} \to \ul{i_k}$ be the unique map such that $
	 p_l\circ \ul{pr}_k = p_{i_1+\ldots+i_{k-1}+l}$ for $1\le l\le k$. Next let $\theta_1\times \ldots\times \theta_m\colon \ul{i_1+\ldots+i_m} \to \ul{m}$ be the unique map such that $
	p_k (\theta_1\times \ldots\times \theta_m) = \theta_k \ul{pr}_k$ for  $1\le k\le m$.
	 So we can define $\gamma(\theta;\theta_1,\ldots,\theta_m) = \theta\circ (\theta_1\times \ldots\times \theta_m)$.
	 
	 Moreover, we need the \textit{forgetful functor} $V_{\mathbb{T}\ti Mod}^{\mathcal{P}_{\mathbb{T}}\ti Alg}\colon \mathbb{T}\ti Mod \to \mathcal{P}_{\mathbb{T}}\ti Alg$ which is defined by $V_{\mathbb{T}\ti Mod}^{\mathcal{P}_{\mathbb{T}}\ti Alg}(M) =M(\ul{1})$ as a set, endowed with the $\mathcal{P}_{\mathbb{T}}$-algebra structure given by the maps
	 \[\xymatrix{
	 \mu_m\colon \mathcal{P}_{\mathbb{T}}(m)\times_{\mathfrak{S}_m} M(\ul{1})^m \ar[r]^-{\cong}& {\rm Hom}_{\mathbb{T}}(\ul{m},\ul{1}) \times_{\mathfrak{S}_m} M(\ul{m}) \ar[r] &  M(\ul{1})}\]
	 given by the isomorphism $M(\ul{1})^m \cong M(\ul{m})$ coming from the fact that $M$ preserves products, and by the evaluation map $(f,x)\mapsto M(f)(x)$ for $(f,x)\in
	 {\rm Hom}_{\mathbb{T}}(\ul{m},\ul{1}) \times M(\ul{m}) $.

	 So we can finally define the functor \[ P_n^{\mathbb{T}} := P_n^{\mathcal{P}_{\mathbb{T}}} \circ V_{\mathbb{T}\ti Mod}^{\mathcal{P}_{\mathbb{T}}\ti Alg}\colon \mathbb{T}\ti Mod\to Ab.\]

	 In the sequel, \textbf{$\mathbb{T}$ always denotes a pointed algebraic theory}; in this case the operad $\mathcal{P}_{\mathbb{T}}$ is \textbf{unitary}.

It is less obvious than in the example \ref{exple:PnAs} to see that for the theories of groups $\mathbb{G}r$ and of abelian groups $\mathbb{A}b$ the functor $P_n^{\mathbb{G}r}$ also coincides with Passi's functor $P_n$ in section \ref{recall:PassisPn}: in fact, $\mathcal{P}_{\mathbb{G}r}(m)=\mathcal{F}_{\mathbb{G}r}(\ul{m})$ since a theory is always isomorphic with the opposite category of the category of its finitely generated free models. For a solution of this problem see Proposition \ref{calculPnGr} below.

	For some letter $a$ let $a^{\sqcup k}=a,\ldots,a$ denote its $k$-fold repetition.
	
	\begin{lm}\label{ApIq} Let $A$ be a \palg. For $p\ge 0$, $q\ge 1$ let $a_1,\ldots,a_p\in A$, $x_1,\ldots,x_q\in I(A)$ and $\theta\in \mathcal{P}(p+q)$. Then 
	\[\overline{\theta}(a_1\otimes\ldots\otimes a_p\otimes x_1 \otimes \ldots\otimes x_q) \equiv 
	\overline{\gamma(\theta; 0_{\mathcal{P}}^{\sqcup p},1_{\mathcal{P}}^{\sqcup q})}(x_1 \otimes \ldots\otimes x_q)\quad\mbox{mod $I^{q+1}_{\PP} (A)$.}\]
	\end{lm}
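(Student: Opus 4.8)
The idea is to reduce the claimed congruence to a statement about how the "padding" of the operation $\theta$ by copies of $0_{\PP}$ in the first $p$ slots interacts with substituting elements of the augmentation ideal into the remaining $q$ slots, working modulo $I^{q+1}_{\PP}(A)$. First I would expand each $a_k$ in the first $p$ slots as $a_k = 1_A + (a_k - 1_A)$ and use multilinearity of $\overline{\mu}_{p+q}$ (equivalently, of $\overline{\theta}(-\otimes\ldots\otimes -)$) to write $\overline{\theta}(a_1\otimes\ldots\otimes a_p\otimes x_1\otimes\ldots\otimes x_q)$ as a sum over subsets $S\subseteq\{1,\ldots,p\}$ of terms of the form $\overline{\theta}(b_1\otimes\ldots\otimes b_p\otimes x_1\otimes\ldots\otimes x_q)$ where $b_k = a_k-1_A\in I(A)$ for $k\in S$ and $b_k = 1_A$ for $k\notin S$. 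The term for $S=\emptyset$ is the one we want to compare with the right-hand side; every term with $S\neq\emptyset$ has at least $|S|+q \ge q+1$ tensor factors lying in $I(A)$, so by the very definition of $I^{q+1}_{\PP}(A)$ (as the sum of the images $\overline{\mu}_m(\Z[\PP(m)]\otimes_{\mathfrak S_m} I(A)^{\otimes m})$ over $m \ge q+1$, and using that a factor $1_A$ among the inputs can be absorbed) that term lies in $I^{q+1}_{\PP}(A)$.

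Hence modulo $I^{q+1}_{\PP}(A)$ we are left with $\overline{\theta}(1_A\otimes\ldots\otimes 1_A\otimes x_1\otimes\ldots\otimes x_q)$ ($p$ copies of $1_A$), and it remains to identify this with $\overline{\gamma(\theta;0_{\PP}^{\sqcup p},1_{\PP}^{\sqcup q})}(x_1\otimes\ldots\otimes x_q)$. This is now a purely operadic identity, not requiring any congruence: recall that $1_A = \mu_0(0_{\PP},*)$, i.e.\ the element $1_A$ is the image of the nullary operation $0_{\PP}\in\PP(0)$. By the associativity axiom for the $\PP$-algebra structure on $A$ (equivalently for the $\Z[\PP]$-algebra structure on $\Z[A]$), plugging the $0$-ary operation $0_{\PP}$ (whose evaluation is $1_A$) into the first $p$ inputs of $\theta$ and leaving the last $q$ inputs alone corresponds exactly to first forming the composite operation $\gamma(\theta;0_{\PP}^{\sqcup p},1_{\PP}^{\sqcup q})\in\PP(0+\ldots+0+1+\ldots+1)=\PP(q)$ and then evaluating it on $x_1,\ldots,x_q$. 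So $\overline{\theta}(1_A^{\otimes p}\otimes x_1\otimes\ldots\otimes x_q) = \overline{\gamma(\theta;0_{\PP}^{\sqcup p},1_{\PP}^{\sqcup q})}(x_1\otimes\ldots\otimes x_q)$ on the nose, which completes the proof.

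The one point that needs a little care — and which I would treat as the main technical obstacle — is the bookkeeping needed to say "the term for $S\neq\emptyset$ lies in $I^{q+1}_{\PP}(A)$''. A term $\overline{\theta}(b_1\otimes\ldots\otimes b_p\otimes x_1\otimes\ldots\otimes x_q)$ with $b_k\in I(A)$ for $k\in S$ and $b_k=1_A$ otherwise is not literally of the form $\overline{\mu}_m(\Z[\PP(m)]\otimes_{\mathfrak S_m} I(A)^{\otimes m})$ because of the $1_A$'s among the inputs; one must first use the same absorption fact as above (applied to the slots indexed by $\{1,\ldots,p\}\setminus S$, using the $0$-ary operation $0_{\PP}$ and associativity) to rewrite it as $\overline{\gamma(\theta;\,0_{\PP}\text{ in slots }\notin S,\,1_{\PP}\text{ in slots }\in S\text{ and in the last }q\text{ slots})}(\,(b_k)_{k\in S}\otimes x_1\otimes\ldots\otimes x_q)$, an operation of arity $|S|+q\ge q+1$ applied to $|S|+q$ elements of $I(A)$, which then visibly lies in $I^{q+1}_{\PP}(A)$. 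Since this absorption step is exactly the special case $q'=|S|+q$, $p'=p-|S|$ of the identity we are already proving (with all the "extra'' inputs in $I(A)$ rather than a mix), it is cleanest to first isolate and prove the absorption lemma "$\overline{\theta}(1_A^{\otimes p}\otimes y_1\otimes\ldots\otimes y_r) = \overline{\gamma(\theta;0_{\PP}^{\sqcup p},1_{\PP}^{\sqcup r})}(y_1\otimes\ldots\otimes y_r)$ for all $y_i\in\Z[A]$'' as a consequence of operad associativity, and then deduce the stated congruence by the multilinear expansion above.
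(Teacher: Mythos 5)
Your proof is correct, and it reaches the same conclusion by a somewhat different organization than the paper's. The paper proves the lemma by induction on $p$, peeling off one slot at a time: write $a_p=(a_p-1_A)+1_A$, note the $(a_p-1_A)$-term lands in $I^{q+1}_{\PP}(A)$ by the inductive hypothesis (applied with $q$ replaced by $q+1$), absorb the $1_A$-term via unitality into a composite operation of arity $(p-1)+q$, and apply the inductive hypothesis once more; the final identification uses associativity of $\gamma$. Your version instead performs the full multilinear expansion over all subsets $S\subseteq\{1,\ldots,p\}$ at once and identifies the $S=\varnothing$ term as the answer, relegating all the $1_A$-absorption to a single separately stated ``absorption lemma'' $\overline{\theta}(1_A^{\otimes p}\otimes y_1\otimes\cdots\otimes y_r)=\overline{\gamma(\theta;0_{\PP}^{\sqcup p},1_{\PP}^{\sqcup r})}(y_1\otimes\cdots\otimes y_r)$, which is an exact equality following from operad unitality/associativity extended linearly to $\Z[A]$. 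The two proofs use exactly the same two ingredients (the substitution $a=1_A+(a-1_A)$ and the unital absorption), but yours makes the absorption lemma explicit as a standalone exact fact, whereas the paper's proof threads it through each induction step. Your organization is slightly more transparent about where the congruence actually enters (only in discarding the $S\neq\varnothing$ terms), at the mild cost of an extra lemma; the paper's is more compact. One small imprecision in your write-up: you briefly describe the absorption step as ``the special case $q'=|S|+q$, $p'=p-|S|$ of the identity we are already proving,'' which would create a circularity worry, but you immediately and correctly retreat from this by proposing to prove the absorption statement independently as an exact operadic identity — that retreat is the right move, since the absorption statement holds for arbitrary $y_i\in\Z[A]$ on the nose, not just modulo an ideal, so it is genuinely simpler than the lemma and not a case of it.
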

	
	\begin{proof} 
	By induction on $p$, the case $p=0$ being clear. For $p\ge 1$,
	\begin{eqnarray*}
	\overline{\theta}(a_1\otimes\ldots\otimes a_p\otimes x_1 \otimes \ldots\otimes x_q) 
&=& 
\overline{\theta}(a_1\otimes \ldots\otimes a_{p-1}\otimes (a_p-1_A)\otimes  x_1 \otimes \ldots\otimes x_q)\\
& &{}+\overline{\theta}(a_1\otimes \ldots\otimes a_{p-1}\otimes 1_A\otimes x_1 \otimes \ldots\otimes x_q) \\ 
&\equiv&  \overline{\gamma(\theta; 0_{\mathcal{P}}^{\sqcup p-1},1_{\mathcal{P}}^{\sqcup q+1})}((a_p-1_A)\otimes x_1\otimes \ldots\otimes x_q) \\
 & &{}+\overline{\gamma(\theta;1_{\mathcal{P}}^{\sqcup p-1}, 0_{\mathcal{P}},1_{\mathcal{P}}^{\sqcup q})}(a_1\otimes \ldots \otimes a_{p-1}\otimes x_1 \otimes \ldots\otimes x_q) \\ 
 & & \mbox{mod $I^{q+2}(A)$}\\
&\equiv& \overline{\gamma(\gamma(\theta;1_{\mathcal{P}}^{\sqcup p-1}, 0_{\mathcal{P}},1_{\mathcal{P}}^{\sqcup q}),0_{\mathcal{P}}^{\sqcup p-1},1_{\mathcal{P}}^{\sqcup q})}( x_1 \otimes \ldots\otimes x_q) \\ 
& & \mbox{mod $I^{q+1}(A)$}\\
&\equiv& \overline{\gamma(\theta; 0_{\mathcal{P}}^{\sqcup p},1_{\mathcal{P}}^{\sqcup q})}(x_1 \otimes \ldots\otimes x_q)
\end{eqnarray*}
by associativity of $\gamma$.
\end{proof}
	
	\begin{cor}\label{modI2}
	Let $A$ be a \palg. For $n\ge 1$ the subgroup $I^n_{\PP}(A)$ is an ideal of $\Z[A]$. Moreover, for $\theta\in \mathcal{P}(p)$ and $a_1,\ldots,a_p \in A$ we have
	\[  \theta(a_1,\ldots,a_p) - 1_A \equiv (\theta_1(a_1) - 1_A)  + \ldots +  (\theta_p(a_p) - 1_A)\quad\mbox{{\rm mod} $I^{2}_{\PP}(A)$}\]
where $\theta_k = \gamma(\theta; 0_{\mathcal{P}}^{\sqcup k-1},1_{\mathcal{P}},0_{\mathcal{P}}^{\sqcup p-k})$.
\end{cor}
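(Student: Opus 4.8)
The plan is to deduce both statements from Lemma \ref{ApIq}, after two preliminary observations. First, the filtration is decreasing, $I^{m+1}_\PP(A)\subseteq I^m_\PP(A)$ for all $m\ge 0$: every generator $\overline\theta(I(A)^{\otimes k})$ of $I^{m+1}_\PP(A)$ has $k\ge m+1\ge m$, hence already occurs among the generators of $I^m_\PP(A)=\sum_{k\ge m}\sum_{\theta\in\PP(k)}\overline\theta(I(A)^{\otimes k})$. Second, I would establish a \emph{weight estimate}: if $\phi\in\PP(r)$ and $y_1,\ldots,y_r\in\Z[A]$ are such that at least $q\ge 1$ of the $y_i$ lie in $I(A)$, then $\overline\phi(y_1\otimes\cdots\otimes y_r)\in I^q_\PP(A)$. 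To see this, write each $y_i\notin I(A)$ as $\epsilon(y_i)\,1_A+(y_i-\epsilon(y_i)\,1_A)$ with $y_i-\epsilon(y_i)\,1_A\in I(A)$ and expand $\overline\phi$ multilinearly; each resulting term has all arguments in $\{1_A\}\cup I(A)$, at least $q$ of them in $I(A)$, and after using equivariance of $\PP$ to collect the copies of $1_A$ in the first slots it has the form $\overline{\phi'}(1_A^{\sqcup s}\otimes x_1\otimes\cdots\otimes x_{q'})$ with $q'\ge q\ge 1$ and $x_j\in I(A)$; Lemma \ref{ApIq} (with $p=s$) then makes it congruent modulo $I^{q'+1}_\PP(A)$ to an element of $\sum_{\psi\in\PP(q')}\overline\psi(I(A)^{\otimes q'})\subseteq I^{q'}_\PP(A)$, and $I^{q'+1}_\PP(A)\subseteq I^{q'}_\PP(A)\subseteq I^q_\PP(A)$ finishes it.

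For the first assertion, observe that speaking of an ideal of $\Z[A]$ presupposes a ring structure on $\Z[A]$, i.e.\ a binary operation $\ast\in\PP(2)$ with $0_\PP$ as two-sided unit --- the hypothesis under which the corollary is applied; then $A$ is a monoid via $ab=\overline{\ast}(a\otimes b)$ and $\Z[A]$ its monoid algebra, so $a\cdot\xi=\overline{\ast}(a\otimes\xi)$ and $\xi\cdot a=\overline{\ast}(\xi\otimes a)$ for $a\in A$, $\xi\in\Z[A]$. As $I^n_\PP(A)$ is a subgroup and $\Z[A]$ is generated by $A$, it suffices to treat $\xi=\overline\theta(x_1\otimes\cdots\otimes x_m)$ with $m\ge n$, $\theta\in\PP(m)$, $x_i\in I(A)$, and $a\in A$. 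Associativity of the $\Z[\PP]$-algebra $\Z[A]$ gives $a\cdot\xi=\overline{\gamma(\ast;1_\PP,\theta)}(a\otimes x_1\otimes\cdots\otimes x_m)$ and $\xi\cdot a=\overline{\gamma(\ast;\theta,1_\PP)}(x_1\otimes\cdots\otimes x_m\otimes a)$, both of which lie in $I^m_\PP(A)\subseteq I^n_\PP(A)$ by the weight estimate (with $q=m$). (The same reasoning shows, for an arbitrary unitary $\PP$, that $I^n_\PP(A)$ is stable under every $\overline\theta$ as soon as one of its arguments lies in $I^n_\PP(A)$.)

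For the second assertion I would expand $\overline\theta(a_1\otimes\cdots\otimes a_p)$ by writing $a_k=1_A+(a_k-1_A)$, $a_k-1_A\in I(A)$, and using multilinearity of $\overline\theta$:
$$\overline\theta(a_1\otimes\cdots\otimes a_p)=\sum_{S\subseteq\{1,\ldots,p\}}\overline\theta(z^S_1\otimes\cdots\otimes z^S_p),$$
where $z^S_k=a_k-1_A$ if $k\in S$ and $z^S_k=1_A$ otherwise. Terms with $|S|\ge 2$ lie in $I^{|S|}_\PP(A)\subseteq I^2_\PP(A)$ by the weight estimate. For any $\phi\in\PP(r)$ one has $\overline\phi(1_A^{\sqcup r})=1_A$ --- using $1_A=\overline{0_\PP}(\,)$, associativity of $\gamma$, and $\gamma(\phi;0_\PP^{\sqcup r})\in\PP(0)=\underline{1}$ --- so the $S=\emptyset$ term equals $1_A$ and $\overline{\theta_k}(1_A)=1_A$. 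For $S=\{k\}$, bringing the argument in slot $k$ to the last slot by equivariance and applying Lemma \ref{ApIq} (with $p=p-1$, $q=1$) gives $\overline\theta(z^{\{k\}}_1\otimes\cdots\otimes z^{\{k\}}_p)\equiv\overline{\theta_k}(a_k-1_A)=\overline{\theta_k}(a_k)-1_A\pmod{I^2_\PP(A)}$; here the identification of the $\gamma$-composite delivered by Lemma \ref{ApIq} with $\theta_k=\gamma(\theta;0_\PP^{\sqcup k-1},1_\PP,0_\PP^{\sqcup p-k})$ is precisely the equivariance axiom for $\PP$, the block permutation involved being a permutation of a single input and hence trivial. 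Summing over $S$ yields $\overline\theta(a_1\otimes\cdots\otimes a_p)\equiv 1_A+\sum_{k=1}^{p}(\overline{\theta_k}(a_k)-1_A)\pmod{I^2_\PP(A)}$, which is the claim.

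The routine ingredients are the multilinear expansions and the inclusions $I^{m+1}_\PP(A)\subseteq I^m_\PP(A)$; the only step demanding care is the operad bookkeeping --- checking that the $\gamma$-composite returned by Lemma \ref{ApIq} after each equivariance permutation is exactly the prescribed one.
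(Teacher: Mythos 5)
Your proof is correct, and it differs from the paper's in two ways worth noting. For the congruence, the paper argues by induction on $p$, peeling off the last argument $a_p = 1_A + (a_p - 1_A)$ and applying Lemma \ref{ApIq} once per step; you instead expand all arguments at once over subsets $S\subseteq\{1,\ldots,p\}$ and dispose of the $|S|\ge 2$ terms via your intermediate ``weight estimate'' lemma, which is a clean, symmetric treatment -- that estimate is implicit in the paper's calculations but never isolated, and stating it explicitly unifies the ideal property and the congruence under a single mechanism. For the ideal statement, you read ``ideal of $\Z[A]$'' literally and so posit a binary $\ast\in\PP(2)$ with $0_\PP$ as unit; the paper's proof is actually operad-generic (it shows $\overline{\theta}(a_1\otimes\cdots\otimes \overline\vartheta(x_1\otimes\cdots\otimes x_m)\otimes\cdots\otimes b_q)\in I^m_\PP(A)\subseteq I^n_\PP(A)$ by a single application of equivariance plus Lemma \ref{ApIq}, with no binary operation assumed), so ``ideal'' is being used in the $\Z[\PP]$-algebra sense of closure under all operations with one argument in the subgroup. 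Your parenthetical remark recovers exactly this general version, so nothing is lost, but the literal-ring reading you lead with is narrower than what the paper intends and proves.
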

	
\begin{proof} 	
For $m\ge n$, $a_1,\ldots,a_p,b_1,\ldots,b_q \in A$, $x_1,\ldots,x_m\in I(A)$, $\vartheta\in \mathcal{P}(m)$  and $\theta\in \mathcal{P}(p+1+q)$, there is an obvious permutation $\sigma$ such that
\[  \overline{\theta}(a_1\otimes \ldots\otimes a_p \otimes \vartheta (x_1\otimes \ldots\otimes x_m) \otimes b_1 \otimes \ldots\otimes b_q )\hspace{2cm} \]
\begin{eqnarray*}
&=& \overline{(\theta.\sigma)} (a_1\otimes \ldots\otimes a_p \otimes b_1 \otimes \ldots\otimes b_q \otimes \vartheta(x_1\otimes \ldots\otimes x_m ))\\
&=&\overline{ \gamma(\theta.\sigma ; 1_{\mathcal{P}}^{\sqcup p+q},\vartheta)}(a_1\otimes \ldots\otimes a_p \otimes b_1 \otimes \ldots\otimes b_q \otimes x_1\otimes \ldots\otimes x_m )
\end{eqnarray*}
So the first assertion is immediate from Lemma \ref{ApIq}.
The second assertion is proved by induction on $p$, the case $p=1$ being clear. For $p\ge 1$, we use Proposition \ref{ApIq} to see that modulo $I^2_{\PP}(A)$,
\begin{eqnarray*}
\theta(a_1,\ldots,a_p) - 1_A 
&=& \overline{\theta}(a_1\otimes\ldots \otimes a_{p-1}\otimes (a_p - 1_A))+
\theta(a_1,\ldots,a_{p-1},1_A) - 1_A\\
&\equiv&  ({\gamma(\theta;0_{\mathcal{P}}^{\sqcup p-1},1_{\mathcal{P}})}(a_p) - 1_A ) +\gamma(\theta;1_{\mathcal{P}}^{\sqcup p-1},0_{\mathcal{P}})
(a_1,\ldots,a_{p-1})- 1_A\\
&\equiv&  (\theta_p(a_p) - 1_A) + \sum_{k=1}^{p-1}  \left( \gamma(\gamma(\theta;1_{\mathcal{P}}^{\sqcup p-1},0_{\mathcal{P}});0_{\mathcal{P}}^{\sqcup k-1},1_{\mathcal{P}},0_{\mathcal{P}}^{\sqcup p-1-k})(a_k)- 1_A \right)\\
&\equiv&  (\theta_p(a_p) - 1_A) + \sum_{k=1}^{p-1}   (\theta_k(a_k) - 1_A)
\end{eqnarray*}
\end{proof}

	\begin{lm}\label{P1linear}
	The functors $P_1^{\mathcal{P}}\colon \mathcal{P}\ti Alg \to Ab$ and
	$P_1^{\mathbb{T}}\colon \mathbb{T}\ti Mod \to Ab$
	are linear.
	\end{lm}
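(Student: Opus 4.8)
The plan is to check directly that each of $P_1^{\PP}$ and $P_1^{\mathbb{T}}$ is reduced and has vanishing second cross-effect; recall that ``linear'' means exactly ``reduced and polynomial of degree $\le 1$'', i.e.\ $cr_2=0$. Reducedness is immediate: the zero object of $\PP\ti Alg$ is the trivial algebra $*$, for which $\Z[*]=\Z$ and hence $I(*)=0$, so $P_1^{\PP}(*)=0$; and since the forgetful functor $V=V_{\mathbb{T}\ti Mod}^{\mathcal{P}_{\mathbb{T}}\ti Alg}$ sends the zero object of $\mathbb{T}\ti Mod$ to $*$, also $P_1^{\mathbb{T}}(0)=P_1^{\mathcal{P}_{\mathbb{T}}}(*)=0$. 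For the cross-effect I would first isolate a soft remark valid for any reduced functor $F$ on a pointed category with finite coproducts: writing $\iota_A,\iota_B$ for the injections into $A\vee B$ and $r_A\colon A\vee B\to A$, $r_B\colon A\vee B\to B$ for the retractions, the maps $(F(\iota_A),F(\iota_B))\colon F(A)\oplus F(B)\to F(A\vee B)$ and $(F(r_A),F(r_B))\colon F(A\vee B)\to F(A)\oplus F(B)$ compose, in that order, to the identity, since $r_A\iota_A=\mathrm{id}$, $r_B\iota_B=\mathrm{id}$ and the off-diagonal composites $r_A\iota_B$, $r_B\iota_A$ are zero morphisms, annihilated by the reduced $F$. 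Hence $(F(\iota_A),F(\iota_B))$ is a split monomorphism; if moreover it is an epimorphism it is an isomorphism, its inverse $(F(r_A),F(r_B))$ is injective, and as $cr_2F(A,B)$ is by Definition \ref{cross-eff} the kernel of $(F(r_A),F(r_B))^t$, it vanishes. So everything reduces to showing that $P_1(A\vee B)$ is generated by the images of $P_1(\iota_A)$ and $P_1(\iota_B)$.

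This surjectivity is where Corollary \ref{modI2} does the work. In $\PP\ti Alg$ the coproduct $A\vee B$ is generated, as a $\PP$-algebra, by $\iota_A(A)\cup\iota_B(B)$: the subalgebra generated by this set receives maps from $A$ and from $B$ compatible with $\iota_A,\iota_B$, so by the universal property of the coproduct it is all of $A\vee B$. Thus every element of $A\vee B$ has the form $\theta(c_1,\dots,c_p)$ with $\theta\in\PP(p)$ and each $c_k$ in $\iota_A(A)$ or $\iota_B(B)$, and the second identity of Corollary \ref{modI2} rewrites $\overline{\theta(c_1,\dots,c_p)-1}$ in $P_1^{\PP}(A\vee B)$ as $\sum_{k=1}^{p}\overline{\theta_k(c_k)-1}$ with $\theta_k\in\PP(1)$. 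Since $\iota_A$ and $\iota_B$ are $\PP$-algebra maps, $\theta_k(c_k)$ again lies in $\iota_A(A)$, resp.\ $\iota_B(B)$, so each summand lies in $\mathrm{im}(P_1^{\PP}(\iota_A))$, resp.\ $\mathrm{im}(P_1^{\PP}(\iota_B))$. As the classes $\overline{c-1}$ generate $P_1^{\PP}(A\vee B)$, this is exactly the required surjectivity, and $P_1^{\PP}$ is linear.

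For $\mathbb{T}$-models I would run the same scheme after $V$, using $P_1^{\mathbb{T}}=P_1^{\mathcal{P}_{\mathbb{T}}}\circ V$: the functor $V$ preserves the zero object and, being a functor, the retraction identities, so the soft remark of the first paragraph applies verbatim to $P_1^{\mathbb{T}}$, and it only remains to see that $V(A\vee B)$ is generated, as a $\mathcal{P}_{\mathbb{T}}$-algebra, by $V(\iota_A)(V(A))\cup V(\iota_B)(V(B))$. This holds because $A\vee B$ is generated as a $\mathbb{T}$-model by $\iota_A(A)\cup\iota_B(B)$ (the same universal-property argument), and for any $\mathbb{T}$-model $M$ the underlying set of the submodel generated by a subset $S\subseteq M(\ul 1)$ is precisely the $\mathcal{P}_{\mathbb{T}}$-subalgebra of $V(M)$ generated by $S$, the derived operations of $\mathbb{T}$ on $M(\ul 1)$ being exactly the operations of $\mathcal{P}_{\mathbb{T}}$. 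Then the $\PP\ti Alg$ computation, applied with $\PP=\mathcal{P}_{\mathbb{T}}$ to the algebras $V(A),V(B),V(A\vee B)$, gives $cr_2P_1^{\mathbb{T}}=0$.

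The one genuinely delicate point is this last compatibility of the $\mathbb{T}$-model coproduct with $V$: since $V$ is a right adjoint it need not preserve colimits, so one cannot simply assert $V(A\vee B)=V(A)\vee V(B)$. The argument circumvents this by routing everything through ``generation by the two factors'', which $V$ \emph{does} respect, rather than through any decomposition of $V(A\vee B)$ itself; all the remaining steps are routine bookkeeping with the formulas in Corollary \ref{modI2}.
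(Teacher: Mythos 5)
Your proof is correct and follows essentially the same route as the paper's: both isolate the split monomorphism $(P_1(\iota_A),P_1(\iota_B))$ and use Corollary \ref{modI2} to show it is surjective, hence an isomorphism, killing $cr_2$. You spell out more carefully than the paper why the argument transports to $\mathbb{T}$-models (the paper just says ``the same argument works''), and your remark that $V_{\mathbb{T}\ti Mod}^{\mathcal{P}_{\mathbb{T}}\ti Alg}$ need not preserve coproducts—so one must route through generation by the two factors rather than any identification $V(A\vee B)\cong V(A)\vee V(B)$—is a genuinely useful clarification of a point the paper leaves implicit.
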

\begin{proof} 	
	Let $A,B$ be $\mathcal{P}$-algebras. Then $A\vee B$ consists of the elements $\theta(a_1,\ldots,a_p,b_1,\ldots,b_q)$ with $p,q\ge 0$, $\theta\in  \mathcal{P}(p+q)$, $a_1,\ldots,a_p\in A$, $b_1,\ldots,b_q\in B$. Hence by Corollary \ref{modI2} we have $I(A\vee B)= I(A)+I(B)+I^2(A\vee B)$. It follows that the section $s_{A,B}\colon P_1^{\mathcal{P}}(A)\oplus P_1^{\mathcal{P}}(B) \to P_1^{\mathcal{P}}(A\vee B)$, defined by $s_{A,B}=(P_1^{\mathcal{P}}(i_1),P_1^{\mathcal{P}}(i_2))$ of $\hat{r}^{P_1^{\mathcal{P}}}$ is surjective, whence an isomorphism. Hence $\hat{r}^{P_1^{\mathcal{P}}}$ is an isomorphism, as desired. The same argument works for $P_1^{\mathbb{T}}$.
	\end{proof}
	
	\begin{lm}\label{Pnexsequ}
	There is a natural exact sequence
	\begin{equation} \label{ses-Pnexsequ}
\xymatrix{
	\Z[\mathcal{P}(n)]\otimes  P_1^{\mathcal{P}}(A)^{\otimes n} \ar[r]^-{\nu_n} & P_n^{\mathcal{P}}(A)\ar[r]^-{\overline{q_{n-1}}} & P_{n-1}^{\mathcal{P}}(A) \ar[r] &0}
\end{equation}

	of funtors on $\mathcal{P}$-$Alg$ where $\nu_n(\theta\otimes \overline{a_1-1} \otimes \cdots\otimes \overline{a_n-1}) = q_n(\overline{\theta}((a_1-1)\otimes \cdots\otimes(a_n-1)))$ for $\theta\in \mathcal{P}(n)$ and $a_1,\ldots,a_n\in A$.
	\end{lm}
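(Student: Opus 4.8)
The plan is to read both maps directly off the description $P^{\mathcal{P}}_n(A) = I(A)/I^{n+1}_{\mathcal{P}}(A)$, and then to identify the image of $\nu_n$ with the kernel of $\overline{q_{n-1}}$ by separating out the grading that is built into the filtration $I^{\bullet}_{\mathcal{P}}(A)$.

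First I would treat $\overline{q_{n-1}}$. Since $I^{n+1}_{\mathcal{P}}(A)\subseteq I^{n}_{\mathcal{P}}(A)$, the projection $q_{n-1}\colon I(A)\twoheadrightarrow I(A)/I^{n}_{\mathcal{P}}(A)=P^{\mathcal{P}}_{n-1}(A)$ factors uniquely through $q_n$, which gives a surjection $\overline{q_{n-1}}\colon P^{\mathcal{P}}_n(A)\twoheadrightarrow P^{\mathcal{P}}_{n-1}(A)$ whose kernel is $I^{n}_{\mathcal{P}}(A)/I^{n+1}_{\mathcal{P}}(A)$. This already gives exactness at $P^{\mathcal{P}}_{n-1}(A)$. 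Next I would construct $\nu_n$. The rule $\theta\otimes x_1\otimes\cdots\otimes x_n\mapsto q_n(\overline{\theta}(x_1\otimes\cdots\otimes x_n))$ defines a homomorphism $\mathbb{Z}[\mathcal{P}(n)]\otimes I(A)^{\otimes n}\to P^{\mathcal{P}}_n(A)$: it is multilinear by construction of $\overline{\mu}_n$, and $\overline{\theta}(I(A)^{\otimes n})\subseteq I(A)$ because $\epsilon$ is a $\mathbb{Z}[\mathcal{P}]$-algebra map, so $\epsilon(\overline{\theta}(x_1\otimes\cdots\otimes x_n))=\overline{\theta}(\epsilon(x_1)\otimes\cdots\otimes\epsilon(x_n))=0$ when all $x_i\in I(A)$. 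The crucial point is that this map kills $\theta\otimes x_1\otimes\cdots\otimes x_n$ whenever some $x_i\in I^{2}_{\mathcal{P}}(A)$, so that it descends to $\mathbb{Z}[\mathcal{P}(n)]\otimes P^{\mathcal{P}}_1(A)^{\otimes n}$ with $P^{\mathcal{P}}_1(A)=I(A)/I^{2}_{\mathcal{P}}(A)$, yielding $\nu_n$ with the formula stated. For this I would reduce, using multilinearity and the $\mathfrak{S}_n$-equivariance of $\overline{\mu}_n$, to the case $x_n=\overline{\vartheta}(y_1\otimes\cdots\otimes y_m)$ with $m\geq 2$ and $y_j\in I(A)$; the associativity/unitality axiom of the $\mathbb{Z}[\mathcal{P}]$-algebra $\mathbb{Z}[A]$ then gives $\overline{\theta}(x_1\otimes\cdots\otimes x_{n-1}\otimes\overline{\vartheta}(y_1\otimes\cdots\otimes y_m))=\overline{\gamma(\theta;1_{\mathcal{P}}^{\sqcup n-1},\vartheta)}(x_1\otimes\cdots\otimes x_{n-1}\otimes y_1\otimes\cdots\otimes y_m)$, which lies in $\overline{\gamma(\theta;1_{\mathcal{P}}^{\sqcup n-1},\vartheta)}\bigl(I(A)^{\otimes(n-1+m)}\bigr)$ with $n-1+m\geq n+1$, hence in $I^{n+1}_{\mathcal{P}}(A)$ by definition — this is exactly the computation underlying the proof of Corollary~\ref{modI2}.

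Finally I would identify the image of $\nu_n$ with $\ker\overline{q_{n-1}}$. Since the elements $a-1_A$, $a\in A$, span $I(A)$, the elements $(a_1-1)\otimes\cdots\otimes(a_n-1)$ span $I(A)^{\otimes n}$, so the image of $\nu_n$ equals $q_n\bigl(\sum_{\theta\in\mathcal{P}(n)}\overline{\theta}(I(A)^{\otimes n})\bigr)$. On the other hand, splitting off the summand $m=n$ from the summands $m\geq n+1$ in the definition of $I^{n}_{\mathcal{P}}(A)$ gives $I^{n}_{\mathcal{P}}(A)=\sum_{\theta\in\mathcal{P}(n)}\overline{\theta}(I(A)^{\otimes n})+I^{n+1}_{\mathcal{P}}(A)$, so $q_n\bigl(\sum_{\theta\in\mathcal{P}(n)}\overline{\theta}(I(A)^{\otimes n})\bigr)=I^{n}_{\mathcal{P}}(A)/I^{n+1}_{\mathcal{P}}(A)=\ker\overline{q_{n-1}}$, which is exactness at $P^{\mathcal{P}}_n(A)$. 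Naturality of the whole sequence in $A$ is automatic, since $I(-)$, $I^{k}_{\mathcal{P}}(-)$, the operations $\overline{\theta}$ and all the projections involved are natural for $\mathcal{P}$-algebra maps. I expect the main obstacle to be precisely the well-definedness of $\nu_n$, i.e.\ checking that it factors through $P^{\mathcal{P}}_1(A)^{\otimes n}$ rather than merely through $I(A)^{\otimes n}$; this is the one place where the operad composition $\gamma$ (through the algebra axiom for $\overline{\mu}$) really enters, and everything else is bookkeeping with the grading of $I^{\bullet}_{\mathcal{P}}(A)$.
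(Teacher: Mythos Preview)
Your proof is correct and follows essentially the same approach as the paper's: both first observe the exact sequence with $\Z[\mathcal{P}(n)]\otimes I(A)^{\otimes n}$ in the left-hand term (equivalently, that $\ker\overline{q_{n-1}}=I^n_{\mathcal{P}}(A)/I^{n+1}_{\mathcal{P}}(A)$), and then use associativity of the $\Z[\mathcal{P}]$-algebra structure on $\Z[A]$ to show that an $I^2_{\mathcal{P}}(A)$ factor in any tensor slot lands the image in $I^{n+1}_{\mathcal{P}}(A)$, so the map descends to $P_1^{\mathcal{P}}(A)^{\otimes n}$. Your write-up is a bit more explicit than the paper's (you spell out why $\overline{\theta}(I(A)^{\otimes n})\subseteq I(A)$ and the splitting $I^n_{\mathcal{P}}(A)=\sum_{\theta\in\mathcal{P}(n)}\overline{\theta}(I(A)^{\otimes n})+I^{n+1}_{\mathcal{P}}(A)$), but the substance is identical.
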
	
	
\begin{proof} 	
	By definition of $P_n^{\mathcal{P}}$ we have an exact sequence
	\[ \xymatrix{
	\Z[\mathcal{P}(n)]\otimes I(A)^{\otimes n} \ar[r]^-{\overline{\mu}_n} & P_n^{\mathcal{P}}(A)\ar[r]^-{\overline{q_{n-1}}} & P_{n-1}^{\mathcal{P}}(A) \ar[r] &0}.\]
So it suffices to show that $\overline{\mu}_n$ factors through $id\otimes q_1^{\otimes n} \colon \Z[\mathcal{P}(n)]\otimes I(A)^{\otimes n} \to \Z[\mathcal{P}(n)]\otimes P_1^{\mathcal{P}}(A)^{\otimes n}$. But by right-exactness of the tensor product,
\begin{eqnarray*}
{\rm Ker}(id\otimes q_1^{\otimes n} ) &=& \sum_{i=1}^n\sum_{j\ge 2} \overline{\mu}_n\left(
\Z[\mathcal{P}(n)]\otimes I(A)^{\otimes i-1} \otimes \overline{\mu}_j(\Z[\mathcal{P}(n)]\otimes I(A)^{\otimes j} ) \otimes I(A)^{\otimes n-i} \right)\\
&\subset&  \sum_{i=1}^n\sum_{j\ge 2} \overline{\mu}_{n-1+j}\left(
\Z[\mathcal{P}(n-1+j)]\otimes I(A)^{\otimes n-1+j}\right) 
\end{eqnarray*}
by associativity of $\overline{\mu}$, so $\overline{\mu}_n {\rm Ker}(id\otimes q_1^{\otimes n} ) =0$ since $n-1+j>n$ for $j\ge 2$.
\end{proof}

\begin{prop}\label{Pnpoldegn}
	The functors $P_n^{\mathcal{P}}\colon \mathcal{P}\ti Alg \to Ab$ and
	$P_n^{\mathbb{T}}\colon \mathbb{T}\ti Mod \to Ab$
	are polynomial of degree $\le n$.
	\end{prop}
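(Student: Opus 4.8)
The plan is to argue by induction on $n$, using the natural exact sequence of Lemma \ref{Pnexsequ} together with the fact that $Pol_n(\C,Ab)$ is thick (Proposition \ref{thick}). I would handle $P_n^{\mathcal{P}}$ and $P_n^{\mathbb{T}}$ in parallel: since $P_n^{\mathbb{T}} = P_n^{\mathcal{P}_{\mathbb{T}}}\circ V_{\mathbb{T}\ti Mod}^{\mathcal{P}_{\mathbb{T}}\ti Alg}$ and precomposition with the functor $V_{\mathbb{T}\ti Mod}^{\mathcal{P}_{\mathbb{T}}\ti Alg}$ preserves naturality and pointwise exactness, applying $V_{\mathbb{T}\ti Mod}^{\mathcal{P}_{\mathbb{T}}\ti Alg}$ to the exact sequence of Lemma \ref{Pnexsequ} (taken for the operad $\mathcal{P}_{\mathbb{T}}$ and the algebra $V_{\mathbb{T}\ti Mod}^{\mathcal{P}_{\mathbb{T}}\ti Alg}(M)$) produces a natural exact sequence
\[\xymatrix{
\Z[\mathcal{P}_{\mathbb{T}}(n)]\otimes P_1^{\mathbb{T}}(-)^{\otimes n} \ar[r] & P_n^{\mathbb{T}}\ar[r] & P_{n-1}^{\mathbb{T}} \ar[r] &0}\]
of functors on the pointed category with finite coproducts $\mathbb{T}\ti Mod$. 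So it is enough to run the induction once, formally, for an arbitrary unitary set-operad, and then quote it also for $\mathcal{P}_{\mathbb{T}}$.

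For the base case $n=1$ the statement is exactly Lemma \ref{P1linear}. For the inductive step, assume $n\ge 2$ and that $P_{n-1}^{\mathcal{P}}$ is polynomial of degree $\le n-1$, hence also of degree $\le n$. Lemma \ref{Pnexsequ} exhibits $P_n^{\mathcal{P}}$ as an extension of $P_{n-1}^{\mathcal{P}}$ by the subfunctor $\mathrm{Im}(\nu_n)=\mathrm{Ker}(\overline{q_{n-1}})$, and this subfunctor is a quotient of $G:=\Z[\mathcal{P}(n)]\otimes P_1^{\mathcal{P}}(-)^{\otimes n}$. Since $Pol_n$ is closed under quotients and under extensions, the whole inductive step reduces to checking that $G$ is polynomial of degree $\le n$.

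To see that, I would factor $G$ as $\xymatrix{\mathcal{P}\ti Alg \ar[r]^-{P_1^{\mathcal{P}}} & Ab \ar[r]^-{H} & Ab}$ with $H(B)=\Z[\mathcal{P}(n)]\otimes B^{\otimes n}$. Now $P_1^{\mathcal{P}}$ is polynomial of degree $\le 1$ by Lemma \ref{P1linear}, and $H$ is polynomial of degree $\le n$ by a pigeonhole count: using Proposition \ref{cr_n}, the group $H(B_1\oplus\cdots\oplus B_{n+1})$ splits as a direct sum of tensor products each involving at most $n$ of the $n+1$ summands, so no summand survives all $n+1$ retractions simultaneously and $cr_{n+1}H=0$ (forming the direct sum $\Z[\mathcal{P}(n)]\otimes(-)^{\otimes n}$ does not affect this, nor does tensoring with the free — hence flat — abelian group $\Z[\mathcal{P}(n)]$). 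Proposition \ref{polopol} then gives that $G=H\circ P_1^{\mathcal{P}}$ is polynomial of degree $\le 1\cdot n=n$, which finishes the induction for $P_n^{\mathcal{P}}$; the same chain of arguments applies verbatim to $P_n^{\mathbb{T}}$ through the exact sequence displayed above and the linearity of $P_1^{\mathbb{T}}$. The only part that is not purely formal is this cross-effect computation for the tensor-power functor together with the observation that neither tensoring with $\Z[\mathcal{P}(n)]$ nor passing to direct sums raises the polynomial degree; everything else follows mechanically from Lemmas \ref{Pnexsequ} and \ref{P1linear} and from Propositions \ref{thick} and \ref{polopol}.
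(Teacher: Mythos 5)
Your proof is correct and follows essentially the same route as the paper: induction on $n$ with base case Lemma \ref{P1linear}, the exact sequence of Lemma \ref{Pnexsequ}, thickness (Proposition \ref{thick}), and Proposition \ref{polopol} applied to the factorization $\Z[\mathcal{P}(n)]\otimes P_1^{\mathcal{P}}(-)^{\otimes n}=H\circ P_1^{\mathcal{P}}$. You merely spell out two things the paper leaves implicit, namely the cross-effect count showing the tensor-power functor $H$ has degree $\le n$, and the precomposition with $V_{\mathbb{T}\ti Mod}^{\mathcal{P}_{\mathbb{T}}\ti Alg}$ that transports the exact sequence to $\mathbb{T}\ti Mod$ so that ``the same argument works'' there.
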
	
	
\begin{proof} 
By induction on $n$:
By the Lemmas \ref{P1linear} and Proposition \ref{polopol} the functor $\Z[\mathcal{P}(n)]\otimes P_1^{\mathcal{P}}(-)^{\otimes n}\colon 
\mathcal{P}$-$Alg \to Ab$ is polynomial of degree $\le n$, hence so is $P_n^{\mathcal{P}}$ since $P_{n-1}^{\mathcal{P}}$ is polynomial of degree $\le n-1$ by induction and the category of polynomial  functors of degree $\le n$ is thick. The same argument works for $P_n^{\mathbb{T}}$.
\end{proof}	

Now we are ready to give a conceptual interpretation of the functors $P_n^{\mathcal{P}}$ and
	$P_n^{\mathbb{T}}$. 
Recall that a category $\mathcal{C}$ is concrete if there exists a faithful forgetful functor $V_{\mathcal{C}}\colon \C\to Set$. Morally a concrete category is a category whose objects are sets endowed with some structure and the morphisms are set maps compatible with this structure in some suitable sense.
If in addition $\C$ is pointed (with the one-point set being its null-object) then  $V_{\mathcal{C}}$ naturally takes values in $Set_*$ (where $Set_*$ is the category of pointed sets). Thus any pointed concrete category $\C$ admits a \textit{canonical reduced functor} $\bar{\Z}[-]\circ V_{\mathcal{C}}\colon \C \to Ab$. Note that $\mathbb{T}\ti Mod$ is a pointed concrete category where $V_{\mathbb{T}\ti Mod}(M) = M(\ul{1})$ with basepoint $1_{M(\ul{1})} = M(0_{\ul{0},\ul{1}})(*)$, see Remark \ref{rem:theories}.

\begin{thm}\label{Pn=Tn} 
There are canonical isomorphisms 
\[ \xymatrix{
\Xi_n^{\mathcal{P}} \colon T_n(\bar{\Z}[-] \circ V_{\mathcal{P}\ti Alg}) \ar[r]^-{\cong} & P_n^{\mathcal{P}}  \quad\mbox{and}\quad
\Xi_n^{\mathbb{T}} \colon T_n(\bar{\Z}[-] \circ V_{\mathbb{T}\ti Mod})\ar[r]^-{\cong} & P_n^{\mathbb{T}}\circ V_{\mathbb{T}\ti Mod}^{\mathcal{P}_{\mathbb{T}}\ti Alg}
}\]
of functors on $\mathcal{P}$-$Alg$ and on $\mathbb{T}$-$Mod$, resp., sending $\bar{a}$ and $\bar{x}$ to  $q_n(a-1_A)$ and $q_n(x-1_{M(\ul{1})})$ where $a\in A$ for $A \in \mathcal{P}$-$Alg$ and $x\in M(\ul{1})$ for $M\in \mathbb{T}$-$Mod$, resp.
\end{thm}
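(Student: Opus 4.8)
The plan is to identify $T_n(\bar{\Z}[-]\circ V_{\mathcal{P}\ti Alg})$ with $P_n^{\mathcal{P}}$ using the universal property of the $n$-Taylorisation functor $T_n$ (Proposition \ref{Tn-prop}) together with the fact, established in Proposition \ref{Pnpoldegn}, that $P_n^{\mathcal{P}}$ is polynomial of degree $\le n$. First I would observe that there is a canonical natural surjection $\bar{\Z}[-]\circ V_{\mathcal{P}\ti Alg} \twoheadrightarrow P_n^{\mathcal{P}}$: for a $\mathcal{P}$-algebra $A$, the map $\bar{\Z}[A] = \Z[A\setminus\{1_A\}] \to I(A) \to P_n^{\mathcal{P}}(A)$ sending $\bar a \mapsto q_n(a-1_A)$ is the composite of the canonical isomorphism $\bar{\Z}[A]\cong I(A)$ (the augmentation ideal has basis $a-1_A$, $a\neq 1_A$) with the quotient map $q_n$; naturality in $A$ is immediate since both $\bar{\Z}[-]$ and $P_n^{\mathcal{P}}$ are functors and $q_n$ is natural. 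Call this natural transformation $\pi_n$. Since $P_n^{\mathcal{P}}$ is polynomial of degree $\le n$, the universal property of $T_n$ as a left adjoint to the inclusion $U_n$ (Proposition \ref{Tn-prop}) yields a unique factorization $\pi_n = \bar{\pi}_n \circ t_n$ where $t_n\colon \bar{\Z}[-]\circ V_{\mathcal{P}\ti Alg} \twoheadrightarrow T_n(\bar{\Z}[-]\circ V_{\mathcal{P}\ti Alg})$ is the unit and $\bar{\pi}_n\colon T_n(\bar{\Z}[-]\circ V_{\mathcal{P}\ti Alg}) \to P_n^{\mathcal{P}}$. This $\bar{\pi}_n$ will be our candidate isomorphism $\Xi_n^{\mathcal{P}}$; since $\pi_n$ is a pointwise epimorphism and $t_n$ is too, $\bar{\pi}_n$ is a pointwise epimorphism automatically.

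The main work is to show $\bar{\pi}_n$ is also a pointwise monomorphism, i.e. injective on each $A$. I would proceed by induction on $n$, using the exact sequence of Lemma \ref{Pnexsequ},
\[ \Z[\mathcal{P}(n)]\otimes P_1^{\mathcal{P}}(A)^{\otimes n} \xrightarrow{\ \nu_n\ } P_n^{\mathcal{P}}(A) \xrightarrow{\ \overline{q_{n-1}}\ } P_{n-1}^{\mathcal{P}}(A) \to 0, \]
and the analogous natural short exact sequence for $T_n$ relating $T_nF$, $T_{n-1}F$ and the $n$-th homogeneous layer, which is built from $cr_n(T_nF)\circ\Delta^n$; here one uses that $cr_n$ is exact (Proposition \ref{exact}) and that $cr_n T_n F \cong cr_n F$ since $T_n$ only kills the part of $F$ detected by $cr_{n+1}$. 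The base case $n=1$ is Lemma \ref{P1linear}: $P_1^{\mathcal{P}}$ is linear, so $T_1(\bar{\Z}[-]\circ V) \cong P_1^{\mathcal{P}}$ because $T_1$ is the linearization and $\pi_1$ exhibits $P_1^{\mathcal{P}}$ as a linear quotient through which any linear quotient must factor — more precisely one checks directly that $cr_1$ of the kernel of $\pi_1$ vanishes. For the inductive step I would compare the two exact sequences via the natural maps induced by $\pi_n$, $\pi_{n-1}$ and the map on the top homogeneous terms: by induction $\bar{\pi}_{n-1}$ is an isomorphism, and on the homogeneous layer one must show the induced map $cr_n(T_nF)(X,\ldots,X)\to \mathrm{image\ of\ }\nu_n$ is an isomorphism. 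The latter reduces, via Lemma \ref{ApIq} and Corollary \ref{modI2} (which together say that modulo $I^{n+1}_{\mathcal{P}}(A)$ every generator $\overline{\theta}$ applied to a mixed tensor collapses onto the purely $I(A)^{\otimes n}$ part, and that the degree-$1$ quotient behaves additively), to identifying $cr_n$ of $\bar{\Z}[-]\circ V$ evaluated on $n$ copies with $\Z[\mathcal{P}(n)]\otimes P_1^{\mathcal{P}}(-)^{\otimes n}$ modulo higher filtration; this is exactly what the formula $\nu_n(\theta\otimes\bar{a_1-1}\otimes\cdots) = q_n(\overline\theta((a_1-1)\otimes\cdots))$ encodes.

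The hard part will be the bookkeeping in this inductive step: matching the $cr_n$-description of the homogeneous quotient of $T_n$ with the $\Z[\mathcal{P}(n)]\otimes P_1^{\mathcal{P}}(-)^{\otimes n}$ term and verifying that the connecting maps agree, i.e. that the diagram with rows the $T_n$-sequence and the $P_n^{\mathcal{P}}$-sequence of Lemma \ref{Pnexsequ} commutes. Once commutativity is in place, the five lemma (or rather the four lemma for epis/monos, applied to a diagram with exact rows where the outer vertical maps are isomorphisms and surjections appropriately) forces $\bar{\pi}_n$ to be an isomorphism. Finally, the case of $\mathbb{T}\ti Mod$ follows by precisely the same argument with $\bar{\Z}[-]\circ V_{\mathbb{T}\ti Mod}$ in place of $\bar{\Z}[-]\circ V_{\mathcal{P}\ti Alg}$, using that $P_n^{\mathbb{T}} = P_n^{\mathcal{P}_{\mathbb{T}}}\circ V_{\mathbb{T}\ti Mod}^{\mathcal{P}_{\mathbb{T}}\ti Alg}$ by definition and that $V_{\mathbb{T}\ti Mod}(M) = M(\ul{1}) = V_{\mathbb{T}\ti Mod}^{\mathcal{P}_{\mathbb{T}}\ti Alg}(M)$ as sets with the same basepoint, so the two canonical reduced functors coincide; the stated formulas for $\Xi_n^{\mathcal{P}}$ and $\Xi_n^{\mathbb{T}}$ on $\bar a$ and $\bar x$ are read off directly from the construction of $\pi_n$ and the factorization through $t_n$.
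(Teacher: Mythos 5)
Your first half agrees with the paper: you construct $\Xi_n^{\mathcal{P}}$ exactly as the authors do, by transporting along $\bar{\Z}[A]\cong I(A)$ ($\bar a\mapsto a-1_A$) and then using that $P_n^{\mathcal{P}}$ is polynomial of degree $\le n$ (Proposition \ref{Pnpoldegn}) together with the universal property of $T_n$ (Proposition \ref{Tn-prop}) to factor $q_n$ through $t_n^I$. Getting a pointwise epimorphism $\bar\pi_n = \Xi_n^{\mathcal{P}}$ is fine. The gap is in your injectivity argument, which rests on the claim that $cr_n T_nF\cong cr_n F$ ``since $T_n$ only kills the part of $F$ detected by $cr_{n+1}$.'' That claim is false in general. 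For example with $F(X)=X^{\otimes 3}$ on $Ab$ and $n=2$: $T_2F$ is quadratic, so $cr_2(T_2F)$ is biadditive, whereas $cr_2F(X,Y)$ contains summands like $X\otimes X\otimes Y$ and is of bidegree $(2,1)$; these cannot be naturally isomorphic. More structurally, $cr_n$ is exact, so $cr_n(T_nF)$ is the quotient of $cr_nF$ by $cr_n\bigl((cr_{n+1}F)\Delta^{n+1}\bigr)$, and the latter is nonzero whenever $(cr_{n+1}F)\Delta^{n+1}$ has degree $>n-1$, which is the typical situation. Your five-lemma diagram therefore does not have a clean left column, and the inductive step as written cannot be completed; the ``bookkeeping'' you defer is precisely where the argument breaks.

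The paper proves injectivity without any exact-sequence comparison or induction: it shows that $(t_n^I)_A$ annihilates $I^{n+1}_{\mathcal{P}}(A)=\sum_{m>n}\overline{\mu}_m(\Z[\mathcal{P}(m)]\otimes I(A)^{\otimes m})$, which is exactly $\ker q_n$, and hence $t_n^I$ factors through $q_n$, giving the inverse. The mechanism is Lemma \ref{TnMDelm}: for each $m>n$, the functor $M_m(A_1,\dots,A_m)=\Z[\mathcal{P}(m)]\otimes I(A_1)\otimes\cdots\otimes I(A_m)$ is multireduced, so $T_n(M_m\Delta^m)=0$; since $T_nI$ has degree $\le n$, the composite $M_m\Delta^m\xrightarrow{\overline{\mu}_m}I\xrightarrow{t_n^I}T_nI$ factors through $T_n(M_m\Delta^m)=0$, hence is zero. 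This single observation replaces your entire inductive comparison of exact sequences. I would suggest abandoning the layer-by-layer approach and adopting this direct annihilation argument: your own Lemma \ref{Pnexsequ} and the fact $\ker q_n = I^{n+1}_{\mathcal{P}}$ already hand you the right target for Lemma \ref{TnMDelm}. The $\mathbb{T}\ti Mod$ case then follows verbatim, as you correctly observe, since $V_{\mathbb{T}\ti Mod}$ and $V^{\mathcal{P}_{\mathbb{T}}\ti Alg}_{\mathbb{T}\ti Mod}$ give the same pointed set $M(\ul{1})$.
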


\begin{proof}
The natural isomorphisms $\xymatrix{\Xi^{\mathcal{P}} \colon \bar{\Z}[A] \ar[r]^-{\cong} & I(A)}$ and $\xymatrix{\Xi^{\mathbb{T}} \colon \bar{\Z}[M(\ul{1})] \ar[r]^-{\cong} & I(M(\ul{1}))}$ sending $\bar{a}$ to $a-1_A$ and $\bar{x}$ to $x-1_{M(\ul{1})}$ allow to replace the functors $\bar{\Z}[-]\circ V_{\mathcal{P}\ti Alg}$ and $\bar{\Z}[-]\circ V_{\mathbb{T}\ti Mod}$ with $I$ and $I\circ V_{\mathbb{T}\ti Mod}^{\mathcal{P}_{\mathbb{T}}\ti Alg}$, resp. By Corollary \ref{Pnpoldegn} the functor $P_n^{\mathcal{P}}$ is polynomial of degree $\le n$, hence $\xymatrix{q_n\colon I\ar@{-{>>}}[r]& P_n^{\mathcal{P}}}$ factors through 
$\xymatrix{t_n^I\colon I \ar@{-{>>}}[r]& T_nI}$, whence $\Xi_n^{\mathcal{P}}$ is well defined. To show that it has an inverse it suffices to show that the map $(t_n^I)_A$ annihilates $I^{n+1}_{\mathcal{P}}(A) = \sum_{m>n}\overline{\mu}_m ( \Z[\mathcal{P}(m)]\otimes I(A)^{\otimes m})$. For $m>n$ consider the multifunctor $M_m\colon 
(\mathcal{P}$-$Alg)^m\to Ab$ defined by $(A_1,\ldots,A_m) \mapsto 
\Z[\mathcal{P}(m)]\otimes I(A_1)\otimes\cdots \otimes I(A_m)$, which is multi-reduced. The morphism $q_n\overline{\mu}_m\colon M_m\Delta^m \to P_n^{\mathcal{P}}$ factors through $\xymatrix{t_n\colon M_m\Delta^m\ar@{-{>>}}[r]& T_n(M_m\Delta^m)}$ since $P_n^{\mathcal{P}}$ is polynomial of degree $\le n$, but $T_n(M_m\Delta^m)=0$ by Lemma \ref{TnMDelm}, as desired. For $\Xi_n^{\mathbb{T}}$ the same argument works.
\end{proof}

We now are ready to compare the functors $P_n^{\mathbb{T}}$ for $\mathbb{T}$ being the theory of groups $\mathbb{G}r$ or of abelian groups $\mathbb{A}b$ with the classical Passi functor of section \ref{recall:PassisPn}.

\begin{prop}\label{calculPnGr} There are natural isomorphisms 
\[ P_n\hspace{1mm}\cong\hspace{1mm} T_n(\bar{\Z}[-]\circ V_{Gr})
\hspace{1mm}\cong\hspace{1mm} P_n^{\mathbb{G}r} \quad\mbox{and}\quad
P_n\hspace{1mm}\cong\hspace{1mm} T_n(\bar{\Z}[-]\circ V_{Ab})
\hspace{1mm}\cong\hspace{1mm} P_n^{\mathbb{A}b}
\]
 of functors from $Gr$ to $Ab$ and from $Ab$ to $Ab$, respectively.
\end{prop}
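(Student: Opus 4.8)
The plan is to deduce Proposition \ref{calculPnGr} from the general machinery already developed, namely Theorem \ref{Pn=Tn} and the classical facts about Passi's functor recalled in section \ref{recall:PassisPn}. The statement asserts three isomorphisms that fit into one chain for $Gr$ (and an analogous one for $Ab$): $P_n \cong T_n(\bar{\Z}[-]\circ V_{Gr}) \cong P_n^{\mathbb{G}r}$. The middle object is the $n$-th Taylorisation of the canonical reduced functor on $Gr$, the right-hand one is the operadic Passi functor applied through the forgetful functor to $\mathcal{P}_{\mathbb{G}r}$-algebras, and the left-hand one is Passi's classical $P_n$ restricted to groups.

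First I would establish the right-hand isomorphism $T_n(\bar{\Z}[-]\circ V_{Gr}) \cong P_n^{\mathbb{G}r}$. Here I identify $Gr$ with $\mathbb{G}r\ti Mod$ via the standard equivalence between the category of groups and the category of models of the algebraic theory of groups, under which the forgetful functor $V_{Gr}$ corresponds to $V_{\mathbb{T}\ti Mod}$ for $\mathbb{T}=\mathbb{G}r$ (evaluation at $\ul{1}$), with basepoint the identity element; and $P_n^{\mathbb{G}r}$ is by definition $P_n^{\mathcal{P}_{\mathbb{G}r}}\circ V_{\mathbb{T}\ti Mod}^{\mathcal{P}_{\mathbb{T}}\ti Alg}$. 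The isomorphism $\Xi_n^{\mathbb{T}}$ of Theorem \ref{Pn=Tn} then gives exactly $T_n(\bar{\Z}[-]\circ V_{Gr}) \cong P_n^{\mathbb{G}r}$, sending $\bar{x}$ to $q_n(x-1)$. The same applies verbatim with $\mathbb{T}=\mathbb{A}b$ and $Ab$ in place of $Gr$, giving the right-hand isomorphism in the abelian case.

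It then remains to match $P_n^{\mathbb{G}r}$ with Passi's classical $P_n$ on $Gr$ (and likewise $P_n^{\mathbb{A}b}$ with $P_n$ on $Ab$). By definition $P_n^{\mathbb{G}r}(G) = I(V(G))/I^{n+1}_{\mathcal{P}_{\mathbb{G}r}}(V(G))$, where $V(G)$ is the underlying set of $G$ viewed as a $\mathcal{P}_{\mathbb{G}r}$-algebra; the augmentation ideal $I(V(G))$ is literally the augmentation ideal $I(G)$ of $\Z[G]$ since both have canonical basis $\{g-1 : g\in G\backslash\{e\}\}$. So the whole content is the identification of the operadic power $I^{n+1}_{\mathcal{P}_{\mathbb{G}r}}(V(G))$ with the classical power $I^{n+1}(G)$. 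One inclusion $I^{n+1}_{\mathcal{P}_{\mathbb{G}r}}(V(G)) \subset I^{n+1}(G)$ is easy: $I^n_{\PP}(A)$ is always an ideal of $\Z[A]$ contained in $I^n(A)$ since each operadic operation $\bar\theta$ applied to $m\ge n$ elements of $I(A)$ lands in $I^n(A)$, using that the operad map $\mathcal{P}_{\mathbb{G}r}\to \C om$ shows $\bar\theta(x_1\otimes\cdots\otimes x_m)$ has augmentation matching that of $x_1\cdots x_m$ — more directly, the binary multiplication and inversion operations of $\mathbb{G}r$, applied appropriately, realize ordinary products $x_1\cdots x_m$ as operadic expressions. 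For the reverse inclusion I would use precisely this last observation in the form: the operad $\mathcal{P}_{\mathbb{G}r}$ contains the binary multiplication operation $\mu\in\mathcal{P}_{\mathbb{G}r}(2)$ coming from the group law, and for $x,y\in I(G)$ one has $\bar\mu(x\otimes y) = xy$ in $\Z[G]$ (the group product of the two elements, extended bilinearly); iterating, every generator $(g_1-1)(g_2-1)\cdots(g_{n+1}-1)$ of the classical $I^{n+1}(G)$ is of the form $\bar\theta((g_1-1)\otimes\cdots\otimes(g_{n+1}-1))$ for a suitable iterated composite $\theta\in\mathcal{P}_{\mathbb{G}r}(n+1)$ of the binary multiplication, hence lies in $I^{n+1}_{\mathcal{P}_{\mathbb{G}r}}(V(G))$. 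The main obstacle is making this last identification clean: one must be careful that $\mathcal{P}_{\mathbb{G}r}$ is the operad associated to the \emph{theory} $\mathbb{G}r$, so its operations are words in the group generators with inverses, and one needs that the $\Z[\mathcal{P}_{\mathbb{G}r}]$-algebra structure on $\Z[G]$ restricts on the group $G\subset\Z[G]$ to the usual group operations; this is exactly how $V_{\mathbb{T}\ti Mod}^{\mathcal{P}_{\mathbb{T}}\ti Alg}$ is constructed (via the evaluation maps $M(f)(x)$). Granting that, the binary multiplication operad element acts by the group product, so iterated products of augmentation elements are operadic expressions of the right arity, giving $I^{n+1}(G)\subset I^{n+1}_{\mathcal{P}_{\mathbb{G}r}}(V(G))$. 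Combining both inclusions yields $P_n^{\mathbb{G}r}\cong P_n$ on $Gr$, and since the theory $\mathbb{A}b$ also has a binary multiplication operation with the analogous property (its underlying set-operad surjects onto $\C om$ but already contains the relevant binary operation), the same argument gives $P_n^{\mathbb{A}b}\cong P_n$ on $Ab$. Chaining the two isomorphisms in each case completes the proof.
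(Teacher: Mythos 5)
Your identification of the right-hand isomorphism via Theorem \ref{Pn=Tn} is exactly the paper's first step, and your reverse inclusion $I^{n+1}(G)\subset I^{n+1}_{\mathcal{P}_{\mathbb{G}r}}(V(G))$ is correct: the iterated binary multiplication $\theta_{n+1}\in\mathcal{P}_{\mathbb{G}r}(n+1)$ satisfies $\overline{\theta_{n+1}}(x_1\otimes\cdots\otimes x_{n+1})=x_1\cdots x_{n+1}$, and such products span $I^{n+1}(G)$.

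The gap is in the inclusion you call ``easy.'' You need $\overline{\theta}(x_1\otimes\cdots\otimes x_m)\in I^m(G)$ for \emph{every} $\theta\in\mathcal{P}_{\mathbb{G}r}(m)=F_m$, not just the iterated multiplication; and your two stated justifications do not establish this. The augmentation observation only shows the expression lies in $I(G)$, not in $I^m(G)$. The remark that ``binary multiplication and inversion realize ordinary products'' addresses the wrong direction: it exhibits \emph{some} operadic elements whose evaluation is $x_1\cdots x_m$, but says nothing about what $\overline{\theta}$ does for a general word $\theta$ involving inverses. Already for $\theta=y_1y_2y_1^{-1}\in\mathcal{P}_{\mathbb{G}r}(2)$ one gets $\overline{\theta}((g-1)\otimes(h-1))=ghg^{-1}-h=([g,h]-1)h$, and landing in $I^2$ uses the nontrivial identity $[g,h]-1\in I^2$; the general case amounts to the Magnus--Fox theorem identifying $I^k(F_m)$ with $\Z[F_m]\cap J^k$ under the Magnus embedding $y_i\mapsto 1+X_i$. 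This is precisely what the paper flags just before the Proposition (``It is less obvious than in the example \ref{exple:PnAs}$\ldots$''). The paper avoids this ideal-theoretic comparison entirely: it proves both $P_n^{\mathbb{G}r}\cong T_n(\bar{\Z}[-]\circ V_{Gr})$ \emph{and} $P_n\cong T_n(\bar{\Z}[-]\circ V_{Gr})$ by running the argument of Theorem \ref{Pn=Tn} twice, the second time based on the classical exact sequence $(G^{ab})^{\otimes n}\to P_n(G)\to P_{n-1}(G)\to 0$ and the linearity of $P_1\cong(-)^{ab}$ from Proposition \ref{prop-cog-2}. Your direct route is salvageable, but only by importing the Magnus embedding theorem (or proving the equivalent), which is a substantially heavier ingredient than anything the paper's proof uses.
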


\begin{proof} The right-hand isomorphisms (invoking $P_n^{\mathbb{G}r} $ and $P_n^{\mathbb{A}b}$) were already established in Theorem \ref{Pn=Tn}. Now the left-hand isomorphisms are obtained in exactly same way, based on the well known exact sequence $(G^{ab})^{\otimes n} \to P_n(G) \to P_{n-1}(G) \to 0$ (proved exactly in the same way as sequence (\ref{ses-Pnexsequ})) and 
the fact that $P_1(G)\cong G^{ab}$ for any group $G$ (see Proposition \ref{prop-cog-2}), whence the functor $P_1$ from $Gr$ and from $Ab$ to $Ab$ is linear in each case.
\end{proof}

\subsection{Polynomial maps on the morphism sets of a category}
Let $\A$ be a small additive category. In \cite{Pira} the author defines a preadditive category $P_n\A$ in the following way: the objects of $P_n\A$ are those of $\A$ and $Hom_{P_n\A}(A,B)=P_n(Hom_{\A}(A,B))$. The importance of this category lies on the following Proposition.
\begin{prop} \cite{Pira} \label{Pira-Pn}
Let $\A$ be a small additive category. Then:
$$Pol_{n}(\A, Ab) \simeq Lin(P_n\A, Ab).$$
\end{prop}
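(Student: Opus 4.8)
The plan is to construct mutually inverse functors between $Pol_n(\A,Ab)$ and $Lin(P_n\A,Ab)$, the two directions being ``apply a functor to morphisms'' and ``precompose with the canonical functor $\pi_n\colon\A\to P_n\A$''. Recall that $\pi_n$ is the identity on objects and on morphism sets is the map $Hom_\A(A,B)\to P_n(Hom_\A(A,B))$, $f\mapsto\rho_n(f-1)$; by Proposition \ref{Passi} this is the universal polynomial map of degree $\le n$ out of the abelian group $Hom_\A(A,B)$. It is a functor because, $\A$ being additive, pre- and post-composition in $\A$ are additive maps of abelian groups, hence induce ring maps of the group rings preserving the augmentation ideals and their powers --- which is exactly what makes the composition of $P_n\A$ well defined (\cite{Pira}) --- and on generators one has $\rho_n(g-1)\circ\rho_n(f-1)=\rho_n(gf-1)$.

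The technical core is a single computation. Let $F\colon\A\to Ab$ be reduced, $X_1,\dots,X_{n+1}\in\A$, $X=X_1\oplus\cdots\oplus X_{n+1}$ with coproduct inclusions $i_k\colon X_k\to X$ and retractions $r_k\colon X\to X_k$, and for $S\subseteq\{1,\dots,n+1\}$ let $e_S=\sum_{k\in S}i_kr_k\colon X\to X$, the idempotent onto $\bigoplus_{k\in S}X_k$. By naturality of the cross-effect decomposition of Proposition \ref{ce-prop}, $F(e_S)$ restricts to the identity on the summand $cr_{|T|}F((X_i)_{i\in T})$ when $T\subseteq S$ and to $0$ otherwise; since $\sum_{T\subseteq S\subseteq\{1,\dots,n+1\}}(-1)^{n+1-|S|}$ equals $0$ for $|T|<n+1$ and $1$ for $|T|=n+1$, the endomorphism $\sum_S(-1)^{n+1-|S|}F(e_S)$ of $F(X)$ is the identity on $cr_{n+1}F(X_1,\dots,X_{n+1})$ and zero on every other summand. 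Consequently $cr_{n+1}F=0$ iff $\sum_S(-1)^{n+1-|S|}F(e_S)=0$ for all $X_1,\dots,X_{n+1}$. Combining this with the fact that every $\sum_{i\in S}f_i$ (for $f_i\colon A\to B$) factors as $\nabla\circ e_S\circ g$ with $g=(f_1,\dots,f_{n+1})\colon A\to B^{\oplus(n+1)}$ and $\nabla\colon B^{\oplus(n+1)}\to B$ the folding map, one obtains the criterion: a reduced functor $F$ is polynomial of degree $\le n$ if and only if, for every pair $A,B$, the map $\phi_{A,B}\colon Hom_\A(A,B)\to Hom_{Ab}(F(A),F(B))$, $f\mapsto F(f)$, is a polynomial map of degree $\le n$ in the sense of Passi. (Here one uses that $\phi_{A,B}(0)=0$ because $F$ is reduced, and that $I^{n+1}(G)$ is generated by the elements $(g_1-1)\cdots(g_{n+1}-1)=\sum_S(-1)^{n+1-|S|}\big(\sum_{i\in S}g_i\big)$, so that $\overline{\phi_{A,B}}$ annihilates $I^{n+1}$ exactly when all the alternating sums $\sum_S(-1)^{n+1-|S|}F(\sum_{i\in S}f_i)$ vanish.)

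Granting this criterion, the equivalence is formal. For $F\in Pol_n(\A,Ab)$, each $\phi_{A,B}$ is polynomial of degree $\le n$, so by Proposition \ref{Passi} its linear extension factors uniquely through $\rho_n$ as a group homomorphism $\widetilde F\colon P_n(Hom_\A(A,B))\to Hom_{Ab}(F(A),F(B))$ with $\widetilde F(\pi_n(f))=F(f)$; since composition and identities in $P_n\A$ are additive and determined on the generators $\pi_n(f)$, these maps assemble into an additive functor $\widetilde F\in Lin(P_n\A,Ab)$. For $G\in Lin(P_n\A,Ab)$, put $\widehat G=G\circ\pi_n$; it is reduced, and $f\mapsto\widehat G(f)$ is the universal polynomial map of degree $\le n$ followed by the homomorphism $G$, hence polynomial of degree $\le n$, so $\widehat G\in Pol_n(\A,Ab)$ by the criterion. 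Finally $\widehat{\widetilde F}=\widetilde F\circ\pi_n$ sends $f$ to $F(f)$, hence equals $F$, and $\widetilde{\widehat G}$ is the unique factorization through $\pi_n$ of $f\mapsto G(\pi_n(f))$, hence equals $G$; as both assignments clearly respect natural transformations, we get an isomorphism, in particular an equivalence, $Pol_n(\A,Ab)\simeq Lin(P_n\A,Ab)$. The only real work is the idempotent/cross-effect computation of the second paragraph, which is also the sole point where additivity of $\A$ enters essentially.
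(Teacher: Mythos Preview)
Your proof is correct. The paper itself does not prove this proposition --- it is simply cited from \cite{Pira} --- so there is no direct proof to compare with. However, the paper does later prove a strict generalization, Corollary~\ref{Poln(C)=Add(PnC)}, valid for any pointed category with finite sums, and then recovers the additive case via Proposition~\ref{Pn-add}. That route is more abstract: rather than your explicit idempotent/M\"obius computation on $F(X_1\oplus\cdots\oplus X_{n+1})$, the paper's Theorem~\ref{thm:polfunc-polmap} works through the reduced projectives $U_X^{\mathcal C}$ and the Yoneda lemma, reducing the question to whether every natural transformation $U_X^{\mathcal C}\to F$ factors through $t_n$.

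What each approach buys: your argument is elementary, short, and transparently identifies where additivity is used (the formula $\sum_{i\in S}f_i=\nabla\,e_S\,g$ and the expansion $(g_1-1)\cdots(g_{n+1}-1)=\sum_S(-1)^{n+1-|S|}\big[\sum_{i\in S}g_i\big]$ both need the abelian-group structure on $\Hom$). The paper's approach trades this explicitness for generality --- it works without any algebraic structure on $\Hom$-sets, at the price of introducing the auxiliary category $T_n\bar{\Z}[\mathcal C]$ and the machinery of $T_n$-adjointness. Your direct argument is essentially the classical one and is a perfectly good stand-alone proof of Proposition~\ref{Pira-Pn}.
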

In the next section, we extend this result from $\A$ to any category $\C$ \textbf{having a null object and finite sums} (and from $Ab$ to any abelian category).

In order to define a suitable analogue of the category $P_n\C$ in this context we need to define polynomial maps on the set of morphisms of $\C$. We prove in Proposition \ref{Pn-add} that for $\A$ a small additive category this definition is equivalent  with the definition of Passi polynomial maps.

Let $\C$ be a category having a null object and finite sums, and let $n\ge 1$. 
	
	\begin{defi} \label{polmap-setmor}
	Let $X,Y$ be objects in $\C$, $A$ be an abelian group and $\varphi\colon {\rm Hom}_{\mathcal{C}}(X,Y)\to A$ be a normalized function, i.e. $\varphi(0)=0$. We say that $\varphi$ is polynomial of degree $\le n$ if its $\Z$-linear extension $\hat{\varphi}\colon U^{\mathcal{C}}_X(Y) \to A$ factors through the quotient map $\xymatrix{t_n\colon U^{\mathcal{C}}_X(Y) \ar@{->>}[r] & T_nU^{\mathcal{C}}_X(Y)\,.}$ 
	\end{defi}
	
It is clear from the definition that the map $t_n'\colon  {\rm Hom}_{\mathcal{C}}(X,Y) \to T_nU^{\mathcal{C}}_X(Y)$, $t_n'(f) =t_n(f)$, is the \textit{universal	polynomial map of degree $\le n$} on ${\rm Hom}_{\mathcal{C}}(X,Y)$.
	
	\begin{rem}
 The previous definition generalizes the definition of quadratic maps from morphism sets of $\C$ to abelian groups given in \cite{HV} section $2.3$.
 \end{rem}

 	\begin{prop} \label{TnZ}
	There exists a preadditive category $T_n\bar{\Z}[\C]$ with the same objects as $\C$ whose morphism sets are given by ${\rm Hom}_{T_n\bar{\Z}[\mathcal{C}]}(X,Y) =T_nU^{\mathcal{C}}_X(Y)$ and whose composition satisfies $t_ng\circ t_nf = t_n(g\circ f)$.
	\end{prop}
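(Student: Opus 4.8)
The category $T_n\bar{\Z}[\C]$ will be obtained by a quotient construction. We start from the $\Z$-linearization $\bar{\Z}[\C]$ of $\C$ modulo the zero maps, i.e.\ the preadditive category with the same objects as $\C$ and with $\Hom_{\bar{\Z}[\C]}(X,Y) = U^{\C}_X(Y)$, the composition being the $\Z$-bilinear extension of the composition in $\C$. (That this is a category is exactly the classical fact that linearizing a category, then passing to the quotient killing the zero maps, yields a preadditive category, using that composition with a zero map is a zero map.) The plan is then to define $T_n\bar{\Z}[\C]$ as the quotient of $\bar{\Z}[\C]$ by the two-sided ideal generated, in each $\Hom$-group, by the kernel of $t_n\colon U^{\C}_X(Y) \to T_n U^{\C}_X(Y)$; the content of the proposition is that this ideal is already ``closed under composition'' in the sense that $\Hom_{T_n\bar{\Z}[\C]}(X,Y)$ is precisely $T_n U^{\C}_X(Y)$ and not a further quotient, and that composition is well defined on the nose by the formula $t_n g \circ t_n f = t_n(g\circ f)$.

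\textbf{Key steps.} First I would recall that $U^{\C}_X = T_0\text{-reduction of }\bar\Z[\Hom_\C(X,-)]$ is the reduced standard projective functor on $X$, and that $T_n U^{\C}_X$ is its $n$-th Taylorization, so $\Hom_{T_n\bar\Z[\C]}(X,Y) := T_n U^{\C}_X(Y)$ makes sense and the projection $t_n \colon U^{\C}_X(Y) \to T_n U^{\C}_X(Y)$ is an epimorphism of abelian groups, natural in $Y$. Second, for composability one must show: given $f\in \Hom_\C(X,Y)$ and $g\in\Hom_\C(Y,Z)$, the element $t_n(g\circ f) \in T_n U^{\C}_X(Z)$ depends only on $t_n(f)$ and $t_n(g)$, and the resulting pairing $T_n U^{\C}_X(Y) \times T_n U^{\C}_Y(Z) \to T_n U^{\C}_X(Z)$ is $\Z$-bilinear and associative. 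Bilinearity and associativity are inherited from $\bar\Z[\C]$ once well-definedness is established, so the crux is well-definedness on each side separately. Dependence only on $t_n(f)$: the map $g_*\colon U^{\C}_X(Y) \to U^{\C}_X(Z)$, $f\mapsto g\circ f$, is a natural transformation of functors $\C \to Ab$ in the variable $X$... more precisely, precomposition by $g$ is a morphism of functors $U^{\C}_{(-)}(Y)$-type objects; the clean way is to note that $g\circ(-)\colon U^{\C}_X(Y)\to U^{\C}_X(Z)$ factors, after applying $t_n$ to the target, through $t_n$ on the source because $T_n U^{\C}_X(Z)$ is polynomial of degree $\le n$ and $t_n^{U^{\C}_X(Z)}$ is the universal degree-$\le n$ quotient (Proposition \ref{Tn-prop}): the composite $\xymatrix{U^{\C}_X(Y) \ar[r]^-{g\circ(-)} & U^{\C}_X(Z) \ar[r]^-{t_n} & T_n U^{\C}_X(Z)}$ is a degree-$\le n$ polynomial functor of $Y$... no, of $X$. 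Let me be careful: we must vary $X$. Precomposition by a fixed $g\colon Y\to Z$ induces a natural transformation $U^{\C}_{?}(g)$ between the contravariant-in-$X$ functors $X\mapsto U^{\C}_X(Y)$ and $X\mapsto U^{\C}_X(Z)$; since these are representable-type functors and $T_n$ is a functor, applying $T_n$ in the variable $X$ gives that $t_n(g\circ f)$ depends only on $t_n(f)$. Symmetrically, dependence only on $t_n(g)$: postcomposition $(-)\circ f \colon \Hom_\C(Y,Z)\to \Hom_\C(X,Z)$ extends to $U^{\C}_Y(Z) \to U^{\C}_X(Z)$, which is a natural transformation $U^{\C}_Y \Rightarrow U^{\C}_X$ of functors on $\C$ (this is just the Yoneda-type functoriality of $U^{\C}_{(-)}$ in its subscript), hence commutes with $t_n$, giving $t_n((-)\circ f)$ factoring through $t_n$ on $U^{\C}_Y(Z)$; so $t_n(g\circ f)$ depends only on $t_n(g)$.

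\textbf{Main obstacle.} The only subtle point is the first half of well-definedness, where one varies $X$: one must check that $T_n$ applied to a morphism between non-reduced-in-$X$ or merely representable functors behaves functorially, i.e.\ that the construction $X \mapsto U^{\C}_X(Y)$ is genuinely a (contravariant) functor $\C \to Func(\C,Ab)$-valued object to which $T_n$ (acting on the $\C$-variable, the variable over which polynomiality is measured) can be applied — this requires identifying which variable the Taylorization is taken in and verifying $T_n$ is natural with respect to the other. Once this bookkeeping is pinned down, associativity $t_n h \circ (t_n g \circ t_n f) = (t_n h\circ t_n g)\circ t_n f$ follows from $t_n(h\circ g\circ f)$ being well defined by iterating the argument, and the identity morphism is $t_n(\mathrm{id}_X)$; the preadditive structure is the quotient abelian group structure. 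Thus $T_n\bar\Z[\C]$ is a preadditive category with the stated $\Hom$-groups and composition rule, completing the proof.
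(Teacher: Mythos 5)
Your overall strategy matches the paper's: reduce to showing that $t_n(g\circ f)$ depends only on $t_n(f)$ and only on $t_n(g)$ separately, then deduce that $t_n\gamma$ factors through $t_n\otimes t_n$. You handle the ``depends only on $t_n(g)$'' direction correctly: precomposition by $f$ gives a natural transformation $f^*\colon U^{\C}_Y\Rightarrow U^{\C}_X$, the functor $T_n$ sends it to $T_n(f^*)\colon T_nU^{\C}_Y\Rightarrow T_nU^{\C}_X$, and naturality of the unit $t_n$ with respect to this transformation yields $t_n(gf)=T_n(f^*)(t_ng)$.

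The gap is in the ``depends only on $t_n(f)$'' direction, where you propose to vary $X$ and apply $T_n$ ``in the variable $X$'' to the contravariant-in-$X$ family $X\mapsto U^{\C}_X(Y)$. That is the wrong variable: the Taylorization in $T_nU^{\C}_X(Y)$ is taken in the covariant argument $Y$ of the functor $U^{\C}_X\colon\C\to Ab$, not in the subscript $X$, and applying a (hypothetical contravariant) $T_n$ in $X$ would produce a different object than the one in the statement. In fact no variation in $X$ is needed at all. For a fixed $g\colon Y\to Z$, the map $f\mapsto g\circ f$ is precisely $U^{\C}_X(g)\colon U^{\C}_X(Y)\to U^{\C}_X(Z)$, the functor $U^{\C}_X$ applied to the morphism $g$ of $\C$. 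Since $t_n\colon U^{\C}_X\Rightarrow T_nU^{\C}_X$ is a natural transformation of functors of $Y$, the square
\[
\xymatrix{U^{\C}_X(Y)\ar[r]^-{U^{\C}_X(g)}\ar[d]_{t_n}&U^{\C}_X(Z)\ar[d]^{t_n}\\ T_nU^{\C}_X(Y)\ar[r]^-{T_nU^{\C}_X(g)}&T_nU^{\C}_X(Z)}
\]
commutes, giving $t_n(g\circ f)=T_nU^{\C}_X(g)(t_nf)$; no universal property and no appeal to polynomiality in $X$ is required. Finally, you should also make explicit the step you only gesture at (``the crux is well-definedness on each side separately''): since ${\rm Ker}(t_n\otimes t_n)={\rm Ker}(1\otimes t_n)+{\rm Ker}(t_n\otimes 1)$ by right-exactness of the tensor product, the two factorizations above together imply that $t_n\gamma$ vanishes on ${\rm Ker}(t_n\otimes t_n)$, hence factors through $T_nU^{\C}_Y(Z)\otimes T_nU^{\C}_X(Y)$, which is exactly what the paper verifies.
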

	
\begin{proof} Consider the homomorphism $\gamma\colon U^{\mathcal{C}}_Y(Z)\otimes U^{\mathcal{C}}_X(Y) \to U^{\mathcal{C}}_X(Z)$ defined by $\gamma(g\otimes f) = g\circ f$ for $g\in  {\rm Hom}_{\mathcal{P}}(Y,Z),$ $f\in  {\rm Hom}_{\mathcal{P}}(X,Y)$. We must show that $t_n\circ \gamma$ factors through $\xymatrix{t_n\otimes t_n\colon U^{\mathcal{C}}_Y(Z)\otimes U^{\mathcal{C}}_X(Y) \ar@{->>}[r] & T_nU^{\mathcal{C}}_Y(Z)\otimes T_nU^{\mathcal{C}}_X(Y)\,.}$ First note that a map $f\in  {\rm Hom}_{\mathcal{P}}(X,Y)$ induces a natural transformation $f^*\colon U^{\mathcal{C}}_Y\to U^{\mathcal{C}}_X$ and hence another natural transformation $T_n(f^*)\colon T_nU^{\mathcal{C}}_Y\to T_nU^{\mathcal{C}}_X$. Now consider the following diagram
\[\xymatrix{
U^{\mathcal{C}}_Y(Z)\otimes T_nU^{\mathcal{C}}_X(Y) \ar[r]^-{a} & T_nU^{\mathcal{C}}_X(Z) \\
U^{\mathcal{C}}_Y(Z)\otimes U^{\mathcal{C}}_X(Y) \ar[r]^-{\gamma} \ar[u]^{1\otimes t_n} \ar[d]_{t_n\otimes 1} & U^{\mathcal{C}}_X(Z) \ar@{->>}[u]^{t_n}\ar@{->>}[d]_{t_n}\\
T_nU^{\mathcal{C}}_Y(Z)\otimes U^{\mathcal{C}}_X(Y) \ar[r]^-{b} & T_nU^{\mathcal{C}}_X(Z)
}\]
where $a(g\otimes t_nf) = T_nU^{\mathcal{C}}_X(g)(t_nf) = t_n(gf)$ and $b(t_ng \otimes f) = T_n(f^*)(t_ng) = t_n(gf)$.
As the diagram commutes we have ${\rm Ker}(1\otimes t_n) \subset {\rm Ker}(t_n\gamma)\supset {\rm Ker}(t_n\otimes 1)$; but ${\rm Ker}(t_n\otimes t_n) = {\rm Ker}(1\otimes t_n) + {\rm Ker}(t_n\otimes 1)$ by right-exactness of the tensor product, so $t_n\gamma$ indeed factors through $t_n\otimes t_n$, as desired.
\end{proof}	

We now exhibit a first case where the category $T_n\bar{\Z}[\C]$ can be described in terms of the Passi functor $P_n$; \textit{we  generalize this to \palg s in the next section.}
	
\begin{prop}\label{Pn-add} 
Suppose that $\C$ is an additive category. Then for $X,Y\in \C$ there is an  isomorphism $T_nU_X^{\mathcal{C}}(Y) \,\cong P_n( {\rm Hom}_{\mathcal{C}}(X,Y))$ natural in $Y$.
\end{prop}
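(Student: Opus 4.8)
The plan is to identify both sides with the Passi group of the morphism abelian group $\mathrm{Hom}_{\mathcal{C}}(X,Y)$ by exhibiting the defining universal properties. First I would recall that for an additive category $\mathcal{C}$ the morphism set $\mathrm{Hom}_{\mathcal{C}}(X,Y)$ is an abelian group; write $A=\mathrm{Hom}_{\mathcal{C}}(X,Y)$ for brevity. The key observation is that the universal functor $U_X^{\mathcal{C}}$ evaluated at $Y$ is exactly the augmentation ideal of the monoid (here: group) algebra: indeed $\mathbb{Z}[\mathrm{Hom}_{\mathcal{C}}(X,-)]$ has the free basis given by morphisms, and quotienting by the subfunctor $\mathbb{Z}[0]$ identifies $U_X^{\mathcal{C}}(Y)$ with the free abelian group on $A\setminus\{0\}$, which is canonically $I(A)$ via $f\mapsto f-1$ (sending the class of $f$ to $f-1$, using additive notation on $A$). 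This is the isomorphism $\Xi^{\mathbb{T}}$-type map of Theorem \ref{Pn=Tn}, restricted to this concrete situation; alternatively it is elementary and can be checked directly.

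Next I would invoke Theorem \ref{Pn=Tn} (or rather its method of proof) in the following guise: since $A$ is an abelian group, we have by Proposition \ref{calculPnGr} that $P_n\cong T_n(\bar{\mathbb{Z}}[-]\circ V_{Ab})$ as functors on $Ab$; applied to the object $A$ this gives $P_n(A)\cong T_n(\bar{\mathbb{Z}}[A])$, and under the identification $\bar{\mathbb{Z}}[A]\cong I(A)\cong U_X^{\mathcal{C}}(Y)$ of the previous paragraph — which is natural in $Y$ because the functor $\mathrm{Hom}_{\mathcal{C}}(X,-)\colon \mathcal{C}\to Ab$ is additive and $T_n$ is functorial — this yields the asserted natural isomorphism
$$T_n U_X^{\mathcal{C}}(Y) \;\cong\; P_n(\mathrm{Hom}_{\mathcal{C}}(X,Y)).$$
More directly one can argue: by Definition \ref{polmap-setmor} a normalized map $\varphi\colon A\to B$ (for $B$ abelian) is polynomial of degree $\le n$ in the sense of that definition iff $\hat\varphi$ factors through $t_n\colon U_X^{\mathcal{C}}(Y)\to T_nU_X^{\mathcal{C}}(Y)$; but via the identification $U_X^{\mathcal{C}}(Y)\cong I(A)$ the universal property of $T_n$ (Proposition \ref{Tn-prop}) combined with Proposition \ref{Pnpoldegn} and Theorem \ref{Pn=Tn} shows $t_n$ corresponds to $q_n\colon I(A)\to P_n(A)$, which by Proposition \ref{Passi} is the universal Passi-polynomial map of degree $\le n$. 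Two objects solving the same universal problem are canonically isomorphic, giving the result and the compatibility of the isomorphism with the respective universal maps.

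The main obstacle, modest as it is, is to make the naturality in $Y$ completely precise and to check that the isomorphism is compatible with the composition structures that make $T_n\bar{\mathbb{Z}}[\mathcal{C}]$ (Proposition \ref{TnZ}) and $P_n\mathcal{A}$ (Proposition \ref{Pira-Pn}) into categories — i.e. that under the identification, the composition $t_ng\circ t_nf = t_n(g\circ f)$ matches the composition in $P_n\mathcal{A}$ induced by bilinearity of $\mathrm{Hom}_{\mathcal{C}}(Y,Z)\times\mathrm{Hom}_{\mathcal{C}}(X,Y)\to\mathrm{Hom}_{\mathcal{C}}(X,Z)$. This reduces to tracing the bilinear composition pairing through the isomorphism $I(\mathrm{Hom})\cong U^{\mathcal{C}}$ and comparing with the pairing $\gamma$ appearing in the proof of Proposition \ref{TnZ}; since in the additive case the composition is bilinear, both $\gamma$ and the Passi-algebra multiplication descend to the same map on the $T_n$-quotient, and the check is straightforward. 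I would state the isomorphism explicitly as $T_nU_X^{\mathcal{C}}(Y)\to P_n(\mathrm{Hom}_{\mathcal{C}}(X,Y))$, $t_n(f)\mapsto q_n(f-1)$, and verify functoriality and well-definedness from the universal properties rather than by hand.
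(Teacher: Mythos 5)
Your opening identification $U_X^{\mathcal{C}}(Y)\cong\bar{\Z}[A]\cong I(A)$ (with $A=\Hom_{\C}(X,Y)$) and your invocation of $P_n\cong T_n(\bar{\Z}[-]\circ V_{Ab})$ from Proposition~\ref{calculPnGr} are both correct ingredients, and both appear in the paper's proof. But there is a genuine gap between them that you do not close. The operator $T_n$ acts on \emph{functors}, not on individual abelian groups, and the two functors involved are different and live on different source categories: on the left-hand side $T_n$ is applied to $U_X^{\mathcal{C}}\colon\C\to Ab$, whose cross-effects are formed using coproducts in $\C$, whereas on the right-hand side $T_n$ is applied to $\bar{\Z}[-]\circ V_{Ab}\colon Ab\to Ab$, whose cross-effects are formed in $Ab$. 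Agreement of these functors after precomposing the latter with $\Hom_{\C}(X,-)$ does not by itself entitle you to equate the two $T_n$-quotients evaluated at $Y$ and at $A$ respectively; your appeal to ``naturality because $\Hom_{\C}(X,-)$ is additive and $T_n$ is functorial'' is precisely where the actual work must happen.

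The paper closes exactly this gap in two nontrivial steps: Lemma~\ref{Tn(GoF)} shows $T_n(G\circ F)\cong T_n(T_nG\circ F)$ whenever $F$ is reduced, and then Proposition~\ref{polopol} is used to observe that $T_nG\circ F$ is already polynomial of degree $\le n$ (since $T_nG$ has degree $\le n$ and $F=\Hom_{\C}(X,-)$ is additive, i.e.\ of degree $\le 1$), so that the outer $T_n$ acts by an isomorphism; only then does Proposition~\ref{calculPnGr} finish. Your alternative ``more direct'' universal-property argument is circular as stated: you assert that ``$t_n$ corresponds to $q_n$'' under the identification $U_X^{\mathcal{C}}(Y)\cong I(A)$, but this is tantamount to the equality of kernels $\mathrm{im}\bigl((S_{n+1}^{U_X^{\mathcal{C}}})_Y\bigr) = I^{n+1}(A)$, which is exactly the content of the proposition. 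To use uniqueness of universal objects you would first need to prove that a normalized map is polynomial of degree $\le n$ in the sense of Definition~\ref{polmap-setmor} if and only if it is Passi-polynomial of degree $\le n$, which you do not do. Either supply the argument of Lemma~\ref{Tn(GoF)} plus Proposition~\ref{polopol}, or prove the equivalence of the two polynomiality notions directly.
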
	
	
	\begin{proof}
	The functor $U_X^{\mathcal{C}}$ factors as $\xymatrix{\C \ar[r]^-{ {\rm Hom}_{\mathcal{C}}(X,-)} & Ab \ar[r]^-{V_{Ab}} & Set_* \ar[r]^-{\bar{\Z}[-]} & Ab}$. Thus
	\begin{eqnarray*}
	T_nU_X^{\mathcal{C}} &=&  T_n((\bar{\Z}[-]\circ V_{Ab}) \circ  {\rm Hom}_{\mathcal{C}}(X,-))\\
	&\cong& T_n(T_n(\bar{\Z}[-]\circ V_{Ab}) \circ  {\rm Hom}_{\mathcal{C}}(X,-))\quad \mbox{by Lemma \ref{Tn(GoF)}}\\
	&\cong&T_n(\bar{\Z}[-]\circ V_{Ab}) \circ {\rm Hom}_{\mathcal{C}}(X,-)\quad \mbox{by Proposition \ref{polopol}}\\
	&\cong& P_n\circ {\rm Hom}_{\mathcal{C}}(X,-) \quad \mbox{by Proposition \ref{calculPnGr}}
\end{eqnarray*}
\end{proof}

\subsection{Characterization of polynomial functors}
Polynomial maps on the set of morphisms allow to characterize polynomial functors on pointed categories with finite sums, in terms of their effect on morphisms instead of their effect on objects which is used to define polynomiality of functors. Actually Theorem \ref{thm:polfunc-polmap} generalizes the corresponding one for polynomial functors between abelian categories due to Eilenberg and MacLane \cite{EML}. As a consequence of this Theorem we obtain a description of polynomial functors on $\C$ in terms of additive functors on $T_n\bar{\Z}[\C]$ extending Proposition  \ref{Pira-Pn}.

	\begin{thm}\label{thm:polfunc-polmap} Let $\mathcal{A}$ be an abelian category and $F\colon \C \to \A$ be a reduced functor. Then the following are equivalent:\
	
	\begin{enumerate}
	
	\item $F$ is polynomial of degree $\le n$.
	
	\item For all $X,Y\in \C$ the map $F_{XY}\colon {\rm Hom}_{\mathcal{C}}(X,Y)\to {\rm Hom}_{\mathcal{A}}(FX,FY)$, $F_{XY}(f)=F(f)$,  is polynomial of degree $\le n$.
	
	\end{enumerate}
	
	If $\A=Ab$, (1) and (2) are equivalent with

	\begin{enumerate}
	\setcounter{enumi}{2}

	\item Any natural transformation $U_X^{\mathcal{C}} \to F$ factors through $\xymatrix{t_n\colon U_X^{\mathcal{C}}  \ar@{->>}[r] & T_nU_X^{\mathcal{C}}}$, so if $\C$ is small  the map $t_n^*\colon {\rm Hom}(T_nU_X^{\mathcal{C}},F) \to {\rm Hom}(U_X^{\mathcal{C}},F)$ is an isomorphism.
	\end{enumerate}
	
	\end{thm}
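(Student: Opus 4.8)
The plan is to establish the cycle of implications $(1)\Rightarrow(2)\Rightarrow(1)$ in the general abelian case, and then $(1)\Leftrightarrow(3)$ when $\A = Ab$. The key device is the universal functor $U_X^{\mathcal C}$, which represents evaluation at objects: for any $F$ and any $X\in\C$ the Yoneda-type identification $\Hom_{Func(\C,Ab)}(U_X^{\mathcal C},F)\cong F(X)$ sends a natural transformation $\tau$ to $\tau_X(\mathrm{id}_X)$ (reading $\mathrm{id}_X$ as its class in $U_X^{\mathcal C}(X)$), and more generally the component $\tau_Y\colon U_X^{\mathcal C}(Y)\to F(Y)$ is determined by $\tau_Y(f) = F(f)(\tau_X(\mathrm{id}_X))$ for $f\in\Hom_\C(X,Y)$.

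First I would prove $(1)\Rightarrow(2)$. Fix $X,Y\in\C$. Since $F$ is reduced, Definition \ref{poly} gives $cr_{n+1}F = 0$. The representing transformation $\eta^X\colon U_X^{\mathcal C}\to F$ with $\eta^X_X(\mathrm{id}_X)$ a chosen element determines, after applying the exact cross-effect functor $cr_{n+1}$ (Proposition \ref{exact}) and using that $cr_{n+1}U_X^{\mathcal C}$ surjects appropriately onto the relevant subquotient, that $\eta^X$ factors through $t_n\colon U_X^{\mathcal C}\twoheadrightarrow T_nU_X^{\mathcal C}$; this is exactly the content I would phrase as the implication $(1)\Rightarrow(3)$ first, and it is cleanest to do $(1)\Rightarrow(3)\Rightarrow(2)$ when $\A=Ab$ and handle the general $\A$ by a Yoneda/pointwise reduction. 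Concretely, $cr_{n+1}F=0$ means $F$ is a quotient of $U_X^{\mathcal C}$-type projectives killed in degree $>n$; more directly, since $Pol_n(\C,Ab)$ is thick (Proposition \ref{thick}) and $T_n$ is left adjoint to the inclusion (Proposition \ref{Tn-prop}), a map from $U_X^{\mathcal C}$ into a degree-$\le n$ functor $F$ extends uniquely over $t_n^{U_X^{\mathcal C}}$; that is statement $(3)$. Then for $f\in\Hom_\C(X,Y)$, the $\Z$-linear extension $\widehat{F_{XY}}\colon U_X^{\mathcal C}(Y)\to \Hom_{Ab}(FX,FY)$ is $g\mapsto \big(v\mapsto F(g)(v)\big)$, which is the composite $U_X^{\mathcal C}(Y)\to \Hom_{Ab}(FX,FY)$ induced by $\eta^X$ ranging over all elements $v=\eta^X_X(\mathrm{id}_X)\in F(X)$; since each such $\eta^X$ factors through $t_n$, so does $\widehat{F_{XY}}$, giving $(2)$. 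For a general abelian $\A$ one replaces $\Hom_{Ab}$ by $\Hom_\A$ and runs the same argument using that $\A$-valued functors embed into $Ab$-valued ones via $\Hom_\A(A,-)$, or argues pointwise.

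For $(2)\Rightarrow(1)$ I would show directly that $cr_{n+1}F = 0$. Take objects $X_1,\dots,X_{n+1}\in\C$ and set $X = X_1\vee\cdots\vee X_{n+1}$, $Y=X$. The element $cr_{n+1}F(X_1,\dots,X_{n+1})\subset F(X)$ is carved out inside $F(X)$ by Proposition \ref{cr_n}, and the folding/inclusion maps $i_k\colon X\to X$, $r^{n+1}_{\hat k}$ etc.\ relating $X$ to its sub-wedges all lie in $\Hom_\C(X,X)$. The point is that the standard Eilenberg–MacLane computation — writing the identity of $X$ as an alternating sum of composites of the $i^{n+1}_{\hat k}$ and $r^{n+1}_{\hat k}$ inside $U_X^{\mathcal C}(X)$, an identity which holds modulo $T_{n}$-degree, i.e.\ which maps to zero under $t_n$ — shows that the inclusion $cr_{n+1}F(X_1,\dots,X_{n+1})\hookrightarrow F(X)$ is the restriction of a map that the polynomiality-of-degree-$\le n$ of $F_{XX}$ forces to vanish. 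More precisely, the diagonal-type idempotent cutting out the $(n{+}1)$-st cross-effect, when expressed via $F_{XX}$ applied to an element of $U_X^{\mathcal C}(X)$ lying in $\ker t_n$, is annihilated; hence $cr_{n+1}F(X_1,\dots,X_{n+1})=0$. This requires setting up the combinatorial identity in $U_X^{\mathcal C}(X)$ expressing the cross-effect projector and checking it dies under $t_n$ — \textbf{this is the step I expect to be the main obstacle}, since one must translate the inductive, object-level Definition \ref{cross-eff} of $cr_{n+1}$ into a single explicit element of the group ring $\Z[\Hom_\C(X,X)]$ (modulo the basepoint) and verify its image in $P_n$-like quotient $T_nU_X^{\mathcal C}(X)$ is zero.

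Finally, $(1)\Leftrightarrow(3)$ for $\A=Ab$: I already sketched $(1)\Rightarrow(3)$ via thickness of $Pol_n(\C,Ab)$ and the adjunction $T_n\dashv U_n$ (Propositions \ref{thick}, \ref{Tn-prop}) — any natural transformation $U_X^{\mathcal C}\to F$ with $F$ of degree $\le n$ factors uniquely through $t_n^{U_X^{\mathcal C}}$ because $U_X^{\mathcal C}\to T_nU_X^{\mathcal C}$ is the universal degree-$\le n$ quotient. The converse $(3)\Rightarrow(1)$: since the projective generators $U_X^{\mathcal C}$ (as $X$ ranges over a generating set of $\C$) have the property that every map out of them into $F$ kills $\ker t_n$, and every reduced $F$ is a quotient of a direct sum of such $U_X^{\mathcal C}$ (more precisely a coequalizer of two such sums), one concludes $F$ is a quotient of the corresponding $T_nU_X^{\mathcal C}$'s; but each $T_nU_X^{\mathcal C}$ is polynomial of degree $\le n$ by Definition \ref{Tn-coker} and Proposition \ref{Tn-prop}, and $Pol_n(\C,Ab)$ is closed under quotients and colimits by thickness (Proposition \ref{thick}) and exactness of $cr_{n+1}$ (Proposition \ref{exact}), hence $F$ has degree $\le n$. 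The last sentence of $(3)$ — that $t_n^*$ is an isomorphism when $\C$ is small — is then immediate since $t_n^*$ is injective (as $t_n$ is epi) and surjective (by the factorization just proved).
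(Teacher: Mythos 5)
Your treatment of $(1)\Leftrightarrow(3)$ matches the paper's: $(1)\Rightarrow(3)$ is the adjunction $T_n\dashv U_n$ (Proposition \ref{Tn-prop}), and $(3)\Rightarrow(1)$ is the Yoneda surjection $\bigoplus U_X^{\mathcal{C}}\twoheadrightarrow F$ factoring through $\bigoplus T_nU_X^{\mathcal{C}}$, which has vanishing $(n+1)$-st cross-effect, combined with thickness. Your $(1)\Rightarrow(3)\Rightarrow(2)$ is also essentially the paper's argument, though you should make explicit (as the paper does) that the identity $\mathcal{Y}_X(x)_Y = ev_x\circ\widehat{F_{XY}}$ is what transports the factorization from $\mathcal{Y}_X(x)$ to $\widehat{F_{XY}}$.

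The gap is exactly where you flagged it: $(2)\Rightarrow(1)$. You propose to write the cross-effect projector $e_{n+1}=\sum_{S}(-1)^{n+1-|S|}\,i_S r_S$ as an element of $\Z[\Hom_{\mathcal{C}}(X,X)]$ and to show it dies under $t_n\colon U_X^{\mathcal{C}}(X)\to T_nU_X^{\mathcal{C}}(X)$. But $\ker t_n = \mathrm{im}\,S_{n+1}^{U_X^{\mathcal{C}}}$, i.e.\ the image of $cr_{n+1}U_X^{\mathcal{C}}(X,\ldots,X)$ under $U_X^{\mathcal{C}}(\nabla^{n+1})$, and it is not at all automatic that the alternating sum $e_{n+1}$ factors through the fold of something landing in the $(n+1)$-st cross-effect of the representable functor $U_X^{\mathcal{C}}$ evaluated on the diagonal. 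The paper sidesteps this entirely by proving $(2)\Leftrightarrow(3)$ instead: the Yoneda identification $FX\cong\Hom(U_X^{\mathcal{C}},F)$, $x\mapsto\mathcal{Y}_X(x)$, together with the injectivity of $ev\colon\Hom_{Ab}(FX,FY)\hookrightarrow\prod_{x\in FX}FY$, shows that $\widehat{F_{XY}}$ factors through $t_n$ iff every $\mathcal{Y}_X(x)_Y$ does, and this for all $Y$ is precisely $(3)$. That equivalence plus $(3)\Rightarrow(1)$ closes the cycle with no combinatorics.

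Your reduction of the general abelian $\A$ to $Ab$ is also too thin. Merely saying "embed via $\Hom_{\A}(A,-)$ or argue pointwise" skips the point that $\Hom_{\A}(A,-)$ must detect polynomiality; the paper handles this by restricting to the small full subcategory on the objects $F(X_1|\ldots|X_m)$, taking $G=(\Hom_{\A}(A,-))_{A}$, and invoking Lemma \ref{addopol}(2), which states that for $G$ additive and faithful on this subcategory, $F$ is polynomial of degree $\le n$ iff $G\circ F$ is. That lemma (and its proof via the split exact sequence computing $G(F(X|Y))\cong(GF)(X|Y)$) is the missing ingredient in your sketch.
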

	
	The proof of this theorem relies on the following lemma.
	
	\begin{lm}\label{addopol} 
	
	Let $\xymatrix{\C\ar[r]^F & \A\ar[r]^G& \mathcal{B}}$ be functors where $\A$ and $\mathcal{B}$ are abelian categories and $F$ is reduced. Then:
	
	\begin{enumerate}
	
	\item If $G$ is additive there is a natural isomorphism $G(F(X_1|\ldots|X_m)) \,\cong\, (G\circ F)(X_1|\ldots|X_m)$ for $m\ge 2$ and $X_1,\ldots,X_m \in \C$.
	
	\item Let $\A'$ be a full subcategory of $\A$ containing all objects $F(X_1|\ldots|X_m)$, $m\ge 1$ and $X_1,\ldots,X_m \in \C$. If $G$ is additive and faithful on $\A'$ then $F$ is polynomial of degree $\le n$ iff $G\circ F$ is.
	
		\end{enumerate}

\end{lm}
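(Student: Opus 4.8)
The plan is to prove the two statements in order, using the explicit kernel description of cross-effects (Proposition \ref{cr_n}) together with the fact that an additive functor $G$ between abelian categories is exact on finite biproducts and, more generally, commutes with finite limits built from biproducts and split monomorphisms. First I would treat (1). Recall from Proposition \ref{cr_n} that $F(X_1|\ldots|X_m) = cr_mF(X_1,\ldots,X_m)$ is the kernel of the natural map $\hat r^F\colon F(X_1\vee\ldots\vee X_m) \to \bigoplus_{k=1}^m F(X_1\vee\ldots\vee\hat X_k\vee\ldots\vee X_m)$, and that by Proposition \ref{ce-prop} (applied to the reduced functor $F$) this map is a \emph{split} epimorphism onto its image, with kernel a direct summand of $F(X_1\vee\ldots\vee X_m)$; indeed the cross-effect decomposition exhibits $F(X_1\vee\ldots\vee X_m)$ as the direct sum of $cr_mF(X_1,\ldots,X_m)$ and the terms $cr_kF(X_{i_1},\ldots,X_{i_k})$ with $k<m$, and the latter sum is exactly the image of the splitting of $\hat r^F$. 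So the short exact sequence $0\to cr_mF(X_1,\ldots,X_m) \to F(X_1\vee\ldots\vee X_m) \xrightarrow{\hat r^F} \mathrm{Im}(\hat r^F) \to 0$ is split. Applying the additive functor $G$ to a split short exact sequence yields again a split short exact sequence, and $G$ commutes with the finite direct sums appearing in the target; hence $G(cr_mF(X_1,\ldots,X_m))$ is the kernel of $\hat r^{G\circ F}\colon (G\circ F)(X_1\vee\ldots\vee X_m)\to \bigoplus_k (G\circ F)(X_1\vee\ldots\vee\hat X_k\vee\ldots\vee X_m)$, which by Proposition \ref{cr_n} again is precisely $(G\circ F)(X_1|\ldots|X_m)$. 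One checks naturality by inspecting that every map in sight is natural in the $X_i$.

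For (2), the ``only if'' direction is immediate from (1): if $F$ is polynomial of degree $\le n$ then $F(X_1|\ldots|X_{n+1})=0$ for all $X_i$, hence by (1) $(G\circ F)(X_1|\ldots|X_{n+1}) \cong G(F(X_1|\ldots|X_{n+1})) = G(0) = 0$, so $G\circ F$ is polynomial of degree $\le n$. Conversely, suppose $G\circ F$ is polynomial of degree $\le n$, so $(G\circ F)(X_1|\ldots|X_{n+1})=0$ for all $X_i$. By (1) this gives $G(F(X_1|\ldots|X_{n+1})) = 0$. Now the object $F(X_1|\ldots|X_{n+1})$ lies in $\A'$, and $G$ is faithful on $\A'$; a faithful additive functor between abelian categories reflects zero objects (if $G(A)=0$ then $\mathrm{id}_A$ maps to $\mathrm{id}_{G(A)} = 0$, so $\mathrm{id}_A = 0$, i.e.\ $A=0$). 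Hence $F(X_1|\ldots|X_{n+1}) = 0$ for all $X_i$, i.e.\ $cr_{n+1}F = 0$, which means $F$ is polynomial of degree $\le n$.

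The only genuinely delicate point is the splitness claim used in (1): one must be sure that $\hat r^F$ is a split epimorphism onto its image with the kernel a direct summand, so that \emph{any} additive functor (not necessarily exact) preserves the relevant exact sequence. This is exactly what the cross-effect decomposition of Proposition \ref{ce-prop} provides, since that decomposition is natural and splits $F(X_1\vee\ldots\vee X_m)$ as $cr_mF(X_1,\ldots,X_m)$ together with a complement on which $\hat r^F$ is (split) injective; I would spell this out by exhibiting the section of $\hat r^F$ as the sum of the maps $F(i^m_{\hat k})$ composed with the splitting of the decomposition, or equivalently by citing that $\ker \hat r^F$ is, by Proposition \ref{ce-prop}, a direct summand. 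Everything else is formal manipulation with additive functors and the naturality of the constructions, so I do not expect further obstacles.
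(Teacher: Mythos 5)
Your part (2) is correct and matches the paper's argument (faithfulness of $G$ on $\A'$ reflects zero objects). The problem is in your proof of (1) for $m\ge 3$, where you appeal directly to the kernel description of Proposition \ref{cr_n}.

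You correctly reduce to showing that $G$ carries $\ker(\hat r^F)$ to $\ker(\hat r^{GF})$. The splitness of the short exact sequence $0\to cr_mF\to F(X_1\vee\ldots\vee X_m)\xrightarrow{\hat r^F}\mathrm{Im}(\hat r^F)\to 0$ (which does follow from Proposition \ref{ce-prop}) only gives you that $G$ applied to this sequence is exact, i.e.\ $G(cr_mF)=\ker\bigl(GF(\ldots)\to G(\mathrm{Im}(\hat r^F))\bigr)$. To identify this with $\ker(\hat r^{GF})$ you still need the inclusion $\mathrm{Im}(\hat r^F)\hookrightarrow\bigoplus_k F(X_1\vee\ldots\vee\hat X_k\vee\ldots\vee X_m)$ to remain a monomorphism after applying $G$. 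For $m=2$ this is automatic because $\hat r^F$ is a split epimorphism onto the whole target (your argument is fine there). For $m\ge 3$, $\hat r^F$ is \emph{not} surjective, and an additive but non-exact $G$ need not preserve non-split monomorphisms (e.g.\ $-\otimes\Z/2$ applied to $2\colon\Z\to\Z$). So you need the stronger fact that $\mathrm{Im}(\hat r^F)$ is a direct summand of the target. That is actually true — each summand $cr_jF(X_I)$ of the source maps diagonally and split-injectively into $\bigoplus_{k\notin I}cr_jF(X_I)$ in the target — but it does not follow from Proposition \ref{ce-prop} alone, which only decomposes the source; your sketch of ``exhibiting the section of $\hat r^F$'' cannot work as stated since $\hat r^F$ has no section when it is not surjective. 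This is a genuine gap, not just a missing sentence.

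The paper sidesteps the issue entirely: it proves (1) only for $m=2$ (where $\hat r^F$ is a split epi and nothing delicate happens), and then inducts on $m$ using the inductive definition $cr_{m+1}F(X_1,\ldots,X_{m+1})=cr_2\bigl(cr_mF(X_1,\ldots,X_{m-1},-)\bigr)(X_m,X_{m+1})$, applying the $m=2$ case to the reduced functor $cr_mF(X_1,\ldots,X_{m-1},-)$ and the induction hypothesis to commute $G$ past $cr_m$. That route never needs to know anything about $\mathrm{Im}(\hat r^F)$ for $m\ge 3$. Your approach can be repaired by explicitly proving the image of $\hat r^F$ is a direct summand, but the inductive argument is cleaner; I'd either adopt it or fill the gap carefully.
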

\begin{proof}
Let $X,Y\in C$. As $F$ is reduced the exact sequence $$\xymatrix{0\ar[r] &F(X|Y) \ar[r]^{\iota^F_{X,Y}} & F(X\vee Y)\ar[r]^-{\hat{r}^F} & F(X) \oplus F(Y) \ar[r] & 0}$$ is split by the section $(F(i^2_{\hat{2}}),F(i^2_{\hat{1}})) \colon F(X)\oplus F(Y) \to F(X\vee Y)$. Whence the sequence
\[ \xymatrix{0\ar[r] &G(F(X|Y)) \ar[r]^{G(\iota^F_{X,Y})} &(GF)(X\vee Y)\ar[r]^-{G(\hat{r}^F)} & G(F(X) \oplus F(Y)) \ar[r] & 0}\] 
is short exat since $G$ is additive. But the composite map
\[\xymatrix{GF(X\vee Y) \ar[r]_-{G(\hat{r}^F)} & G(F(X) \oplus F(Y)) \ar[rr]_-{(G(r^2_{\hat{2}}),G(r^2_{\hat{1}}))^t}^{\cong} && GF(X) \oplus GF(Y)}\]
equals $\hat{r}^{GF}$, whence $G(F(X|Y)) \cong (GF)(X|Y)$. 

So we can assume that (1) holds by induction for $m\ge 2$. Then for $X_1,\ldots,X_{m+1}\in \C $ we have natural isomorphisms
\begin{eqnarray*}
G(F(X_1|\ldots|X_{m+1})) &\cong& G(F(X_1|\ldots|X_{m-1}|-)(X_m|X_{m+1}))\quad \mbox{by Definition \ref{cross-eff}}\\
&\cong& (G\circ F(X_1|\ldots|X_{m-1}|-))(X_m|X_{m+1})\quad\mbox{by (1) in the case $m=2$}\\
&\cong& (G\circ F)(X_1|\ldots|X_{m-1}|-)(X_m|X_{m+1})\quad\mbox{by induction hypothesis}\\
&\cong& (G\circ F)(X_1|\ldots|X_{m+1})\quad
\end{eqnarray*}
Now (2) follows from (1) and the fact that $\forall A \in \mathcal{A}'$, $G(A)=0$ iff $G(1_A)=G(0_A)$ iff $1_A=0_A$ since $G$ is faithful. But the last equality is equivalent to $A=0$. 
\end{proof}

	\begin{proof}[Proof of Theorem \ref{thm:polfunc-polmap}]
	
	Both (1) and (2) hold iff they hold for the restriction of $F$ to any small full subcategory $\C'$ closed under finite sums. So we can suppose that $\C$ is small.
	
	In a  first step we suppose that $\A=Ab$. We start by proving that here the statements (1) and (2) are both equivalent with (3). Clearly (1) implies (3) since $T_n$ is the left adjoint of the forgetful functor by Proposition \ref{Tn-prop}, so let us prove the converse. The Yoneda lemma provides an epimorphism of functors $\xymatrix{\alpha\colon \bigoplus_{X\in \mathcal{C}}\bigoplus_{x\in FX} U_X^{\mathcal{C}} }$ $\xymatrix{\ar@{->>}[r] &F\,.}$ But the restriction of $\alpha$ to any component $U_X^{\mathcal{C}}$ factors through $T_nU_X^{\mathcal{C}}$ by hypothesis, hence $\alpha$ factors through $G=\bigoplus_{X\in \mathcal{C}}\bigoplus_{x\in FX} T_nU_X^{\mathcal{C}}$. Noting that $cr_{n+1}G=\bigoplus_{X\in \mathcal{C}}\bigoplus_{x\in FX} cr_{n+1}T_nU_X^{\mathcal{C}} = 0$, we conclude that $G$ is polynomial of degree $\le n$, and hence so is its quotient $F$.
	
	Now let us prove the equivalence of (2) and (3). Recall that the Yoneda isomorphism $\mathcal{Y}_X\colon FX \stackrel{\cong}{\longrightarrow} {\rm Hom}(U_X^{\mathcal{C}},F)$ is given by the composite map
		\[ \xymatrix{
		\mathcal{Y}_X(x)_Y\colon U_X^{\mathcal{C}}(Y) \ar[r]^{\widehat{F_{XY}}}& {\rm Hom}_{Ab}(FX,FY) \ar[r]^-{ev_x} & FY		}\]
		for $x\in FX$ and $Y\in \C$, where $ev_x$ is the evaluation in $x$. Now the map $ev\colon 
		 {\rm Hom}_{Ab}(FX,FY)$ $\to \prod_{x\in FX} FY$ such that $pr_x ev = ev_x$ is injective, whence assertion (2) is equivalent with saying that for all $X,Y\in \C$ and $x\in FX$ the map $\mathcal{Y}_X(x)_Y=ev_x\widehat{F_{XY}}$ factors through $\xymatrix{ (t_n)_Y\colon U_X^{\mathcal{C}}(Y)  \ar@{->>}[r] &{}}$ $T_nU_X^{\mathcal{C}}(Y)$. But this means that for all  $X\in \C$ and $x\in FX$ the natural transformation $\mathcal{Y}_X(x)$ factors through $t_n$, or equivalently, that every natural transformation from $U_X^{\mathcal{C}}$ to $F$ factors through $t_n$, as claimed.
		 
		 Now suppose that $\A$ is an arbitrary abelian category. In order to reduce $\A$ to a small subcategory, we consider the set of objects:
		 $$\mathcal{O} = \bigcup_{m\ge 1} \{F(X_1|\ldots|X_m)|X_1,\ldots,X_m\in C\}$$ and the full subcategory $\A'$  of $\A$ with  objects in $\mathcal{O}$. Consider the additive functor $G\colon \A \to \prod_{A\in \mathcal{O}} Ab$ such that $pr_AA \circ G = {\rm Hom}_{\mathcal{A}}(A,-)$. Then $G$ is faithful on $\A'$ since a non-trivial map $\alpha\colon A \to B$ in $\A'$ is detected by the functor 
		 ${\rm Hom}_{\mathcal{A}}(A,-)$. Thus by Lemma \ref{addopol} (2) $F$ is polynomial of degree $\le n$ iff $G\circ F$ is, which means that $F^A={\rm Hom}_{\mathcal{A}}(A,-)\circ F\colon \C \to Ab$ is polynomial of degree $\le n$ for all $A\in \mathcal{O}$ since cross-effects of $G\circ F$ are formed factorwise. 
		 
		 On the other hand, let $X,Y\in C$. Then the map $F_{X,Y}$ is polynomial of degree $\le n$ iff $\widehat{F_{X,Y}}({\rm Ker}(t_n)) =0$ iff $G_{FX,FY}\widehat{F_{X,Y}}({\rm Ker}(t_n)) =0$ iff for all $A\in \mathcal{O}$, $\widehat{F^A_{X,Y}}({\rm Ker}(t_n)) =0$ iff for all $A\in \mathcal{O}$, the map $F^A_{X,Y}$ is polynomial of degree $\le n$. But by the case $\A=Ab$, the functor $F^A$ is polynomial of degree $\le n$ iff the map  $F^A_{X,Y}$ is polynomial of degree $\le n$ for all $X,Y\in C$, whence (1) $\Leftrightarrow$ (2), as desired.
		\end{proof}

	Note that there exists a functor $$\underline{t_n} \colon \C \to T_n\bar{\Z}[\C]$$ such that $\underline{t_n}(X)=X$ and $\underline{t_n}(f)=t_nf$.  Theorem \ref{thm:polfunc-polmap} provides the following:

\begin{cor}\label{Poln(C)=Add(PnC)}
	
	For any abelian category $\A$ the functor $\underline{t_n} $ induces an isomorphism of categories
	\[ \underline{t_n} ^*\colon Add(T_n\bar{\Z}[\C] , \A) \stackrel{\sim}{\longrightarrow} Pol_{n}(\C,\A).\]
	
	\end{cor}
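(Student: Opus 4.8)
The plan is to exhibit mutually inverse constructions between $Add(T_n\bar{\Z}[\C],\A)$ and $Pol_n(\C,\A)$ and check that precomposition with $\underline{t_n}$ realizes one of them. First I would observe that precomposition with $\underline{t_n}$ does land in $Pol_n(\C,\A)$: given an additive functor $H\colon T_n\bar{\Z}[\C]\to\A$, the composite $F=H\circ\underline{t_n}$ is reduced (since $\underline{t_n}$ sends the zero object to the zero object and $H$, being additive between categories with zero objects where $T_n\bar{\Z}[\C]$ is preadditive, is reduced), and for every pair $X,Y$ the map $F_{XY}=H_{XY}\circ t_n'$ is polynomial of degree $\le n$, because $t_n'\colon \Hom_\C(X,Y)\to T_nU_X^\C(Y)$ is the universal polynomial map of degree $\le n$ and postcomposing a polynomial map with a group homomorphism ($H$ acting on Hom-groups) preserves polynomiality of degree $\le n$; by Theorem \ref{thm:polfunc-polmap}, equivalence (1)$\Leftrightarrow$(2), this forces $F$ to be polynomial of degree $\le n$. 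Hence $\underline{t_n}^*$ is well defined as a functor $Add(T_n\bar{\Z}[\C],\A)\to Pol_n(\C,\A)$.

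Next I would construct the inverse. Given $F\in Pol_n(\C,\A)$, I want to define $\widetilde{F}\colon T_n\bar{\Z}[\C]\to\A$ on objects by $\widetilde{F}(X)=F(X)$ and on morphisms by sending the generator $t_nf\in T_nU_X^\C(Y)$ to $F(f)\in\Hom_\A(FX,FY)$. This is well defined precisely because $F_{XY}\colon\Hom_\C(X,Y)\to\Hom_\A(FX,FY)$ is polynomial of degree $\le n$ (Theorem \ref{thm:polfunc-polmap} again), so its $\Z$-linear extension factors through $t_n\colon U_X^\C(Y)\twoheadrightarrow T_nU_X^\C(Y)$, giving a unique group homomorphism $\widetilde{F}_{XY}\colon\Hom_{T_n\bar{\Z}[\C]}(X,Y)\to\Hom_\A(FX,FY)$ with $\widetilde{F}_{XY}\circ t_n'=F_{XY}$. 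Functoriality of $\widetilde{F}$ follows from the defining relation $t_ng\circ t_nf=t_n(g\circ f)$ of composition in $T_n\bar{\Z}[\C]$ (Proposition \ref{TnZ}) together with functoriality of $F$: on generators $\widetilde{F}(t_ng\circ t_nf)=\widetilde{F}(t_n(gf))=F(gf)=F(g)F(f)=\widetilde{F}(t_ng)\widetilde{F}(t_nf)$, and additivity of $\widetilde{F}_{XY}$ extends this to all elements. That $\widetilde{F}$ is additive is immediate since each $\widetilde{F}_{XY}$ is a group homomorphism. Then I would check the two round trips: $\widetilde{F}\circ\underline{t_n}=F$ is clear on objects and on morphisms since $\widetilde{F}(\underline{t_n}(f))=\widetilde{F}(t_nf)=F(f)$; conversely, for $H$ additive, $\widetilde{H\circ\underline{t_n}}=H$ because both agree on objects and on the generators $t_nf$ (where $\widetilde{H\circ\underline{t_n}}(t_nf)=H(\underline{t_n}(f))=H(t_nf)$), and a group homomorphism out of $T_nU_X^\C(Y)$ is determined by its values on the classes of morphisms of $\C$, which generate this group.

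The naturality of these assignments in $F$ (resp. $H$) is routine, so $\underline{t_n}^*$ is an equivalence — in fact an isomorphism of categories, since the constructions are strictly inverse. I expect the only genuinely delicate point to be making sure the generators $t_nf$, for $f\in\Hom_\C(X,Y)$, really do generate $T_nU_X^\C(Y)$ as an abelian group: this holds because $U_X^\C(Y)$ is generated by the classes of morphisms $X\to Y$ (by Definition \ref{UXC}, it is a quotient of $\Z[\Hom_\C(X,Y)]$) and $t_n$ is surjective, so the images $t_nf$ generate $T_nU_X^\C(Y)$; this is what guarantees both the well-definedness and the uniqueness of $\widetilde{F}_{XY}$ and hence that the two functors $\widetilde{F}\mapsto\cdot$ and $H\mapsto\cdot$ are genuinely inverse. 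Everything else is bookkeeping with the universal property of $t_n$ and the characterization theorem already proved.
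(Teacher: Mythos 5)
Your proof is correct and takes essentially the same route that the paper leaves implicit: the corollary is stated as an immediate consequence of Theorem \ref{thm:polfunc-polmap} (together with the universal property of $t_n$ and the definition of composition in $T_n\bar{\Z}[\C]$ from Proposition \ref{TnZ}), and you have simply spelled out that argument in full. The details — well-definedness of $\widetilde{F}_{XY}$ via the factorization of $\hat{F}_{XY}$ through $t_n$, bilinearity of composition reducing functoriality to the generators $t_nf$, and the observation that the $t_nf$ generate each Hom-group so the round trips close — are exactly what is needed and are all correct.
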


	\begin{rem}

	Remark that \textit{strict} polynomial functors of degree 
	$\le n$ are \textit{defined} in the same spirit, namely to be additive functors on a category $\Gamma_n\C$, formed by replacing morphism sets in an additive category $\C$ enriched in $\mathbb{K}$-modules by their images under the functor  
	$\Gamma_n$  (see \cite{pira-PS}).
	\end{rem}

%%%%%%%%%%%%%%%%%%%%%%%%%

 \subsection{Description of $T_n\bar{\Z}[\mathcal{P}\ti alg]$ in terms of the generalized Passi functor}
	
	We now show that for \textit{``good''} operads $\mathcal{P}$ and algebraic theories $\mathbb{T}$, the category $T_n\bar{\Z}[\mathcal{C}]$ for $\C$ being the category of finitely generated free $\mathcal{P}$-algebras or $\mathbb{T}$-models can be described in terms of the functors $P_n^{\mathcal{P}}$ or $P_n^{\mathbb{T}}$,  respectively. But this description is only available if  cartesian products of $\PP$-algebras can be decomposed in terms of   $\mathcal{P}$-algebra operations, as follows.
	
	Let $m\ge 1$ and $A_1,\ldots,A_m$ be $\mathcal{P}$-algebras. For $1\le k\le m$ let $i_k^{\times}\colon A_k\to A_1\times\ldots \times A_m$ be the canonical injection, i.e. $i_1^{\times}(a)=(a,1_{A_2},\ldots,1_{A_m})$ etc. Note that $i_k^{\times}$ is a $\mathcal{P}$-algebra map since $\mathcal{P}(0)=\{0_{\mathcal{P}}\}$.
	
\begin{lm}\label{decomp-prod}
 Suppose that $\mathcal{P}(2)$ contains an element $\theta_2$ for which $0_{\mathcal{P}}$ is a unit, i.e. $\gamma_{2;1,0}(\theta_2;1_{\mathcal{P}},0_{\mathcal{P}})=1_\PP=\gamma_{2;0,1}(\theta_2;0_{\mathcal{P}},1_{\mathcal{P}})$. Then there exists an operation $\theta_m\in \mathcal{P}(m)$ such that $(a_1,\ldots,a_m) = \theta_m(i_1^{\times}a_1,\ldots,i_m^{\times}a_m)$ for $(a_1,\ldots,a_m)\in A_1\times\ldots \times A_m$.
\end{lm}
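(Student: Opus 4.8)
The plan is to define the operation $\theta_m\in\mathcal{P}(m)$ recursively by iterating the binary operation $\theta_2$, and then to verify by induction on $m$ that the resulting operation reassembles an arbitrary tuple $(a_1,\ldots,a_m)$ from the images $i_k^{\times}a_k$ of its components under the canonical injections. Concretely, set $\theta_1 = 1_{\mathcal{P}}$ and, for $m\ge 2$, define
\[\theta_m = \gamma_{2;1,m-1}(\theta_2; 1_{\mathcal{P}}, \theta_{m-1}) \in \mathcal{P}(m),\]
so that $\theta_m$ applied to $m$ arguments computes $\theta_2$ of the first argument with $\theta_{m-1}$ of the remaining $m-1$ arguments. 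The base case $m=1$ is trivial since $1_{\mathcal{P}}(a_1)=a_1$, and the case $m=2$ is essentially the hypothesis in disguise, once one observes how $\theta_2$ acts on the injections.

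For the inductive step I would argue as follows. Fix $(a_1,\ldots,a_m)\in A_1\times\cdots\times A_m$. The key observation is that the canonical injection $A_1\times\cdots\times A_m \to (A_1)\times(A_2\times\cdots\times A_m)$ identifies the product with a binary product, and under this identification $i_1^{\times}a_1$ corresponds to $i_1^{\times}a_1$ (into the binary product) while $(1_{A_1},a_2,\ldots,a_m)$ corresponds to $i_2^{\times}(a_2,\ldots,a_m)$. Now $\theta_2$ applied to these two elements of the binary product $A_1\times(A_2\times\cdots\times A_m)$ should yield $(a_1,a_2,\ldots,a_m)$: this is exactly the $m=2$ case of the lemma applied to the two $\mathcal{P}$-algebras $A_1$ and $A_2\times\cdots\times A_m$. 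Then, using that $i_k^{\times}\colon A_k\to A_1\times\cdots\times A_m$ factors through $i_2^{\times}\colon A_2\times\cdots\times A_m\to A_1\times\cdots\times A_m$ composed with the injection of $A_k$ into $A_2\times\cdots\times A_m$ (for $k\ge 2$), together with the fact that each $i_k^{\times}$ and in particular $i_2^{\times}$ is a $\mathcal{P}$-algebra homomorphism (valid because $\mathcal{P}(0)=\{0_{\mathcal{P}}\}$, as noted just before the statement), I can rewrite $(1_{A_1},a_2,\ldots,a_m) = i_2^{\times}\bigl(\theta_{m-1}(i_1^{\times}a_2,\ldots,i_{m-1}^{\times}a_m)\bigr)$ by the induction hypothesis applied to $A_2,\ldots,A_m$, and then pull $i_2^{\times}$ (a $\mathcal{P}$-algebra map) through $\theta_{m-1}$. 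Associativity of the operadic composition $\gamma$ then identifies $\theta_2(i_1^{\times}a_1, i_2^{\times}(\theta_{m-1}(\cdots)))$ with $\theta_m(i_1^{\times}a_1,\ldots,i_m^{\times}a_m)$, completing the induction.

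I expect the main obstacle to be bookkeeping rather than conceptual: one must carefully track how the various canonical injections $i_k^{\times}$ interact with the iterated binary product decomposition and with the operadic composition, and in particular make repeated use of the fact that $i_2^{\times}$ is a morphism of $\mathcal{P}$-algebras so that it commutes with the operations $\theta_j$. The subtle point deserving attention is the verification of the $m=2$ base identity $\theta_2(i_1^{\times}a, i_2^{\times}b) = (a,b)$ for $\mathcal{P}$-algebras $A,B$: this must be extracted directly from the unit condition $\gamma_{2;1,0}(\theta_2;1_{\mathcal{P}},0_{\mathcal{P}})=1_{\mathcal{P}}=\gamma_{2;0,1}(\theta_2;0_{\mathcal{P}},1_{\mathcal{P}})$ by evaluating it componentwise in the product $A\times B$, using that the structure maps of $A\times B$ are the product of those of $A$ and $B$ and that $1_{A\times B}=(1_A,1_B)$ arises from $0_{\mathcal{P}}$. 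Everything else is a routine though slightly lengthy manipulation with $\gamma$, and I would not grind through all the index-chasing in the written proof but rather indicate the associativity and naturality steps that drive it.
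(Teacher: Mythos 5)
Your proposal is correct and takes essentially the same approach as the paper: induction on $m$, with the $m=2$ case extracted componentwise from the unit axiom, the step driven by the fact that the canonical injection into the $m$-fold product is a $\mathcal{P}$-algebra morphism, and $\theta_m$ defined by operadic composition of $\theta_2$ with $\theta_{m-1}$ and $1_{\mathcal{P}}$. The only (inessential) difference is that you peel off the first factor ($\theta_m = \gamma_{2;1,m-1}(\theta_2;1_{\mathcal{P}},\theta_{m-1})$) where the paper peels off the last ($\theta_m = \gamma_{2;m-1,1}(\theta_2;\theta_{m-1},1_{\mathcal{P}})$).
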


\begin{proof}
By induction on $m$; the case $m=1$ is clear by taking $\theta_1=1_\PP$, and for $m=2$, we indeed have $(a_1,a_2) = (\theta_2(a_1,1_{A_1}), \theta_2(1_{A_2},a_2)) = \theta_2((a_1,1_{A_2}),(1_{A_1},a_2)) = \theta_2(i_1^{\times}a_1,i_2^{\times}a_2)$. Suppose that the assertion holds for $m-1$, $m\ge 3$. Writing $I_1^{\times} \colon A_1\times\ldots \times A_{m-1} \to A_1\times\ldots \times A_{m}$ for the canonical injection we have
\begin{eqnarray*}
(a_1,\ldots,a_m) &=& \theta_2(I_1^{\times}(a_1,\ldots,a_{m-1}),i_m^{\times}a_m)\\
&=& \theta_2(I_1^{\times}\theta_{m-1}(i_1^{\times}a_1,\ldots,i_{m-1}^{\times}a_{m-1}),i_m^{\times}a_m)\quad\mbox{by induction hypothesis}\\
&=& \theta_2(\theta_{m-1}I_1(i_1^{\times}a_1,\ldots,i_{m-1}^{\times}a_{m-1}),i_m^{\times}a_m)\quad\mbox{since $I_1^{\times}$ is a $\mathcal{P}$-algebra map}\\
&=& \theta_2(\theta_{m-1}(i_1^{\times}a_1,\ldots,i_{m-1}^{\times}a_{m-1}),i_m^{\times}a_m)\\
&=& \gamma_{2;m-1,1}(\theta_2,\theta_{m-1},1_{\mathcal{P}})(i_1^{\times}a_1,\ldots,i_m^{\times}a_m).
\end{eqnarray*}
Hence the assertion also holds for $m$ with $\theta_m = \gamma_{2;m-1,1}(\theta_2,\theta_{m-1},1_{\mathcal{P}})$.
\end{proof}

In the sequel, it will be convenient to replace the category  $Free(\PP)$ by $\mathcal{F}(\PP)$, see Remark
\ref{skelett-Free(P)}. 
The coproducts in $Free(\PP)$ and in $\mathbb{T} \ti Mod$ are given by: $\underline{k} \amalg \underline{l} =\underline{k+l}$. 

We also need the following elementary observation.

\begin{prop}\label{P-Alg(Free,A)} 
For $m\ge 1$ the functor ${\rm Hom}_{\mathcal{P}\ti Alg}(\mathcal{F}_{\mathcal{P}}(\ul{m}),-)$ naturally takes values in the category of $\mathcal{P}$-algebras such that for a $\PP$-algebra $A$ the natural bijection $ev\colon {\rm Hom}_{\mathcal{P}\ti Alg}(\mathcal{F}_{\mathcal{P}\ti Alg}(\ul{m}),A) \to A^m$, $ev(f)=(f(1),\ldots,f(m))$, is an isomorphism of $\mathcal{P}$-algebras.
\end{prop}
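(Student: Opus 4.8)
The plan is to prove Proposition \ref{P-Alg(Free,A)} by exhibiting the $\mathcal{P}$-algebra structure on ${\rm Hom}_{\mathcal{P}\ti Alg}(\mathcal{F}_{\mathcal{P}}(\ul{m}),-)$ directly and then checking that the evaluation map intertwines it with the componentwise structure on $A^m$. First I would recall that for each $\mathcal{P}$-algebra $A$ the set $A^m = A\times\ldots\times A$ is a $\mathcal{P}$-algebra under the componentwise operations $\mu_j^{A^m}(\theta; (a^1_1,\ldots,a^m_1),\ldots,(a^1_j,\ldots,a^m_j)) = (\mu_j^A(\theta;a^1_1,\ldots,a^1_j),\ldots,\mu_j^A(\theta;a^m_1,\ldots,a^m_j))$; associativity, unitality and equivariance are inherited coordinatewise from $A$. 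Then I would transport this structure along the evaluation bijection $ev\colon {\rm Hom}_{\mathcal{P}\ti Alg}(\mathcal{F}_{\mathcal{P}}(\ul{m}),A)\to A^m$, which is bijective by the adjunction between $\mathcal{F}_{\mathcal{P}}$ and the forgetful functor (a map $\mathcal{F}_{\mathcal{P}}(\ul{m})\to A$ of $\mathcal{P}$-algebras is the same as a set map $\ul{m}\to A$, i.e.\ an element of $A^m$). This a priori gives a $\mathcal{P}$-algebra structure on the hom-set, and $ev$ becomes an isomorphism of $\mathcal{P}$-algebras by construction; the content is to show this structure is natural in $A$ and is described intrinsically.

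The key step is therefore to give the intrinsic description. Given $f_1,\ldots,f_j\in {\rm Hom}_{\mathcal{P}\ti Alg}(\mathcal{F}_{\mathcal{P}}(\ul{m}),A)$ and $\theta\in\mathcal{P}(j)$, I would define $\mu_j^{{\rm Hom}}(\theta;f_1,\ldots,f_j)$ to be the unique $\mathcal{P}$-algebra map $\mathcal{F}_{\mathcal{P}}(\ul{m})\to A$ sending the generator $k\in\ul{m}$ to $\mu_j^A(\theta;f_1(k),\ldots,f_j(k))$. By adjunction such a map exists and is unique, and clearly $ev(\mu_j^{{\rm Hom}}(\theta;f_1,\ldots,f_j)) = \mu_j^{A^m}(\theta; ev(f_1),\ldots,ev(f_j))$, so $ev$ intertwines the two structures; transporting the $\mathcal{P}$-algebra axioms through the bijection $ev$ then shows $\mu^{{\rm Hom}}$ really defines a $\mathcal{P}$-algebra. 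For naturality in $A$: a $\mathcal{P}$-algebra map $g\colon A\to B$ induces ${\rm Hom}_{\mathcal{P}\ti Alg}(\mathcal{F}_{\mathcal{P}}(\ul{m}),g) = g\circ(-)$, and one checks $g_*(\mu_j^{{\rm Hom},A}(\theta;f_1,\ldots,f_j)) = \mu_j^{{\rm Hom},B}(\theta;g f_1,\ldots,g f_j)$ by evaluating on generators and using that $g$ commutes with $\mu_j$; equivalently this follows from the square $ev\circ g_* = g^{\times m}\circ ev$ together with the fact that $g^{\times m}\colon A^m\to B^m$ is a $\mathcal{P}$-algebra map.

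The routine part is the verification of the operad axioms, which is immediate once everything is phrased through the bijection $ev$, so there is no serious obstacle here; the only point requiring a little care is to check that the intrinsically defined operations $\mu_j^{{\rm Hom}}$ land in the \emph{same} structure as the transported one, i.e.\ that "unique map sending $k$ to $\mu_j^A(\theta;f_i(k))$" is well defined — this is exactly the adjunction $\mathcal{P}\ti Alg(\mathcal{F}_{\mathcal{P}}(\ul{m}),A)\cong {\rm Set}(\ul{m},A)$, with $ev$ being the composite of this adjunction iso with the identification ${\rm Set}(\ul{m},A)\cong A^m$. I would close by remarking that the analogous statement holds for $\mathbb{T}$-models, since ${\rm Hom}_{\mathbb{T}\ti Mod}(\mathcal{F}_{\mathbb{T}}(\ul{m}),M)\cong M(\ul{1})^m\cong M(\ul{m})$ and $M$ preserves finite products, so the $\mathcal{P}_{\mathbb{T}}$-algebra structure on the hom-set is precisely the one on $M(\ul{m})$; this is what will be used in the next subsection to compute $T_n\bar{\Z}[\mathcal{P}\ti alg]$.
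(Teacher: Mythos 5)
Your proposal is correct and follows the same approach as the paper's proof: both define the $\mathcal{P}$-algebra structure on ${\rm Hom}_{\mathcal{P}\ti Alg}(\mathcal{F}_{\mathcal{P}}(\ul{m}),A)$ by specifying each operation $\theta(f_1,\ldots,f_k)$ as the unique $\mathcal{P}$-algebra map determined on the basis $\ul{m}$ by $i\mapsto\theta(f_1(i),\ldots,f_k(i))$, invoking the universal property of the free algebra. Your write-up is more thorough than the paper's one-line argument (you additionally spell out the naturality in $A$ and the verification that $ev$ intertwines the two structures), but the underlying idea is identical.
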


\begin{proof} 
Let $k\ge 0$, $\theta\in \mathcal{P}(k)$ and $f_1,\ldots,f_k\in {\rm Hom}_{\mathcal{P}\ti Alg}(\mathcal{F}_{\mathcal{P}\ti Alg}(\ul{m}),A)$.
As $\ul{m}$ is a basis of $\mathcal{F}_{\mathcal{P}\ti Alg}(\ul{m})$ we may define  $\theta(f_1,\ldots,f_k)$ to be the unique map such that for $i\in \ul{m}$, $\theta(f_1,\ldots,f_k)(i)=\theta(f_1(i),\ldots,f_k(i))$ .
\end{proof}

\begin{thm}\label{Pn(P-Alg)} Suppose that $\mathcal{P}(2)$ contains an element $\theta_2$ for which $0_{\mathcal{P}}$ is a unit, and the same for $\mathcal{P}_{\mathbb{T}}$. Then for $m\ge 1$ there are isomorphisms of functors
\[ T_nU^{\mathcal{P}\ti Alg}_{\mathcal{F}_{\mathcal{P}}(\ul{m})} \hspace{2mm}\cong\hspace{2mm} P_n^{\mathcal{P}}\circ {\rm Hom}_{\mathcal{P}\ti Alg}(\mathcal{F}_{\mathcal{P}}(\ul{m}),-)
\]
on $\mathcal{P}\ti Alg$ and
\[ T_nU^{\mathbb{T}\ti Mod}_{\mathcal{F}_{\mathbb{T}}(\ul{m})} \hspace{2mm}\cong\hspace{2mm} 
P_n^{\mathbb{T}} \circ {\rm Hom}_{\mathbb{T}\ti Mod}(\mathcal{F}_{\mathbb{T}}(\ul{m}),-)
\]
on $\mathbb{T}\ti Mod$.
\end{thm}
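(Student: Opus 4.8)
The plan is to deduce Theorem \ref{Pn(P-Alg)} from Proposition \ref{Pn-add}'s proof strategy, using the conceptual identification of the generalized Passi functor as a Taylorisation obtained in Theorem \ref{Pn=Tn}, together with the decomposition of products of $\mathcal{P}$-algebras from Lemma \ref{decomp-prod} and the computation of $\mathrm{Hom}$-objects in Proposition \ref{P-Alg(Free,A)}. I shall treat the $\mathcal{P}\ti Alg$ case; the $\mathbb{T}\ti Mod$ case is formally identical, applied to the operad $\mathcal{P}_{\mathbb{T}}$ via the forgetful functor $V_{\mathbb{T}\ti Mod}^{\mathcal{P}_{\mathbb{T}}\ti Alg}$.

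\textbf{Step 1: factor the universal functor.} Fix $m\ge 1$. The universal functor $U^{\mathcal{P}\ti Alg}_{\mathcal{F}_{\mathcal{P}}(\ul{m})}\colon \mathcal{P}\ti Alg \to Ab$ factors, by definition, as the composite
\[\xymatrix{\mathcal{P}\ti Alg \ar[r]^-{H_m} & \mathcal{P}\ti Alg \ar[r]^-{V_{\mathcal{P}\ti Alg}} & Set_* \ar[r]^-{\bar{\Z}[-]} & Ab}\]
where $H_m = {\rm Hom}_{\mathcal{P}\ti Alg}(\mathcal{F}_{\mathcal{P}}(\ul{m}),-)$, which lands in $\mathcal{P}\ti Alg$ by Proposition \ref{P-Alg(Free,A)}, and where the pointed-set structure on $H_m(A)$ has the zero homomorphism as basepoint — which corresponds under $ev$ to $(1_A,\ldots,1_A)$, i.e.\ to the basepoint of $V_{\mathcal{P}\ti Alg}(A^m)$. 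So $U^{\mathcal{P}\ti Alg}_{\mathcal{F}_{\mathcal{P}}(\ul{m})} \cong (\bar{\Z}[-]\circ V_{\mathcal{P}\ti Alg})\circ H_m$ as functors on $\mathcal{P}\ti Alg$.

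\textbf{Step 2: apply $T_n$ and slide it inside.} Applying $T_n$ and using Lemma \ref{Tn(GoF)} — whose hypothesis that $H_m$ be reduced holds since $H_m(0)=0$ — we get $T_n U^{\mathcal{P}\ti Alg}_{\mathcal{F}_{\mathcal{P}}(\ul{m})} \cong T_n\bigl( T_n(\bar{\Z}[-]\circ V_{\mathcal{P}\ti Alg})\circ H_m\bigr) \cong T_n\bigl(P_n^{\mathcal{P}}\circ H_m\bigr)$, the last isomorphism by Theorem \ref{Pn=Tn}. Now $P_n^{\mathcal{P}}$ is polynomial of degree $\le n$ by Proposition \ref{Pnpoldegn}, so it remains only to show that $P_n^{\mathcal{P}}\circ H_m$ is \emph{already} polynomial of degree $\le n$; then $t_n$ is an isomorphism on it by Proposition \ref{Tn-prop} and we are done. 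Equivalently, by Proposition \ref{polopol} it suffices to show that $H_m\colon \mathcal{P}\ti Alg \to \mathcal{P}\ti Alg$ is linear, i.e.\ that the natural map $H_m(A\times B) \to H_m(A)\oplus H_m(B)$ is an isomorphism. This is where the hypothesis on $\mathcal{P}(2)$ enters: by Proposition \ref{P-Alg(Free,A)}, $H_m(A\times B) \cong (A\times B)^m \cong A^m \times B^m \cong H_m(A)\oplus H_m(B)$ as $\mathcal{P}$-algebras, naturally in $A,B$, and one checks that this identification is the cross-effect map $\hat r^{H_m}$ — more precisely, Lemma \ref{decomp-prod} guarantees that every element of $(A\times B)^m$ is built from the images of the canonical inclusions $A^m\hookrightarrow (A\times B)^m \hookleftarrow B^m$ under a single $\mathcal{P}$-operation $\theta_{2m}$, so the section $(H_m(i_1), H_m(i_2))$ of $\hat r^{H_m}$ is surjective, hence an isomorphism, forcing $\hat r^{H_m}$ to be one too. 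Wait — one should be slightly careful: $H_m$ applied to a product needs $A\times B$ to \emph{be} the coproduct $A\vee B$ in the sense used for cross-effects, which is false in $\mathcal{P}\ti Alg$ in general; the clean way is instead to argue directly that $cr_2(P_n^{\mathcal{P}}\circ H_m)$ vanishes using that $H_m$ sends the split inclusions/retractions $i^2_{\hat k}, r^2_{\hat k}$ of $A\vee B$ to the corresponding maps for $A^m \vee B^m$, together with linearity of $P_1^{\mathcal{P}}$ (Lemma \ref{P1linear}) and the exact sequence of Lemma \ref{Pnexsequ}, mirroring the induction in the proof of Proposition \ref{Pnpoldegn}.

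\textbf{Main obstacle.} The delicate point is precisely Step 2's claim that $P_n^{\mathcal{P}}\circ H_m$ is polynomial of degree $\le n$ — i.e.\ controlling the interaction between the cross-effects used to define polynomiality (which refer to the coproduct $\vee$ in $\mathcal{P}\ti Alg$) and the cartesian products appearing through $H_m$ and through the very definition of $P_n^{\mathcal{P}}$. The hypothesis on $\mathcal{P}(2)$ is exactly what bridges this gap via Lemma \ref{decomp-prod}: it lets one express a cartesian product in terms of $\mathcal{P}$-operations, so that $H_m$ becomes ``product-preserving'' in a form compatible with $\vee$-cross-effects. I expect the cleanest route is to prove the auxiliary statement that, under this hypothesis, the composite of $H_m$ with any polynomial functor of degree $\le n$ on $\mathcal{P}\ti Alg$ remains polynomial of degree $\le n$ — reducing, by Lemma \ref{Pnexsequ} and thickness of $Pol_n$ (Proposition \ref{thick}), to the linear case $P_1^{\mathcal{P}}\circ H_m$, and there to the explicit computation $P_1^{\mathcal{P}}(A^m)\cong P_1^{\mathcal{P}}(A)^{\oplus m}$ which follows from Lemma \ref{P1linear} applied $m-1$ times. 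Everything else is formal bookkeeping with the natural isomorphisms of Theorem \ref{Pn=Tn} and Proposition \ref{P-Alg(Free,A)}.
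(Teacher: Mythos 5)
Your first two steps (factoring $U^{\mathcal{P}\ti Alg}_{\mathcal{F}_{\mathcal{P}}(\ul{m})}$ through $\bar{\Z}[-]\circ V_{\mathcal{P}\ti Alg}$ and $H_m$, and applying Lemma \ref{Tn(GoF)} and Theorem \ref{Pn=Tn} to reduce to showing $P_n^{\mathcal{P}}\circ H_m$ is polynomial of degree $\le n$) are correct and coincide with the paper's argument, as does your reduction to the case $n=1$ via Lemma \ref{Pnexsequ} and thickness. You also correctly locate the main obstacle: reconciling the $\vee$-cross-effects defining polynomiality with the cartesian powers produced by $H_m$. But the final step — the actual content — is not correct. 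You claim that $P_1^{\mathcal{P}}(A^m)\cong P_1^{\mathcal{P}}(A)^{\oplus m}$ ``follows from Lemma \ref{P1linear} applied $m-1$ times.'' Lemma \ref{P1linear} says $P_1^{\mathcal{P}}$ takes the coproduct $A\vee B$ to $P_1^{\mathcal{P}}(A)\oplus P_1^{\mathcal{P}}(B)$; but $A^m$ is the $m$-fold \emph{cartesian} power, not the coproduct $A^{\vee m}$, and in $\mathcal{P}\ti Alg$ these are genuinely different (e.g.\ free product vs.\ direct product of monoids). So that citation gives nothing about $P_1^{\mathcal{P}}(A^m)$. Similarly, your earlier attempted shortcut ``by Proposition \ref{polopol} it suffices to show $H_m$ is linear'' is not available: Proposition \ref{polopol} requires the intermediate category to be abelian, while $H_m$ lands in $\mathcal{P}\ti Alg$; and the assertion that $H_m$ sends the inclusions/retractions of $A\vee B$ to ``the corresponding maps for $A^m\vee B^m$'' is also off, since $H_m(A\vee B)=(A\vee B)^m\neq A^m\vee B^m$.

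What the paper actually does at this point, and what your argument is missing, is the following: combining Lemma \ref{decomp-prod} with Corollary \ref{modI2} gives a \emph{generation} statement $P_1^{\mathcal{P}}(A^m)=\sum_{k=1}^m P_1^{\mathcal{P}}(i_k^{\times})P_1^{\mathcal{P}}(A)$ (no claim of an isomorphism yet). Then, using the commuting square $i_k^{\times}\circ i_l = (i_l)^m\circ i_k^{\times}$ for the coproduct injections $i_l\colon A_l\to A_1\vee A_2$, together with linearity of $P_1^{\mathcal{P}}$ applied to $A_1\vee A_2$, one computes that the section $\bigl(P_1^{\mathcal{P}}((i_1)^m),\,P_1^{\mathcal{P}}((i_2)^m)\bigr)\colon P_1^{\mathcal{P}}(A_1^m)\oplus P_1^{\mathcal{P}}(A_2^m)\to P_1^{\mathcal{P}}\bigl((A_1\vee A_2)^m\bigr)$ of $\hat{r}^{P_1^{\mathcal{P}}\circ(-)^m}$ is surjective, hence an isomorphism, so $A\mapsto P_1^{\mathcal{P}}(A^m)$ is linear. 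In other words, you had the right high-level scaffolding and the right intuition that Lemma \ref{decomp-prod} is the bridge, but you never actually apply it where it is needed, and the substitute you give (Lemma \ref{P1linear} alone) does not do the job.
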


\begin{proof} 
We have
\begin{eqnarray*}
T_nU^{\mathcal{P}}_{\mathcal{F}_{\mathcal{P}\ti Alg}(\ul{m})}
&=& T_n\left( (\bar{\Z}[-] \circ V_{\mathcal{P}\ti Alg}) \circ
{\rm Hom}_{\mathcal{P}\ti Alg}(\mathcal{F}_{\mathcal{P}}(\ul{m}),-) \right) \quad\mbox{by definition of $U$}\\
&=& T_n\left( T_n(\bar{\Z}[-]\circ V_{\mathcal{P}\ti Alg}) \circ {\rm Hom}_{\mathcal{P}\ti Alg}(\mathcal{F}_{\mathcal{P}}(\ul{m}),-) \right) \quad\mbox{by Lemma \ref{Tn(GoF)}}\\
&=& T_n\left( P_n^{\mathcal{P}} \circ {\rm Hom}_{\mathcal{P}\ti Alg}(\mathcal{F}_{\mathcal{P}}(\ul{m}),-) \right) \quad\mbox{by Theorem \ref{Pn=Tn}.}
\end{eqnarray*}
So it suffices to show that the composite functor $P_n ^{\mathcal{P}} \circ {\rm Hom}_{\mathcal{P}\ti Alg}(\mathcal{F}_{\mathcal{P}}(\ul{m}),-) $ is polynomial of degree $\le n$. 
By the same argument as in the proof of Proposition \ref{Pnpoldegn} it suffices to prove this in the case $n=1$. 
First observe that by combining Lemma \ref{decomp-prod} with Lemma \ref{modI2} we have $P_1^{\mathcal{P}} (A^m) = \sum_{k=1}^mP_1^{\mathcal{P}} (i_k^{\times})
P_1^{\mathcal{P}} (A)$. Moreover, for $A_1,A_2\in \mathcal{P}\ti Alg$ the following diagram commutes for $l=1,2$:
\[ \xymatrix{
A_l \ar[r]^-{i_l} \ar[d]^{i_k^{\times}} & A_1\vee A_2 \ar[d]^{i_k^{\times}} \\
(A_l)^m\ar[r]^-{(i_l)^m} & (A_1\vee A_2)^m
}\]
Combining these observations with Proposition \ref{P-Alg(Free,A)} and the fact that $P_1^{\mathcal{P}} $ is linear by Lemma \ref{P1linear}, we obtain
\begin{eqnarray*}
P_1^{\mathcal{P}} ((A\vee B)^m) &=& \sum_{k=1}^m P_1^{\mathcal{P}} (i_k^{\times})P_1^{\mathcal{P}} (A_1\vee A_2)\\
&=& \sum_{k=1}^m P_1^{\mathcal{P}} (i_k^{\times})(P_1^{\mathcal{P}} (i_1)P_1^{\mathcal{P}} (A_1) +P_1^{\mathcal{P}} (i_2)P_1^{\mathcal{P}} (A_2))\quad\mbox{since $P_1^{\mathcal{P}} $ is linear}\\
&=& P_1^{\mathcal{P}} ((i_1)^m) \left( \sum_{k=1}^m P_1^{\mathcal{P}} (i_k^{\times})P_1^{\mathcal{P}} (A_1) \right) +P_1^{\mathcal{P}} ((i_2)^m)\left( \sum_{k=1}^m P_1^{\mathcal{P}} (i_k^{\times})P_1^{\mathcal{P}} (A_2)\right)\\
&=& P_1^{\mathcal{P}} ((i_1)^m) P_1^{\mathcal{P}} ((A_1)^m)  +P_1^{\mathcal{P}} ((i_2)^m)P_1^{\mathcal{P}} ((A_1)^m).
\end{eqnarray*}
Thus the  section $(P_1^{\mathcal{P}} ((i_1)^m) , P_1^{\mathcal{P}} ((i_2)^m) )\colon P_1^{\mathcal{P}} ((A_1)^m) \oplus P_1^{\mathcal{P}} ((A_2)^m) \to P_1^{\mathcal{P}} ((A_1\vee A_2)^m)$ of $\hat{r}^{P_1^{\mathcal{P}} }$
is surjective, hence an isomorphism. Consequently the functor $A\mapsto P_1^{\mathcal{P}} (A^m)$ is linear, as desired. The same proof works in the case of $\mathbb{T}$-algebras.
\end{proof}
In order to compute $T_n\bar{\Z}[\mathcal{F}(\mathcal{P})]$ and $T_n\bar{\Z}[\mathcal{F}(\mathbb{T})]$ we need the following definition.
\begin{defi}
The category $P_n^{\mathcal{P}} \mathcal{F}(\mathcal{P})$ is defined as follows: the objects are the sets $\ul{m}$, $m\ge 0$, the morphism sets are ${\rm Hom}_{P^{\mathcal{P}}_n\mathcal{F}(\mathcal{P})}(\ul{k},\ul{l}) = P_n^{\mathcal{P}}\left( 
{\rm Hom}_{\mathcal{F}(\mathcal{P})}(\mathcal{F}_{\mathcal{P}}(\ul{k}),\mathcal{F}_{\mathcal{P}}(\ul{l})
\right))$, and the composition satisfies $p_n(g)\circ p_n(f) = p_n(g\circ f)$. The latter identity means that there is a functor $\ul{p}_n^{\PP}\colon \mathcal{F}(\mathcal{P}) \to P_n^{\mathcal{P}}\mathcal{F}(\mathcal{P})$ which is the identity on objects and sends a morphism $f$ in $\mathcal{F}(\mathcal{P})$ to $p_n(f)$.

The category $P_n^{\mathbb{T}}\mathcal{F}(\mathbb{T})$ is defined in an analogous way.

\end{defi}

\begin{cor}\label{Pn(Free(P))} For $\mathcal{P}$  as in Theorem \ref{Pn(P-Alg)} the category $T_n\bar{\Z}[\mathcal{F}(\mathcal{P})]$ is isomorphic to the category $P_n^{\mathcal{P}}\mathcal{F}(\mathcal{P})$. 

Similarly, for $\mathbb{T}$  as in Theorem \ref{Pn(P-Alg)} the category $T_n\bar{\Z}[\mathcal{F}(\mathbb{T})]$ is isomorphic to the category $P_n^{\mathbb{T}}\mathcal{F}(\mathbb{T})$.
\hfill$\Box$
\end{cor}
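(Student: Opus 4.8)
The plan is to read off the isomorphism directly from Theorem~\ref{Pn(P-Alg)} together with the definitions of the two categories, the only genuine work being to keep track of which category the various universal functors are considered on. First I would identify $\mathcal{F}(\mathcal{P})$ with $Free(\mathcal{P})$ via Remark~\ref{skelett-Free(P)}; this is a pointed category with finite coproducts, the coproduct of $\mathcal{F}_{\mathcal{P}}(\ul k)$ and $\mathcal{F}_{\mathcal{P}}(\ul l)$ being $\mathcal{F}_{\mathcal{P}}(\ul{k+l})$, which is the same as the coproduct computed in $\mathcal{P}\ti Alg$ since $\mathcal{F}_{\mathcal{P}}$ is a left adjoint. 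The value of $T_nG$ at an object $Y$ is, by Definition~\ref{Tn-coker}, the cokernel of $S^G_{n+1}$ at $Y$, and this only involves the values of $G$ on the objects $Y^{\vee j}$, $j\le n+1$, together with the fold map and the (co)restrictions between these objects --- all of which lie in $Free(\mathcal{P})$. Hence, since the restriction of $U^{\mathcal{P}\ti Alg}_X$ to $Free(\mathcal{P})$ is $U^{Free(\mathcal{P})}_X$, the value at $(X,Y)\in Free(\mathcal{P})^2$ of $T_nU^{Free(\mathcal{P})}_X$ agrees with that of $T_nU^{\mathcal{P}\ti Alg}_X$. By Proposition~\ref{TnZ} this gives
\[\Hom_{T_n\bar{\Z}[\mathcal{F}(\mathcal{P})]}(\ul k,\ul l)\;=\;T_nU^{\mathcal{P}\ti Alg}_{\mathcal{F}_{\mathcal{P}}(\ul k)}(\mathcal{F}_{\mathcal{P}}(\ul l)),\]
and Theorem~\ref{Pn(P-Alg)} identifies the right-hand side with $P^{\mathcal{P}}_n\bigl(\Hom_{\mathcal{P}\ti Alg}(\mathcal{F}_{\mathcal{P}}(\ul k),\mathcal{F}_{\mathcal{P}}(\ul l))\bigr)=\Hom_{P^{\mathcal{P}}_n\mathcal{F}(\mathcal{P})}(\ul k,\ul l)$ by an isomorphism of abelian groups $\beta_{kl}$, natural in $\ul l$. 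Since both categories have the objects $\ul m$, $m\ge 0$, this produces a bijection on objects and on each morphism set.

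Next I would check that the $\beta_{kl}$ assemble into a functor. Tracing the composite isomorphism of Theorem~\ref{Pn(P-Alg)} through Lemma~\ref{Tn(GoF)} and the explicit description in Theorem~\ref{Pn=Tn} (which sends $\bar a$ to $q_n(a-1_A)$), and using the identification $U^{\mathcal{P}\ti Alg}_{\mathcal{F}_{\mathcal{P}}(\ul k)}(\mathcal{F}_{\mathcal{P}}(\ul l))=\bar{\Z}[\Hom_{\mathcal{P}\ti Alg}(\mathcal{F}_{\mathcal{P}}(\ul k),\mathcal{F}_{\mathcal{P}}(\ul l))]$ together with Proposition~\ref{P-Alg(Free,A)} to regard the target as $\bar{\Z}$ of the underlying set of the $\mathcal{P}$-algebra $\Hom_{\mathcal{P}\ti Alg}(\mathcal{F}_{\mathcal{P}}(\ul k),\mathcal{F}_{\mathcal{P}}(\ul l))$, one sees that $\beta_{kl}$ sends $t_nf$ to $p_n(f)$ for every morphism $f$ of $\mathcal{F}(\mathcal{P})$ (and the zero morphism, which is the unit of that $\mathcal{P}$-algebra, to $0$). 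In other words $\beta$ intertwines the functors $\underline{t_n}\colon\mathcal{F}(\mathcal{P})\to T_n\bar{\Z}[\mathcal{F}(\mathcal{P})]$ and $\underline{p}^{\PP}_n\colon\mathcal{F}(\mathcal{P})\to P^{\mathcal{P}}_n\mathcal{F}(\mathcal{P})$. The compositions in both categories are bilinear (both are preadditive by construction) and are characterised by $t_ng\circ t_nf=t_n(gf)$ and $p_n(g)\circ p_n(f)=p_n(gf)$ respectively; since the elements $t_nf$ generate the abelian group $\Hom_{T_n\bar{\Z}[\mathcal{F}(\mathcal{P})]}(\ul k,\ul l)$ and each $\beta_{kl}$ is additive, the identity $\beta(v\circ u)=\beta(v)\circ\beta(u)$, valid for $u=t_nf$, $v=t_ng$, extends to all morphisms. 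Thus $\ul m\mapsto\ul m$, $t_nf\mapsto p_n(f)$ defines a functor $T_n\bar{\Z}[\mathcal{F}(\mathcal{P})]\to P^{\mathcal{P}}_n\mathcal{F}(\mathcal{P})$ that is bijective on objects and on morphisms, hence an isomorphism of categories. The case of a pointed algebraic theory $\mathbb{T}$ is proved verbatim, replacing $Free(\mathcal{P})$ by the category of finitely generated free $\mathbb{T}$-models, $U^{\mathcal{P}\ti Alg}$ by $U^{\mathbb{T}\ti Mod}$, Proposition~\ref{P-Alg(Free,A)} by its evident analogue, and invoking the second isomorphism of Theorem~\ref{Pn(P-Alg)}.

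The only delicate point is the compatibility of the construction $T_n\bar{\Z}[-]$ with passing from $\mathcal{P}\ti Alg$ to the full subcategory $Free(\mathcal{P})$: Theorem~\ref{Pn(P-Alg)} is stated for the universal functor on all of $\mathcal{P}\ti Alg$, whereas $T_n\bar{\Z}[\mathcal{F}(\mathcal{P})]$ is built intrinsically from $Free(\mathcal{P})$. As indicated above this is harmless, because the cokernel formula defining $T_n$ is ``local'': it only probes a functor at finite coproducts of a single object and the structure maps between them, and the inclusion $Free(\mathcal{P})\hookrightarrow\mathcal{P}\ti Alg$ preserves the zero object and finite coproducts. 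Everything else is unwinding of definitions and the explicit form of the isomorphism of Theorem~\ref{Pn=Tn}.
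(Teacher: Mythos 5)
Your proof is correct and takes essentially the same route the authors intend: the paper presents this as an immediate corollary of Theorem \ref{Pn(P-Alg)} (hence the bare $\Box$), and your argument simply supplies the three details that make "immediate" rigorous — that $T_n\bar{\Z}[-]$ is insensitive to passing from $\mathcal{P}\ti Alg$ to the full subcategory $Free(\mathcal{P})$ since $Free(\mathcal{P})$ contains $0$ and is closed under finite coproducts and the cokernel defining $T_n$ only probes finite coproducts of a single object, that the natural isomorphism of Theorem \ref{Pn(P-Alg)} sends $t_nf$ to $p_n(f)$ by tracing through Lemma \ref{Tn(GoF)} and $\Xi_n^{\PP}$ of Theorem \ref{Pn=Tn}, and that this is enough to get an isomorphism of preadditive categories because composition is bilinear in both $T_n\bar{\Z}[\C]$ and $P_n^{\PP}\mathcal{F}(\PP)$ and is characterized on the generators $t_nf$, $p_n(f)$ by the identities $t_ng\circ t_nf=t_n(gf)$ and $p_n(g)\circ p_n(f)=p_n(gf)$.
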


Finally we wish to compare the categories $P_n^{\mathcal{A}s}mon$ and $P_n^{\mathbb{G}r}gr$.
For this we still need the well known fact that the functor ${\bf g}$ preserves finite products. To make this precise let $M_1,\ldots,M_m$ be monoids. As ${\bf g}$ is left adjoint to $U$ the monoid morphism $\eta_{\ul{M}} := \eta_{M_1} \times \ldots \times \eta_{M_m}\colon M_1\times \ldots \times  M_m \to {\bf g}(M_1)\times \ldots \times  {\bf g}(M_m) $ induces a unique group homomorphism $\widehat{\eta_{\ul{M}} } \colon {\bf g}(M_1\times \ldots \times  M_m ) \to {\bf g}(M_1)\times \ldots \times  {\bf g}(M_m)$. Then:

\begin{prop}\label{g-pres-prods} The map $\widehat{\eta_{\ul{M}} }$ is an isomorphism.
\end{prop}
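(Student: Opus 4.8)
## Proof Proposal for Proposition \ref{g-pres-prods}

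The plan is to exhibit an explicit inverse to $\widehat{\eta_{\ul{M}}}$ using the universal property of groupification, mirroring the strategy already used in the proof of Lemma \ref{iso-ab-gr}. First I would recall that the canonical injections $i_k\colon M_k \to M_1\times\ldots\times M_m$ (sending $a$ to $(1_{M_1},\ldots,a,\ldots,1_{M_m})$) are monoid morphisms, and hence induce group homomorphisms ${\bf g}(i_k)\colon {\bf g}(M_k) \to {\bf g}(M_1\times\ldots\times M_m)$. Since any two of these have commuting images (because the images of $i_k$ and $i_l$ commute elementwise in $M_1\times\ldots\times M_m$ for $k\neq l$, and ${\bf g}$ preserves this relation as it is expressible by equations among the generators $\eta_{M_1\times\ldots\times M_m}(i_k(a))$), the formula $(g_1,\ldots,g_m)\mapsto {\bf g}(i_1)(g_1)\cdots{\bf g}(i_m)(g_m)$ defines a group homomorphism $\psi\colon {\bf g}(M_1)\times\ldots\times{\bf g}(M_m) \to {\bf g}(M_1\times\ldots\times M_m)$.

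Next I would check that $\psi$ and $\widehat{\eta_{\ul M}}$ are mutually inverse. For this it suffices, by the universal property of $\eta_{M_1\times\ldots\times M_m}$ and of each $\eta_{M_k}$, to verify the two composites agree with the appropriate identity after precomposition with the relevant unit maps. On the one hand, $\widehat{\eta_{\ul M}}\circ\psi\circ(\eta_{M_1}\times\ldots\times\eta_{M_m})$ applied to $(a_1,\ldots,a_m)$ unwinds, using $\widehat{\eta_{\ul M}}\,\eta_{M_1\times\ldots\times M_m} = \eta_{\ul M}$ and the monoid-level identity $i_1(a_1)\cdots i_m(a_m) = (a_1,\ldots,a_m)$, to $\eta_{\ul M}(a_1,\ldots,a_m) = (\eta_{M_1}(a_1),\ldots,\eta_{M_m}(a_m))$; since the image of $\eta_{M_1}\times\ldots\times\eta_{M_m}$ generates the target group, this gives $\widehat{\eta_{\ul M}}\circ\psi = \mathrm{id}$. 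On the other hand, $\psi\circ\widehat{\eta_{\ul M}}\circ\eta_{M_1\times\ldots\times M_m}(a_1,\ldots,a_m) = \psi(\eta_{M_1}(a_1),\ldots,\eta_{M_m}(a_m)) = {\bf g}(i_1)(\eta_{M_1}(a_1))\cdots{\bf g}(i_m)(\eta_{M_m}(a_m))$; using naturality ${\bf g}(i_k)\circ\eta_{M_k} = \eta_{M_1\times\ldots\times M_m}\circ i_k$ this equals $\eta_{M_1\times\ldots\times M_m}(i_1(a_1))\cdots\eta_{M_1\times\ldots\times M_m}(i_m(a_m)) = \eta_{M_1\times\ldots\times M_m}(i_1(a_1)\cdots i_m(a_m)) = \eta_{M_1\times\ldots\times M_m}(a_1,\ldots,a_m)$, and since the image of $\eta_{M_1\times\ldots\times M_m}$ generates ${\bf g}(M_1\times\ldots\times M_m)$ we get $\psi\circ\widehat{\eta_{\ul M}} = \mathrm{id}$.

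By induction it is enough to treat $m=2$, which streamlines the bookkeeping; the general case then follows from associativity of the product, or one simply runs the argument uniformly in $m$ as above. The only genuinely delicate point — and hence the main obstacle — is justifying that the images of ${\bf g}(i_k)$ and ${\bf g}(i_l)$ commute in ${\bf g}(M_1\times\ldots\times M_m)$, so that $\psi$ is well defined as a homomorphism out of the \emph{direct product} rather than merely the free product; this rests on the observation that a left adjoint such as ${\bf g}$ sends the commuting pairs $(i_k(a),i_l(b))$ to commuting pairs because the relation $xy=yx$ holds after applying $\eta$ and $\eta$ is a monoid map whose image generates, so all relations among generators that hold in $U({\bf g}(-))$ are preserved. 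Everything else is a routine diagram chase with universal properties, requiring no computation beyond what is sketched here.
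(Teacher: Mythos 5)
The paper states Proposition \ref{g-pres-prods} without any proof, treating it as the well-known fact that ${\bf g}$ preserves finite products. Your argument supplies the standard proof of this fact, and it is essentially correct: you construct the inverse $\psi$ from the factor injections $i_k\colon M_k\to M_1\times\cdots\times M_m$, verify that both composites agree with the identity on the image of the relevant unit map, and appeal to the generation of each group by that image. The one step to tighten is the commutation of the images of ${\bf g}(i_k)$ and ${\bf g}(i_l)$ for $k\neq l$, which you correctly identify as the only delicate point but justify by the loose slogan that groupification preserves relations among generators (this is false in general for left adjoints). The clean argument is a centralizer argument: for fixed $a\in M_k$, the centralizer of $\eta(i_k(a))$ in ${\bf g}(M_1\times\cdots\times M_m)$ is a subgroup, and it contains every $\eta(i_l(b))$ because $i_k(a)$ and $i_l(b)$ commute in $M_1\times\cdots\times M_m$ and $\eta$ is a monoid morphism; hence it contains the subgroup ${\bf g}(i_l)({\bf g}(M_l))$ generated by those elements. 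In turn, the centralizer of ${\bf g}(i_l)({\bf g}(M_l))$ contains every $\eta(i_k(a))$, hence contains ${\bf g}(i_k)({\bf g}(M_k))$. With this point made explicit your proof is complete and fills the gap the paper leaves open.
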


\begin{thm}\label{Pnmon=Pngr} The functor ${\bf g}\colon mon \to gr$ induces an isomorphism of preadditive categories $P_n{\bf g} \colon P_n^{\mathcal{A}s}mon \to P_n^{\mathbb{G}r}gr$ such that $P_n{\bf g}\circ \ul{p}{}_n^{\mathcal{A}s} =\ul{p}{}_n^{\mathbb{G}r}\circ {\bf g}$.

\end{thm}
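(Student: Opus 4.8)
The plan is to define $P_n{\bf g}$ to be the identity on objects and, on morphism sets, the homomorphism obtained by applying the classical Passi functor to ${\bf g}$, and then to deduce that it is an isomorphism from Lemma~\ref{iso-ab-gr} together with Proposition~\ref{g-pres-prods}. First I would note that, being a left adjoint, ${\bf g}$ preserves the initial object and sends the free monoid $\mathcal{F}_{\mathcal{A}s}(\underline{m})$ to the free group $\mathcal{F}_{\mathbb{G}r}(\underline{m})$, so it restricts to a functor ${\bf g}\colon mon=\mathcal{F}(\mathcal{A}s)\to gr=\mathcal{F}(\mathbb{G}r)$ which is the identity on objects (note that both $\mathcal{A}s$ and $\mathcal{P}_{\mathbb{G}r}$ satisfy the hypothesis of Theorem~\ref{Pn(P-Alg)}, so the categories $P_n^{\mathcal{A}s}mon$ and $P_n^{\mathbb{G}r}gr$ of Corollary~\ref{Pn(Free(P))} are available). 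By Proposition~\ref{P-Alg(Free,A)} the morphism set $\mathrm{Hom}_{mon}(\mathcal{F}_{\mathcal{A}s}(\underline{k}),\mathcal{F}_{\mathcal{A}s}(\underline{l}))$ is naturally a monoid, the evaluation map identifying it with $(\mathcal{F}_{\mathcal{A}s}(\underline{l}))^{k}$, and likewise on the group side. Using the unit $\eta$ of the adjunction ${\bf g}\dashv U$, a short computation with these evaluation isomorphisms shows that the map
\[{\bf g}_{k,l}\colon \mathrm{Hom}_{mon}(\mathcal{F}_{\mathcal{A}s}(\underline{k}),\mathcal{F}_{\mathcal{A}s}(\underline{l}))\to \mathrm{Hom}_{gr}(\mathcal{F}_{\mathbb{G}r}(\underline{k}),\mathcal{F}_{\mathbb{G}r}(\underline{l}))\]
corresponds to $(\eta_{\mathcal{F}_{\mathcal{A}s}(\underline{l})})^{\times k}$, hence is a homomorphism of monoids. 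Applying the classical Passi functor $P_n$ and invoking Example~\ref{exple:PnAs} on the source (so $P_n=P_n^{\mathcal{A}s}$ there) and Proposition~\ref{calculPnGr} on the target (so $P_n=P_n^{\mathbb{G}r}$ there) defines $P_n{\bf g}$ on $\mathrm{Hom}(\underline{k},\underline{l})$.

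Next I would check that this data is a functor satisfying the stated compatibility. Since $p_n\colon A\to P_n^{\mathcal{P}}(A)$ is natural in the monoid $A$ and ${\bf g}_{k,l}$ carries the unit to the unit, one gets at once $P_n{\bf g}(p_n(f))=p_n({\bf g}(f))$, that is $P_n{\bf g}\circ \underline{p}_n^{\mathcal{A}s}=\underline{p}_n^{\mathbb{G}r}\circ {\bf g}$. The elements $p_n(f)$ generate the morphism groups of $P_n^{\mathcal{A}s}mon$ (as $P_n^{\mathcal{A}s}(A)$ is generated by the $q_n(a-1_A)$), composition is biadditive in both preadditive categories, and $P_n{\bf g}$ is additive on morphism groups; hence the identity $P_n{\bf g}\circ \underline{p}_n^{\mathcal{A}s}=\underline{p}_n^{\mathbb{G}r}\circ {\bf g}$ together with functoriality of ${\bf g}$ forces $P_n{\bf g}$ to respect composition, and it plainly respects identities.

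It then remains to prove that each $P_n({\bf g}_{k,l})$ is an isomorphism of abelian groups. Writing $M=\mathcal{F}_{\mathcal{A}s}(\underline{l})$, Proposition~\ref{g-pres-prods} provides a group isomorphism $\widehat{\eta}\colon {\bf g}(M^{k})\xrightarrow{\ \cong\ }({\bf g}M)^{k}$ satisfying $U(\widehat{\eta})\circ\eta_{M^{k}}=(\eta_{M})^{\times k}$ by its very construction. Therefore $P_n({\bf g}_{k,l})=P_n(U\widehat{\eta})\circ P_n(\eta_{M^{k}})$, where the first factor is an isomorphism because $\widehat{\eta}$ is one and $P_n\circ U$ is a functor, and the second factor is an isomorphism by Lemma~\ref{iso-ab-gr} applied to the monoid $M^{k}$. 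Being the identity on objects and an isomorphism on every morphism group, $P_n{\bf g}$ is an isomorphism of preadditive categories. (Alternatively, the whole argument may be run through the isomorphisms $P_n^{\mathcal{P}}\mathcal{F}(\mathcal{P})\cong T_n\bar{\Z}[\mathcal{F}(\mathcal{P})]$ of Corollary~\ref{Pn(Free(P))}, letting ${\bf g}$ act on the functor $T_n\bar{\Z}[-]$.)

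The main obstacle I anticipate is the bookkeeping in the first step: checking carefully, through the adjunction ${\bf g}\dashv U$ and the evaluation isomorphisms of Proposition~\ref{P-Alg(Free,A)}, that ${\bf g}_{k,l}$ is exactly $(\eta_M)^{\times k}$ and that it factors as $U(\widehat{\eta})\circ\eta_{M^{k}}$, so that Lemma~\ref{iso-ab-gr} applies verbatim. Once that identification is secured, everything else merely assembles results already in hand (Example~\ref{exple:PnAs}, Propositions~\ref{calculPnGr} and~\ref{g-pres-prods}, and Lemma~\ref{iso-ab-gr}).
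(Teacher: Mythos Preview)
Your proposal is correct and follows essentially the same approach as the paper: identify ${\bf g}_{k,l}$ with $(\eta_M)^{\times k}$ via the evaluation isomorphisms, apply the classical Passi functor $P_n$ (using Example~\ref{exple:PnAs} and Proposition~\ref{calculPnGr} to replace $P_n^{\mathcal{A}s}$ and $P_n^{\mathbb{G}r}$), verify functoriality on the generators $p_n(f)$, and deduce that $P_n({\bf g}_{k,l})$ is an isomorphism from the factorization $(\eta_M)^{\times k}=U(\widehat{\eta})\circ\eta_{M^k}$ together with Lemma~\ref{iso-ab-gr} and Proposition~\ref{g-pres-prods}. The paper organizes these steps into a single commutative diagram, but the content is the same.
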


\begin{proof}
First of all, we may replace both of the functors  $P_n^{\mathcal{A}s}$ and $P_n^{\mathbb{G}r}$ in the definition of the categories $P_n^{\mathcal{A}s}mon$ and $ P_n^{\mathbb{G}r}gr$ by Passi's functor $P_n$, according to Example \ref{exple:PnAs} and Proposition \ref{calculPnGr}; we denote the resulting categories by $P_nmon$ and $P_ngr$. In the sequel we freely use the basic fact that by definition $P_n(G) = P_n(U(G))$ for any group $G$.

Now let $k,l\ge 1$ and consider the following diagram of plain arrows where the map $\widehat{\eta^k}$ renders the left-hand triangle commutative and is an isomorphism by Proposition \ref{g-pres-prods}.
\[\xymatrix{
& \mathcal{F}_{Mon}(\ul{l})^k \ar[dl]^-{\eta} \ar[dd]^{\eta^k} & mon(\ul{k},\ul{l})
\ar[l]^-{\cong}_{ev}
\ar[dd]^{{\bf g}_{\ul{k},\ul{l}}} \ar[r]^-{p_n} & P_n(mon(\ul{k},\ul{l})) \ar@{.>}[dd]^{P_n({\bf g}_{\ul{k},\ul{l}})} \\
{\bf g}(\mathcal{F}_{Mon}(\ul{l})^k) \ar[dr]^{\widehat{\eta^k}}& & & \\
 &{\bf g}(\mathcal{F}_{Mon}(\ul{l}))^k &gr(\ul{k},\ul{l}) \ar[l]^-{\cong}_-{ev}\ar[r]^-{p_n} &P_n(gr(\ul{k},\ul{l}))
}\]
The left-hand square commutes  since for $f\in mon(\ul{k},\ul{l})$ and $i\in \ul{k}$,
$pr_i\circ \eta^k\circ ev(f) = \eta f(i)$ while \begin{eqnarray*}
pr_i\circ ev\circ {\bf g}_{\ul{k},\ul{l}}(f) &=& {\bf g}(f)(i) \\
&=&  {\bf g}(f)\eta(i)\\ &=& \eta(f(i)) \quad\mbox{by naturality of $\eta$.}
\end{eqnarray*}
It follows that the map ${\bf g}_{\ul{k},\ul{l}}$ is a morphism of monoids since $ev$ and $\eta^k$ are. Applying the functor $P_n$ we deduce  that the dotted arrow $P_n({\bf g}_{\ul{k},\ul{l}})$ exists, is a homomorphism of abelian groups and renders the right-hand square commutative. We claim that $P_n{\bf g}\colon P_nmon\to P_ngr$ defined by $P_n{\bf g}(\ul{m}) =\ul{m}$ and $(P_n{\bf g})_{\ul{k},\ul{l}} = P_n({\bf g}_{\ul{k},\ul{l}})$ is an isomorphism of additive categories.
To show that $P_n{\bf g}$ is a functor let $\xymatrix{\ul{k}\ar[r]^-f & \ul{l}\ar[r]^-g& \ul{m}}$. Then 
\begin{eqnarray*}
P_n({\bf g}_{\ul{k},\ul{m}})(p_n(g) \circ p_n(f)) &=& P_n({\bf g}_{\ul{k},\ul{m}})(p_n(gf))\quad\mbox{by definition of $P_nmon$}\\
&=&p_n({\bf g}_{\ul{k},\ul{m}}(gf)) \quad\mbox{by definition of $
P_n({\bf g}_{\ul{k},\ul{m}})$}\\
&=&p_n({\bf g}_{\ul{l},\ul{m}}(g) \circ {\bf g}_{\ul{k},\ul{l}}(f))\quad\mbox{since ${\bf g}$ is a functor}\\
&=&p_n({\bf g}_{\ul{l},\ul{m}}(g)) \circ p_n({\bf g}_{\ul{k},\ul{l}}(f))\quad\mbox{by definition of $P_ngr$}\\
&=&P_n({\bf g}_{\ul{l},\ul{m}})(p_n(g)) \circ P_n({\bf g}_{\ul{k},\ul{l}})(p_n(f))
\end{eqnarray*}
It follows that $P_n{\bf g}$ is a functor since the composition in $P_nmon$ and $P_ngr$ is bilinear and $P_n(mon(\ul{k},\ul{l}))$ is generated by the elements $p_n(f)$, $f\in mon(\ul{k},\ul{l})$.

It remains to prove that $P_n({\bf g}_{\ul{k},\ul{l}})$ is an isomorphism for all $k,l\ge 1$. To see this apply the functor $P_n$ to the left-hand triangle and square of the above diagram: as $P_n(\eta)$ is an isomorphism by Lemma \ref{iso-ab-gr} so is $P_n(\eta^k)$ and hence $P_n({\bf g}_{\ul{k},\ul{l}})$.
\end{proof}

		     We now can state the main application of the results of this section:

\begin{cor} \label{pol-gr-mon}
There is an isomorphism of categories $Pol_n(gr,Ab) \,\cong\,Pol_n(mon,Ab)$.
\end{cor}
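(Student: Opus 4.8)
The plan is to assemble the isomorphism $Pol_n(gr,Ab)\cong Pol_n(mon,Ab)$ from three ingredients already in place. First, by Corollary \ref{Poln(C)=Add(PnC)} applied to $\C=gr$ and to $\C=mon$ (both are pointed categories with finite sums, indeed finite coproducts), we have isomorphisms of categories
\[ Pol_n(gr,Ab)\,\cong\,Add(T_n\bar{\Z}[gr],Ab)\qquad\text{and}\qquad Pol_n(mon,Ab)\,\cong\,Add(T_n\bar{\Z}[mon],Ab). \]
Here I would be slightly careful about the passage between $gr$ (resp.\ $mon$) and its skeleton $\mathcal{F}(\mathbb{G}r)$ (resp.\ $\mathcal{F}(\mathcal{A}s)$), invoking Remark \ref{skelett-Free(P)} and Example \ref{ex-alg}; since $mon=Free(\mathcal{A}s)$ and $gr$ is the category of finitely generated free groups, this is harmless. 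Second, I would observe that the operad $\mathcal{A}s$ satisfies the hypothesis of Theorem \ref{Pn(P-Alg)}: $\mathcal{A}s(2)=\underline{2}=\mathfrak{S}_2$ and the identity permutation is an element for which $0_{\mathcal{A}s}$ is a unit (binary multiplication with unit); likewise the theory $\mathbb{G}r$ of groups is pointed and $\mathcal{P}_{\mathbb{G}r}$ has such a binary operation. Hence Corollary \ref{Pn(Free(P))} gives isomorphisms of preadditive categories
\[ T_n\bar{\Z}[mon]\,\cong\,P_n^{\mathcal{A}s}\mathcal{F}(\mathcal{A}s)\qquad\text{and}\qquad T_n\bar{\Z}[gr]\,\cong\,P_n^{\mathbb{G}r}\mathcal{F}(\mathbb{G}r). \]

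Third, the key nontrivial input is Theorem \ref{Pnmon=Pngr}, which produces the isomorphism of preadditive categories $P_n{\bf g}\colon P_n^{\mathcal{A}s}mon\to P_n^{\mathbb{G}r}gr$ induced by the groupification functor ${\bf g}$. Combining this with the two displayed isomorphisms above, we get an isomorphism of preadditive categories $T_n\bar{\Z}[mon]\cong T_n\bar{\Z}[gr]$. Applying the (contravariant in its first variable, but here we just need functoriality) functor $Add(-,Ab)$ to this isomorphism, and composing with the two instances of Corollary \ref{Poln(C)=Add(PnC)}, yields the desired isomorphism of categories
\[ Pol_n(gr,Ab)\,\cong\,Add(T_n\bar{\Z}[gr],Ab)\,\cong\,Add(T_n\bar{\Z}[mon],Ab)\,\cong\,Pol_n(mon,Ab). \]
One should also record that this isomorphism is induced by precomposition with ${\bf g}\colon mon\to gr$: the compatibility $P_n{\bf g}\circ\ul{p}{}_n^{\mathcal{A}s}=\ul{p}{}_n^{\mathbb{G}r}\circ{\bf g}$ in Theorem \ref{Pnmon=Pngr}, together with the defining property $\underline{t_n}$ of the functors $\mathcal{F}(\mathcal{P})\to T_n\bar{\Z}[\mathcal{F}(\mathcal{P})]$, shows that the square relating $\underline{t_n}$ for $mon$ and for $gr$ commutes up to the identifications above, so that on the level of polynomial functors the equivalence is exactly ``restrict along ${\bf g}$''.

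The work is essentially bookkeeping at this stage: all the substance has been front-loaded into Theorem \ref{Pnmon=Pngr} (whose proof in turn rests on Lemma \ref{iso-ab-gr} / Proposition \ref{poly-map}, that Passi groups do not see the difference between a monoid and its groupification, and on Proposition \ref{g-pres-prods}, that ${\bf g}$ preserves finite products). The only point requiring a little care is that the three chained isomorphisms are isomorphisms of categories (not merely equivalences), which is why I insist on using the skeleton $\mathcal{F}(\mathcal{P})$ throughout and on the fact that $Add(-,Ab)$ sends isomorphisms of preadditive categories to isomorphisms of functor categories; the main obstacle, such as it is, is simply verifying that the operad $\mathcal{A}s$ and the theory $\mathbb{G}r$ genuinely fall under the hypotheses of Theorem \ref{Pn(P-Alg)}, which is immediate once one unwinds the definitions.
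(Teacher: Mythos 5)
Your proof is correct and follows essentially the same chain of isomorphisms as the paper: Corollary \ref{Poln(C)=Add(PnC)} at the two ends, Corollary \ref{Pn(Free(P))} in the middle, and Theorem \ref{Pnmon=Pngr} as the key bridge between the monoid and group sides. Your added remarks (verifying the hypotheses of Theorem \ref{Pn(P-Alg)} for $\mathcal{A}s$ and $\mathbb{G}r$, and identifying the resulting isomorphism as precomposition with ${\bf g}$) are accurate elaborations of what the paper leaves implicit.
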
	
	
	\begin{proof}
	
	We have:
	$$Pol_n(gr,Ab) \,\cong\, Add(T_n \overline{\Z}[gr],Ab)\,\cong\,Add(P_n^{\mathbb{G}r}gr,Ab)\,\cong\,Add(P_n^{\mathcal{A}s}mon,Ab)$$
	$$\cong\,Add(T_n \overline{\Z}[mon],Ab) \,\cong\, Pol_n(mon,Ab)$$
	where the first and fifth equivalences follow from Corollary \ref{Poln(C)=Add(PnC)}, the second and fourth equivalences follow from Corollary \ref{Pn(Free(P))} and the third equivalence is given by Theorem \ref{Pnmon=Pngr}.
	 \end{proof}

%%%%%%%%%%%%%%%%%%%%%%%%

\section{Application: polynomial functors from abelian groups and from groups} \label{section-5.5}
In this section we apply Theorem \ref{thm-pal} and results of section \ref{section5} to the cases $\PP=\mathcal{C} om$ and $\PP= \mathcal{A} s$. In the first case we recover the results of \cite{BDFP}.

\subsection{Polynomial functors from abelian groups}
Applying Theorem \ref{thm-pal} for $\PP=\mathcal{C} om$ we obtain a natural equivalence of categories
$$Func(Free(\mathcal{C} om), Ab) \simeq PMack(\Omega(\mathcal{C} om), Ab).$$
Since $\Omega(\mathcal{C} om)=\Omega$ by Example \ref{Omega} and $Free(\mathcal{C} om)=common$ by \ref{ex-alg}, we obtain:
\begin{equation} \label{eq-ab}
Func(common, Ab) \simeq PMack(\Omega, Ab)
\end{equation}
which is the Theorem $0.2$ of \cite{BDFP}. Furthermore, we have:
\begin{thm}[\cite{BDFP} section $8$] \label{BDFP-8}
Let $n \in \N$, there is an equivalence of categories
$$Pol_n(common,Ab) \simeq Pol_n(ab,Ab).$$
\end{thm}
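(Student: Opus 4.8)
The final statement, Theorem \ref{BDFP-8}, asserts an equivalence $Pol_n(common,Ab) \simeq Pol_n(ab,Ab)$, which is the commutative analogue of Corollary \ref{pol-gr-mon}. The plan is to mimic the proof of Corollary \ref{pol-gr-mon} verbatim, substituting the operad $\mathcal{C}om$ for $\mathcal{A}s$, the category $common$ for $mon$, and the category $ab$ for $gr$. Concretely, the chain of equivalences to establish is
\[
Pol_n(ab,Ab) \,\cong\, Add(T_n\overline{\Z}[ab],Ab) \,\cong\, Add(P_n^{\mathbb{A}b}ab,Ab) \,\cong\, Add(P_n^{\mathcal{C}om}common,Ab) \,\cong\, Add(T_n\overline{\Z}[common],Ab) \,\cong\, Pol_n(common,Ab).
\]
The first and last equivalences come from Corollary \ref{Poln(C)=Add(PnC)} applied with $\C=ab$ and $\C=common$ respectively (both are pointed categories with finite sums). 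The second and fourth equivalences come from Corollary \ref{Pn(Free(P))}: one must check that $\mathcal{C}om$ and the theory $\mathbb{A}b$ satisfy the hypothesis of Theorem \ref{Pn(P-Alg)}, i.e. that $\mathcal{C}om(2)$ (resp. $\mathcal{P}_{\mathbb{A}b}(2)$) contains an element for which $0_{\mathcal{P}}$ is a unit — this is immediate since $\mathcal{C}om(2)=\underline{1}$ and the commutative multiplication has the unit $0_{\mathcal{C}om}\in\mathcal{C}om(0)$ as a (two-sided) unit, and $\mathbb{A}b$ is the theory of abelian groups whose addition operation has $0$ as unit. Also $Free(\mathcal{C}om)=common$ by Example \ref{ex-alg} and $ab = \mathbb{A}b\ti Mod$ restricted to finitely generated free models. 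Note that by Example \ref{exple:PnAs} and Proposition \ref{calculPnGr} the functors $P_n^{\mathcal{C}om}$ and $P_n^{\mathbb{A}b}$ both coincide with Passi's classical functor $P_n$, so $P_n^{\mathcal{C}om}common$ and $P_n^{\mathbb{A}b}ab$ are both the ``Passi category'' $P_n(-)$ applied to the relevant morphism sets.

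The central (third) equivalence requires the analogue of Theorem \ref{Pnmon=Pngr}: the groupification functor ${\bf g}\colon common \to ab$ should induce an isomorphism of preadditive categories $P_n{\bf g}\colon P_n^{\mathcal{C}om}common \to P_n^{\mathbb{A}b}ab$ compatible with the projection functors $\ul{p}_n$. The proof carries over mutatis mutandis from the proof of Theorem \ref{Pnmon=Pngr}: one uses that ${\bf g}$ preserves finite products (the commutative version of Proposition \ref{g-pres-prods}, equally well known — in fact in the commutative case ${\bf g}$ is just the Grothendieck group functor, which preserves finite products), that $ev$ gives isomorphisms $common(\ul{k},\ul{l})\cong \mathcal{F}_{ComMon}(\ul{l})^k$ and $ab(\ul{k},\ul{l})\cong {\bf g}(\mathcal{F}_{ComMon}(\ul{l}))^k$, and finally that $P_n(\eta_M)$ is an isomorphism — this last point is the commutative specialization of Lemma \ref{iso-ab-gr}, whose proof made no use of commutativity and applies directly. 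Bilinearity of composition in $P_n common$ and $P_n ab$ then forces $P_n{\bf g}$ to be a functor, and $P_n(\eta^k)$ being an isomorphism makes each $P_n({\bf g}_{\ul{k},\ul{l}})$ an isomorphism.

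I do not expect any genuine obstacle here: the entire commutative case is strictly easier than the non-commutative one already treated in Section \ref{section5}, since the composition maps $Hom(B,C)\times Hom(A,B)\to Hom(A,C)$ in $common$ are bilinear (this is precisely the fact \cite{BDFP} exploited and the reason their original argument worked in the commutative setting). The only minor care needed is to confirm the two ``unitality of a binary operation'' hypotheses for $\mathcal{C}om$ and $\mathbb{A}b$ so that Theorem \ref{Pn(P-Alg)} and hence Corollary \ref{Pn(Free(P))} apply, and to invoke the commutative versions of the two ``well known'' facts (Proposition \ref{g-pres-prods} and, implicitly via Lemma \ref{iso-ab-gr}, the behaviour of $P_n$ under groupification) — but these are all contained in or trivially follow from results stated earlier in the paper. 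If one prefers, an even more direct route is available: simply observe that Corollary \ref{pol-gr-mon}, together with the fact that $Pol_n(gr,Ab)$ only depends on the degree-$\le n$ truncation which factors through abelianization (so $Pol_n(gr,Ab)\cong Pol_n(ab,Ab)$ and likewise $Pol_n(mon,Ab)\cong Pol_n(common,Ab)$ via the abelianization/commutativization functors being final among polynomial-relevant data), yields the claim; however the cleanest self-contained argument is the verbatim transcription described above, and that is the proof I would write.
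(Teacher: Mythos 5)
Your main proof (the five-step chain through $Add(T_n\overline{\Z}[-],Ab)$ and $Add(P_n^{\bullet}\bullet,Ab)$, using Corollary \ref{Poln(C)=Add(PnC)}, Corollary \ref{Pn(Free(P))}, and a commutative analogue of Theorem \ref{Pnmon=Pngr}) is correct, and your verification that $\mathcal{C}om$ and $\mathcal{P}_{\mathbb{A}b}$ satisfy the unitality hypothesis of Theorem \ref{Pn(P-Alg)} is right. This is genuinely different from what the paper does for this statement: the paper does not re-prove Theorem \ref{BDFP-8} with its new machinery, but instead quotes it from \cite{BDFP} and then recalls the \emph{original} \cite{BDFP} argument, which constructs an explicit extension functor $\tilde{-}:Pol_n(common,Ab)\to Pol_n(ab,Ab)$ via Passi polynomial maps and checks functoriality using bilinearity of composition in $common$. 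That argument is short but collapses in the non-commutative case (composition is only right-distributive), which is why the paper developed Section \ref{section5}; your approach applies that heavier machinery uniformly and hence yields the commutative case as a corollary in exactly the same form as Corollary \ref{pol-gr-mon}, which is a nice observation even if the paper's authors chose not to re-derive the known result this way.

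However, the ``even more direct route'' you sketch at the end is wrong, and the error is conceptually significant. It is \emph{not} true that $Pol_n(gr,Ab)\cong Pol_n(ab,Ab)$, nor that $Pol_n(mon,Ab)\cong Pol_n(common,Ab)$; polynomial functors do not in general factor through abelianization. This is visible already for $n=2$ in the paper's own results: Section \ref{section6} identifies $Quad(ab,Ab)$ with quadratic $\Z$-modules (relations $PHP=2P$ \emph{and} $HPH=2H$) and $Quad(gr,Ab)$ with abelian square groups (only $PHP=2P$), which are different categories. The cross-effects of a functor on $gr$ are taken with respect to the free product, which carries strictly more information than the direct sum used for $ab$, so the two theories of polynomial functors are genuinely distinct for $n\ge 2$. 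The whole point of Theorems \ref{Thm-1}, \ref{Thm-2} and Corollary \ref{pol-gr-mon} is that the correct comparison is between $mon$ and $gr$ (respectively $common$ and $ab$) --- i.e.\ along groupification with the operad held fixed --- not along abelianization. Dropping that last paragraph, your proof is fine.
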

This theorem is generalized to the non-commutative setting in Corollary \ref{pol-gr-mon}. In the sequel, we briefly recall  the proof of Theorem \ref{BDFP-8} sketched in \cite{BDFP} in order to explain clearly why it can't be adapted to the non-commutative setting.

The proof sketched in \cite{BDFP} consists in constructing a functor:
	$$\tilde{-}: Pol_n(common, Ab) \to Pol_n(ab,Ab)$$
	which induces an equivalence of categories. A crucial tool in defining $\tilde{-}$ is Lemma $8.1$ in \cite{BDFP} asserting the equivalence between polynomial maps of degree $\leq n$ from a commutative monoid $M$ to an abelian group $A$ and polynomial maps of degree $\leq n$ from the commutative group $gr(M)$ to $A$ where $gr$ is the group completion. (We generalize this result to the non-commutative setting in Proposition \ref{poly-map}).
	
	For $F \in Pol_n(common, Ab)	$ the functor $\tilde{F}: ab \to Ab$ is obtained in the following way:
	\begin{itemize}
\item For $\underline{n} \in ab$, $\tilde{F}(\underline{n})=F(\underline{n})$;
\item for the morphisms: the map $\psi: Hom_{ab}(\underline{l}, \underline{m}) \to Hom_{Ab}(F(\underline{l}), F(\underline{m}))$ is the unique polynomial map of degree $n$ corresponding, by Lemma $8.1$ in \cite{BDFP} to the polynomial map of degree $n$: $Hom_{common}(\underline{l}, \underline{m}) \to Hom_{Ab}(F(\underline{l}), F(\underline{m}))$ given by the polynomiality of the functor $F$;
\item to prove that $\tilde{F}$ is a functor we have to prove the commutativity of the following diagram:

$$\xymatrix{
Hom_{ab}(\underline{l}, \underline{m}) \times Hom_{ab}(\underline{m}, \underline{k}) \ar[d]_{\phi} \ar[r]^-{\psi \times \psi} & Hom_{Ab}(F(\underline{l}), F(\underline{m})) \times Hom_{Ab}(F(\underline{m}), F(\underline{k})) \ar[d]^{\phi}
 \\
Hom_{ab}(\underline{l}, \underline{k}) \ar[r]_-{\psi}& Hom_{Ab}(F(\underline{l}), F(\underline{k}))
}$$
In this diagram the composition map $\phi$ is bilinear since we are in the commutative case.
By composition, $\psi \circ \phi$ is bipolynomial of degree $n$ in each variable. This map corresponds to the unique bipolynomial map of degree $n$ in each variable associated  to the map sending $(f,g)\in Hom_{common}(\underline{l}, \underline{m}) \times Hom_{common}(\underline{m}, \underline{k})$ to
$F(g \circ f)$, by Lemma $8.1$ in \cite{BDFP}. The other composition is also bipolynomial of degree $n$ in each variable and corresponds, by Lemma $8.1$ in \cite{BDFP}  to the unique bipolynomial map of degree $n$ in each variable associated to the map sending $(f,g)$ to $F(g)\circ F( f)$. Since $F$ is a functor we deduce that $\tilde{F}$ is a functor.
\end{itemize}
      In the case of non-commutative monoids and non-commutative groups, the map corresponding to $\phi$ in the previous diagram is no more bilinear (in fact we have $f(g+g')=fg+fg'$ but in general $(f+f') g\neq fg+f'g$). Consequently the composition $\psi \phi$, a priori, has no reason to be bipolynomial of degree $n$ in each variable and we can't prove as above that a similar extension to $gr$ of a functor from  $mon$ indeed defines a functor from $gr$ to $Ab$.

We deduce from (\ref{eq-ab}) and Theorem \ref{BDFP-8}:

\begin{thm}[\cite{BDFP} Theorem $6.1$]
Let $n \in \N$, there is an equivalence of categories
$$Pol_n(ab,Ab) \simeq (PMack(\Omega, Ab))_{\leq n}$$
where $(PMack(\Omega, Ab))_{\leq n}$ is the full subcategory of $PMack(\Omega, Ab)$ having as objects the functors which vanish on the sets $X$ such that $\mid X \mid >n$ and on $\underline{0}$.
\end{thm}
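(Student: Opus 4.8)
The plan is to derive this final theorem by combining three ingredients already available in the excerpt: the equivalence $(\ref{eq-ab})$, the equivalence of Theorem \ref{BDFP-8} (which is $Pol_n(common,Ab)\simeq Pol_n(ab,Ab)$), and the general Corollary \ref{equ-pol-cor}. First I would recall Corollary \ref{equ-pol-cor} applied to $\PP=\mathcal{C}om$: it gives a natural equivalence between the category of reduced polynomial functors of degree $\le n$ from $Free(\mathcal{C}om)$ to $Ab$ and the full subcategory of $PMack(\Omega(\mathcal{C}om),Ab)$ consisting of pseudo-Mackey functors vanishing on $\underline 0$ and on sets of cardinality $>n$. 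Since $Free(\mathcal{C}om)=common$ by Example \ref{ex-alg} and $\Omega(\mathcal{C}om)=\Omega$ by Example \ref{Omega}, this reads
$$Pol_n(common,Ab)\simeq (PMack(\Omega,Ab))_{\le n}.$$

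Next I would invoke Theorem \ref{BDFP-8}, which asserts $Pol_n(common,Ab)\simeq Pol_n(ab,Ab)$. Composing this equivalence with the one just obtained yields
$$Pol_n(ab,Ab)\simeq Pol_n(common,Ab)\simeq (PMack(\Omega,Ab))_{\le n},$$
which is exactly the statement of the theorem. So the proof is essentially a two-line chain of equivalences once the correct specializations of the general results have been identified; the only care needed is to check that the subcategory $(PMack(\Omega,Ab))_{\le n}$ appearing here is literally the one produced by Corollary \ref{equ-pol-cor} in the case $\PP=\mathcal{C}om$, i.e. that "zero values on $\underline 0$ and on sets of cardinality $>n$" is the same condition in both places. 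This is immediate from the definitions.

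I do not expect any genuine obstacle here, since this theorem is explicitly a corollary of material established earlier (Theorem \ref{thm-pal}/Corollary \ref{equ-pol-cor} and the recalled Theorem \ref{BDFP-8}); the real content of the section lies in Corollary \ref{pol-gr-mon}, which upgrades Theorem \ref{BDFP-8} to the non-commutative setting, not in this final statement. The one point worth a sentence of justification is naturality: all three equivalences being composed are natural equivalences of categories, so their composite is as well, and no coherence issue arises because $Pol_n$, being a thick subcategory of a functor category, and $PMack$ are honest categories with the equivalences given by explicit functors (restriction of $cr$ and $i_!$ together with the functor $\widetilde{-}$ of \cite{BDFP}). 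Thus the proof amounts to: cite $(\ref{eq-ab})$ restricted to polynomial objects via Corollary \ref{equ-pol-cor}, cite Theorem \ref{BDFP-8}, and compose.
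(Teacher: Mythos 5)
Your proof is correct and is essentially identical to the paper's own derivation: the paper states the theorem as an immediate consequence of the equivalence $Func(common,Ab)\simeq PMack(\Omega,Ab)$ (restricted to polynomial functors, i.e.\ Corollary \ref{equ-pol-cor} for $\PP=\mathcal{C}om$) combined with Theorem \ref{BDFP-8}, which is exactly the chain of two equivalences you compose.
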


\subsection{Polynomial functors from groups}
Applying Theorem \ref{thm-pal} for $\PP=\mathcal{A} s$ we obtain a natural equivalence of categories
$$Func(Free(\mathcal{A} s), Ab) \simeq PMack(\Omega(\mathcal{A} s), Ab).$$
Since  $Free(\mathcal{A} s)=mon$ by \ref{ex-alg}, we obtain:
$$Func(mon, Ab) \simeq PMack(\Omega(\mathcal{A} s), Ab).$$

By Corollary \ref{pol-gr-mon} we obtain.
\begin{thm}
Let $n \in \N$, there is an equivalence of categories
$$Pol_n(gr, Ab) \simeq (PMack(\Omega(\mathcal{A} s), Ab))_{\leq n}$$
where $(PMack(\Omega(\mathcal{A} s), Ab))_{\leq n}$ is the full subcategory of $PMack(\Omega(\mathcal{A} s), Ab)$ having as objects the functors which vanish on  $\underline{0}$ and on the sets $\underline{m}$ such that $m >n$.
\end{thm}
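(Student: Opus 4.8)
The plan is to deduce the stated equivalence by concatenating three results already proved in the paper, so that the present theorem becomes a purely formal consequence. The first and only non-formal ingredient is Corollary \ref{pol-gr-mon}, which provides an isomorphism of categories $Pol_n(gr,Ab)\cong Pol_n(mon,Ab)$. I would quote it as a black box here, recalling that its proof rests on the identifications $Pol_n(\C,Ab)\simeq Add(T_n\overline{\Z}[\C],Ab)$ of Corollary \ref{Poln(C)=Add(PnC)}, on the computation $T_n\overline{\Z}[\mathcal{F}(\PP)]\cong P_n^{\PP}\mathcal{F}(\PP)$ of Corollary \ref{Pn(Free(P))}, and on the canonical isomorphism $P_n^{\mathcal{A}s}mon\cong P_n^{\mathbb{G}r}gr$ of Theorem \ref{Pnmon=Pngr}.

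Next, since $mon = Free(\mathcal{A} s)$ by Example \ref{ex-alg} (equivalently, $mon$ is isomorphic to the skeleton $\mathcal{F}(\mathcal{A} s)$ via Remark \ref{skelett-Free(P)}), I would apply Corollary \ref{equ-pol-cor} with $\PP=\mathcal{A} s$. That corollary — itself a consequence of Theorem \ref{thm-pal} together with the thickness of the subcategory of polynomial functors of bounded degree (Proposition \ref{thick}) — yields a natural equivalence between $Pol_n(Free(\mathcal{A} s),Ab)$, the category of reduced polynomial functors of degree $\le n$ from finitely generated free $\mathcal{A} s$-algebras to $Ab$, and the full subcategory of $PMack(\Omega(\mathcal{A} s),Ab)$ whose objects are the pseudo-Mackey functors vanishing on $\underline 0$ and on all sets of cardinality $>n$, which is exactly $(PMack(\Omega(\mathcal{A} s),Ab))_{\le n}$ as defined in the statement. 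Composing this with the isomorphism of Corollary \ref{pol-gr-mon} produces the desired equivalence $Pol_n(gr,Ab)\simeq (PMack(\Omega(\mathcal{A} s),Ab))_{\le n}$.

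The two points that deserve an explicit (but short) line of verification are: that the two sides of Corollary \ref{equ-pol-cor} use exactly the same normalization as here, namely \emph{reduced} polynomial functors of degree $\le n$, so that no extra reduction hypothesis needs to be imposed; and that the composite of the two equivalences is again a natural equivalence respecting the relevant subcategory structure, which is immediate since each factor is and since $mon = Free(\mathcal{A} s)$ is an equality (resp.\ isomorphism) of categories rather than a mere equivalence.

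I expect the ``main obstacle'' to be entirely absent from this proof: all the difficulty has been absorbed into Corollary \ref{pol-gr-mon}, whose justification requires the full apparatus of Section \ref{section5} — the generalized Passi functors $P_n^{\PP}$, the categories $T_n\overline{\Z}[\C]$, the identification $T_n(\overline{\Z}[-]\circ V)\cong P_n^{\PP}$ of Theorem \ref{Pn=Tn}, and the fact that groupification preserves finite products (Proposition \ref{g-pres-prods}). Granting that corollary and Theorem \ref{thm-pal}, the statement follows by transport of structure along two equivalences, and the proof is essentially a one-line chain of $\simeq$'s.
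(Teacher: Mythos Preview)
Your proposal is correct and follows essentially the same approach as the paper: the paper applies Theorem \ref{thm-pal} with $\PP=\mathcal{A}s$ (together with $Free(\mathcal{A}s)=mon$) and then invokes Corollary \ref{pol-gr-mon}, which is exactly your chain of equivalences via Corollary \ref{equ-pol-cor}. Your write-up is in fact slightly more explicit than the paper's, which leaves the restriction to degree $\le n$ (i.e.\ the passage from Theorem \ref{thm-pal} to Corollary \ref{equ-pol-cor}) implicit in the phrase ``By Corollary \ref{pol-gr-mon} we obtain''.
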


%%%%%%%%%%%%%%%%%%%%%%%%%
\section{Presentation of polynomial functors} \label{section6}
The aim of this section is to give a presentation of polynomial functors from $Free(\PP)$ to $Ab$ in terms of minimal data.

First we give the generators of the category $\Omega(\PP)_2$.
\subsection{Generators of $\Omega(\PP)_2$}
\subsubsection{Horizontal maps}
Since $(\Omega(\PP)_2)^h=\Omega$, the horizontal generators of $\Omega(\PP)_2$ are given in \cite{BDFP}.

For all $i\in \N$ such that $1 \leq i <n$ we consider:
\begin{itemize}
\item $s_i^n: \underline{n} \to \underline{n-1}$ the unique surjection preserving the natural order and such that:
$$s^n_i(i)=s_i^n(i+1)=i;$$
\item $\tau^n_i: \underline{n} \to \underline{n}$ the transposition of $\mathfrak{S}_n$ which exchanges $i$ and $i+1$.
\end{itemize}

\begin{lm}[Lemme 9.5 in \cite{BDFP}] \label{relations-hor}
The category $\Omega$ is generated by $\{ s_i^n, \tau^n_i \ \mid \ 1 \leq i <n \}$ submitted to the relations of the symmetric group $\mathfrak{S}_n$ and to the following conditions:
\begin{enumerate}
\item
$s_i^{n-1} s_j^n= s^{n-1}_{j-1} s^n_i \mathrm{\quad  for\ } i<j$
\item 
$s_i^n \tau_i^n=s_i^n$
\item
$
\tau^{n-1}_k s^n_j= 
\left \lbrace
\begin{array}{lll}
s^n_j \tau^n_k \mathrm{\quad  for\ } k<j-1\\
s^n_{j-1} \tau^n_j  \tau^n_{j-1} \mathrm{\quad  for\ } k=j-1\\
s^n_{j+1} \tau^n_j  \tau^n_{j+1} \mathrm{\quad  for\ } k=j\\
s^n_j \tau^n_{k+1} \mathrm{\quad  for\ } k>j.
\end{array}
\right.
$
\end{enumerate}

\end{lm}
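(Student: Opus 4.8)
The plan is to establish the two halves of this presentation statement separately: that the maps $s_i^n$ and $\tau_i^n$ generate $\Omega$, and that the displayed relations, together with the relations of the symmetric groups, are complete.

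\textbf{Generation.} Given a surjection $f\colon \underline{n}\to\underline{m}$, I would first sort the elements of $\underline{n}$ by the value of $f$, breaking ties by the natural order; this produces a permutation $\sigma\in\mathfrak{S}_n$ for which $\bar f:=f\circ\sigma$ is order-preserving. Every order-preserving surjection factors as a composite of codegeneracies $s_i^{\bullet}$ of the simplicial category, and $\mathfrak{S}_n$ is generated by the adjacent transpositions $\tau_i^n$; hence $f=\bar f\circ\sigma^{-1}$ lies in the subcategory generated by the $s_i^n$ and the $\tau_i^n$.

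\textbf{Normal form and completeness.} Let $\Omega'$ be the category abstractly presented by the $s_i^n,\tau_i^n$ modulo the $\mathfrak{S}_n$-relations and relations (1)--(3); by the previous paragraph there is a full functor $\pi\colon\Omega'\to\Omega$, and it remains to see that $\pi$ is faithful. I would fix a normal form for a morphism $\underline{n}\to\underline{m}$ of $\Omega'$, namely a word $\bar w\cdot\sigma$ in which $\bar w$ is a chosen canonical word in the $s_i$'s representing an order-preserving surjection $\bar f$, and $\sigma$ is the permutation that is increasing along each fibre of $\bar f$. Then I would describe a rewriting procedure bringing an arbitrary word to such a normal form: apply relation (3) repeatedly to move every $\tau$ to the right of every $s$; use the Coxeter relations of $\mathfrak{S}_n$ to normalize the permutation tail; use relations (1) to bring the $s$-part to the canonical word $\bar w$; and use relation (2)---propagated through relations (1), so that $\bar w\cdot\tau_j=\bar w$ whenever $j,j+1$ share a fibre of $\bar f$, these relations generating all identifications inside the Young subgroup of within-fibre permutations---to replace $\sigma$ by its fibrewise-increasing representative. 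Since $f\mapsto(\bar f,\ \text{fibrewise-increasing sorting permutation})$ is a bijection between surjections $\underline{n}\to\underline{m}$ and such pairs, faithfulness of $\pi$ follows once one knows the normal form is unique.

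\textbf{Uniqueness / the main obstacle.} The remaining and hardest point is uniqueness of the normal form, equivalently confluence of the rewriting system: one must verify that relations (1)--(3) and the $\mathfrak{S}_n$-relations suffice to rewrite \emph{any} two words with the same image in $\Omega$ to the \emph{same} normal form. I would carry this out in the ``build an inverse'' style: equip the set of normal-form words with the composition transported from $\Omega$, check that the generators $s_i^n$ and $\tau_i^n$ act by normal-form words, and verify that relations (1)--(3) and the symmetric-group relations hold for these actions; this produces a functor $\Omega\to\Omega'$ splitting $\pi$. The delicate bookkeeping---the interaction between pushing a transposition past a degeneracy via (3) and absorbing within-fibre transpositions via (2), together with associativity of the transported composition---is precisely the content of Lemme $9.5$ in \cite{BDFP}, which one may either reproduce or cite directly.
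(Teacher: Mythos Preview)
The paper does not give its own proof of this lemma: it is stated with the attribution ``Lemme 9.5 in \cite{BDFP}'' and is used as a black box, so there is no argument in the paper to compare against. Your proposal therefore goes beyond what the paper does.

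That said, your strategy is the standard and correct one for establishing a presentation of this type. The generation argument is fine. For completeness, your normal form $\bar w\cdot\sigma$ with $\bar w$ an order-preserving surjection and $\sigma$ the fibrewise-increasing coset representative is exactly the right invariant, since surjections $\underline{n}\twoheadrightarrow\underline{m}$ are in bijection with pairs consisting of an order-preserving surjection and a minimal-length coset representative modulo the corresponding Young subgroup. Your rewriting scheme---push $\tau$'s past $s$'s via (3), normalize the $s$-word via (1), absorb within-fibre transpositions via (2)---is the natural way to reach that normal form. The one point to be careful about in executing the ``build an inverse'' step is checking that relation (3) is consistent with the Coxeter relations once the transpositions have been moved to the right; this is a finite case-check (essentially verifying the braid relation survives the passage through an $s$), and is indeed the bookkeeping carried out in \cite{BDFP}. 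Citing that reference for this verification, as you do, is appropriate here and matches exactly what the present paper does.
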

 \subsubsection{Vertical maps}
We have $(\Omega(\PP)_2)^v=\Omega(\PP)$.

For all $k\in \N$ such that $1 \leq k <n$  and for all $i \in \N$ such that $1 \leq i \leq n-k$ we consider:
\begin{itemize}
\item $s_i^{n, n-k}: \underline{n} \to \underline{n-k}$ the unique surjection preserving the natural order and such that:
$$s^{n, n-k}_i(i)=s_i^{n, n-k}(i+1)=\ldots= s_i^{n, n-k}(i+k)=i$$
$\forall \omega \in \PP(k+1)$ we obtain a morphism in $\Omega(\PP)$:
$$(s^{n, n-k}_i, (1_{\PP}^{\times (i-1)},\omega, 1_{\PP}^{\times (n-k-i)})): \underline{n} \to \underline{n-k};$$ 
in the sequel, this morphism will be denoted by $(s^{n, n-k}_i,\omega)$ for simplicity;
\item $(\tau^n_i, 1_{\PP}^{\times n} \in \PP(1)^{\times n}): \underline{n} \to \underline{n}$ where $\tau^n_i$ is  the transposition of $\mathfrak{S}_n$ which exchanges $i$ and $i+1$;
\item
$(Id_n, (\alpha_1, \ldots, \alpha_n)): \underline{n} \to \underline{n}$ where $\forall i \in \{1, \ldots, n\}$, $\alpha_i \in \PP(1)$.
\end{itemize}

\begin{lm} \label{relations-vert}
The category $\Omega(\PP)$ is generated by the morphisms $(s^{n, n-k}_i, \omega \in \PP(k+1))$, $(\tau^n_k, 1_{\PP}^{\times n} \in \PP(1)^{\times n})$ and  $(Id_n, (\alpha_1, \ldots, \alpha_n) \in \PP(1)^{\times n})$ submitted to the relations of the symmetric group $\mathfrak{S}_n$ and to the following conditions:
\begin{enumerate}
\item \label{rel-vert-1}
$(Id_n, (\omega_1, \ldots, \omega_n))(Id_n, (\alpha_1, \ldots, \alpha_n))=(Id_n, (\gamma(\omega_1, \alpha_1), \ldots, \gamma(\omega_n, \alpha_n)))$
\item \label{rel-vert-2}
$(Id_n, (\omega_1, \ldots, \omega_n))(\tau^n_k, 1_{\PP}^{\times n})=
(\tau^n_k, 1_{\PP}^{\times n})(Id_n, (\omega_1, \ldots, \omega_{k-1}, \omega_{k+1}, \omega_k, \omega_{k+2}, \ldots, \omega_n));
$
\item \label{rel-vert-3}
$(Id_{n-k}, \omega_1, \ldots, \omega_{n-k})(s_{i}^{n, n-k},\omega)=(s_{i}^{n, n-k},\gamma(\omega_i, \omega))(Id_n, \omega_1, \ldots, \omega_{i-1}, 1_{\PP}^{\times (k+1)}, \omega_{i+1}, \ldots, \omega_{n-k})$
\item \label{rel-vert-4}
$(s_{i}^{n, n-k},\omega)(Id_{n}, \omega_1, \ldots, \omega_{n})$\\
$=(Id_{n-k}, \omega_1, \ldots, \omega_{i-1}, 1_{\PP}, \omega_{i+k+1}, \ldots,\omega_{n})(s_{i}^{n, n-k},\gamma(\omega; \omega_i, \omega_{i+1}, \ldots, \omega_{i+k}))$
\item \label{rel-vert-5}
$(s_i^{n-k,n-k-p}, \omega \in \PP(p+1))(s_j^{n, n-k}, \alpha \in \PP(k+1))$
$$=\left \lbrace
\begin{array}{lll}
(s_i^{n, n-k-p}, \gamma(\omega;1_{\PP}^{\times (j-i)},  \alpha, 1_{\PP}^{\times(p-j+i)}) \in \PP(k+p+1))  \mathrm{\quad for\ } i \leq j\leq i+p\\
 (s_j^{n-p, n-k-p}, \alpha )(s_{i+k}^{n,n-p}, \omega)  \mathrm{\quad for\ } j<i\\
 (s_{j-p}^{n-p, n-k-p}, \alpha)(s_{i}^{n,n-p}, \omega )  \mathrm{\quad for\ } j>i+p\\
\end{array}
\right.
$$
\item \label{rel-vert-6}
$(\tau^{n-p}_k, 1_{\PP}^{\times (n-p)} \in \PP(1)^{\times n-p})(s_j^{n, n-p}, \alpha \in \PP(p+1))$
$$=\left \lbrace
\begin{array}{lll}
(s_j^{n, n-p}, \alpha)(\tau^n_k, 1_{\PP}^{\times n}) \mathrm{\quad for\ } k<j-1\\
(s_{j-1}^{n, n-p}, \alpha)(\tau^n_{j+p-1}, 1_{\PP}^{\times n})(\tau^n_{j+p-2}, 1_{\PP}^{\times n})  \ldots    (\tau^n_j, 1_{\PP}^{\times n})(\tau^n_{j-1}, 1_{\PP}^{\times n}) \mathrm{\quad for\ } k=j-1\\
(s_{j+1}^{n, n-p}, \alpha)(\tau^n_j, 1_{\PP}^{\times n}) (\tau^n_{j+1}, 1_{\PP}^{\times n}) \ldots (\tau^n_{j+p-1}, 1_{\PP}^{\times n})(\tau^n_{j+p}, 1_{\PP}^{\times n}) \mathrm{\quad for\ } k=j\\
(s_j^{n, n-p}, \alpha)(\tau^n_{k+p}, 1_{\PP}^{\times n}) \mathrm{\quad for\ } k>j.
\end{array}
\right.
$$
\item \label{rel-vert-7}
$(s_j^{n, n-p}, \alpha \in \PP(p+1))(\tau^n_k, 1_{\PP}^{\times n} \in \PP(1)^{\times n})$
$$=\left \lbrace
\begin{array}{lll}
(s_{j}^{n, n-p}, \alpha. \tau_{k-j+1})\mathrm{\quad for\ }  j \leq k \leq j+p-1\\
(\tau^n_{k-p}, 1_{\PP}^{\times n}) (s_j^{n, n-p}, \alpha)\mathrm{\quad for\ } k>j+p \mathrm{\quad or\ }  k<j-1.
\end{array}
\right.
$$
Remark: For $k=j+p$ and $k=j-1$ there are no relation.

\end{enumerate}
\end{lm}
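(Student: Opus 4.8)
\emph{Set-up.} The plan is to compare $\Omega(\PP)$ with the category $\mathcal{C}$ abstractly presented by the listed generators together with the Coxeter relations of $\mathfrak{S}_n$ and the relations (1)--(7). First I would verify that these relations actually hold in $\Omega(\PP)$; this is routine, using only the composition formula of Definition \ref{May-T} together with the associativity, unitality and equivariance axioms of $\PP$ (for instance, relation (5) is operad associativity read off on the relevant fibre, relation (7) is equivariance, and relations (3),(4) are unitality combined with associativity). This produces a functor $\Phi\colon\mathcal{C}\to\Omega(\PP)$ that is the identity on objects, and it remains to prove that $\Phi$ is an isomorphism of categories, i.e.\ full and faithful.

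\emph{Fullness.} Given $(f,\omega^f)\colon\underline{n}\to\underline{m}$ in $\Omega(\PP)$, I would first split off the unary decorations by writing $(f,\omega^f)=(Id_m,\vec\beta)\circ(f,\tilde\omega^f)$, where $\beta_j=\omega^f_j$ and $\tilde\omega^f_j=1_\PP$ if $|f^{-1}(j)|=1$, and $\beta_j=1_\PP$, $\tilde\omega^f_j=\omega^f_j$ otherwise (the identity $\gamma(\theta;1_\PP)=\theta=\gamma(1_\PP;\theta)$ makes this work). Then I factor the underlying surjection as $f=\mu\circ\sigma$ with $\mu$ order-preserving and $\sigma\in\mathfrak{S}_n$, use Lemma \ref{relations-hor} to write $\sigma$ as a word in transpositions and $\mu$ as an ordered composite of one-step fat collapses $s_i^{\bullet,\bullet-k}$, and decorate the collapse of the $j$-th fibre by the component of $\tilde\omega^f$ over it, transported along $\sigma$ (see Remark \ref{remarque-section1}). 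This exhibits $(f,\omega^f)$ as a composite of the generators.

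\emph{Faithfulness.} This is where the real work lies. I would fix a normal form: using the relations, every morphism $\underline{p}\to\underline{m}$ of $\mathcal{C}$ can be brought to the shape
$$(Id_m,\vec\beta)\circ(s^{(1)},\omega_1)\circ\ldots\circ(s^{(r)},\omega_r)\circ(\sigma,1_\PP^{\times p})$$
in which the block of fat degeneracies is in a fixed canonical order, $\vec\beta$ equals $1_\PP$ at every non-singleton fibre, and $\sigma$ is the canonical shuffle reproducing the underlying surjection $s^{(1)}\circ\ldots\circ s^{(r)}\circ\sigma$. One checks that this datum is determined by $\Phi$ of the morphism: the underlying surjection determines the degeneracy block and $\sigma$ by the uniqueness part of Lemma \ref{relations-hor}, and the $\PP$-decoration of a morphism of $\Omega(\PP)$ corresponds precisely to the tuple $(\vec\beta,\omega_1,\ldots,\omega_r)$. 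By the standard rewriting argument, faithfulness then follows once one shows that left-multiplying a normal form by any single generator can be brought back to normal form using only the relations. In this reduction the relations play the following roles: the Coxeter relations normalize the permutation $\sigma$; (1) merges and splits $Id$-decorations; (2) commutes an $Id$-decoration past a transposition; (3),(4) commute and absorb an $Id$-decoration through a fat degeneracy; (5) reorders the degeneracy block and, in the overlapping case, amalgamates two degeneracies into a fatter one; (6) commutes a transposition past a fat degeneracy; and (7) absorbs a within-fibre transposition into the $\mathfrak{S}$-action on a decoration, which is exactly what reconciles the ambiguity in the choice of $\sigma$ with operadic equivariance.

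\emph{Main obstacle.} The difficulty is entirely concentrated in the confluence check of the last step: a long case analysis over the three types of generator and the subcases of (5),(6),(7), tracking indices and the $\PP$-elements produced by $\gamma$. Its undecorated shadow is precisely Lemma \ref{relations-hor}, so the point is to verify that every elementary move there lifts to a consequence of (1)--(7); the genuinely new feature, invisible when $\PP(1)=\{1_\PP\}$, is the interaction between the $Id$-decorations and the fat degeneracies (handled by (3),(4)) and the equivariance bookkeeping (handled by (7)).
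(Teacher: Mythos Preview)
The paper states Lemma \ref{relations-vert} without proof: after the list of relations the text moves directly to the next subsubsection. So there is nothing to compare against; the authors evidently regard the verification as routine, parallel to the undecorated case of Lemma \ref{relations-hor} (itself only referenced to \cite{BDFP}).

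Your outline is the standard and correct way to establish a presentation by generators and relations: construct the functor $\Phi$ from the abstractly presented category to $\Omega(\PP)$ by checking that relations (1)--(7) hold there, then prove $\Phi$ is bijective on hom-sets via a normal form. Your fullness argument is fine; the factorisation $(Id_m,\vec\beta)\circ(f,\tilde\omega^f)$ you wrote down is exactly right (both cases reduce to the unit law $\gamma(\theta;1_\PP)=\theta=\gamma(1_\PP;\theta)$), and the subsequent $\mu\circ\sigma$ decomposition with decorated degeneracies is the obvious lift of the corresponding step in $\Omega$. For faithfulness, your rewriting scheme is the right shape, and your identification of which relation handles which commutation is accurate; in particular you correctly single out (3),(4) and (7) as the genuinely new ingredients beyond the $\PP(1)=\{1_\PP\}$ case. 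The only caveat is that the confluence check, while elementary, is lengthy and one must be careful with the index bookkeeping in the overlapping subcase of (5) and in the boundary cases $k=j-1,\,k=j+p$ excluded from (7); but you have already flagged this as the main obstacle, so there is no conceptual gap.
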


 \subsubsection{The role of the generators}
 The following proposition allows us to reduce the number of conditions to be verified to prove that a Janus functor is a pseudo-Mackey functor.
 \begin{prop} \label{pour-gene},
 Let $M$ be a Janus functor; $M$ is a pseudo-Mackey functor iff conditions $(1)$ and $(2)$ in Definition \ref{PMack} are satisfied for horizontal (resp. vertical) generators $h$ (resp. $(v, \omega^v)$) of $\Omega(\PP)_2$.
 \end{prop}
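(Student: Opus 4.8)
\textbf{Proof plan for Proposition \ref{pour-gene}.}
The plan is to reduce the verification of conditions (1) and (2) of Definition \ref{PMack} for arbitrary morphisms to the verification for generators, using multiplicativity of the two functors $M_*$ and $M^*$ together with the analysis of how squares in $\Omega(\PP)_2$ decompose along a factorization of their defining data. The forward direction is trivial: if $M$ is a pseudo-Mackey functor then (1) and (2) hold for all morphisms, hence in particular for generators. So the content is the converse, which we establish in three stages.

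First, I would treat condition (1). A double isomorphism of $\Omega(\PP)_2$ is, by Lemma \ref{dble-iso-MT}, a pair $(f,(f,1_{\PP}^{\times\mid A\mid}))$ with $f\colon A\to A$ a bijection; such an $f$ factors as a product of transpositions $\tau^n_i$, and correspondingly the vertical isomorphism factors as the product of the $(\tau^n_i,1_{\PP}^{\times n})$. Since $M_*$ is covariant on $(\Omega(\PP)_2)^h$ and $M^*$ is contravariant on $(\Omega(\PP)_2)^v$, the relation $M_*(f)M^*(u)=\mathrm{Id}$ follows by induction on the length of the factorization from the same relation for each transposition, once one observes that $M_*(f_1f_2)M^*(u_1u_2)=M_*(f_1)\big(M_*(f_2)M^*(u_2)\big)M^*(u_1)$; so (1) for all double isomorphisms follows from (1) for the generating transpositions, which are among the generators. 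The second, and main, stage is condition (2). Fix a horizontal $g\colon C\to D$ and a vertical $(\psi,\omega^\psi)\colon B\dashrightarrow D$, giving the unique square $\mathcal D$ up to isomorphism by Lemma \ref{uniq-square}. I would factor $g$ as a composite of horizontal generators $g=g_r\circ\cdots\circ g_1$ and $(\psi,\omega^\psi)$ as a composite of vertical generators $(\psi,\omega^\psi)=(\psi_s,\omega^{\psi_s})\circ\cdots\circ(\psi_1,\omega^{\psi_1})$, and then build the square $\mathcal D$ as a paste of $r\cdot s$ elementary squares obtained by repeatedly applying Lemma \ref{uniq-square} (existence and uniqueness of pullback-type squares in $\Omega(\PP)_2$). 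The key combinatorial point is that, for the pasting of two squares horizontally or vertically, the set $Adm(-)$ of admissible subsets of the outer square is obtained by ``composing'' the admissible subsets of the two inner squares: an admissible subset $A'$ of the big square corresponds to a pair of compatible admissible subsets of the pieces, because admissibility ($f_{A'}$ and $\phi_{A'}$ surjective) is checked fibrewise and surjectivity composes. This is exactly the book-keeping that makes the sum $\sum_{A'\in Adm(\mathcal D)}M_*(f_{A'})M^*(\phi,\omega^\phi)_{A'}$ behave like a ``matrix product'' of the analogous sums for the elementary squares; one then checks that $M^*(\psi,\omega^\psi)M_*(g)$, expanded via multiplicativity of $M_*$ and $M^*$ into a composite of $M^*$ and $M_*$ applied to generators, matches this product term by term.

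The third stage is to verify that a single pasting step preserves the property: if (2) holds for two elementary squares sharing an edge, it holds for their composite. Horizontally, given
$$\mathcal D_1=\ \vcenter{\xymatrix{ A_1 \ar@{-->}[d] \ar[r] & A_2 \ar@{-->}[d]^{(\psi_2,\omega^{\psi_2})}\\ C_1 \ar[r] & C_2}}\qquad
\mathcal D_2=\ \vcenter{\xymatrix{ A_2 \ar@{-->}[d] \ar[r] & B \ar@{-->}[d]^{(\psi,\omega^\psi)}\\ C_2 \ar[r] & D}}$$
with $\mathcal D_1$ having horizontal arrow a generator and $\mathcal D_2$ vertical arrow a generator, I would compute $M^*(\psi,\omega^\psi)M_*(g_2g_1)=M^*(\psi,\omega^\psi)M_*(g_2)M_*(g_1)$, apply (2) for $\mathcal D_2$ to the first two factors, then apply (2) for $\mathcal D_1$ to each resulting term, and finally reindex the double sum using the bijection $Adm(\mathcal D_2\circ\mathcal D_1)\cong\coprod Adm(\mathcal D_1)$ described above, together with the restriction identities $(\phi,\omega^\phi)_{A'}\circ(\text{inclusion})=(\phi,\omega^\phi)_{A''}$ that reduce restrictions to $A'$ of a composite vertical arrow to composites of restrictions. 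The same argument mirrored gives the vertical pasting step. Iterating over the $r\cdot s$ elementary pieces then yields (2) for $\mathcal D$. I expect the main obstacle to be precisely this third stage: making the reindexing of the admissible-subset sums under pasting completely rigorous, in particular checking that the restriction $(\phi,\omega^\phi)_{A'}$ of a vertical morphism decorated by operad elements is compatible with the decoration $\omega^\phi$ along the pasting (the operad-component condition (2) in Definition \ref{double} must be tracked through each elementary square), and handling the fibres over points that map to a degenerate part — the same subtlety that, for $\Gamma(\PP)_2$, forced the passage to $\Omega(\PP)_2$. Everything else is a routine induction on word length in the generators.
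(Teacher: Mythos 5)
Your plan coincides with the paper's proof: the paper packages your ``third stage'' (pasting preserves condition (2), via multiplicativity of $M_*$ and $M^*$ together with the reindexing $\sum_{B'\in Adm(B)}\sum_{A''\in Adm(A')}=\sum_{A''\in Adm(A)}$) as a separate lemma attributed to \cite{BDFP}, and then deduces the proposition in one line from a decomposition of the horizontal and vertical maps into generators, exactly as you propose. The subtleties you flag about tracking $\omega^\phi$ through restrictions are precisely what the paper leaves as a brief sketch; the degeneracy concern you raise at the end, however, does not actually arise in $\Omega(\PP)_2$, where all maps are surjective.
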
 
 The proof of this proposition relies on the following lemma which corresponds to Lemma $9.2$ in \cite{BDFP}.
 
 \begin{lm}
 Let $M=(M_*, M^*): \Omega(\PP)_2 \to Ab$ be a Janus functor. For two squares in $\Omega(\PP)_2$
 $$\vcenter{
\xymatrix{ A \ar@{-->}[d]_ -{(\phi_3, \omega^{\phi_3})} \ar[r]^-{g_1} &B \ar@{-->}[d]^-{(\phi_2, \omega^{\phi_2})} \ar[r]^-{g_2} & C \ar@{-->}[d]^-{(\phi_1, \omega^{\phi_1})} \\
D \ar[r]_-{f_1} &E \ar[r]_-{f_2}& F
} }$$
if
$$ M^*(\phi_1, \omega^{\phi_1}) M_*(f_2)=\underset{B' \in Adm(B)}{\sum} M_*({g_2}_{B'}) M^*((\phi_2, \omega^{\phi_2})_{B'})$$
and
$$M^*(\phi_2, \omega^{\phi_2}) M_*(f_1)=\underset{A' \in Adm(A)}{\sum} M_*({g_1}_{A'}) M^*((\phi_3, \omega^{\phi_3})_{A'}) $$
then
$$M^*(\phi_1, \omega^{\phi_1}) M_*(f_2 \circ f_1)=\underset{A' \in Adm(A)}{\sum} M_*({(g_2 \circ g_1)}_{A'}) M^*((\phi_3, \omega^{\phi_3})_{A'}). $$
For two squares  in $\Omega(\PP)_2$
 $$\vcenter{
\xymatrix{ A \ar@{-->}[d]_ -{(\psi_1, \omega^{\psi_1})} \ar[r]^-{g} &B \ar@{-->}[d]^-{(\phi_1, \omega^{\phi_1})} \\
D \ar[r]_-{f} \ar@{-->}[d]_ -{(\psi_2, \omega^{\psi_2})}&E  \ar@{-->}[d]^ -{(\phi_2, \omega^{\phi_2})}\\
G \ar[r]_-{e} &H
} }$$
if
$$ M^*(\phi_1, \omega^{\phi_1}) M_*(f)=\underset{A' \in Adm(A)}{\sum} M_*({g}_{A'}) M^*((\psi_1, \omega^{\psi_1})_{A'})$$
and
$$M^*(\phi_2, \omega^{\phi_2}) M_*(e)=\underset{D' \in Adm(D)}{\sum} M_*({f}_{D'}) M^*((\psi_2, \omega^{\psi_2})_{D'}) $$
then
$$M^*((\phi_2, \omega^{\phi_2})(\phi_1, \omega^{\phi_1})) M_*(e)=\underset{A' \in Adm(A)}{\sum} M_*({g}_{A'}) M^*(((\psi_2, \omega^{\psi_2})(\psi_1, \omega^{\psi_1}))_{A'}). $$

 \end{lm}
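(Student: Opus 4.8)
The plan is to prove the two statements by the same three-step scheme, and I will spell out the first one (horizontal pasting), the second being strictly dual (interchange $M_*$ with $M^*$ and ``bottom/left'' with ``top/right''). A preliminary observation, which I would record first, is that the outer rectangle $\mathcal{D}$ with horizontal maps $f_2\circ f_1$, $g_2\circ g_1$ and vertical maps $(\phi_3,\omega^{\phi_3})$, $(\phi_1,\omega^{\phi_1})$ is again a square of $\Omega(\PP)_2$: its image in $\Omega$ is a pullback because a horizontal pasting of pullbacks is a pullback, and condition $(2)$ of Definition~\ref{double} for $\mathcal{D}$ follows from the corresponding conditions for the two given sub-squares once one notes that for $d\in D$ the bijection $\phi_3^{-1}(d)\xrightarrow{\sim}\phi_1^{-1}(f_2f_1(d))$ factors through $\phi_2^{-1}(f_1(d))$ via $g_1$ and $g_2$, so that $(f_2f_1)_*=(f_2)_*\circ(f_1)_*$ by Remark~\ref{remarque-section1}.

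For Part~1 I would begin from
\[M^*(\phi_1,\omega^{\phi_1})\,M_*(f_2\circ f_1)=M^*(\phi_1,\omega^{\phi_1})\,M_*(f_2)\,M_*(f_1)\]
by functoriality of $M_*$, then apply the first hypothesis to $M^*(\phi_1,\omega^{\phi_1})M_*(f_2)$ to obtain $\sum_{B'}M_*((g_2)_{B'})M^*((\phi_2,\omega^{\phi_2})_{B'})M_*(f_1)$, the sum running over the subsets $B'\subseteq B$ admissible for the right sub-square $\mathcal{D}_1$. For each such $B'$ the map $\phi_2|_{B'}\colon B'\to E$ is onto, so by Lemma~\ref{uniq-square} the pair $\big(f_1,(\phi_2,\omega^{\phi_2})_{B'}\big)$ completes, uniquely up to isomorphism, to a square $\mathcal{D}_2^{B'}$ with top-left vertex $A_{B'}:=g_1^{-1}(B')\subseteq A$, top map $(g_1)_{A_{B'}}$ and left vertical $(\phi_3,\omega^{\phi_3})_{A_{B'}}$. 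Granting the pseudo-Mackey relation for $\mathcal{D}_2^{B'}$, I would substitute it, use $(g_2)_{B'}\circ(g_1)_{A'}=(g_2\circ g_1)_{A'}$, and reorganise the resulting double sum along the bijection $A'\mapsto(g_1(A'),A')$ from $Adm(\mathcal{D})$ onto $\coprod_{B'\in Adm(\mathcal{D}_1)}Adm(\mathcal{D}_2^{B'})$; this yields exactly the pseudo-Mackey relation for $\mathcal{D}$. The bijection is the combinatorial core of the argument: if $A'\in Adm(\mathcal{D})$ and $B':=g_1(A')$, then $g_2(B')=g_2g_1(A')=C$ and, by commutativity of $\mathcal{D}_2$ together with $\phi_3|_{A'}$ and $f_1$ onto, $\phi_2(B')=f_1(\phi_3(A'))=E$, so $B'\in Adm(\mathcal{D}_1)$ and $A'\in Adm(\mathcal{D}_2^{B'})$; conversely $B'$ is recovered as $g_1(A')$ and the surjectivity conditions recombine.

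The step I expect to be the main obstacle is the claim that the pseudo-Mackey relation passes from $\mathcal{D}_2$ to each of its restrictions $\mathcal{D}_2^{B'}$. This is not formal, since $(\phi_2,\omega^{\phi_2})_{B'}=(\phi_2,\omega^{\phi_2})\circ(i_{B'},\omega^{i_{B'}})$ with $(i_{B'},\omega^{i_{B'}})$ only a morphism of $\mathcal{S}(\PP)$, not of $\Omega(\PP)$, so $M^*((\phi_2,\omega^{\phi_2})_{B'})$ cannot simply be expressed through $M^*(\phi_2,\omega^{\phi_2})$; I would establish it by a direct restriction argument along $B'\hookrightarrow B$, tracking which subsets of $A_{B'}$ are admissible and how the decorations $\omega^{i_{B'}}=(1_\PP^{\times|B'|},0_\PP^{\times|B\setminus B'|})$ propagate through the composition law of $\mathcal{S}(\PP)$. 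I note that in the intended use of this lemma, namely the proof of Proposition~\ref{pour-gene}, the subtlety disappears: there one first verifies the relation for every square whose bottom horizontal is a generator, and $f_1$ is one, so the relation for $\mathcal{D}_2^{B'}$ is already available. Finally, Part~2 is carried out dually: write $M^*\big((\phi_2,\omega^{\phi_2})(\phi_1,\omega^{\phi_1})\big)M_*(e)=M^*(\phi_1,\omega^{\phi_1})M^*(\phi_2,\omega^{\phi_2})M_*(e)$, apply the lower-square hypothesis to $M^*(\phi_2,\omega^{\phi_2})M_*(e)$, then for each admissible $D'\subseteq D$ use the relation for the restricted upper square with bottom map $f_{D'}$, and reorganise the double sum through the analogous bijection $A'\mapsto(\psi_1(A'),A')$ between $Adm$ of the composite square and the disjoint union of the $Adm$'s of the restricted upper squares.
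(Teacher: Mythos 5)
Your argument reproduces the paper's proof (which it borrows from \cite{BDFP}): expand $M_*(f_2\circ f_1)$ by functoriality, substitute the pseudo-Mackey relation for the right square, then for each $B'\in Adm(B)$ substitute the relation for the square over $(f_1,(\phi_2,\omega^{\phi_2})_{B'})$, and reorganise the resulting double sum via the bijection $A'\mapsto(g_1(A'),A')$ between $Adm$ of the composite rectangle and $\coprod_{B'}Adm(\mathcal{D}_2^{B'})$. The one point you flag, namely that the relation for the restricted square $\mathcal{D}_2^{B'}$ is not literally one of the lemma's two stated hypotheses, is also left tacit in the paper; as you observe it is harmless in the intended application (Proposition~\ref{pour-gene}), where one assumes the relation for every square having a generator in the relevant position.
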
 
 \begin{proof}\cite{BDFP}
 We have:
 
$$M^*(\phi_1, \omega^{\phi_1}) M_*(f_2 \circ f_1) = \big(M^*(\phi_1, \omega^{\phi_1}) M_*(f_2)\big) M_*( f_1) 
= \underset{B' \in Adm(B)}{\sum}M_*({g_2}_{B'}) M^*((\phi_2, \omega^{\phi_2})_{B'})M_*( f_1).$$
Let 
$$\vcenter{
\xymatrix{ A' \ar@{-->}[d]_ -{(\phi_3, \omega^{\phi_3})_{A'}} \ar[r]^-{{g_1}_{A'}} &B' \ar@{-->}[d]^-{(\phi_2, \omega^{\phi_2})_{B'}} \\
D \ar[r]_-{f_1} &E } }$$
be the unique square of $\Omega(\PP)_2$ associated to $(f_1, (\phi_2, \omega^{\phi_2})_{B'})$
we have:
\begin{eqnarray*}
 \underset{B' \in Adm(B)}{\sum}M_*({g_2}_{B'}) M^*((\phi_2, \omega^{\phi_2})_{B'})M_*( f_1)&=& \underset{B' \in Adm(B)}{\sum}M_*({g_2}_{B'})  \underset{A'' \in Adm(A')}{\sum} M_*({g_1}_{A''}) M^*(\phi_3, \omega^{\phi_3})_{A''}\\
 &=& \underset{B' \in Adm(B)}{\sum}\underset{A'' \in Adm(A')}{\sum} M_*({g_2}_{B'} {g_1}_{A''}) M^*((\phi_3, \omega^{\phi_3})_{A''})\\
 &=&\underset{B' \in Adm(B)}{\sum}\underset{A'' \in Adm(A')}{\sum} M_*((g_2 \circ {g_1})_{A''}) M^*((\phi_3, \omega^{\phi_3})_{A''})\\
 &=&\underset{A'' \in Adm(A)}{\sum} M_*((g_2 \circ {g_1})_{A''}) M^*((\phi_3, \omega^{\phi_3})_{A''})\\
\end{eqnarray*}
The proof of the second part of the lemma is similar.
 \end{proof}
 
  \begin{proof}[Proof of Proposition \ref{pour-gene}]
The proposition is a direct consequence of the previous lemma applied to a decomposition of horizontal and vertical maps into a product of the generators.
 \end{proof}

\subsection{Presentation of polynomial functors}
Let $(M_*, M^*)$ be a Janus functor $\Omega(\PP)_2 \to Ab$. We put:
$$I_n^{\alpha_1, \ldots, \alpha_n}:=M^*(Id_n, (\alpha_1, \ldots, \alpha_n)): M(\underline{n}) \to M(\underline{n})$$
$$T_k^n:=M_*(\tau_k^n): M(\underline{n}) \to M(\underline{n})$$
$$P_i^n:= M_*(s_i^n): M(\underline{n}) \to M(\underline{n-1})$$
$$H_i^{(n,n-p)\  \omega}:=M^*(s_i^{n, n-p}, \omega): M(\underline{n-p}) \to M(\underline{n})\quad \mathrm{for}\ \omega \in \PP(p+1).$$

\begin{prop} \label{presentation}
Let $M=(M_*, M^*)$ be a Janus functor $\Omega(\PP)_2 \to Ab$, $M$ is a pseudo-Mackey functor iff the following relations are satisfied:
\begin{align} \label{presentation-1}
T_i^n&=(M^*(\tau_i^n, 1_{\PP}^{\times n}))^{-1} \mathrm{\quad  for\ } 1 \leq i \leq n-1;
\end{align}
\begin{align}  \label{presentation-2}
I_{n-1}^{\alpha_1,\ldots \alpha_{n-1}}P_i^n&=P_i^n I_n^{\alpha_1, \ldots, \alpha_{i-1}, \alpha_i , \alpha_i, \ldots, \alpha_n}\mathrm{\quad  for\ } 1 \leq i \leq n;
\end{align}

\begin{align}
H_j^{(n+k-1,n-1)\  \alpha} P_i^{n}&=P_i^{n+k} H_{j+1}^{(n+k,n)\  \alpha}
\end{align}
\begin{equation*}
 \mathrm{\quad  for\ } 1 \leq i<j<n, \ 1 \leq k < n \mathrm{\quad  and\ } \alpha \in \PP(k+1);
\end{equation*}

\begin{align}
H_j^{(n+k-1,n-1)\  \alpha} P_i^{n}&=P_{k+i}^{n+k} H_{j}^{(n+k,n)\  \alpha}
\end{align}
\begin{equation*}
 \mathrm{\quad  for\ } 1 \leq j<i<n, \  1 \leq k < n \mathrm{\quad  and\ } \alpha \in \PP(k+1);
\end{equation*}
\begin{equation}
H_i^{(n+k-1,n-1)\  \alpha} P_i^{n}= P_{i+k}^{n+k}P_{i+k-1}^{n+k+1} \ldots P_i^{n+2k} M_*(\sigma) H_i^{(n+2k, n+k)\ \alpha}H_{i+1}^{(n+k,n)\ \alpha}
\end{equation}
\begin{equation*}
+\sum_{\beta=1}^{k+1} \sum_{i\leq u_1<\ldots<u_\beta \leq i+k} P_{i+k}^{n+k} \ldots \hat{P_{u_{\beta}}} \ldots \hat{P_{u_1}} \ldots P_i^{n+2k-\beta} \sum_{\epsilon_1, \ldots, \epsilon_\beta \in J } M_*(\sigma_{u_1+\epsilon_1(k+1), \ldots, u_{\beta}+\epsilon_{\beta}(k+1)})
\end{equation*}
\begin{equation*} H_i^{(n+2k-\beta, n+k-\epsilon_1-\ldots-\epsilon_{\beta})\ \gamma(\alpha; \omega_1, \ldots, \omega_{k+1})}H_{i+1}^{(n+k-\epsilon_1-\ldots-\epsilon_{\beta},n)\ \gamma(\alpha; \omega'_1, \ldots, \omega'_{k+1})}  
\end{equation*}
\begin{equation*}
\mathrm{\quad  for\ } 1 \leq i<n, \  1 \leq k \leq n \mathrm{\quad  and\ } \alpha \in \PP(k+1);
\end{equation*}
where  $J=\{(\epsilon_1, \ldots, \epsilon_\beta) \in \{0,1\}^\beta \mid \beta-k\leq \epsilon_1+ \ldots + \epsilon_{\beta} \leq k \} $, for $l \in \{1, \ldots, k+1\}$:
$$
\omega_l=\left \lbrace
\begin{array}{lll}
0_\PP \mathrm{\quad for\ }  l \in \{\overline{(1+\epsilon_1)}(u_1-i+1), \ldots, \overline{(1+\epsilon_\beta)}(u_\beta-i+1) \} \setminus \{0\} \\
1_\PP \mathrm{\quad otherwise}
\end{array}
\right.
$$
where $\overline{x}$ is the class modulo $2$ of $x$
and 
$$
\omega'_l=\left \lbrace
\begin{array}{lll}
0_\PP \mathrm{\quad for\ }  l \in \{ \epsilon_1(u_1-i+1), \ldots, \epsilon_\beta(u_\beta-i+1) \} \setminus \{0\} \\
1_\PP \mathrm{\quad otherwise}
\end{array}
\right.
$$
$\sigma$  is\  the\ permutation\ of\ ${n+2k}$\ letters\ such\ that\
$Supp(\sigma)=\{ i+1, \ldots, i+2k\}$ and for $I=\{i, i+1, \ldots, i+2k, i+2k+1\}$ we have:
$$\sigma_{\mid I}=\begin{pmatrix} i&i+1&i+2& \ldots &i+k& i+k+1& i+k+2& \ldots& i+2k+1\\ i& i+2& i+4&  \ldots & i+2k & i+1&i+3& \ldots& i+2k+1\end{pmatrix}.$$ 
and
$\sigma_{u_1+\epsilon_1(k+1), \ldots, u_{\beta}+\epsilon_{\beta}(k+1)} $ is \ the \ permutation\ obtained\ from\ $\sigma$
removing the columns 
$
 \left(
  \begin{array}{ c}
    u_1+\epsilon_1(k+1)\\
    \ldots
  \end{array} \right)
$
$
 \left(
  \begin{array}{ c}
    u_2+\epsilon_2(k+1)\\
    \ldots
  \end{array} \right)
$
\ldots 
$
 \left(
  \begin{array}{ c}
    u_{\beta}+\epsilon_{\beta}(k+1)\\
    \ldots
  \end{array} \right)
$
and reindexing in order to have a permutation of $\underline{n+2k-\beta}$.  
\end{prop}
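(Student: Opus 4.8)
The plan is to derive the list of relations directly from the definition of a pseudo-Mackey functor by restricting attention to generators. Concretely, I would first invoke Proposition \ref{pour-gene}: a Janus functor $M\colon\Omega(\PP)_2\to Ab$ is pseudo-Mackey if and only if conditions (1) and (2) of Definition \ref{PMack} hold for the horizontal generators $s_i^n,\tau_i^n$ of Lemma \ref{relations-hor} and the vertical generators $(s_i^{n,n-k},\omega)$, $(\tau_i^n,1_\PP^{\times n})$, $(Id_n,(\alpha_1,\dots,\alpha_n))$ of Lemma \ref{relations-vert}. Since $M$ is already assumed to be a Janus functor, every identity internal to $(\Omega(\PP)_2)^h=\Omega$ and to $(\Omega(\PP)_2)^v=\Omega(\PP)$ — the symmetric group relations, the multiplicativity $(\ref{rel-vert-1})$ of the $I$'s, and the straightening relations $(\ref{rel-vert-2})$–$(\ref{rel-vert-7})$ — is automatically transported to the operators $I_n,T_k^n,P_i^n,H_i^{(n,n-p)\,\omega}$, so the task reduces to showing that conditions (1) and (2) contribute exactly the stated extra relations.

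Next I would treat the easy parts. For condition (1): by Lemma \ref{dble-iso-MT} every double isomorphism of $\Omega(\PP)_2$ has the form $(f,(f,1_\PP^{\times n}))$ with $f$ a bijection of $\underline n$, and $f$ is a product of transpositions $\tau_i^n$; using invertibility of $M^*(\tau_i^n,1_\PP^{\times n})$ and the $\mathfrak{S}_n$-relations, condition (1) is equivalent to $M_*(\tau_i^n)M^*(\tau_i^n,1_\PP^{\times n})=Id$, which is relation $(\ref{presentation-1})$. For condition (2), given a horizontal generator $g$ and a vertical generator $(\psi,\omega^\psi)$ with the same target I would form the unique square $\mathcal D$ of Lemma \ref{uniq-square}, with pullback $A$ and projections $f\colon A\to B$, $\phi\colon A\to C$, the decoration $\omega^\phi$ being forced by condition (2) of Definition \ref{double}. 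When $\psi$ is injective — the cases $(\tau_j^m,1_\PP^{\times m})$ and $(Id_m,(\alpha_1,\dots,\alpha_m))$ — the relevant projection is a bijection, so $Adm(\mathcal D)=\{A\}$ and the pseudo-Mackey equation has one term: for $g=\tau_i^n$, or for $g=s_i^n$ paired with $(\tau_j^m,1_\PP^{\times m})$, it follows from $(\ref{presentation-1})$ and the relations already built into $M$; for $g=s_i^n$ paired with $(Id_{n-1},(\alpha_1,\dots,\alpha_{n-1}))$ a quick computation of $\mathcal D$ (pullback $\underline n$, $f=s_i^n$, and forced decoration $(\alpha_1,\dots,\alpha_{i-1},\alpha_i,\alpha_i,\dots,\alpha_{n-1})$ on $\phi=Id_n$) yields exactly relation $(\ref{presentation-2})$.

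The substantive cases are the pairs $(g,\psi)=(s_i^n,(s_j^{n-1+k,n-1},\alpha))$, which are the only ones producing genuine Mackey-square relations mixing $M_*$ and $M^*$. I would compute $A$ explicitly: $f\colon A\to\underline{n-1+k}$ is an $s$-type surjection with a single doubled fibre, and $\phi\colon A\to\underline n$ lifts an $s$-type surjection with two contracted blocks, over $i$ and $i+1$. For $i<j$ or $j<i$ the doubled fibre of $g$ and the contracted block of $\psi$ lie over distinct points of $\underline{n-1}$, so deleting a point of either block destroys surjectivity of $\phi$ or $f$; hence $Adm(\mathcal D)=\{A\}$, and after expressing $f$ and $\phi$ through $P_\bullet$, $H_\bullet$ and transpositions the single-term identity becomes the stated relation $H_j^{(n+k-1,n-1)\,\alpha}P_i^n=P_i^{n+k}H_{j+1}^{(n+k,n)\,\alpha}$ when $i<j$ (the shift $j\mapsto j+1$ coming from inserting the doubled $g$-point at position $i<j$) and the relation $H_j^{(n+k-1,n-1)\,\alpha}P_i^n=P_{k+i}^{n+k}H_j^{(n+k,n)\,\alpha}$ when $j<i$.

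The overlapping case $i=j$ is the main obstacle, and I would spend most of the effort there. Now the doubled fibre of $g$ and the contracted block of $\psi$ both lie over $i\in\underline{n-1}$, so the fibre of $A$ over $i$ is a $2\times(k+1)$ grid: $\phi$ collapses its two rows onto $i,i+1\in\underline n$ and $f$ collapses its $k+1$ columns onto $i,\dots,i+k\in\underline{n-1+k}$. A subset $A'\subseteq A$ is $\mathcal D$-admissible iff it meets every row and every column of this grid, i.e. iff $A'=A$ or $A'$ is obtained by picking columns $i\le u_1<\dots<u_\beta\le i+k$ and deleting from each column $u_l$ exactly one of its two entries — the choice $\epsilon_l\in\{0,1\}$ — subject to neither row emptying, which is precisely the constraint $\beta-k\le\epsilon_1+\dots+\epsilon_\beta\le k$ defining $J$. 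For each such $A'$ the composition law of $\mathcal S(\PP)$ in Definition \ref{May-T} reads off $M_*(f_{A'})$ and $M^*((\phi,\omega^\phi)_{A'})$: $f_{A'}$ contracts the $k+1-\beta$ un-chosen columns, producing the product of $P$-operators, preceded by the perfect shuffle $\sigma$ for $A'=A$ or by the minor $\sigma_{u_1+\epsilon_1(k+1),\dots,u_\beta+\epsilon_\beta(k+1)}$ obtained by deleting from $\sigma$ the rows and columns of the removed grid entries; and $(\phi,\omega^\phi)_{A'}$ contracts the two rows, producing two $H$-operators whose operadic decorations, by associativity and unitality of $\gamma$ together with the $0_\PP$-decoration on the inclusion $A'\hookrightarrow A$, are $\gamma(\alpha;\omega_1,\dots,\omega_{k+1})$ and $\gamma(\alpha;\omega'_1,\dots,\omega'_{k+1})$ with the $0_\PP/1_\PP$ patterns recording exactly which grid entries survive. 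Summing $M_*(f_{A'})M^*((\phi,\omega^\phi)_{A'})$ over $Adm(\mathcal D)$, with the term $A'=A$ displayed separately, gives the last relation of the statement; and since every relation in the list has now been identified with an instance of condition (1) or (2) on generators, Proposition \ref{pour-gene} shows the list is jointly necessary and sufficient. The hard part will be carrying out this last translation faithfully — matching the grid-deletion combinatorics and the reordering of fibres to the operators $P$, $H$, $I$, $T$ and to the explicit permutations $\sigma$ and $\sigma_\bullet$ — rather than anything conceptual.
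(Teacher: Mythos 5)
Your proposal is correct and follows essentially the same route as the paper: reduce to generator pairs via Proposition \ref{pour-gene}, read off conditions (1) and (2) on each pair, and in the overlapping case $i=j$ identify the pullback with a $2\times(k+1)$ grid over $i\in\underline{n-1}$ whose admissible subsets are parametrized exactly by the data $(\beta;u_1<\cdots<u_\beta;\epsilon_1,\dots,\epsilon_\beta\in J)$, yielding the $3^{k+1}-2$ terms. The only difference is that you stop short of the final translation of $M_*(f_{A'})M^*((\phi,\omega^\phi)_{A'})$ into products of $P$'s, $H$'s and the minors $\sigma_\bullet$, which the paper carries out explicitly; your outline, however, is faithful to that computation.
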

\begin{rem}
In the last family of equations we take the convention that 
$$H_p^{(m,m)\  \omega}=I_{m}^{1_\PP^{\times p-1},\  \omega,\  1_\PP^{ \times m-p}}\mathrm{\  for\ } 1 \leq p \leq m \mathrm{\ and\ } \omega \in \PP(1).$$
\end{rem}
\begin{rem}
The last family of equations which decomposes $H_i^{(n+k-1,n-1)\  \alpha} P_i^{n}$ as a sum of $(3^{k+1}-2)$-terms is the generalization of the equation $(M4)$ in Proposition $9.6$ in \cite{BDFP}, which decomposes $H_i^n P_i^n$ as a sum of $7$-terms. In fact, in  \cite{BDFP}, we have $k=1$. It is a consequence of the fact that $\mathcal{C}om$ is an operad generated by $\mathcal{C}om(2)$ which is a situation studied in the next section.
\end{rem}

\begin{proof}[Proof of Proposition \ref{presentation}]
According Proposition \ref{pour-gene}, it is sufficient to verify conditions $(1)$ and  $(2)$ in Definition \ref{PMack} on horizontal and vertical generators. 

According Lemma \ref{dble-iso-MT}, double isomorphisms in $\Omega(\PP)_2$ are pairs $(f, (f, 1_\PP^{\times n}))$ where $f$ is an isomorphism of $\underline{n}$.
Condition $(1)$ in Definition \ref{PMack} applied to the double isomorphism  $(\tau_i^n, (\tau_i^n, 1_{\PP}^{\times n}))$ becomes
\begin{equation} \label{aaaa}
T_i^n=(M^*(\tau_i^n, 1_\PP^{\times n}))^{-1}.
\end{equation}
Condition $(1)$ in Definition \ref{PMack} applied to the double isomorphism  $(Id_{\underline{n}}, (Id_{\underline{n}}, 1_{\PP}^{\times n}))$ gives the condition:
$$I_n^{1_\PP, \ldots, 1_\PP}=Id_{M(\underline{n})}$$
which is a consequence of $(\ref{aaaa})$ since:
$$I_n^{1_\PP, \ldots, 1_\PP}=M^*(Id_n,1_\PP^{\times n})=M^*(\tau_i^n, 1_{\PP}^{\times n}) M^*(\tau_i^n, 1_{\PP}^{\times n})=M_*(\tau_i^n)^{-1}M_*(\tau_i^n)^{-1}=(M_*(Id_n))^{-1}=Id_{M(\underline{n})}.$$

For condition $(2)$ in Definition \ref{PMack}, we consider the pairs $(h,v)$ of horizontal and vertical generators of $\Omega(\PP)_2$ having the same target and the corresponding unique square in $\Omega(\PP)_2$.
\begin{itemize}
\item Let $(f,\omega^f): \underline{n+p} \to \underline{n}$ (for $p \in \mathbb{N}$) be a vertical generator of $\Omega(\PP)_2$. The pair $(h,v)=(\tau_i^n, (f, \omega^f))$ corresponds to the unique square in $\Omega(\PP)_2$:
$$\vcenter{
\xymatrix{ \underline{n+p} \ar@{-->}[d]_ -{(\tilde{f}, \omega^{\tilde{f}})} \ar[r]^-{g}& \underline{n+p}\ar@{-->}[d]^-{(f, \omega^f)}\\
\underline{n} \ar[r]_-{\tau_i^n} &\underline{n}
} }$$ 
where $g$ is a bijection.
This square is equivalent, up to double isomorphisms, to the square:
$$\mathcal{D}=\quad \vcenter{
\xymatrix{ \underline{n+p} \ar@{-->}[d]_ -{(\tilde{f}, \omega^{\tilde{f}})\circ (g^{-1}, 1_\PP^{\times (n+p)})} \ar[r]^-{Id}& \underline{n+p}\ar@{-->}[d]^-{(\tau_i^n, 1_\PP^{\times n}) \circ (f, \omega^f)}\\
\underline{n} \ar[r]_-{Id} &\underline{n}.
} }$$ 
We have $Adm(\mathcal{D})=\{ \underline{n+p} \}$. So, in this case condition $(2)$ in Definition \ref{PMack} becomes:
\begin{equation} \label{aaa}
M^*\big((\tau_i^n, 1_\PP^{\times n}) \circ (f, \omega^f)\big) = M^*\big((\tilde{f}, \omega^{\tilde{f}})\circ (g^{-1}, 1_\PP^{\times (n+p)})\big).
\end{equation}

By Lemma \ref{uniq-square} the square associated to the pair $(Id, (\tau_i^n, 1_\PP^{\times n}) \circ (f, \omega^f))$ is unique up to double isomorphism so we have the following equality of vertical maps in $\Omega(\PP)_2$:
$$(\tilde{f}, \omega^{\tilde{f}})\circ (g^{-1}, 1_\PP^{\times (n+p)})=(\tau_i^n, 1_\PP^{\times n}) \circ (f, \omega^f).$$
We deduce that  relation $(\ref{aaa})$ is a consequence of the functoriality of $M^*$.

\item The pair $(h,v)=\big(s_i^{n}, (Id_{n-1},(\alpha_1, \ldots, \alpha_{n-1}))\big)$ corresponds to the unique square in $(\Omega(\PP))_2$:
$$\mathcal{D}=\quad \vcenter{
\xymatrix{ \underline{n} \ar@{-->}[d]_ -{\big(Id_n,(\alpha_1, \ldots, \alpha_{i-1}, \alpha_i, \alpha_i, \alpha_{i+1}, \ldots, \alpha_{n-1})\big)} \ar[r]^-{s_i^{n}}& \underline{n-1}\ar@{-->}[d]^-{\big(Id_{n-1},(\alpha_1, \ldots, \alpha_{i-1}, \alpha_i, \alpha_{i+1}, \ldots, \alpha_{n-1})\big)}\\
\underline{n} \ar[r]_-{s_i^{n}} &\underline{n-1}
} }$$ 
We have $Adm(\mathcal{D})=\{ \underline{n} \}$. So, in this case condition $(2)$ in Definition \ref{PMack} becomes:
$$I_{n-1}^{\alpha_1,\ldots \alpha_{n-1}}P_i^n=P_i^n I_n^{\alpha_1, \ldots, \alpha_{i-1}, \alpha_i , \alpha_i, \ldots, \alpha_n}$$

\item The pair $(h,v)=\big(s_i^{n}, (\tau_j^{n-1},{1_\PP}^n)\big)$ corresponds to the unique square in $(\Omega(\PP))_2$:
$$\quad \vcenter{
\xymatrix{ \underline{n} \ar@{-->}[d]_ -{(g,1_\PP^{\times n})} \ar[r]^-{k}& \underline{n-1}\ar@{-->}[d]^-{(\tau_j^{n-1},{1_\PP}^n)}\\
\underline{n} \ar[r]_-{s_i^{n}} &\underline{n-1}
} }$$ 
where $g$ is a bijection. This square is equivalent, up to double isomorphisms, to the square: 
$$\mathcal{D}=\quad \quad \vcenter{
\xymatrix{ \underline{n} \ar@{-->}[d]_ -{(Id,1_\PP^{\times n})} \ar[r]^-{k \circ g^{-1}}& \underline{n-1}\ar@{-->}[d]^-{(Id,{1_\PP}^n)}\\
\underline{n} \ar[r]_-{\tau_j^{n-1} \circ s_i^{n}} &\underline{n-1}.
} }$$ 
Using a similar argument that for the pair $(\tau^i_n, (f,\omega^f))$ we deduce that the relation obtained from this diagram is a consequence of the functoriality of $M_*$.

\item For $i<j$, the pair $(h,v)=(s_i^{n}, (s_j^{n+k-1,n-1},\alpha))$ corresponds to the unique square in $(\Omega(\PP))_2$:
$$\mathcal{D}=\quad \vcenter{
\xymatrix{ \underline{n+k} \ar@{-->}[d]_ -{(s_{j+1}^{n+k,n},\alpha)} \ar[r]^-{s_i^{n+k}}& \underline{n+k-1}\ar@{-->}[d]^-{(s_{j}^{n+k-1,n-1},\alpha)}\\
\underline{n} \ar[r]_-{s_i^{n}} &\underline{n-1}
} }$$ 
We have $Adm(\mathcal{D})=\{ \underline{n+k} \}$. So, in this case condition $(2)$ in Definition \ref{PMack} becomes:
$$ H_j^{(n+k-1,n-1)\  \alpha} P_i^{n}=P_i^{n+k} H_{j+1}^{(n+k,n)\  \alpha} $$
\item For $i>j$, the pair $(h,v)=(s_i^{n}, (s_j^{n+k-1,n-1},\alpha))$ corresponds to the unique square in $(\Omega(\PP))_2$:
$$\mathcal{D}=\quad \vcenter{
\xymatrix{ \underline{n+k} \ar@{-->}[d]_ -{(s_j^{n+k,n},\alpha)} \ar[r]^-{s_{k+i}^{n+k}}& \underline{n+k-1}\ar@{-->}[d]^-{(s_{j}^{n+k-1,n-1},\alpha)}\\
\underline{n} \ar[r]_-{s_i^{n}} &\underline{n-1}
}}$$ 
We have $Adm(\mathcal{D})=\{ \underline{n+k} \}$. So, in this case condition $(2)$ in Definition \ref{PMack} becomes:
$$ H_j^{(n+k-1,n-1)\  \alpha} P_i^{n}=P_{k+i}^{n+k} H_{j}^{(n+k,n)\  \alpha} $$

\item For $i=j$, the pair $(h,v)=(s_i^{n}, (s_i^{n+k-1,n-1},\alpha))$ corresponds to the unique square in $(\Omega(\PP))_2$:
$$\mathcal{D}=\quad \vcenter{
\xymatrix{ \underline{n+2k} \ar@{-->}[d]_ -{p_1} \ar[r]^-{p_2}& \underline{n+k-1}\ar@{-->}[d]^-{(s_{i}^{n+k-1,n-1},\alpha)}\\
\underline{n} \ar[r]_-{s_i^{n}} &\underline{n-1}
} }$$ 
where 
$$p_1=(s_{i+1}^{n+k,n}, \alpha)(s_{i}^{n+2k,n+k}, \alpha)$$
and
$$p_2=s_{i+k}^{n+k,n+k-1}s_{i+k-1}^{n+k-1,n+k-2} \ldots s_{i+1}^{n+2k-1, n+2k-2}s_{i}^{n+2k,n+2k-1} \sigma$$
where $\sigma$ is the permutation of $n+2k$ letters such that $Supp(\sigma)=\{ i+1, \ldots, i+2k\}$ and for $I=\{i, i+1, \ldots, i+2k, i+2k+1\}$ we have:
$$\sigma_{\mid I}=\begin{pmatrix} i&i+1&i+2& \ldots &i+k& i+k+1& i+k+2& \ldots& i+2k+1\\ i& i+2& i+4&  \ldots & i+2k & i+1&i+3& \ldots& i+2k+1\end{pmatrix}.$$
In the square $\D$ we have:
$$\underline{n+2k}\simeq \{(1,1), \ldots, (i-1,i-1), (i,i), (i,i+1), \ldots, (i,i+k),  (i+1,i), (i+1,i+1), \ldots, (i+1,i+k),$$
$$ (i+2, i+k+1), (i+3, i+k+2), \ldots, (n,n+k-1) \}$$
so we have $Card(Adm(\mathcal{D}))=\Sigma_{l=0}^{k+1} \binom{k+1}{l}  2^{k+1-l}-2=3^{k+1}-2$.

There are $2(k+1)$ $\D-$admissible subsets of $\underline{n+2k}$ having $n+2k-1$ elements which are:
$$A^{\{i+p\}}=\underline{n+2k}-\{ i+p\} \mathrm{\quad  for\ } p \in \{0, \ldots, 2k+1\}.$$

For $p \in \{0, \ldots, k\}$, we have:
$${p_1}_{\mid A^{\{i+p\}}}=(s_{i+1}^{n+k,n},\alpha) (s_{i}^{n+2k-1,n+k}, \gamma(\alpha; 1_\PP^{\times p}, 0_\PP, 1_\PP^{\times k-p}))$$
with the convention that $(s_{i}^{m,m}, \omega)=(Id_{\underline{m}}, (1_\PP^{\times i-1}, \omega, 1_\PP^{\times m-i}))$ for $1\leq i < m$ and  $\omega \in \PP(1)$ and
$${p_2}_{\mid A^{\{i+p\}}}=s_{i+k}^{n+k,n+k-1} \ldots s_{i+p+1}^{n+2k-(p+1), n+2k-(p+2)} s_{i+p-1}^{n+2k-p, n+2k-(p+1)}\ldots s_{i}^{n+2k,n+2k-1} \sigma_p$$
where $\sigma_p$ is the permutation of $n+2k-1$ letters
obtained\ from\ $\sigma$
removing the columns 
$
 \left(
  \begin{array}{ c}
   i+p\\
    \ldots
  \end{array} \right)
$
 in  $\sigma_{\mid I}$ and reindexing.

For $p \in \{k+1, \ldots, 2k+1\}$, we have:
$${p_1}_{\mid A^{\{i+p\}}}=(s_{i+1}^{n+k-1,n}, \gamma(\alpha; 1_\PP^{\times p-(k+1)}, 0_\PP, 1_\PP^{\times 2k+1-p})) (s_{i}^{n+2k-1,n+k-1}, \alpha)$$
with the convention that $(s_{i+1}^{m,m}, \omega)=(Id_{\underline{m}}, (1_\PP^{\times i}, \omega, 1_\PP^{\times m-i-1}))$ for $1\leq i < m$ and  $\omega \in \PP(1)$ and

$${p_2}_{\mid A^{\{i+p\}}1}=s_{i+k}^{n+k,n+k-1} \ldots s_{i+p+1}^{n+2k-(p+1), n+2k-(p+2)} s_{i+p-1}^{n+2k-p, n+2k-(p+1)}\ldots s_{i}^{n+2k,n+2k-1} \sigma_p$$
where $\sigma_p$ is the permutation of $n+2k-1$ letters obtained  from        \ $\sigma$
removing the column 
$
 \left(
  \begin{array}{ c}
    i+p\\
    \ldots
  \end{array} \right)
$
 in  $\sigma_{\mid I}$ and reindexing.

More generally, for $\beta \in \{1, \ldots, k+1\}$, $\{u_1, \ldots, u_\beta\} \subset \{i, \ldots, i+k \}$ such that  $i\leq u_1< \ldots< u_\beta \leq i+k$ and $\iota \in J=\{ (\epsilon_1, \ldots, \epsilon_\beta)  \in \{0,1\}^{\times \beta}\ \mid \beta-k \leq \epsilon_1+ \ldots+ \epsilon_\beta \leq k \}$ we consider the set:
$$U^\iota=\{(i+\epsilon_1, u_1), (i+\epsilon_2, u_2), \ldots, (i+\epsilon_\beta, u_\beta) \}.$$
Therefore, for $\beta\leq k$ (resp. $\beta=k+1$) there are $\binom{k+1}{\beta} 2^\beta$ (resp. $2^{k+1}-2$) $\D$-admissible subsets of $\underline{n+2k}$ having $n+2k-\beta$ elements which are 
$$A^{U^\iota}=\underline{n+2k}-U^\iota.$$
We have:
$${p_1}_{\mid A^{U^\iota}}=(s_{i+1}^{n+k-\Sigma_{i=1}^\beta\epsilon_i,n}, \gamma(\alpha; \omega'_1, \ldots, \omega'_{k+1})) (s_{i}^{n+2k-\beta,n+k-\Sigma_{i=1}^\beta \epsilon_i},  \gamma(\alpha; \omega_1, \ldots, \omega_{k+1}))$$
with the convention that 
$(s_{j}^{m,m}, \omega)=(Id_{\underline{m}}, (1_\PP^{\times j-1}, \omega, 1_\PP^{\times m-j}))$ for $1\leq j \leq m$ and  $\omega \in \PP(1)$ and
$${p_2}_{\mid A^{U^\iota}}=s_{i+k}^{n+k,n+k-1} \ldots \hat{s_{u_\beta}} \ldots \hat{s_{u_1}} \ldots s_{i}^{n+2k,n+2k-1} \sigma_{u_1+\epsilon_1(k+1), \ldots, u_{\beta}+\epsilon_{\beta}(k+1)}$$
where, for $l \in \{1, \ldots, k+1\}$,
$$
\omega_l=\left \lbrace
\begin{array}{lll}
0_\PP \mathrm{\quad for\ }  l \in \{\overline{(1+\epsilon_1)}(u_1-i+1), \ldots, \overline{(1+\epsilon_\beta)}(u_\beta-i+1) \} \setminus \{0\} \\
1_\PP \mathrm{\quad otherwise}
\end{array}
\right.
$$
where $\overline{x}$ is the class modulo $2$ of $x$
and 
$$
\omega'_l=\left \lbrace
\begin{array}{lll}
0_\PP \mathrm{\quad for\ }  l \in \{ \epsilon_1(u_1-i+1), \ldots, \epsilon_\beta(u_\beta-i+1) \} \setminus \{0\} \\
1_\PP \mathrm{\quad otherwise}
\end{array}
\right.
$$
and
$\sigma_{u_1+\epsilon_1(k+1), \ldots, u_{\beta}+\epsilon_{\beta}(k+1)} $ is \ the \ permutation\ of\ $n+2k-\beta$\ letters\ obtained\ from\ $\sigma$
removing the columns 
$
 \left(
  \begin{array}{ c}
    u_1+\epsilon_1(k+1)\\
    \ldots
  \end{array} \right)
$
$
 \left(
  \begin{array}{ c}
    u_2+\epsilon_2(k+1)\\
    \ldots
  \end{array} \right)
$
\ldots 
$
 \left(
  \begin{array}{ c}
    u_{\beta}+\epsilon_{\beta}(k+1)\\
    \ldots
  \end{array} \right)
$
and reindexing in order to have a permutation of $\underline{n+2k-\beta}$.  
So, in this case condition $(2)$ in Definition \ref{PMack} becomes:
\begin{equation}
H_i^{(n+k-1,n-1)\  \alpha} P_i^{n}= P_{i+k}^{n+k}P_{i+k-1}^{n+k+1} \ldots P_i^{n+2k} M_*(\sigma) H_i^{(n+2k, n+k)\ \alpha}H_{i+1}^{(n+k,n)\ \alpha}
\end{equation}
\begin{equation*}
+\sum_{\beta=1}^{k+1} \sum_{i\leq u_1<\ldots<u_\beta \leq i+k} P_{i+k}^{n+k} \ldots \hat{P_{u_{\beta}}} \ldots \hat{P_{u_1}} \ldots P_i^{n+2k-\beta} \sum_{\epsilon_1, \ldots, \epsilon_\beta \in J } M_*(\sigma_{u_1+\epsilon_1(k+1), \ldots, u_{\beta}+\epsilon_{\beta}(k+1)})
\end{equation*}
\begin{equation*} H_i^{(n+2k-\beta, n+k-\epsilon_1-\ldots-\epsilon_{\beta})\ \gamma(\alpha; \omega_1, \ldots, \omega_{k+1})}H_{i+1}^{(n+k-\epsilon_1-\ldots-\epsilon_{\beta},n)\ \gamma(\alpha; \omega'_1, \ldots, \omega'_{k+1})}  
\end{equation*}

\end{itemize}

\end{proof}

%%%%%%%%%%%%%%%%%%%%%%%%%%%%%%%%%%

\subsection{Examples}
In this section we present several applications of Proposition \ref{presentation}. We give the presentation of polynomial functors from $\PP-alg$ for an operad $\PP$ generated by binary operations and the presentation of linear and quadratic functors.
\subsubsection{Presentation of polynomial functors from $\PP-alg$ for an operad $\PP$ generated by binary operations}
We suppose, in this section, that $\PP$ is generated by binary operations. For example, $\C$om and $\A$s are such operads. In this case, we have to consider less generators for $\Omega(\PP)$.
For all $i \in \N$ such that $1 \leq i \leq n-1$ we consider:
\begin{itemize}
\item
$(Id_n, \alpha_1, \ldots, \alpha_n): \underline{n} \to \underline{n}$ where $\forall i \in \{1, \ldots, n\}$, $\alpha_i \in \PP(1)$.
\item $s_i^{n, n-1}: \underline{n} \to \underline{n-1}$ the unique surjection preserving the natural order and such that:
$$s^{n, n-1}_i(i)=s_i^{n, n-1}(i+1)=i$$
$\forall \omega \in \PP(2)$ we obtain a morphism in $\Omega(\PP)$:
$$(s^{n, n-1}_i, \omega): \underline{n} \to \underline{n-1};$$ 
\item 
$(\tau^n_i, 1_{\PP}^{\times n} \in \PP(1)^{\times n}): \underline{n} \to \underline{n}$ where $\tau^n_i$ is  the transposition of $\mathfrak{S}_n$ which exchanges $i$ and $i+1$.
\end{itemize}
By an easy computation we obtain the following simplification of Proposition \ref{presentation}:

\begin{prop} \label{presentation-gal}
Let $\PP$ be an operad generated by binary operations and $M=(M_*, M^*)$ be a Janus functor $\Omega(\PP)_2 \to Ab$, $M$ is a pseudo-Mackey functor iff the following relations are satisfied:

\begin{align}
T_i^n&=(M^*(\tau_i^n, 1_{\PP}^{\times n}))^{-1} \mathrm{\quad  for\ } 1 \leq i \leq n;
\end{align}
\begin{align}
I_{n-1}^{\alpha_1,\ldots \alpha_{n-1}}P_i^n&=P_i^n I_n^{\alpha_1, \ldots, \alpha_{i-1}, \alpha_i, \alpha_i, \ldots, \alpha_n}\mathrm{\quad  for\ } 1 \leq i \leq n;
\end{align}
\begin{align}
H_j^{(n,n-1)\  \alpha} P_i^{n}&=P_i^{n+1} H_{j+1}^{(n+1,n)\  \alpha} \mathrm{\quad  for\ } 1 \leq i<j<n \mathrm{\quad  and\ } \alpha \in \PP(2);
\end{align}
\begin{align}
H_j^{(n,n-1)\  \alpha} P_i^{n}&=P_{1+i}^{n+1} H_{j}^{(n+1,n)\  \alpha} \mathrm{\quad  for\ } 1 \leq j<i<n \mathrm{\quad  and\ }  \alpha \in \PP(2);
\end{align}
\begin{equation}
H_i^{(n,n-1)\  \alpha} P_i^{n}= P_{i+1}^{n+1}P_{i}^{n+2} T_{i+1}^{n+2} H_{i}^{(n+2,n+1)\  \alpha} H_{i+1}^{(n+1,n)\  \alpha}+P_{i+1}^{n+1}T_{i}^{n+1} H_{i+1}^{(n+1,n)\  \alpha}
\end{equation}
\begin{equation*}
+P_{i+1}^{n+1}H_{i}^{(n+1,n)\  \alpha}+P_{i}^{n+1}H_{i+1}^{(n+1,n)\  \alpha}+P_{i}^{n+1}T_{i+1}^{n+1}H_{i}^{(n+1,n)\  \alpha}
\end{equation*}
\begin{equation*}
+I_n^{1_{\PP}^{\times (i-1)}, \gamma(\alpha; 1_\PP, 0_\PP), \gamma(\alpha; 0_\PP, 1_\PP), 1_{\PP}^{\times (n-i-1)}}+T_i^n
I_n^{1_{\PP}^{\times (i-1)}, \gamma(\alpha; 0_\PP, 1_\PP), \gamma(\alpha; 1_\PP, 0_\PP), 1_{\PP}^{\times (n-i-1)}}
\end{equation*}
\begin{equation*}
 \mathrm{\quad  for\ } 1 \leq i<n \mathrm{\quad  and\ } \alpha \in \PP(2).
\end{equation*}
\end{prop}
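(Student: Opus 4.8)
The plan is to derive Proposition~\ref{presentation-gal} from Proposition~\ref{presentation} by exploiting the fact that, when $\PP$ is generated by binary operations, the vertical morphisms $(s_i^{n,n-1},\omega)$ with $\omega\in\PP(2)$, together with the permutations $(\tau_i^n,1_\PP^{\times n})$ and the unary decorations $(Id_n,(\alpha_1,\ldots,\alpha_n))$, already generate $\Omega(\PP)$, hence the vertical part of $\Omega(\PP)_2$. Indeed, any $\omega\in\PP(k+1)$ is a $\gamma$-composite of binary operations of $\PP$ and copies of $1_\PP$, and the composition formula of Definition~\ref{May-T} turns such a factorisation of $\omega$ into a factorisation of $(s_i^{n,n-k},\omega)$ as a product of morphisms of the form $(s_j^{m,m-1},\theta)$, $\theta\in\PP(2)$, and permutations. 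Granting this, Proposition~\ref{pour-gene} reduces the claim to verifying conditions $(1)$ and $(2)$ of Definition~\ref{PMack} on the reduced horizontal and vertical generators; one then simply reruns the proof of Proposition~\ref{presentation} restricted to these generators, that is, with $k=1$ throughout.

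I would first dispose of the cases producing no new constraint or only the ``short'' relations, following the proof of Proposition~\ref{presentation} verbatim. Condition~$(1)$ on the double isomorphisms $(\tau_i^n,(\tau_i^n,1_\PP^{\times n}))$ and $(Id_{\underline n},(Id_{\underline n},1_\PP^{\times n}))$ yields the relation $T_i^n=(M^*(\tau_i^n,1_\PP^{\times n}))^{-1}$, from which $I_n^{1_\PP,\ldots,1_\PP}=Id$ follows. For condition~$(2)$: the pairs $(\tau_i^n,(f,\omega^f))$ and $(s_i^n,(\tau_j^{n-1},1_\PP^{\times n}))$ give, after passing to the canonical square (unique by Lemma~\ref{uniq-square}), only identities forced by the functoriality of $M^*$, resp.\ $M_*$; the pair $(s_i^n,(Id_{n-1},(\alpha_1,\ldots,\alpha_{n-1})))$ gives $I_{n-1}^{\alpha_1,\ldots,\alpha_{n-1}}P_i^n=P_i^nI_n^{\alpha_1,\ldots,\alpha_{i-1},\alpha_i,\alpha_i,\ldots,\alpha_n}$; and for $i<j$, resp.\ $i>j$, the pair $(s_i^n,(s_j^{n,n-1},\alpha))$ gives $H_j^{(n,n-1)\,\alpha}P_i^n=P_i^{n+1}H_{j+1}^{(n+1,n)\,\alpha}$, resp.\ $H_j^{(n,n-1)\,\alpha}P_i^n=P_{i+1}^{n+1}H_{j}^{(n+1,n)\,\alpha}$, since in each of these cases the relevant square admits a unique admissible subset.

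The substantive step is the diagonal pair $(s_i^n,(s_i^{n,n-1},\alpha))$ with $\alpha\in\PP(2)$, obtained by specialising the last family of relations of Proposition~\ref{presentation} to $k=1$. Then the top-left corner of the unique square becomes $\underline{n+2}$, the permutation $\sigma$ reduces to the transposition $\tau_{i+1}^{n+2}$ (since, with $I=\{i,i+1,i+2,i+3\}$, $\sigma_{|I}$ merely exchanges $i+1$ and $i+2$), and $Adm(\mathcal{D})$ has $3^{2}-2=7$ elements: the whole set $\underline{n+2}$; the four sets $A^{\{i+p\}}$, $p\in\{0,1,2,3\}$, coming from $u_1\in\{i,i+1\}$ and $\epsilon_1\in\{0,1\}$; and the two sets $A^{U^\iota}$ with $U^\iota=\{(i+\epsilon_1,i),(i+\epsilon_2,i+1)\}$ for $(\epsilon_1,\epsilon_2)\in\{(1,0),(0,1)\}$, where here $J=\{\epsilon_1+\epsilon_2=1\}$. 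Substituting the formulas for $p_1|_{A'}$ and $p_2|_{A'}$ established in the proof of Proposition~\ref{presentation}, rewriting each one-line permutation $\sigma_p$, $\sigma_{u_1+2\epsilon_1,u_2+2\epsilon_2}$ as a product of transpositions $\tau_\bullet$, and using the conventions $H_p^{(m,m)\,\omega}=I_m^{1_\PP^{\times p-1},\omega,1_\PP^{\times m-p}}$ and $(s_j^{m,m},\omega)=(Id_{\underline m},(1_\PP^{\times j-1},\omega,1_\PP^{\times m-j}))$, condition~$(2)$ turns into exactly the last relation of Proposition~\ref{presentation-gal}, whose seven summands are indexed by these seven admissible subsets. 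I expect the main obstacle to be entirely bookkeeping: keeping track of the reindexing of the suppressed-surjection permutations and of the operadic arguments $\gamma(\alpha;1_\PP,0_\PP)$, $\gamma(\alpha;0_\PP,1_\PP)$ produced when a surjection is omitted, so that the $\beta=1$ and $\beta=2$ contributions come out in the stated normal form.
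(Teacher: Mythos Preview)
Your proposal is correct and follows exactly the paper's approach: the paper merely states that this is obtained ``by an easy computation'' from Proposition~\ref{presentation}, after noting that for an operad generated by binary operations the vertical generators $(s_i^{n,n-k},\omega)$ with $k>1$ are redundant. Your account simply spells out this specialisation to $k=1$, and your bookkeeping for the diagonal case (identifying $\sigma$ with $\tau_{i+1}^{n+2}$ and enumerating the seven admissible subsets) is accurate.
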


\begin{rem}
In this case, the last family of equations decomposes $H_i^{(n,n-1) \alpha} P_i^n$ as a sum of $7$ terms. For $\PP=\C$om, since $\C$om$(1)=\C$om$(2)=\underline{1}$ we recover the relation given in \cite{BDFP}. For $\PP=\A s$, since $\A$s$(1)=\underline{1}$ the last family of equations becomes:
\begin{equation}
H_i^{(n,n-1)\  \alpha} P_i^{n}= P_{i+1}^{n+1}P_{i}^{n+2} T_{i+1}^{n+2} H_{i}^{(n+2,n+1)\  \alpha} H_{i+1}^{(n+1,n)\  \alpha}+P_{i+1}^{n+1}T_{i}^{n+1} H_{i+1}^{(n+1,n)\  \alpha}
\end{equation}
\begin{equation*}
+P_{i+1}^{n+1}H_{i}^{(n+1,n)\  \alpha}+P_{i}^{n+1}H_{i+1}^{(n+1,n)\  \alpha}+P_{i}^{n+1}T_{i+1}^{n+1}H_{i}^{(n+1,n)\  \alpha} +Id_{M(\underline{n})}+T_i^n
\end{equation*}
\begin{equation*}
 \mathrm{\quad  for\ } 1 \leq i<n \mathrm{\quad  and\ } \alpha \in \A s(2)=\mathfrak{S}_2.
\end{equation*}

\end{rem}

%%%%%%%%%%%%%%%%%%%%%%%%
\subsubsection{Presentation of linear functors}
\begin{prop}
There is an equivalence of  categories between $Lin(Free(\PP), Ab)$ and the category of diagrams:
$$\xymatrix{
\PP(1)\times M_e \ar[d]^{I_e}\\
M_e
}$$
where
\begin{itemize}
\item $M_e$ is an abelian group;
\item $\forall \omega \in \PP(1); \ I_e^{\omega}:=I_e(\omega,-):M_e \to M_{e}$ is a group morphism;
\end{itemize}
satisfying the following relations $\forall \omega \in \PP(1), \forall \omega' \in \PP(1) \quad$
\begin{enumerate}
\item $I_e^{1_{\PP}}=Id_{M_e}$;
\item $I_e^{\omega'} I_e^{\omega}=I_e^{\gamma(\omega; \omega')}$.
\end{enumerate}
\end{prop}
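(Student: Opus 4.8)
The plan is to combine Corollary \ref{equ-pol-cor} in the case $n=1$ with a direct description of the few pseudo-Mackey functors that can occur. By definition a linear functor is a reduced polynomial functor of degree $\leq 1$, so Corollary \ref{equ-pol-cor} already supplies an equivalence between $Lin(Free(\PP), Ab)$ and the full subcategory of $PMack(\Omega(\PP), Ab)$ whose objects vanish on $\underline 0$ and on every $\underline m$ with $m\geq 2$. It therefore suffices to identify this subcategory with the category of diagrams in the statement; I will in fact exhibit an isomorphism of categories, which is \emph{a fortiori} an equivalence.

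First I would isolate the part of $\Omega(\PP)_2$ that can act non-trivially. The only surjection $\underline 1\to\underline 1$ is the identity, so in $(\Omega(\PP)_2)^h=\Omega$ the object $\underline 1$ has $Id_{\underline 1}$ as its sole endomorphism, whereas $\Hom_{\Omega(\PP)}(\underline 1,\underline 1)$ is the set of pairs $(Id_{\underline 1},\omega)$ with $\omega\in\PP(1)$, the composition formula of Definition \ref{May-T} degenerating to $(Id_{\underline 1},\omega)\circ(Id_{\underline 1},\omega')=(Id_{\underline 1},\gamma(\omega;\omega'))$ with unit $(Id_{\underline 1},1_\PP)$. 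Hence a Janus functor $M=(M_*,M^*)$ vanishing on $\underline 0$ and on the $\underline m$ with $m\geq 2$ is completely determined by the abelian group $M_e:=M(\underline 1)$ together with the endomorphisms $I_e^\omega:=M^*(Id_{\underline 1},\omega)\colon M_e\to M_e$, $\omega\in\PP(1)$: every morphism of $(\Omega(\PP)_2)^h$ (resp.\ of $(\Omega(\PP)_2)^v$) other than $Id_{\underline 1}$ (resp.\ other than a pair $(Id_{\underline 1},\omega)$) has source or target of cardinality $\neq 1$ and is thus sent to $0$. Covariant functoriality of $M_*$ is automatic, and contravariant functoriality of $M^*$ amounts exactly to the relations $I_e^{1_\PP}=Id_{M_e}$ and $I_e^{\omega'}I_e^{\omega}=I_e^{\gamma(\omega;\omega')}$; conversely any diagram as in the statement determines a Janus functor by these formulas, with morphisms between them corresponding to homomorphisms $M_e\to M'_e$ commuting with all the $I_e^\omega$.

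It remains to check that such a Janus functor automatically satisfies the two axioms of Definition \ref{PMack}. For axiom (1): by Lemma \ref{dble-iso-MT} a double isomorphism of $\Omega(\PP)_2$ has the form $(f,(f,1_\PP^{\times\mid A\mid}))$ with $f$ a bijection; when $A\neq\underline 1$ both composites vanish, and when $A=\underline 1$ the only such datum is $(Id_{\underline 1},(Id_{\underline 1},1_\PP))$, for which $M_*(Id_{\underline 1})\circ M^*(Id_{\underline 1},1_\PP)=Id_{M_e}$ by the first relation. For axiom (2): given a horizontal morphism $g\colon C\to D$, a vertical morphism $(\psi,\omega^\psi)$ from $B$ to $D$, and the attached square of Lemma \ref{uniq-square} with top $f\colon A\to B$ and left $(\phi,\omega^\phi)$ from $A$ to $C$, one verifies that if some of $B,C,D$ has cardinality $\neq 1$ then, $\psi$ and $g$ being surjective, already $B$ or $C$ has cardinality $\neq 1$, whence $M(B)=0$ or $M(C)=0$ and both sides of the required identity vanish; while if $B=C=D=\underline 1$ then necessarily $A=\underline 1$, the underlying maps of $g,\psi,f$ are all $Id_{\underline 1}$, $(\phi,\omega^\phi)=(Id_{\underline 1},\omega^\psi)$ and $Adm(A)=\{\underline 1\}$, so the identity reduces to $I_e^{\omega^\psi}=I_e^{\omega^\psi}$. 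Thus the subcategory of $PMack(\Omega(\PP),Ab)$ produced by Corollary \ref{equ-pol-cor} is isomorphic, via $M\mapsto(M_e,(I_e^\omega)_{\omega\in\PP(1)})$, to the category of diagrams in the statement, which finishes the proof. The one step deserving genuine (if routine) care is the case analysis verifying axiom (2) in the degenerate situations above; everything else is bookkeeping.
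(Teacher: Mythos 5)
Your proof is correct and follows essentially the same strategy as the paper's: invoke Corollary \ref{equ-pol-cor} for $n=1$, observe that a pseudo-Mackey functor concentrated at $\underline{1}$ is determined by $M_e:=M(\underline{1})$ together with the operators $I_e^\omega=M^*(Id,\omega)$, and check that functoriality of $M^*$ plus the pseudo-Mackey axioms impose exactly relations (1) and (2). The only cosmetic difference is in how the work is partitioned: the paper cites the generators-and-relations machinery (Proposition \ref{presentation}) and notes that the non-trivial relations reduce to (1) and (2), while you verify axioms (1) and (2) of Definition \ref{PMack} by a direct case analysis, observing also that $I_e^{1_\PP}=Id$ is already forced by contravariant functoriality since $(Id_{\underline{1}},1_\PP)$ is the identity morphism of $\underline{1}$ in $\Omega(\PP)$. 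Both routes are sound and essentially interchangeable.
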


\begin{proof}
According Corollary \ref{equ-pol-cor}, $Lin(Free(\PP), Ab)$ is naturally equivalent to the category of pseudo-Mackey functors from $\Omega(\PP)$ to $Ab$ which have zero values on $\underline{0}$ and on sets of cardinality $>1$. So, it is sufficient to consider horizontal generator: $Id: \underline{1} \to \underline{1}$ and vertical generators $(Id, \omega \in \PP(1)): \underline{1} \to \underline{1}$ in $\Omega(\PP)_2 $. Let $M=(M_*, M^*): \Omega(\PP)_2 \to Ab$ be a Janus functor which has zero values  on $\underline{0}$ and on sets of cardinality $>1$. In the statement we replace $M(\underline{1})$ by $M_e$ and $I_1^\omega$ by $I_e^\omega$ for $\omega \in \PP(1)$. $M$ is a Janus functor iff we have the following relations:
$$M^*(Id,\omega') M^*(Id,\omega)=M^*((Id,\omega)(Id,\omega')) \quad i.e. \quad I_e^{\omega'} I_e^{\omega}=I^{\gamma(\omega',\omega)};$$
 $M$ is a pseudo-Mackey functor iff 
$$M^*(Id,1_\PP)=Id \quad i.e.\quad  I_e^{1_\PP}=Id_{M_e}.$$
(the other conditions obtained in Proposition \ref{presentation} are empty).
\end{proof}

\begin{rem}
Using the description of linear functors recalled in \cite{HV} we have: $Lin(Free(\PP), Ab) \simeq T_1U_{\mathcal{F}_\PP(1)}(\mathcal{F}_\PP(1)) \ti mod \simeq \Z[\PP(1)] \ti mod$ where $U_{\mathcal{F}_\PP(1)}$ is the reduced projective functor associated to $\mathcal{F}_\PP(1)$. We verify easily that this description coincide with that given in the last proposition.
\end{rem}

%%%%%%%%%%%%%%%%%%%%%%%%
\subsubsection{Presentation of quadratic functors}
\begin{prop}
There is an equivalence of categories between $Quad(Free(\PP), Ab)$ and the category of diagrams:
$$\xymatrix{
\PP(1) \times \PP(1)\times M_{ee} \ar[d]_{I_{ee}} & \PP(1) \times M_e \ar[d]_{I_e}\\
M_{ee}  \ar@(ul,l)[]_T  \ar[r]_P& M_e\\
 & \PP(2) \times M_e \ar[ul]^H
}$$
where
\begin{itemize}
\item $M_e$ and $M_{ee}$ are abelian groups;
\item $\forall \omega \in \PP(1); \ I_e^{\omega}:=I_e(\omega,-):M_e \to M_{e}$ is a group morphism;
\item $\forall \omega \in \PP(1), \omega' \in \PP(1); \ I_{ee}^{\omega, \omega'}:=I_{ee}(\omega, \omega',-):M_{ee} \to M_{ee}$ is a group morphism;
\item $\forall \omega \in \PP(2); \ H^{\omega}:=H(\omega,-):M_e \to M_{ee}$ is a group morphism;
\item $P$ and $T$ are group morphisms;
\end{itemize}
satisfying the following relations $\forall (\omega, \omega', \omega_1, \omega'_1) \in \PP(1)^{\times 4}, \forall \alpha \in \PP(2), \forall \alpha' \in \PP(2)$
\begin{enumerate}
\item \label{quad-1} $T.T=Id_{M_{ee}}$;
\item \label{quad-2} $PT=P$;
\item \label{quad-3} $ I_e^{\omega'} I_e^{\omega}=I_e^{\gamma(\omega; \omega')}$;
\item \label{quad-4} $I_{ee}^{\omega, \omega'}I_{ee}^{\omega_1, \omega'_1}=I_{ee}^{\gamma(\omega_1, \omega), \gamma(\omega'_1, \omega')}$;
\item \label{quad-5} $I_{ee}^{\omega, \omega'}T=T I_{ee}^{\omega', \omega}$;
\item \label{quad-6} $H^{\alpha} I_e^{\omega}= H^{\gamma(\omega,\alpha)}$
\item \label{quad-7} $I_{ee}^{\omega, \omega'}H^{\alpha}=H^{\gamma(\alpha; \omega, \omega')}$;
\item \label{quad-8} $T H^{\alpha}=H^{\alpha. \tau}$ where $\tau$ is the transposition of $\mathfrak{S}_2$ exchanging $1$ and $2$;
\item \label{quad-9} $T=(M^*(\tau, (1_\PP, 1_\PP)))^{-1}$ 
\item \label{quad-10} $I_e^{\omega}P=P I_{ee}^{\omega, \omega}$ 
\item   \label{quad-11} $H^{\alpha}P=I_{ee}^{\gamma(\alpha; 1_{\PP}, 0_{\PP}),\gamma(\alpha; 0_{\PP}, 1_{\PP})}+TI_{ee}^{\gamma(\alpha; 0_{\PP}, 1_{\PP}),\gamma(\alpha; 1_{\PP}, 0_{\PP})}$.
\end{enumerate}
\end{prop}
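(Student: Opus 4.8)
The plan is to deduce this presentation from the general one of Proposition \ref{presentation}, after identifying $Quad(Free(\PP),Ab)$ with an explicit category of pseudo-Mackey functors and truncating to the objects $\underline 1$ and $\underline 2$.

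First I would invoke Corollary \ref{equ-pol-cor}: the category $Quad(Free(\PP),Ab)=Pol_2(Free(\PP),Ab)$ is naturally equivalent to the full subcategory of $PMack(\Omega(\PP),Ab)$ of those $M$ with $M(\underline 0)=0$ and $M(\underline n)=0$ for $n>2$. Such an $M$ is determined by the abelian groups $M_e:=M(\underline 1)$ and $M_{ee}:=M(\underline 2)$ together with the maps induced by the morphisms of $\Omega(\PP)_2$ whose vertices lie in $\{\underline 1,\underline 2\}$, since any morphism factoring through $\underline 0$ or through some $\underline n$ with $n\ge 3$ acts as $0$ on $M$. By Lemmas \ref{relations-hor} and \ref{relations-vert} the pertinent generators are the horizontal maps $\tau^2_1$ and $s^2_1$ and the vertical maps $(Id_{\underline 1},\omega)$ ($\omega\in\PP(1)$), $(Id_{\underline 2},(\omega,\omega'))$ ($\omega,\omega'\in\PP(1)$), $(\tau^2_1,1_{\PP}^{\times 2})$ and $(s^{2,1}_1,\alpha)$ ($\alpha\in\PP(2)$); note that a surjection onto a set of cardinality $\le 2$ with source of cardinality $\le 2$ has all fibres of cardinality $\le 2$, so only $\PP(1)$- and $\PP(2)$-decorations occur here. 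Translating these morphisms through $M_*$ and $M^*$ produces precisely the data $(M_e,M_{ee},I_e^{\bullet},I_{ee}^{\bullet,\bullet},H^{\bullet},P,T)$ of the statement.

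Next I would read off the relations, using Proposition \ref{pour-gene} to reduce the verification of conditions (1) and (2) of Definition \ref{PMack} to the above generators. Functoriality of $M_*$ on $(\Omega(\PP)_2)^h$ restricted to $\{\underline 1,\underline 2\}$ gives relation (\ref{quad-1}) (from $(\tau^2_1)^2=Id$) and (\ref{quad-2}) (from $s^2_1\tau^2_1=s^2_1$ in Lemma \ref{relations-hor}); functoriality of $M^*$ on $(\Omega(\PP)_2)^v$, computing the relevant composites by the formula of Definition \ref{May-T} as tabulated in Lemma \ref{relations-vert}, gives (\ref{quad-3}), (\ref{quad-4}), (\ref{quad-6}) and (\ref{quad-7}); condition (1) of Definition \ref{PMack} applied to the unique non-trivial double isomorphism $(\tau^2_1,(\tau^2_1,1_{\PP}^{\times 2}))$ (Lemma \ref{dble-iso-MT}) gives (\ref{quad-9}); and relations (\ref{quad-5}) and (\ref{quad-8}) follow by combining (\ref{quad-9}) and (\ref{quad-1}) with the vertical composites $(Id_{\underline 2},(\omega,\omega'))(\tau^2_1,1_{\PP}^{\times 2})$ and $(s^{2,1}_1,\alpha)(\tau^2_1,1_{\PP}^{\times 2})=(s^{2,1}_1,\alpha.\tau)$ recorded in Lemma \ref{relations-vert}. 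Finally, condition (2) of Definition \ref{PMack} for the square associated to the pair $(s^2_1,(Id_{\underline 1},\omega))$ yields (\ref{quad-10}), and for the square associated to $(s^2_1,(s^{2,1}_1,\alpha))$ it yields (\ref{quad-11}): the top-left corner of the latter square is the pullback $\underline 2\times_{\underline 1}\underline 2\cong\underline 4$, and since $M$ vanishes on $\underline 3$ and $\underline 4$ only the two $\mathcal{D}$-admissible subsets of cardinality $2$ (the graphs of $Id_{\underline 2}$ and of $\tau^2_1$) contribute, reproducing exactly the $n=2$, $i=1$, $k=1$ instance of the last relation of Proposition \ref{presentation} once its five terms passing through $\underline 3$ or $\underline 4$ are discarded. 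Conversely, any diagram satisfying (\ref{quad-1})--(\ref{quad-11}) reassembles into a pseudo-Mackey functor with the required vanishing, these constructions are mutually inverse and natural, and the equivalence follows.

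The main work is bookkeeping rather than conceptual: one must check that \emph{every} defining relation of Proposition \ref{presentation} for an arbitrary unitary operad $\PP$ either becomes vacuous once $M(\underline n)=0$ is imposed for $n\ge 3$ --- which happens for all relations whose indexing forces a vertex of cardinality $\ge 3$, in particular all of relations (3) and (4) of Proposition \ref{presentation} and all but two terms of its last relation --- or collapses to one of (\ref{quad-1})--(\ref{quad-11}), and symmetrically that no relation among the retained generators has been omitted. The higher decorations $\PP(k)$ with $k\ge 3$ cause no difficulty, as they can only be attached to surjections $\underline n\to\underline m$ with $n\ge 3$, on which $M$ is zero; this is also why the resulting presentation involves only $\PP(1)$ and $\PP(2)$ and coincides with what the binary-operad computation of Proposition \ref{presentation-gal} would give.
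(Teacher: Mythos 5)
Your proof follows essentially the same route as the paper's: reduce via Corollary \ref{equ-pol-cor} to pseudo-Mackey functors on $\Omega(\PP)_2$ vanishing on $\underline{0}$ and on sets of cardinality $\geq 3$, restrict the generators of Lemmas \ref{relations-hor} and \ref{relations-vert} to those with vertices in $\{\underline{1},\underline{2}\}$, and read off the eleven relations from functoriality together with conditions (1) and (2) of Definition \ref{PMack} as specialized in Proposition \ref{presentation}. Your organization differs only superficially — you deduce (\ref{quad-5}) and (\ref{quad-8}) from (\ref{quad-9}) and (\ref{quad-1}) rather than listing them with the $M^*$-functoriality relations as the paper does (an equivalent bookkeeping, since (\ref{quad-9}) identifies $T$ with $M^*(\tau_1^2,1_\PP^{\times 2})$) — and you spell out more explicitly than the paper why the five terms of relation (\ref{quad-11}) passing through $\underline{3}$ and $\underline{4}$ drop out.
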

\begin{proof}
According Corollary \ref{equ-pol-cor}, $Quad(Free(\PP), Ab)$ is naturally equivalent to the category of pseudo-Mackey functors from $\Omega(\PP)$ to $Ab$ which have zero values on $\underline{0}$ and on sets of cardinality $>2$. So, it is sufficient to consider horizontal generators: 
$$s_1^2: \underline{2} \to \underline{1}$$
$$\tau_1^2: \underline{2} \to \underline{2}$$
 and vertical generators 
 $$(Id, \omega \in \PP(1)): \underline{1} \to \underline{1}$$
 $$(Id, (\omega, \omega') \in \PP(1)^{\times 2}): \underline{2} \to \underline{2}$$
 $$(s_1^{2,1}, \omega \in \PP(2)): \underline{2} \to \underline{1}$$
 $$(\tau_1^2, (1_\PP, 1_\PP)): \underline{2} \to \underline{2}$$
  in $\Omega(\PP)_2 $. Let $M=(M_*, M^*): \Omega(\PP)_2 \to Ab$ be a Janus functor which has zero values  on $\underline{0}$ and on sets of cardinality $>2$. In the statement we replace $M(\underline{1})$ by $M_e$, $M(\underline{2})$ by $M_{ee}$, $I_1^\omega$ by $I_e^\omega$ for $\omega \in \PP(1)$, $I_2^{\omega, \omega'}$ by $I_{ee}^{\omega, \omega'}$ for $\omega \in \PP(1)$ and $\omega' \in \PP(1)$, $H_1^{(2,1) \omega}$ by $H^{\omega}$ for $\omega \in \PP(2)$, $P_1^2$ by $P$ and $T_1^2$ by $T$.
  
The conditions given in Lemma \ref{relations-hor} imply that $M_*$ is a covariant functor iff conditions (\ref{quad-1}) and (\ref{quad-2}) in the statement are satisfied. 

The conditions given in Lemma \ref{relations-vert} imply that $M^*$ is a contravariant functor iff conditions (\ref{quad-3}),(\ref{quad-4}), (\ref{quad-5}), (\ref{quad-6}), (\ref{quad-7}) and (\ref{quad-8}) in the statement are satisfied. 
  
 The conditions given in Proposition \ref{presentation} imply that
  $M$ is a pseudo-Mackey functor iff conditions (\ref{quad-9}),  (\ref{quad-10}) and  (\ref{quad-11}) are satisfied.
\end{proof}

In the particular case $\PP(1)=\{ 1_{\PP}\}$ we obtain the following simplification of the previous proposition: 

\begin{cor}
For $\PP$ an unitary set operad such that $\PP(1)=\{ 1_{\PP}\}$, there is an equivalence of categories between $Quad(Free(\PP), Ab)$ and the category of diagrams:
$$\xymatrix{
M_{ee}   \ar[r]_P& M_e\\
 & \PP(2) \times M_e \ar[ul]^H
}$$
where
\begin{itemize}
\item $M_e$ and $M_{ee}$ are abelian groups;
\item $\forall \omega \in \PP(2); \ H^{\omega}:=H(\omega,-):M_e \to M_{ee}$ is a group morphism;
\item $P$ is a group morphism;
\end{itemize}
satisfying the following relations $\forall \alpha \in \PP(2), \forall \alpha' \in \PP(2)$
\begin{enumerate}
\item $ PH^{\alpha}P=2P$;
\item $H^{\alpha}PH^{\alpha'}=H^{\alpha'}+H^{\alpha'.\tau}$ where $\tau$ is the transposition of $\mathfrak{S}_2$ exchanging $1$ and $2$.
\end{enumerate}
\end{cor}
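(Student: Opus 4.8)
The plan is to deduce the Corollary from the preceding Proposition describing $Quad(Free(\PP),Ab)$ for an arbitrary operad $\PP$ generated by binary operations, by specialising to the case $\PP(1)=\{1_\PP\}$ and then eliminating the auxiliary endomorphism $T$ from the data.

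First I would observe that the families $I_e$ and $I_{ee}$ degenerate. Since $\PP(1)=\{1_\PP\}$, the morphism $(\mathrm{Id}_{\underline 1},1_\PP)$ is the identity of $\underline 1$ in $\Omega(\PP)$ and $(\mathrm{Id}_{\underline 2},(1_\PP,1_\PP))$ is the identity of $\underline 2$; hence $I_e^{1_\PP}=M^*(\mathrm{id}_{\underline 1})=\mathrm{Id}_{M_e}$ and $I_{ee}^{1_\PP,1_\PP}=M^*(\mathrm{id}_{\underline 2})=\mathrm{Id}_{M_{ee}}$, so these data carry no information and may be dropped. Using moreover the unitality identities $\gamma(1_\PP;\alpha)=\alpha=\gamma(\alpha;1_\PP,1_\PP)$ for $\alpha\in\PP(2)$, the relations (3)--(8) and (10) of the preceding Proposition become either tautologies or are absorbed into relations (1), (2), (8), while relation (9) merely prescribes $M^*(\tau_1^2,(1_\PP,1_\PP))$ and is, together with (1), equivalent to $M^*(\tau_1^2,(1_\PP,1_\PP))=T$. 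One is thus left with relation (1) $T^2=\mathrm{Id}$, relation (2) $PT=P$, relation (8) $TH^\alpha=H^{\alpha\cdot\tau}$, and relation (11).

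The key point is that in relation (11) the operations $\gamma(\alpha;1_\PP,0_\PP)$ and $\gamma(\alpha;0_\PP,1_\PP)$ lie in $\PP(1)=\{1_\PP\}$, so relation (11) collapses to
\[ H^\alpha P=\mathrm{Id}_{M_{ee}}+T\qquad\text{for every }\alpha\in\PP(2). \]
I would use this to eliminate $T$: writing $T=H^\alpha P-\mathrm{Id}_{M_{ee}}$, relation (2) becomes $PH^\alpha P=2P$, i.e.\ relation (1) of the statement; relation (8) becomes $(H^\alpha P-\mathrm{Id})H^{\alpha'}=H^{\alpha'\cdot\tau}$, i.e.\ relation (2) of the statement $H^\alpha PH^{\alpha'}=H^{\alpha'}+H^{\alpha'\cdot\tau}$; and relation (1), $T^2=\mathrm{Id}$, becomes $(H^\alpha P)^2=2H^\alpha P$, which is a consequence of relation (1) of the statement since $(H^\alpha P)^2=H^\alpha(PH^\alpha P)=2H^\alpha P$. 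Conversely, given data $(M_e,M_{ee},\{H^\omega\}_{\omega\in\PP(2)},P)$ satisfying relations (1) and (2) of the statement, I would fix $\alpha_0\in\PP(2)$ (which exists since $\PP$ is generated by binary operations), set $T:=H^{\alpha_0}P-\mathrm{Id}_{M_{ee}}$, and check that one thereby obtains a Janus functor on $\Omega(\PP)_2$ (using Lemmas \ref{relations-hor} and \ref{relations-vert}) satisfying the pseudo-Mackey conditions on generators (Proposition \ref{pour-gene} and Definition \ref{PMack}) and vanishing on $\underline 0$ and on $\underline m$ for $m>2$; by Corollary \ref{equ-pol-cor} this corresponds to a quadratic functor from $Free(\PP)$ to $Ab$. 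Since $T$ is completely determined by $H^{\alpha_0}$ and $P$, any morphism of the simplified data automatically commutes with $T$, so the two diagram categories are isomorphic; combined with the preceding Proposition this yields the claimed equivalence.

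The main obstacle is the consistency of the reconstruction: one must show that relations (1) and (2) of the statement force $H^\alpha P$ to be independent of $\alpha\in\PP(2)$ (so that $T:=H^{\alpha_0}P-\mathrm{Id}$ is well defined independently of $\alpha_0$ and relation (11) holds for every $\alpha$), and that relations (1) and (2) generate, rather than merely follow from, all the functoriality and pseudo-Mackey constraints of the preceding Proposition once $T$ has been eliminated. This is where one invokes the $\mathfrak{S}_2$-equivariance carried by relation (8) together with the fact that $\PP$ is binary-generated; the remaining verifications amount to matching each generator relation of $\Omega(\PP)_2$ against relations (1) and (2). As a consistency check, in the case $\PP=\C om$ one has $|\PP(2)|=1$ and $\tau$ acts trivially, so relations (1) and (2) read $PHP=2P$ and $HPH=2H$, recovering the presentation of quadratic functors from abelian groups in \cite{BDFP}.
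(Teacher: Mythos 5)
Your reduction is the same as the paper's: specialize the preceding Proposition to $\PP(1)=\{1_\PP\}$, observe that the families $I_e,I_{ee}$ become identities and that relations (3)--(7) and (10) degenerate, read relation (11) as $H^\alpha P=\mathrm{Id}_{M_{ee}}+T$, eliminate $T$, and check that relation (2) gives the Corollary's relation (1), relation (8) gives the Corollary's relation (2), and relation (1) becomes a consequence. The paper's own proof is precisely this, stated more tersely (``$T$ is determined by the other data'', then the surviving relations are listed).

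Where you go further than the paper is in flagging the consistency of the reconstruction of $T$. Since relation (11) holds for \emph{every} $\alpha\in\PP(2)$, the candidate $T=H^{\alpha_0}P-\mathrm{Id}$ is a legitimate inverse image only if $H^\alpha P$ is independent of $\alpha$, and for essential surjectivity this independence must follow from the Corollary's relations (1) and (2). You are right that this is the main obstacle, but the sketch you offer does not close it: relation (8) has already been absorbed into the Corollary's (2), and the Corollary assumes only $\PP(1)=\{1_\PP\}$, not that $\PP$ is binary-generated (that hypothesis lives in a different subsubsection). What (1) and (2) actually give is $H^{\alpha\tau}P=H^\alpha P$ and $2\bigl(H^\alpha P-H^{\alpha'}P\bigr)=0$ (postcompose (1) with $H^{\alpha'}$ and use (2) twice), which is weaker. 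For instance take $\PP(2)$ with two $\mathfrak{S}_2$-fixed elements $\alpha,\alpha'$, $M_e=\Z$, $M_{ee}=\Z\oplus\Z/2$, $P(x,y)=x$, $H^{\alpha}(z)=(2z,0)$, $H^{\alpha'}(z)=(2z,\bar z)$: relations (1) and (2) hold, yet $H^{\alpha}P\neq H^{\alpha'}P$, so no pseudo-Mackey functor restricts to this diagram. Thus the concern you raise is genuine and unresolved both in your argument and in the paper's proof; closing it requires either adding ``$H^\alpha P$ is independent of $\alpha$'' as an explicit relation in the Corollary or supplying an argument that appears in neither source. (Note that the paper's two applications, to $\C om$ and $\A s$, are unaffected, since there $\PP(2)$ is a single $\mathfrak{S}_2$-orbit.)
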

\begin{proof}
If $\PP(1)=\{ 1_{\PP}\}$ the conditions (\ref{quad-3}), (\ref{quad-4}), (\ref{quad-5}), (\ref{quad-6}), (\ref{quad-7}) and (\ref{quad-10}) of the previous proposition are trivial.

Condition (\ref{quad-11}) becomes  $T=H^{\alpha}P-Id_{M_{ee}}$. So, we deduce that $T$ is determined by the other data.

Condition (\ref{quad-2}) becomes $ PH^{\alpha}P=2P$; condition (\ref{quad-8}) becomes $H^{\alpha}PH^{\alpha'}=H^{\alpha'}+H^{\alpha'.\tau}$ where $\tau$ is the transposition of $\mathfrak{S}_2$ exchanging $1$ and $2$ and condition (\ref{quad-1}) is a consequence of condition (\ref{quad-2}).
\end{proof}

For $\PP=\mathcal{C}om$, since $\mathcal{C}om(2)=\underline{1}$ we recover the equivalence between $Quad(ab,Ab)$ and quadratic $\Z$-module obtained by Baues in \cite{Baues}. Recall that a quadratic $\Z$-module is a diagram of abelian groups 
$$M_e \xrightarrow{H} M_{ee} \xrightarrow{P} M_e$$
satisfying $PHP=2P$ and $HPH=2H$.

For  $\PP=\mathcal{A}s$, since $\mathcal{A}s(2)=\mathfrak{S}_2=\{ 1, \tau \}$, we have two group morphisms from $M_e$ to $M_{ee}$: $H^1$ and $H^\tau$.
Relation $(1)$ in the corollary becomes:
$$(1 \ti 1)\quad PH^1P=2P$$
$$(1 \ti 2)\quad PH^\tau P=2P$$
and relation $(2)$ becomes:
$$(2 \ti 1)\quad H^1 P H^1=H^1+H^\tau$$
$$(2 \ti 2)\quad H^\tau P H^1=H^1+H^\tau$$
$$(2 \ti 3)\quad H^1 P H^\tau=H^1+H^\tau$$
$$(2 \ti 4)\quad H^\tau P H^\tau=H^1+H^\tau.$$
We deduce from relation $(2 \ti 1)$ that $H^\tau$ is determined by the other data. Then conditions $(1 \ti 2), (2 \ti 2), (2 \ti 3)$ and $(2 \ti 4)$ are consequences of relation $(1 \ti 1)$

So, we recover the equivalence between Quad(gr,Ab) and abelian square groups obtained by Baues and Pirashvili in \cite{Baues-Pira}. Recall that an abelian square group is a diagram of abelian groups 
$$M_e \xrightarrow{H} M_{ee} \xrightarrow{P} M_e$$
satisfying $PHP=2P$.

%%%%%%%%%%%%%%%%%%%%%%%%%

\bibliographystyle{amsplain}
\bibliography{biblio-poly}

\def\cprime{$'$}
\providecommand{\bysame}{\leavevmode\hbox to3em{\hrulefill}\thinspace}
\providecommand{\MR}{\relax\ifhmode\unskip\space\fi MR }
% \MRhref is called by the amsart/book/proc definition of \MR.
\providecommand{\MRhref}[2]{%
  \href{http://www.ams.org/mathscinet-getitem?mr=#1}{#2}
}
\providecommand{\href}[2]{#2}
\begin{thebibliography}{10}

\bibitem{Baues}
Hans-Joachim Baues, \emph{Quadratic functors and metastable homotopy}, J. Pure
  Appl. Algebra \textbf{91} (1994), no.~1-3, 49--107. \MR{MR1255923
  (94j:55022)}

\bibitem{BDFP}
Hans-Joachim Baues, Winfried Dreckmann, Vincent Franjou, and Teimuraz
  Pirashvili, \emph{Foncteurs polynomiaux et foncteurs de {M}ackey non
  lin\'eaires}, Bull. Soc. Math. France \textbf{129} (2001), no.~2, 237--257.
  \MR{MR1871297 (2002j:18004)}

\bibitem{Baues-Pira}
Hans-Joachim Baues and Teimuraz Pirashvili, \emph{Quadratic endofunctors of the
  category of groups}, Adv. Math. \textbf{141} (1999), no.~1, 167--206.
  \MR{MR1667150 (2000b:20073)}

\bibitem{Betley}
Stanislaw Betley, \emph{Stable {$K$}-theory of finite fields}, $K$-Theory
  \textbf{17} (1999), no.~2, 103--111. \MR{1696427 (2000d:18006)}

\bibitem{Bor2}
Francis Borceux, \emph{Handbook of categorical algebra. 2}, Encyclopedia of
  Mathematics and its Applications, vol.~51, Cambridge University Press,
  Cambridge, 1994, Categories and structures. \MR{MR1313497 (96g:18001b)}

\bibitem{BB}
Francis Borceux and Dominique Bourn, \emph{Mal'cev, protomodular, homological
  and semi-abelian categories}, Mathematics and its Applications, vol. 566,
  Kluwer Academic Publishers, Dordrecht, 2004. \MR{MR2044291 (2005e:18001)}

\bibitem{Dja-V}
Aur{\'e}lien Djament and Christine Vespa, \emph{Sur l'homologie des groupes
  orthogonaux et symplectiques \`a coefficients tordus}, Ann. Sci. \'Ec. Norm.
  Sup\'er. (4) \textbf{43} (2010), no.~3, 395--459. \MR{2667021 (2011h:20095)}

\bibitem{Ehres}
Charles Ehresmann, \emph{Cat\'egories structur\'ees}, Ann. Sci. \'Ecole Norm.
  Sup. (3) \textbf{80} (1963), 349--426. \MR{MR0197529 (33 \#5694)}

\bibitem{EML}
Samuel Eilenberg and Saunders Mac~Lane, \emph{On the groups {$H(\Pi,n)$}. {II}.
  {M}ethods of computation}, Ann. of Math. (2) \textbf{60} (1954), 49--139.
  \MR{MR0065162 (16,391a)}

\bibitem{Fied-Loday}
Zbigniew Fiedorowicz and Jean-Louis Loday, \emph{Crossed simplicial groups and
  their associated homology}, Trans. Amer. Math. Soc. \textbf{326} (1991),
  no.~1, 57--87. \MR{998125 (91j:18018)}

\bibitem{Pan-S}
Vincent Franjou, Eric~M. Friedlander, Teimuraz Pirashvili, and Lionel Schwartz,
  \emph{Rational representations, the {S}teenrod algebra and functor homology},
  Panoramas et Synth\`eses [Panoramas and Syntheses], vol.~16, Soci\'et\'e
  Math\'ematique de France, Paris, 2003. \MR{2117525 (2007m:55001)}

\bibitem{FFSS}
Vincent Franjou, Eric~M. Friedlander, Alexander Scorichenko, and Andrei Suslin,
  \emph{General linear and functor cohomology over finite fields}, Ann. of
  Math. (2) \textbf{150} (1999), no.~2, 663--728. \MR{1726705 (2001b:14076)}

\bibitem{Grandis-Pare}
Marco Grandis and Robert Pare, \emph{Limits in double categories}, Cahiers
  Topologie G\'eom. Diff\'erentielle Cat\'eg. \textbf{40} (1999), no.~3,
  162--220. \MR{1716779 (2000i:18007)}

\bibitem{HV}
M.~Hartl and C.~Vespa, \emph{Quadratic functors on pointed categories}, Adv.
  Math. \textbf{226} (2011), 3927--4010.

\bibitem{JMcotriples}
B.~Johnson and R.~McCarthy, \emph{Deriving calculus with cotriples}, Trans.
  Amer. Math. Soc. \textbf{356} (2004), no.~2, 757--803 (electronic).
  \MR{MR2022719 (2005m:18016)}

\bibitem{May-Kriz}
Igor K{\v{r}}{\'{\i}}{\v{z}} and J.~P. May, \emph{Operads, algebras, modules
  and motives}, Ast\'erisque (1995), no.~233, iv+145pp. \MR{1361938
  (96j:18006)}

\bibitem{Lindner}
Harald Lindner, \emph{A remark on {M}ackey-functors}, Manuscripta Math.
  \textbf{18} (1976), no.~3, 273--278. \MR{MR0401864 (53 \#5691)}

\bibitem{May-T}
J.~P. May and R.~Thomason, \emph{The uniqueness of infinite loop space
  machines}, Topology \textbf{17} (1978), no.~3, 205--224. \MR{MR508885
  (80g:55015)}

\bibitem{Passi}
Inder Bir~S. Passi, \emph{Group rings and their augmentation ideals}, Lecture
  Notes in Mathematics, vol. 715, Springer, Berlin, 1979. \MR{537126
  (80k:20009)}

\bibitem{Pira-Richter}
T.~Pirashvili and B.~Richter, \emph{Hochschild and cyclic homology via functor
  homology}, $K$-Theory \textbf{25} (2002), no.~1, 39--49. \MR{MR1899698
  (2003c:16011)}

\bibitem{PiraSS}
T.~I. Pirashvili, \emph{A spectral sequence of an epimorphism. {I}}, Trudy
  Tbiliss. Mat. Inst. Razmadze Akad. Nauk Gruzin. SSR \textbf{70} (1982),
  69--91. \MR{701219 (85f:18009)}

\bibitem{Pira}
Teimuraz Pirashvili, \emph{Polynomial approximation of {${\rm Ext}$} and {${\rm
  Tor}$} groups in functor categories}, Comm. Algebra \textbf{21} (1993),
  no.~5, 1705--1719. \MR{MR1213983 (94d:18020)}

\bibitem{Pira-Dold}
\bysame, \emph{Dold-{K}an type theorem for {$\Gamma$}-groups}, Math. Ann.
  \textbf{318} (2000), no.~2, 277--298. \MR{MR1795563 (2001i:20112)}

\bibitem{Pira-PROP}
\bysame, \emph{On the {PROP} corresponding to bialgebras}, Cah. Topol. G\'eom.
  Diff\'er. Cat\'eg. \textbf{43} (2002), no.~3, 221--239. \MR{MR1928233
  (2003i:18012)}

\bibitem{pira-PS}
\bysame, \emph{Introduction to functor homology}, Rational representations, the
  {S}teenrod algebra and functor homology, Panor. Synth\`eses, vol.~16, Soc.
  Math. France, Paris, 2003, pp.~1--26. \MR{2117526}

\bibitem{Popescu}
N.~Popescu, \emph{Abelian categories with applications to rings and modules},
  Academic Press, London, 1973, London Mathematical Society Monographs, No. 3.
  \MR{MR0340375 (49 \#5130)}

\bibitem{Schur}
I.~Schur, \emph{{\"Uber eine Klasse von Matrizen, die sich einer gegebenen
  Matrix zuordnen lassen.}}, Ph.D. thesis, {Diss. Berlin. 76 S }, 1901.

\bibitem{Slom}
Jolanta S{\l}omi{\'n}ska, \emph{Dold-{K}an type theorems and {M}orita
  equivalences of functor categories}, J. Algebra \textbf{274} (2004), no.~1,
  118--137. \MR{2040866 (2005c:18002)}

\bibitem{Stanley}
Richard~P. Stanley, \emph{Enumerative combinatorics. {V}ol. 1}, Cambridge
  Studies in Advanced Mathematics, vol.~49, Cambridge University Press,
  Cambridge, 1997, With a foreword by Gian-Carlo Rota, Corrected reprint of the
  1986 original. \MR{MR1442260 (98a:05001)}

\bibitem{Vespa1}
Christine Vespa, \emph{Generic representations of orthogonal groups: the
  functor category $\mathcal{ F}_{\rm quad}$}, J. Pure Appl. Algebra
  \textbf{212} (2008), no.~6, 1472--1499. \MR{MR2391661 (2008m:18001)}

\end{thebibliography}

\end{document}